\numberwithin{equation}{section}
\numberwithin{figure}{section}
\newcommand{\lyxaddress}[1]{
\par {\raggedright #1
\vspace{1.4em}
\noindent\par}
}
\theoremstyle{plain}
\newtheorem{thm}{\protect\theoremname}
  \theoremstyle{remark}
  \newtheorem{rem}[thm]{\protect\remarkname}
  \theoremstyle{remark}
  \newtheorem*{acknowledgement*}{\protect\acknowledgementname}
  \theoremstyle{definition}
  \newtheorem{defn}[thm]{\protect\definitionname}
  \theoremstyle{plain}
  \newtheorem{cor}[thm]{\protect\corollaryname}
  \theoremstyle{plain}
  \newtheorem{lem}[thm]{\protect\lemmaname}
  \theoremstyle{plain}
  \newtheorem{prop}[thm]{\protect\propositionname}
\newcommand{\bigslant}[2]{{\raisebox{.2em}{$#1$}\left/\raisebox{-.2em}{$#2$}\right.}}
\newcommand{\xdownarrow}[1]{%
  {\left\downarrow\vbox to #1{}\right.\kern-\nulldelimiterspace}
}
\newenvironment{rcases}
  {\left.\begin{aligned}}
  {\end{aligned}\right\rbrace}
  \providecommand{\acknowledgementname}{Acknowledgement}
  \providecommand{\corollaryname}{Corollary}
  \providecommand{\definitionname}{Definition}
  \providecommand{\lemmaname}{Lemma}
  \providecommand{\propositionname}{Proposition}
  \providecommand{\remarkname}{Remark}
\providecommand{\theoremname}{Theorem}
\begin{document}
\begin{singlespace}

\title{\noindent A Compactness Result for $\mathcal{H}-$holomorphic Curves
in Symplectizations}
\end{singlespace}
\begin{singlespace}

\author{\noindent Alexandru Doicu$^{*}$, Urs Fuchs$^{**}$}
\end{singlespace}
\maketitle
\begin{singlespace}

\lyxaddress{\noindent $^{\star}$Institut für Mathematik, Universität Augsburg.
E-mail: \texttt{alexandru.doicu@math.uni-augsburg.de}}

\lyxaddress{\noindent $^{\star\star}$Mathematisches Institut, Universität Heidelberg.
E-mail: \texttt{ufuchs@mathi.uni-heidelberg.de}}
\end{singlespace}
\begin{abstract}
\begin{singlespace}
\noindent $\mathcal{H}-$holomorphic curves are solutions of a specific
modification of the pseudoholomorphic curve equation in symplectizations
involving a harmonic $1-$form as perturbation term. In this paper
we compactify the moduli space of $\mathcal{H}-$holomorphic curves
with a priori bounds on the harmonic $1-$forms.
\end{singlespace}
\end{abstract}
\begin{singlespace}
\noindent \tableofcontents{}
\end{singlespace}
\begin{singlespace}

\section{\label{sec:Introduction}Introduction}
\end{singlespace}

\begin{singlespace}
\noindent Let $M$ be a closed, connected, $3-$dimensional manifold
and $\alpha$ a $1-$form on $M$ such that $(M,\alpha)$ is a contact
manifold. Further, let $X_{\alpha}$ be the Reeb vector field with
respect to the contact form $\alpha$ on $M$, defined by $\iota_{X_{\alpha}}\alpha\equiv1$
and $\iota_{X_{\alpha}}d\alpha\equiv0$. Denote by $\xi=\ker(\alpha)$
the contact structure and $\pi_{\alpha}:TM\rightarrow\xi$ the canonical
projection along the Reeb vector field $X_{\alpha}$. Denote by $\phi_{\rho}^{\alpha}$
the flow of $X_{\alpha}$, and note that $\phi_{\rho}^{\alpha}$ preserves
the contact structure and the Reeb vector field $X_{\alpha}$. Consider
a $d\alpha-$compatible almost complex structure $J_{\xi}:\xi\rightarrow\xi$,
and let $J$ be the extension of $J_{\xi}$ to a $\mathbb{R}-$invariant
almost complex structure on $\mathbb{R}\times M$ by mapping $1\in T\mathbb{R}$
to $X_{\alpha}$ and $X_{\alpha}$ to $-1\in T\mathbb{R}$. $M$ is
equipped with the metric
\[
g=\alpha\otimes\alpha+d\alpha(\cdot,J_{\xi}\cdot)
\]
while we equip the $\mathbb{R}\times M$ with the metric
\begin{equation}
\overline{g}=dr\otimes dr+g\label{eq:metric-symplectization}
\end{equation}
where $r$ is the coordinate on $\mathbb{R}$. Throughout this paper
we assume that all periodic orbits are non-degenerate. This means
that for every periodic orbit $x$ of period $T$, the linear map
$d\phi_{T}^{\alpha}(x(0)):\xi_{x(0)}\rightarrow\xi_{x(T)}$ does not
contain $1$ in its spectrum. Let $(S,j)$ be a closed Riemann surface
and $\mathcal{P}\subset S$ a finite subset whose elements are called
``punctures''. A proper and non-constant map $u=(a,f):S\backslash\mathcal{P}\rightarrow\mathbb{R}\times M$
is called $\mathcal{H}-$holomorphic if 
\begin{equation}
\begin{array}{cl}
\pi_{\alpha}df\circ j & =J_{\xi}(u)\circ\pi_{\alpha}df\\
f^{*}\alpha\circ j & =da+\gamma\\
E(u;S\backslash\mathcal{P}) & <\infty
\end{array}\label{eq:H-hol}
\end{equation}
for a harmonic $1-$form $\gamma\in\mathcal{H}_{j}^{1}(S)$, i.e.
$d\gamma=d(\gamma\circ j)=0$. The energy
\begin{equation}
E(u;S\backslash\mathcal{P})=\sup_{\varphi\in\mathcal{A}}\int_{S\backslash\mathcal{P}}\varphi'(a)da\circ j\wedge da+\int_{S\backslash\mathcal{P}}f^{*}d\alpha\label{eq:energy}
\end{equation}
sums the contribution of the $\alpha-$energy (first term) and the
$d\alpha-$energy (second term) of $u$ on $S\backslash\mathcal{P}.$
The set $\mathcal{A}$ consists of all smooth maps $\varphi:\mathbb{R}\rightarrow[0,1]$
with $\varphi'(r)\geq0$ for all $r\in\mathbb{R}$. Note that if the
perturbation $1-$form $\gamma$ vanishes, the energy of $u$ is equal
to the Hofer energy defined in \cite{key-1}. This modification of
the pseudoholomorphic curve equation, which was first suggested by
Hofer \cite{key-3}, was used by Abbas et al. \cite{key-4} to prove
the generalized Weinstein conjecture in dimension three. However,
due to a lack of a compactness result of the moduli space of $\mathcal{H}-$holomorphic
curves, the generalized Weinstein conjecture was proved only in the
planar case, i.e. when the leaves of the holomorphic open book decomposition
\cite{key-18} have zero genus. In this paper we describe a compactification
of the moduli space of finite energy $\mathcal{H}-$holomorphic curves
by imposing some additional conditions. These are outlined below.

\noindent The $L^{2}-$norm of the harmonic perturbation $1-$form
$\gamma$ is defined by
\[
\left\Vert \gamma\right\Vert _{L^{2}(S)}^{2}=\int_{S}\gamma\circ j\wedge\gamma.
\]
For an isotopy class $[c]$ which is represented by a smooth loop
$c$ the period and co-period of $\gamma$ over $[c]$ are
\begin{equation}
P_{\gamma}([c])=\int_{c}\gamma\label{eq:periods}
\end{equation}
and
\begin{equation}
S_{\gamma}([c])=\int_{c}\gamma\circ j,\label{eq:co-period}
\end{equation}
respectively. Let $R_{[c]}$ the conformal modulus of $[c]$, as defined
in \cite{key-10}. The \emph{conformal period }and \emph{co-period}
of $\gamma$ over $[c]$ are $\tau_{[c],\gamma}=R_{[c]}P_{\gamma}([c])$
and $\sigma_{[c],\gamma}=R_{[c]}S_{\gamma}([c])$, respectively. The
significance of these two quantities will become apparent later on. 

\noindent The compactness result will be established for finite energy
$\mathcal{H}-$holomorphic curves with harmonic perturbation $1-$forms
having uniformly bounded $L^{2}-$norms and uniformly bounded conformal
periods and co-periods. Specifically, we will consider a sequence
of $\mathcal{H}-$holomorphic curves $u_{n}=(a_{n},f_{n}):(S_{n}\backslash\mathcal{P}_{n},j_{n})\rightarrow\mathbb{R}\times M$
with harmonic perturbations $\gamma_{n}$, satisfying the following
conditions:
\end{singlespace}
\begin{description}
\begin{singlespace}
\item [{A1}] \noindent $(S_{n},j_{n})$ are compact Riemann surfaces of
the same genus and $\mathcal{P}_{n}\subset S_{n}$ is a finite set
of punctures whose cardinality is independent of $n$.
\item [{A2}] \noindent The energy of $u_{n}$, as well as the $L^{2}-$norm
of $\gamma_{n}$ are uniformly bounded by the constants $E_{0}>0$
and $C_{0}>0$, respectively.
\end{singlespace}
\end{description}
\begin{singlespace}
\noindent In \cite{key-15} Bergmann introduced a model for the compactification
of the moduli space of $\mathcal{H}-$holomorphic curves satisfying
the conditions A1 and A2. However, he neglects the phenomena occuring
when conformal periods and conformal coperiods of the sequence of
the harmonic perturbation $1-$forms $\gamma_{n}$ are unbounded.
In this case, the convergence behaviour can be very complicated. For
instance, near a node, the $\mathcal{H}-$holomorphic curve can follow
along a Reeb trajectory which is dense in $M$. To avoid these complications
we make additional assumptions on the conformal period and co-period.
The task is to establish a notion of convergence of such curves as
well as the description of the limit object similar to \cite{key-1}
and \cite{key-10}. Essentially, we will prove the convergence of
a sequence of $\mathcal{H}-$holomorphic maps to a stratified broken
$\mathcal{H}-$holomorphic building. The concept of a stratified broken
$\mathcal{H}-$holomorphic building of a certain level is similar
to that given in \cite{key-1}. Each level consists of a nodal $\mathcal{H}-$holomorphic
curve having the same asymptotic properties at the positive and negative
punctures. However, as compared to \cite{key-1}, there are two differences: 
\end{singlespace}
\begin{enumerate}
\begin{singlespace}
\item The nodes on each level are not just points; in our setting they are
replaced by a finite length Reeb trajectory. 
\item The breaking orbits between the levels have a twist in the sense made
precise in Definition \ref{def:A-mapwhere-} from Section \ref{subsec:Notion-of-convergence}.
\end{singlespace}
\end{enumerate}
\begin{rem}
\begin{singlespace}
\noindent \label{rem:For-a-sequence}For a sequence of punctured Riemann
surfaces $(S_{n},j_{n},\mathcal{P}_{n})$, the Deligne-Mumford convergence
result implies that there exists a punctured nodal Riemann surface
$(S,j,\mathcal{P},\mathcal{D})$ and a sequence of diffeomorphisms
$\varphi_{n}:S^{D,r}\rightarrow S_{n}$, such that $\varphi_{n}^{*}j_{n}$
converges outside certain circles in $C_{\text{loc}}^{\infty}$ to
$j$. Here, $S^{D,r}$ is the surface obtained by blowing up the points
from $\mathcal{D}$ and identifying them via the decoration $r$ (see
Section \ref{chap:Definitions-and-main}). Denote by $\Gamma_{i}^{\text{nod}}$,
for $i=1,...,|\mathcal{D}|/2$, the equivalence classes of the boundary
circles of $S^{\mathcal{D}}$in $S^{\mathcal{D},r}$. Let $\Gamma_{n,i}^{\text{nod}}=(\varphi_{n})_{*}\Gamma_{i}^{\text{nod}}$
for all $n\in\mathbb{N}$ and $i=1,...,|\mathcal{D}|/2$. 
\end{singlespace}
\end{rem}
\begin{singlespace}
\noindent The main result of our analysis is the following
\end{singlespace}
\begin{thm}
\begin{singlespace}
\noindent \label{thm:Let--be-2-1}Let $(S_{n},j_{n},u_{n},\mathcal{P}_{n},\gamma_{n})$
be a sequence of $\mathcal{H}-$holomorphic curves in $\mathbb{R}\times M$
satisfying assumptions A1 and A2. Then there exists a subsequence
that converges to a $\mathcal{H}-$holomorphic curve $(S,j,u,\mathcal{P},\mathcal{D},\gamma)$
in the sense of Definition \ref{def:A-sequence-of} from Section \ref{subsec:Convergence}.
Moreover, if there exists a constant $C>0$ such that for all $n\in\mathbb{N}$
and all $1\leq i\leq|\mathcal{D}|/2$ we have $|\tau_{[\Gamma_{n,i}^{\text{nod}}],\gamma_{n}}|,|\sigma_{[\Gamma_{n,i}^{\text{nod}}],\gamma_{n}}|<C$
then $(S,j,u,\mathcal{P},\mathcal{D},\gamma)$ is a stratified $\mathcal{H}-$holomorphic
building of height $N$ and after going over to a subsequence the
$\mathcal{H}-$holomorphic curves $(S_{n},j_{n},u_{n},\mathcal{P}_{n},\gamma_{n})$
converges to $(S,j,u,\mathcal{P},\mathcal{D},\gamma)$ in the sense
of Definition \ref{def:A-sequence-of-1} from Section \ref{subsec:Convergence}.
\end{singlespace}
\end{thm}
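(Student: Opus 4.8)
The plan is to follow the general strategy of SFT compactness from \cite{key-1} and \cite{key-10}, adapted to the presence of the harmonic perturbation $1$-form, and organised in two stages matching the two assertions of the theorem. First I would set up the domain degeneration: by assumption A1 the surfaces $(S_{n},j_{n},\mathcal{P}_{n})$ have fixed genus and number of punctures, so after stabilizing and passing to a subsequence, Deligne--Mumford convergence (Remark \ref{rem:For-a-sequence}) produces the nodal Riemann surface $(S,j,\mathcal{P},\mathcal{D})$, the diffeomorphisms $\varphi_{n}:S^{\mathcal{D},r}\rightarrow S_{n}$, and the degenerating circles $\Gamma_{n,i}^{\text{nod}}$. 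Pulling everything back by $\varphi_{n}$ turns the problem into one of controlling $\tilde{u}_{n}:=u_{n}\circ\varphi_{n}$ and $\tilde{\gamma}_{n}:=\varphi_{n}^{*}\gamma_{n}$ on a fixed surface whose necks grow longer and longer.

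The analytic core on the thick part is a gradient bound together with a bubbling analysis. Using assumption A2 (the energy bound $E_{0}$ and the $L^{2}$-bound $C_{0}$ on $\gamma_{n}$) and the usual monotonicity/mean-value estimate for $\mathcal{H}$-holomorphic curves, I would show that on compact subsets of the thick part of $S^{\mathcal{D},r}$ the gradients of $\tilde{u}_{n}$ are uniformly bounded away from finitely many points, and that at those points a standard rescaling extracts finite-energy planes or cylinders. The $L^{2}$-bound is exactly what keeps the perturbation term under control: $\tilde{\gamma}_{n}$ is $\varphi_{n}^{*}j_{n}$-harmonic with bounded $L^{2}$-norm, hence on the thick part it subconverges in $C^{\infty}_{\text{loc}}$ to a $j$-harmonic form $\gamma$, and elliptic bootstrapping then gives $C^{\infty}_{\text{loc}}$-convergence of $\tilde{u}_{n}$ (after suitable $\mathbb{R}$-shifts $c_{n}$) to an $\mathcal{H}$-holomorphic map on each component of the thick part. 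Combining this with Hofer's asymptotic analysis at the punctures in $\mathcal{P}$ --- adapted to \eqref{eq:H-hol}, where the relevant $1$-form near a puncture is still exponentially close to a model one because the harmonic form is --- yields convergence to a (possibly nodal, with a priori uncontrolled neck behaviour) $\mathcal{H}$-holomorphic curve in the sense of Definition \ref{def:A-sequence-of}, which proves the first claim.

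For the second assertion I would carry out the neck analysis and the level decomposition under the additional hypothesis. Fix a degenerating circle $\Gamma_{n,i}^{\text{nod}}$ and work on the corresponding long cylinder $Z_{n}=[-L_{n},L_{n}]\times S^{1}$ with $L_{n}\to\infty$ and modulus comparable to $R_{[\Gamma_{n,i}^{\text{nod}}]}$. Decompose $\tilde{\gamma}_{n}|_{Z_{n}}$ into its harmonic model part $\lambda_{n}\,dt+\mu_{n}\,ds$ --- whose coefficients are, up to the conformal factor, the period and co-period of $\gamma_{n}$ from \eqref{eq:periods} and \eqref{eq:co-period} --- plus an exponentially small remainder. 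The hypothesis $|\tau_{[\Gamma_{n,i}^{\text{nod}}],\gamma_{n}}|,|\sigma_{[\Gamma_{n,i}^{\text{nod}}],\gamma_{n}}|<C$ bounds precisely the accumulated period and co-period across the neck, so $\lambda_{n}L_{n}$ and $\mu_{n}L_{n}$ stay bounded; this rules out the $M$-component spiralling along a dense Reeb orbit and instead forces it to $C^{0}$-converge on the neck to a finite-length Reeb trajectory, while the co-period contributes the twist recorded in Definition \ref{def:A-mapwhere-}. I would then run the soft-rescaling procedure of \cite{key-1}: partition the neck according to the values of $a_{n}$, choose new $\mathbb{R}$-shifts at every scale where a definite quantum of $d\alpha$-energy concentrates, and apply the thick-part analysis and the puncture asymptotics at each such scale. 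Since every non-trivial level carries at least the minimal-period amount of energy and the total energy is $\le E_{0}$, the process terminates after finitely many steps $N$; gluing the levels along the twisted breaking orbits produces the stratified $\mathcal{H}$-holomorphic building of height $N$, and assembling the scale-by-scale convergences with the neck convergence gives convergence in the sense of Definition \ref{def:A-sequence-of-1}.

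The main obstacle I expect is exactly this neck/long-cylinder analysis for the perturbed equation. One must show that the $j_{n}$-harmonic form on a long degenerating cylinder really does split as a controlled model part plus an exponentially decaying error \emph{uniformly in $n$}, and then re-run the Hofer--Wysocki--Zehnder asymptotic machinery for the modified system \eqref{eq:H-hol} --- in which the $\mathbb{R}$-component satisfies $da_{n}=f_{n}^{*}\alpha\circ j_{n}-\gamma_{n}$ rather than $da_{n}=f_{n}^{*}\alpha\circ j_{n}$ --- to obtain exponential decay toward a Reeb trajectory of the correct length and twist. Controlling the coupling between $a_{n}$, the pullback $f_{n}^{*}\alpha$ and $\gamma_{n}$ on the neck, with all constants independent of $n$, is the delicate point; once that is in place, the remainder of the argument is a careful but routine adaptation of the standard compactness scheme.
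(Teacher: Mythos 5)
Your thick-part analysis matches the paper's: stabilize, invoke Deligne--Mumford, pull back by $\varphi_{n}$, use bubbling-off together with the $L^{2}$-bound on $\gamma_{n}$ to get $C^{\infty}_{\text{loc}}$-subconvergence of the harmonic forms and gradient bounds for $\tilde{u}_{n}$ away from a finite exceptional set, and bootstrap. That carries the first assertion of the theorem in essentially the same way as Section \ref{2_1_sec:First_Theorem}.

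The neck analysis, however, contains a genuine gap. You appeal to ``the usual monotonicity/mean-value estimate for $\mathcal{H}$-holomorphic curves'' and propose to ``re-run the Hofer--Wysocki--Zehnder asymptotic machinery for the modified system.'' The paper explicitly disclaims the availability of a monotonicity lemma in the $\mathcal{H}$-holomorphic setting (see the opening of Section \ref{subsec:Cylinders}); that is precisely why the classical long-cylinder asymptotics of \cite{key-1} and \cite{key-11} cannot be ``re-run'' directly. Without a monotonicity-type estimate you do not obtain uniform gradient bounds across a degenerating neck even after $\mathbb{R}$-shifts, and the soft-rescaling level decomposition you invoke relies on exactly that kind of control. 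Your proposed splitting of $\tilde{\gamma}_{n}$ on the neck into a model part plus an exponentially small remainder does not by itself repair this, because the difficulty is the coupling of $da_{n}$ with $f_{n}^{*}\alpha$ in the presence of possibly unbounded gradients, not merely the size of $\gamma_{n}$.

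The idea you are missing is the paper's decomposition of each hyperbolic cylinder into an alternating chain of cylinders of type $\infty$ (conformal modulus $\to\infty$, $d\alpha$-energy below $\hbar_{0}/4$) and type $b_{1}$ (bounded modulus), with two different transformations to genuine pseudoholomorphic curves. On a cylinder of type $\infty$ the small $d\alpha$-energy restores the gradient bound via bubbling (Lemma \ref{lem:Let-=000024=00005Bh_=00007Bn=00007D^=00007B(m-1)=00007D,h_=00007Bn=00007D^=00007B(m)=00007D=00005D}), and the curve is made pseudoholomorphic by pushing the $M$-component along the Reeb flow for a time dictated by the conformal period; the hypotheses $|\tau_{[\Gamma^{\text{nod}}_{n,i}],\gamma_{n}}|,|\sigma_{[\Gamma^{\text{nod}}_{n,i}],\gamma_{n}}|<C$ enter exactly here, to keep the resulting domain-dependent almost complex structure in a compact family so that \cite{key-11} and \cite{key-23} apply. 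On a cylinder of type $b_{1}$ the modulus is bounded, the perturbation is exact on disk neighbourhoods of the bubbling points, and one absorbs it into the $\mathbb{R}$-coordinate and invokes the Gromov compactness with free boundary of \cite{key-10} (Appendix \ref{sec:Pseudoholomorphic-disks-with}). Gluing the two regimes, plus the separate analysis at cusps (Section \ref{subsec:Punctures}), is what yields the stratified building and the $C^{0}$-convergence of Definition \ref{def:A-sequence-of-1}. Without this $\infty/b_{1}$ decomposition, or a substitute uniform asymptotics result for the perturbed equation itself, your plan does not close the neck case.
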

\begin{singlespace}
\noindent For applications we would like to get rid of the a priori
bounds on the $L^{2}-$norm of $\gamma_{n}$. Such bounds are automatically
satisfied if all the leafs of the foliation, given by $\ker(f^{*}\alpha\circ j)$,
are compact.
\end{singlespace}
\begin{singlespace}

\subsection{Outline of the paper}
\end{singlespace}

\begin{singlespace}
\noindent The paper is organized as follows. In Chapter \ref{chap:Definitions-and-main}
we review the basic concepts related to the compactness of $\mathcal{H}-$holomorphic
curves. More precisely, in Section \ref{sec:Harmonic-Deligne-Mumford-converg}
we recall the Deligne-Mumford convergence theorem for stable Riemann
surfaces by following the analysis given in \cite{key-1} and \cite{key-6}.
In Section \ref{sec:Harmonic-pertubed-pseudoholomorp} we provide
the precise definition of $\mathcal{H}-$holomorphic curves. By Proposition
\ref{prop:Let--be}, we recall a result established by Hofer et al.
\cite{key-12} stating that the behaviour of $\mathcal{H}-$holomorphic
curves in a neighbourhood of the punctures is similar to that of usual
pseudoholomorphic curves. This result will enable us to split the
set of punctures into positive and negative punctures as in \cite{key-1}.
In Section \ref{subsec:Notion-of-convergence} we introduce the notion
of convergence and describe the limit object. In particular, the description
of the limit object is given in Section \ref{subsec:Stratified-broken-holomorphic},
Definition \ref{def:A-mapwhere-}, while the notion of convergence
is defined in Section \ref{subsec:Convergence}, Definition \ref{def:A-sequence-of}
and Definition \ref{def:A-sequence-of-1}. The limit object known
as a stratified $\mathcal{H}-$holomorphic building of a certain hight
is similar to the broken pseudoholomorphic building introduced in
\cite{key-1}, \cite{key-6} and \cite{key-10}; the difference is
that we allow two points, lying in the same level, to be connected
by a finite length trajectory of the Reeb vector field. Essentially,
the $\mathcal{H}-$holomorphic curves converge in $C_{\text{loc}}^{\infty}$
away from the punctures and certain loops that degenerate to nodes,
while the projections of the $\mathcal{H}-$holomorphic curves to
$M$ converge in $C^{0}$.

\noindent The proof of the main compactness results on the thick part
with certain points removed, as well as on the thin part and in a
neighbourhood of the removed points, are carried out in Sections \ref{2_1_sec:First_Theorem}
and \ref{sec:The-Thin-Part} of Chapter \ref{chap:Compactness-results},
respectively. For the thick part, we use the Deligne-Mumford convergence
and the thick-thin decomposition to show that the domains converge
in the Deligne-Mumford sense to a punctured nodal Riemann surface.
By using bubbling-off analysis and the results of Appendix \ref{chap:Special-coordinates}
(to generate a sequence of holomorphic coordinates that behaves well
under Deligne-Mumford limit process) we prove, after introducing additional
punctures, that the $\mathcal{H}-$holomorphic curves have uniformly
bounded gradients in the complement of the special circles and certain
marked points. By using the elliptic regularity theorem for pseudoholomorphic
curves and Arzela-Ascoli theorem we show that the $\mathcal{H}-$holomorphic
curves togeher with the harmonic perturbations converge in $C_{\text{loc}}^{\infty}$
on the thick part with certain points removed to a $\mathcal{H}-$holomorphic
curve with harmonic perturbation. This result is similar to the bubbling-off
analysis used in \cite{key-1}. However, in contrast to Lemma 10.7
of \cite{key-1}, we do not change the hyperbolic structure each time
after adding the additional marked point generated by the bubbling-off
analysis.

\noindent The thin part is decomposed into cusps corresponding to
neighbourhoods of punctures and hyperbolic cylinders corresponding
to nodes in the limit. As the perturbation harmonic $1-$forms are
exact in a neighbourhood of the punctures or the points that were
removed in the first part, by means of a change of the $\mathbb{R}-$coordinate,
the $\mathcal{H}-$holomorphic curves are turned into usual pseudoholomorphic
curves on which the classical theory \cite{key-1}, \cite{key-10}
is applicable. The case of hyperbolic cylinders is more interesting
because the difference from the classical SFT compactness result is
evident. Due to a lack of the monotonicity lemma, we cannot expect
the $\mathcal{H}-$holomorphic curves to have uniformly bounded gradients,
and so, to apply the classical SFT convergence theory. To deal with
this problem we decompose the hyperbolic cylinder into a finite uniform
number of smaller cylinders; some of them having conformal modulus
tending to infinity but $d\alpha-$energies strictly smaller than
$\hbar$, and the rest of them having bounded modulus but $d\alpha-$energies
possibly larger than $\hbar$. Here, $\hbar>0$ is defined by

\noindent 
\begin{equation}
\hbar:=\min\{|P_{1}-P_{2}|\mid P_{1},P_{2}\in\mathcal{P}_{\alpha},P_{1}\not=P_{2},P_{1},P_{2}\leq E_{0}\}\label{eq:hbar}
\end{equation}
where $\mathcal{P}_{\alpha}$ is the action spectrum of $\alpha$
as defined in \cite{key-11} and $E_{0}>0$ is the uniform bound on
the energy. We refer to these cylinders as cylinders of types $\infty$
and $b_{1}$, respectively, and note that they appear alternately.
Convergence results are derived for each cylinder type, and then glued
together to obtain a convergence result on the whole hyperbolic cylinder.
As cylinders of type $\infty$ have small $d\alpha-$energies, we
assume by the classical bubbling-off analysis, that the $\mathcal{H}-$holomorphic
curves have uniformly bounded gradients. To turn these maps into pseudoholomorphic
curves, we perform a transformation by pushing them along the Reeb
flow up to some specific time characterized by a uniformly bounded
conformal period. These transformed curves are now pseudoholomorphic
with respect to a domain-dependent almost complex structure on $M$,
which due to the uniform boundedness of the conformal period varies
in a compact set. For this part of our analysis, we use the results
established in \cite{key-23}. In the case of cylinders of type $b_{1}$
we proceed as follows. Relying on a bubbling-off argument, as we did
in the case of the thick part, we assume that the gradient blows up
only in a finite uniform number of points and remains uniformly bounded
on a compact complement of them. In this compact region we use Arzelá-Ascoli
theorem to show that the $\mathcal{H}-$holomorphic curves together
with the harmonic perturbations converge in $C^{\infty}$ to some
$\mathcal{H}-$holomorphic curve. What is then left is the convergence
in a neighbourhood of the finitely many punctures, where the gradient
blows up. Here, a neighbourhood of a puncture is a disk on which the
harmonic perturbation can be made exact and can be encoded in the
$\mathbb{R}-$coordinate of the $\mathcal{H}-$holomorphic curve.
By this procedure we transform the $\mathcal{H}-$holomorphic curve
into a usual pseudoholomorphic curve defined on a disk $D$. By the
$C^{\infty}-$convergence on any compact complement of the punctures,
established before, we assume that the transformed curves converge
on an arbitrary neighbourhood of $\partial D$. Then we use the results
of \cite{key-10}, especially Gromov compactness with free boundary,
to obtain a convergence results for cylinders of type $b_{1}$. This
part of our analysis uses extensively the results established in Appendix
\ref{sec:Pseudoholomorphic-disks-with} and some of \cite{key-23}.

\noindent In Chapter \ref{chap:Discussion-on-conformal} we discuss
the condition imposed on the conformal period and co-period, namely
their uniformly boundedness. The conformal period and co-period can
be seen as a link between the conformal data and the topology on the
Riemann surface, as well as the harmonic perturbation $1-$form. Without
these conditions, the transformation performed in \cite{key-23} cannot
be established. The reason is that the domain dependent almost complex
structure, which was constructed in order to change the $\mathcal{H}-$holomorphic
curve into a usual pseudoholomorphic curve, does not vary in a compact
space, and so, the results established in \cite{key-11} cannot be
applied. By means of a counterexample stated in Proposition \ref{prop:There-exists-a-2}
we show that the condition on the uniform bound of the conformal period
is not always satisfied. It should be pointed out that Bergmann \cite{key-15}
claimed to have established a compactification of the space of $\mathcal{H}-$holomorphic
curves by performing the same transformation as we did in \cite{key-23},
i.e. by pushing the $M-$component of the $\mathcal{H}-$holomorphic
curve by the Reeb flow up to some specific time determined by the
conformal period, and then by assuming that the conformal period can
be universally bounded by a quantity which depends only on the periods
of the harmonic perturbation $1-$form (note that if the $L^{2}-$norm
of a sequence of harmonic $1-$forms is uniformly bounded then their
periods are also uniformly bounded). In this context, Proposition
\ref{prop:There-exists-a-2} contradicts his argument.
\end{singlespace}
\begin{acknowledgement*}
\begin{singlespace}
\noindent We thank Peter Albers and Kai Cieliebak who provided insight
and expertise that greatly assisted the research. U.F. is supported
by the SNF fellowship 155099, a fellowship at Institut Mittag-Leffler
and the GIF Grant 1281.
\end{singlespace}
\end{acknowledgement*}
\begin{singlespace}

\section{\label{chap:Definitions-and-main}Definitions and main results}
\end{singlespace}

\begin{singlespace}
\noindent In this chapter we present the basic concepts related to
the compactness of $\mathcal{H}-$holomorphic curves. In particular,
we provide the Deligne-Mumford compactness in order to describe the
convergence of a sequence of Riemann surfaces, introduce the concept
of a stratified $\mathcal{H}-$holomorphic buildings of height $N$,
which serves as limit object, and discuss the convergence of such
maps.
\end{singlespace}
\begin{singlespace}

\subsection{\label{sec:Harmonic-Deligne-Mumford-converg}Deligne-Mumford convergence}
\end{singlespace}

\begin{singlespace}
\noindent In this section we review the Deligne-Mumford convergence
following the analysis given in \cite{key-1} and \cite{key-6}.

\noindent Consider the surface $(S,j,\mathcal{M}\amalg\mathcal{D})$,
where $(S,j)$ is a closed Riemann surface, and $\mathcal{M}$ and
$\mathcal{D}$ are finite disjoint subsets of $S$. Assume that the
cardinality of $\mathcal{D}$ is even. The points from $\mathcal{M}$
are called \emph{marked points,} while the points from $\mathcal{D}$
are called \emph{nodal points}. The points from $\mathcal{D}$ are
organized in pairs, $\mathcal{D}=\{d'_{1},d''_{1},d'_{2},d''_{2},...,d'_{k},d''_{k}\}$.
A nodal surface\textbf{ $(S,j,\mathcal{M}\amalg\mathcal{D})$} is
said to be \emph{stable} if the stability condition $2g+\mu\geq3$
is satisfied for each component of the surface $S,$ where $\mu=|\mathcal{M}\cup\mathcal{D}|$.
With a nodal surface $(S,j,\mathcal{M}\amalg\mathcal{D})$ we can
associate the following singular surface with double points,
\[
\hat{S}_{\mathcal{D}}=S/\{d'_{i}\sim d''_{i}\mid i=1,...,k\}.
\]
The identified points $d'_{i}\sim d''_{i}$ are called \emph{node}s.
The nodal surface $(S,j,\mathcal{M}\amalg\mathcal{D})$ is said to
be \emph{connected} if the singular surface $\hat{S}_{\mathcal{D}}$
is connected. For each marked point $p\in\mathcal{M}\amalg\mathcal{D}$
of a stable nodal Riemann surface $(S,j,\mathcal{M}\amalg\mathcal{D})$,
we define the surface $S^{p}$ with boundary as the oriented blow-up
of $S$ at the point $p$. Thus $S^{p}$ is the circle compactification
of $S\backslash\{p\}$; it is a compact surface bounded by the circle
$\Gamma_{p}=(T_{p}S\backslash\{0\})/\mathbb{R}_{+}$. The canonical
projection $\pi:S^{p}\rightarrow S$ sends the circle $\Gamma_{p}$
to the point $p$ and the maps $S^{p}\backslash\Gamma_{p}$ diffeomorphically
to $S\backslash\{p\}$. Similarly, given a finite set $\mathcal{M}'=\{p_{1},...,p_{k}\}\subset\mathcal{M}\amalg\mathcal{D}$
of punctures, we consider a blow-up surface $S^{\mathcal{M}'}$ with
$k$ boundary components $\Gamma_{1},...,\Gamma_{k}$. It comes with
the projection $\pi:S^{\mathcal{M}'}\rightarrow S$, which collapses
the boundary circles $\Gamma_{1},...,\Gamma_{k}$ to points $p_{1},...,p_{k}$
and the maps $S^{\mathcal{M}'}\backslash\coprod_{i=1}^{k}\Gamma_{i}$
diffeomorphically to $\dot{S}=S\backslash\mathcal{M}'$. 

\noindent If the nodal surface $(S,j,\mathcal{M}\amalg\mathcal{D})$
is connected its arithmetic genus $g$ is defined as
\[
g=\frac{1}{2}|\mathcal{D}|-b_{0}+\sum_{i=1}^{b_{0}}g_{i}+1,
\]
where $|\mathcal{D}|=2k$ is the cardinality of $\mathcal{D}$, $b_{0}$
is the number of connected components of the surface $S$, and $\sum_{i=1}^{b_{0}}g_{i}$
is the sum of the genera of the connected components of $S$. The
\emph{signature} of a nodal curve $(S,j,\mathcal{M}\amalg\mathcal{D})$
is the pair $(g,\mu)$, where $g$ is the arithmetic genus and $\mu=|\mathcal{M}|$.
A stable nodal Riemann surface $(S,j,\mathcal{M}\amalg\mathcal{D})$
is called \emph{decorated} if for each node there is an orientation
reversing orthogonal map
\begin{equation}
r_{i}:\overline{\Gamma}_{i}=(T_{\overline{d}_{i}}S\backslash\{0\})/\mathbb{R}_{+}\rightarrow\underline{\Gamma}_{i}=(T_{\underline{d}_{i}}S\backslash\{0\})/\mathbb{R}_{+}.\label{eq:compactifying_circles-1}
\end{equation}
For the orthogonal orientation reversing map $r_{i}$, we must have
that $r_{i}(e^{2\pi i\vartheta}p)=e^{-2\pi i\vartheta}r(p)$ for all
$p\in\overline{\Gamma}_{i}$.

\noindent In the following we argue as in \cite{key-1}. Consider
the oriented blow-up $S^{\mathcal{D}}$ at the points of $\mathcal{D}$
as described above. The circles $\overline{\Gamma}_{i}$ and $\underline{\Gamma}_{i}$
defined by (\ref{eq:compactifying_circles-1}) are boundary circles
for the points $d'_{i},d''_{i}\in\mathcal{D}$. The canonical projection
$\pi:S^{\mathcal{D}}\rightarrow S$, collapsing the circles $\overline{\Gamma}_{i}$
and $\underline{\Gamma}_{i}$ to the points $d'_{i}$ and $d''_{i}$,
respectively, induces a conformal structure on $S^{\mathcal{D}}\backslash\coprod_{i=1}^{k}\overline{\Gamma}_{i}\amalg\underline{\Gamma}_{i}$.
The smooth structure of $S^{\mathcal{D}}\backslash\coprod_{i=1}^{k}\overline{\Gamma}_{i}\amalg\underline{\Gamma}_{i}$
extends to $S^{\mathcal{D}}$, while the extended conformal structure
degenerates along the boundary circles $\overline{\Gamma}_{i}$ and
$\underline{\Gamma}_{i}$. Let $(S,j,\mathcal{M}\amalg\mathcal{D},r)$
be a decorated surface, where $r=(r_{1},...,r_{k})$. By means of
the mappings $r_{i}$, $i=1,...,k$, $\overline{\Gamma}_{i}$ and
$\underline{\Gamma}_{i}$ can be glued together to yield a closed
surface $S^{\mathcal{D},r}$. The genus of the surface $S^{\mathcal{D},r}$
is equal to the arithmetic genus of $(S,j,\mathcal{M}\amalg\mathcal{D})$.
There exists a canonical projection $p:S^{\mathcal{D},r}\rightarrow\hat{S}_{\mathcal{D}}$
which projects the circle $\Gamma_{i}=\{\overline{\Gamma}_{i},\underline{\Gamma}_{i}\}$
to the node $d_{i}=\{d'_{i},d''_{i}\}$. The projection $p$ induces
on the surface $S^{\mathcal{D},r}$ a conformal structure in the complement
of the special circles $\Gamma_{i}$; the conformal structure is still
denoted by $j$. The continous extension of $j$ to $S^{\mathcal{D},r}$
degenerates along the special circles $\Gamma_{i}$.

\noindent According to the uniformization theorem, for a stable surface
$(S,j,\mathcal{M}\amalg\mathcal{D})$ there exists a unique complete
hyperbolic metric of constant curvature $-1$ of finite volume, in
the given conformal class $j$ on $\dot{S}=S\backslash(\mathcal{M}\amalg\mathcal{D})$.
This metric is denoted by $h^{j,\mathcal{M}\amalg\mathcal{D}}$. Each
point in $\mathcal{M}\amalg\mathcal{D}$ corresponds to a cusp of
the hyperbolic metric $h^{j,\mathcal{M}\amalg\mathcal{D}}$. Assume
that for a given stable Riemann surface $(S,j,\mathcal{M}\amalg\mathcal{D})$,
the punctured surface $\dot{S}=S\backslash(\mathcal{M}\amalg\mathcal{D})$
is endowed with the uniformizing hyperbolic metric $h^{j,\mathcal{M}\amalg\mathcal{D}}$. 

\noindent Fix $\delta>0$, and denote by
\begin{eqnarray*}
\text{Thick}_{\delta}(S,h^{j,\mathcal{M}\amalg\mathcal{D}}) & = & \left\{ x\in\dot{S}\mid\rho(x)\geq\delta\right\} 
\end{eqnarray*}
and
\begin{eqnarray*}
\text{Thin}_{\delta}(S,h^{j,\mathcal{M}\amalg\mathcal{D}}) & = & \overline{\left\{ x\in\dot{S}\mid\rho(x)<\delta\right\} },
\end{eqnarray*}
the $\delta-$thick and $\delta-$thin parts, respectively, where
$\rho(x)$ is the injectivity radius of the metric $h^{j,\mathcal{M}\amalg\mathcal{D}}$
at the point $x\in\dot{S}$. A fundamental result of hyperbolic geometry
states that there exists a universal constant $\delta_{0}=\sinh^{-1}(1)$
such that for any $\delta<\delta_{0}$, each component $C$ of $\text{Thin}_{\delta}(S,h^{j,\mathcal{M}\amalg\mathcal{D}})$
is conformally equivalent either to a finite cylinder $[-R,R]\times S^{1}$
if the component $C$ is not adjacent to a puncture, or to the punctured
disk $D\backslash\{0\}\cong[0,\infty)\times S^{1}$ if it is adjacent
to a puncture (see, for example, \cite{key-5} and \cite{key-6}).
Each compact component $C$ of the thin part contains a unique closed
geodesic of length $2\rho(C)$ denoted by $\Gamma_{C}$, where $\rho(C)=\inf_{x\in C}\rho(x)$.
When considering the $\delta-$thick$-$thin decompositions we always
assume that $\delta$ is chosen smaller than $\delta_{0}$.

\noindent The uniformization metric $h^{j,\mathcal{M}\amalg\mathcal{D}}$
can be lifted to a metric $\overline{h}^{j,\mathcal{M}\amalg\mathcal{D}}$
on $\dot{S}^{\mathcal{D},r}:=S^{\mathcal{D},r}\backslash\mathcal{M}$.
The lifted metric degenerates along each circle $\Gamma_{i}$ in the
sense that the length of $\Gamma_{i}$ is $0$, and the distance of
$\Gamma_{i}$ to any other point in $\dot{S}^{\mathcal{D},r}$ is
infinite. However, we can still speak about geodesics on $\dot{S}^{\mathcal{D},r}$
which are orthogonal to $\Gamma_{i}$, i.e., two geodesics rays, whose
asymptotic directions at the cusps $d'_{i}$ and $d''_{i}$ are related
via the map $r_{i}$, and which correspond to a compact geodesic interval
in $S^{\mathcal{D},r}$ intersecting orthogonally the circle $\Gamma_{i}$.
It is convenient to regard $\text{Thin}_{\delta}(S,h^{j,\mathcal{M}\amalg\mathcal{D}})$
and $\text{Thick}_{\delta}(S,h^{j,\mathcal{M}\amalg\mathcal{D}})$
as subsets of $\dot{S}^{\mathcal{D},r}$. This interpretation provides
a compactification of the non-compact components of $\text{Thin}_{\delta}(S,h^{j,\mathcal{M}\amalg\mathcal{D}})$
not adjacent to points from $\mathcal{M}$. Any compact component
$C$ of $\text{Thin}_{\delta}(S,h^{j,\mathcal{M}\amalg\mathcal{D}})\subset\dot{S}^{\mathcal{D},r}$
is a compact annulus; it contains either a closed geodesic $\Gamma_{C}$,
or one of the special circles, still denoted by $\Gamma_{C}$, which
projects to a node (as described above).

\noindent Consider a sequence of decorated stable nodal marked Riemann
surfaces $(S_{n},j_{n},\mathcal{M}_{n}\amalg\mathcal{D}_{n},r_{n})$
indexed by $n\in\mathbb{N}$. 
\end{singlespace}
\begin{defn}
\begin{singlespace}
\noindent \label{def:The-sequence-}The sequence $(S_{n},j_{n},\mathcal{M}_{n}\amalg\mathcal{D}_{n},r_{n})$
is said to converge to a decorated stable nodal surface $(S,j,\mathcal{M}\amalg\mathcal{D},r)$
if for sufficiently large $n$, there exists a sequence of diffeomorphisms
$\varphi_{n}:S^{\mathcal{D},r}\rightarrow S_{n}^{\mathcal{D}_{n},r_{n}}$
with $\varphi_{n}(\mathcal{M})=\mathcal{M}_{n}$ such that the following
are satisfied.
\end{singlespace}
\begin{enumerate}
\begin{singlespace}
\item For any $n\geq1$, the images $\varphi_{n}(\Gamma_{i})$ of the special
circles $\Gamma_{i}\subset S^{\mathcal{D},r}$ for $i=1,...,k$, are
special circles or closed geodesics of the metrics $h^{j_{n},\mathcal{M}_{n}\amalg\mathcal{D}_{n}}$
on $\dot{S}^{\mathcal{D}_{n},r_{n}}$. All special circles on $S^{\mathcal{D}_{n},r_{n}}$
are among these images.
\item $h_{n}\rightarrow\overline{h}$ in $C_{\text{loc}}^{\infty}(\dot{S}^{\mathcal{D},r}\backslash\coprod_{i=1}^{k}\Gamma_{i})$,
where $h_{n}:=\varphi_{n}^{*}h^{j_{n},\mathcal{M}_{n}\amalg\mathcal{D}_{n}}$
and $\overline{h}:=\overline{h}^{j,\mathcal{M}\amalg\mathcal{D}}$.
\item Given a component $C$ of $\text{Thin}_{\delta}(S,h^{j,\mathcal{M}\amalg\mathcal{D}})\subset\dot{S}^{\mathcal{D},r}$
containing a special circle $\Gamma_{i}$, and given a point $c_{i}\in\Gamma_{i}$,
let $\delta_{i}^{n}$ be the geodesic arc corresponding to the induceed
metric $h_{n}=\varphi_{n}^{*}h^{j_{n},\mathcal{M}_{n}\amalg\mathcal{D}_{n}}$
for any $n\geq1$, intersecting $\Gamma_{i}$ orthogonally at the
point $c_{i}$, and having the ends in the $\delta-$thick part of
the metric $h_{n}$. Then, in the limit $n\rightarrow\infty$, $(C\cap\delta_{i}^{n})$
converge in $C^{0}$ to a continous geodesic for a metric $\overline{h}$
passing through the point $c_{i}$.
\end{singlespace}
\end{enumerate}
\end{defn}
\begin{rem}
\begin{singlespace}
\noindent In view of the uniformization theorem, one can see that
the second property of Definition \ref{def:The-sequence-} is equivalent
to the condition $\varphi_{n}^{*}j_{n}\rightarrow j$ in $C_{\text{loc}}^{\infty}(\dot{S}^{\mathcal{D},r}\backslash\coprod_{i=1}^{k}\Gamma_{i})$
which in turn, by the removable singularity theorem, is equivalent
to the convergence in $C_{\text{loc}}^{\infty}(S^{\mathcal{D},r}\backslash\coprod_{i=1}^{k}\Gamma_{i})$.
\end{singlespace}
\end{rem}
\begin{singlespace}
\noindent We are now in the position to state the Deligne-Mumford
convergence.
\end{singlespace}
\begin{thm}
\begin{singlespace}
\noindent \label{thm:(Harmonic-Deligne-Mumford)-Every}(Deligne-Mumford)
Any sequence of nodal stable Riemann surfaces $(S_{n},j_{n},\mathcal{M}_{n}\amalg\mathcal{D}_{n},r_{n})$
of signature $(g,\mu)$ has a subsequence which converges in the sense
of Definition \ref{def:The-sequence-} to a decorated nodal stable
Riemann surface $(S,j,\mathcal{M}\amalg\mathcal{D},r)$ of signature
$(g,\mu)$.
\end{singlespace}
\end{thm}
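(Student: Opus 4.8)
The plan is to deduce the Deligne--Mumford convergence for \emph{decorated} nodal stable surfaces from the classical Deligne--Mumford theorem for the underlying (undecorated) stable nodal surfaces, by a diagonal/compactness argument on the decorations. First I would forget the decorations and apply the standard Deligne--Mumford compactness theorem (as stated, for instance, in \cite{key-1} and \cite{key-6}) to the sequence of stable nodal Riemann surfaces $(S_n,j_n,\mathcal{M}_n\amalg\mathcal{D}_n)$ of fixed signature $(g,\mu)$. Since there are only finitely many topological types of stable nodal surfaces of a given signature, after passing to a subsequence we may assume all $(S_n,j_n,\mathcal{M}_n\amalg\mathcal{D}_n)$ have the same combinatorial type, and the classical theorem produces a limit stable nodal surface $(S,j,\mathcal{M}\amalg\mathcal{D})$ together with diffeomorphisms $\psi_n : S^{\mathcal{D}} \to S_n^{\mathcal{D}_n}$ (of the oriented blow-ups, respecting marked points) realizing the convergence of the uniformizing hyperbolic metrics $h^{j_n,\mathcal{M}_n\amalg\mathcal{D}_n}$ in $C^\infty_{\mathrm{loc}}$ away from the special circles, and realizing the degeneration of the collapsing geodesics onto the boundary circles as in parts (1) and (2) of Definition~\ref{def:The-sequence-}.

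Next I would extract the limiting decoration. Each $S_n$ carries a decoration $r_n = (r_{n,1},\dots,r_{n,k})$, i.e.\ for each node an orientation-reversing orthogonal identification of the two compactifying circles; equivalently, after trivializing each pair of circles via $\psi_n$, the datum $r_{n,i}$ is an element of a circle (the choice of ``gluing angle''). Since each such parameter lives in the compact space $S^1$, after a further subsequence the $r_{n,i}$ converge, for each $i=1,\dots,k$, to some orthogonal orientation-reversing map $r_i$, and one checks that $r=(r_1,\dots,r_k)$ satisfies the compatibility $r_i(e^{2\pi i\vartheta}p) = e^{-2\pi i \vartheta} r_i(p)$ since this condition is closed. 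This equips $(S,j,\mathcal{M}\amalg\mathcal{D})$ with a decoration, hence gives a decorated stable nodal surface $(S,j,\mathcal{M}\amalg\mathcal{D},r)$ of signature $(g,\mu)$, and it allows the diffeomorphisms $\psi_n$ of the blow-ups $S^{\mathcal{D}}\to S_n^{\mathcal{D}_n}$ to be promoted, after a small isotopy matching the glued boundary circles, to diffeomorphisms $\varphi_n : S^{\mathcal{D},r}\to S_n^{\mathcal{D}_n,r_n}$ of the closed glued surfaces.

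The content of part (3) of Definition~\ref{def:The-sequence-} is then the statement that the convergence $r_{n,i}\to r_i$ is precisely what makes the geodesic arcs $\delta_i^n$ crossing the special circle $\Gamma_i$ orthogonally at a fixed point $c_i$ converge in $C^0$ to a \emph{continuous} geodesic for the limiting (degenerate) metric $\overline h$ passing through $c_i$. To see this I would work in the collar coordinates around $\Gamma_i$: a neighborhood of $\Gamma_i$ in $\dot S^{\mathcal{D},r}$ is modeled, via $\varphi_n$, on two hyperbolic cusp neighborhoods of $d_i'$ and $d_i''$ glued along $\Gamma_i$ using $r_i$, while on $S_n$ one has a genuine hyperbolic collar of a short geodesic (or again a pair of cusps if $\Gamma_i$ is itself a special circle on $S_n$). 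A geodesic arc through the collar is determined by its asymptotic directions at the two cusps; the condition that $\delta_i^n$ pass through $c_i$ orthogonally pins down these directions on the two sides, and the fact that they are matched in the limit by $r_i=\lim r_{n,i}$ is exactly what the orientation-reversing orthogonality of $r_i$ encodes. Uniform control of the hyperbolic collars (standard collar lemma estimates) then upgrades pointwise matching of the asymptotic directions to $C^0$-convergence of the arcs to a limiting broken-but-continuous geodesic.

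The main obstacle I expect is purely bookkeeping: one must be careful that the two subsequence extractions (first for the undecorated Deligne--Mumford limit and the combinatorial type, then for the decorations) are compatible, that the isotopy modifying $\psi_n$ to $\varphi_n$ does not destroy properties (1)--(2) already arranged, and that the circle-valued ``gluing angle'' is intrinsically well-defined independent of auxiliary trivializations (so that its limit is meaningful). None of this is deep—the genuinely analytic input, namely the convergence of hyperbolic metrics and the degeneration of collars, is entirely supplied by the classical Deligne--Mumford theorem—but the verification of part (3) in the degenerate-metric collar requires the explicit hyperbolic normal form and a short estimate, and that is where I would spend most of the write-up.
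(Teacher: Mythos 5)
The paper does not actually supply a proof of this theorem; it is stated as a recollection of the classical result and cited to \cite{key-1} and \cite{key-6}. So I can only evaluate your sketch on its own merits, and there is a genuine conceptual gap in it.

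Your plan is to first invoke an \emph{undecorated} Deligne--Mumford theorem producing a limit $(S,j,\mathcal{M}\amalg\mathcal{D})$ and diffeomorphisms $\psi_n : S^{\mathcal{D}}\to S_n^{\mathcal{D}_n}$ of the oriented blow-ups, and only afterwards extract a limiting decoration $r$ by compactness of the circle of gluing angles. This does not quite make sense as stated, because in the Deligne--Mumford limit new nodes can appear: a closed geodesic of $h^{j_n,\mathcal{M}_n\amalg\mathcal{D}_n}$ whose length tends to zero gives rise to a pair of points in $\mathcal{D}$ that had no counterpart in $\mathcal{D}_n$. Consequently $S^{\mathcal{D}}$ may have strictly more boundary circles than $S_n^{\mathcal{D}_n}$, and there is no diffeomorphism of the blow-ups at all. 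The diffeomorphisms $\varphi_n$ in Definition~\ref{def:The-sequence-} are between the \emph{closed} decorated blow-ups $S^{\mathcal{D},r}\to S_n^{\mathcal{D}_n,r_n}$, i.e.\ the decoration $r$ must already be present for the maps $\varphi_n$ to exist; one cannot first get a ``decoration-free'' convergence and graft the decoration on afterwards. Relatedly, for the newly formed nodes there is no sequence $r_{n,i}$ in a circle to pass to a subsequence of: the decoration of a new node is not obtained as a limit of given decorations, it is \emph{constructed} from the degenerating collar geometry, precisely by demanding that the orthogonal geodesic arcs $\delta_i^n$ converge in $C^0$ through the circle $\Gamma_i$. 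In other words, item~(3) of Definition~\ref{def:The-sequence-} is not a property one verifies after choosing $r_i$ by soft compactness; for new nodes it is the defining requirement for $r_i$. Your argument only treats nodes already present in $S_n$, which is the easy case, and silently assumes the new nodes away.

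The part of your sketch that concerns collar coordinates, asymptotic directions at the cusps, and the collar lemma is pointing in the right direction, but it has to be reorganized: one should first run the thick--thin analysis, identify the shrinking geodesics, use the hyperbolic normal form of the collar to define the limiting identification of the two cusp ends (this is the new decoration), and only then build the diffeomorphisms $\varphi_n$ of the closed glued surfaces and check properties~(1)--(3). For pre-existing nodes your compactness-of-$S^1$ argument is fine and slots in without trouble. If you restructure the proof along these lines you would be reproducing, in outline, the argument given in Section~4 of \cite{key-1} and Chapter~1 of \cite{key-6}.
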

\begin{cor}
\begin{singlespace}
\noindent \label{cor:Any-sequence-of}Any sequence of stable Riemann
surfaces $(S_{n},j_{n},\mathcal{M}_{n})$ of signature $(g,\mu)$
has a subsequence which converges to a decorated nodal stable Riemann
surface $(S,j,\mathcal{M}\amalg\mathcal{D},r)$ of signature $(g,\mu)$.
\end{singlespace}
\end{cor}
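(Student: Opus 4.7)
The plan is to derive this immediately from Theorem \ref{thm:(Harmonic-Deligne-Mumford)-Every} by viewing each surface in the given sequence as a degenerate instance of a decorated nodal stable Riemann surface. First I would set $\mathcal{D}_n := \emptyset$ and take $r_n$ to be the empty decoration (vacuously an orientation-reversing orthogonal map, since there are no nodal points to decorate). Then $(S_n, j_n, \mathcal{M}_n) = (S_n, j_n, \mathcal{M}_n \amalg \mathcal{D}_n, r_n)$ is a decorated nodal stable Riemann surface of signature $(g, \mu)$ in the sense defined above, where $\hat{S}_{n,\mathcal{D}_n} = S_n$ is connected and the stability condition $2g + |\mathcal{M}_n| \geq 3$ for each component is exactly the hypothesis on the original sequence.

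Next I would simply invoke Theorem \ref{thm:(Harmonic-Deligne-Mumford)-Every} applied to this auxiliary sequence $(S_n, j_n, \mathcal{M}_n \amalg \emptyset, r_n)$. The theorem provides a subsequence and a limit $(S, j, \mathcal{M} \amalg \mathcal{D}, r)$ of signature $(g, \mu)$ together with diffeomorphisms $\varphi_n : S^{\mathcal{D},r} \to S_n^{\emptyset, r_n} = S_n$ satisfying the three convergence conditions of Definition \ref{def:The-sequence-}. In particular, nodal points $\mathcal{D}$ may appear in the limit even though $\mathcal{D}_n$ was empty: they arise precisely from the closed geodesics in $(S_n, h^{j_n, \mathcal{M}_n})$ whose lengths shrink to zero along the subsequence, which is the standard mechanism by which nodes are formed in the Deligne-Mumford compactification.

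There is no genuine obstacle here. The only thing worth checking carefully is the bookkeeping in condition (1) of Definition \ref{def:The-sequence-}: the images $\varphi_n(\Gamma_i)$ of the special circles $\Gamma_i \subset S^{\mathcal{D},r}$ must be closed geodesics of $h^{j_n, \mathcal{M}_n}$ on $\dot{S}_n$, since $\mathcal{D}_n = \emptyset$ implies $S_n^{\mathcal{D}_n, r_n}$ has no special circles of its own. This is consistent with the theorem's statement and requires no additional argument beyond the one already carried out in its proof. The remaining conditions on $C^\infty_{\text{loc}}$ convergence of the uniformizing metrics and on $C^0$ convergence of the orthogonal geodesic arcs through the nodal circles transfer verbatim from the theorem to the corollary.
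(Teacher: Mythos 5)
Your argument is correct and is exactly the standard reduction that the paper implicitly relies on (it states the corollary without proof): view each $(S_n,j_n,\mathcal{M}_n)$ as a decorated nodal surface with $\mathcal{D}_n=\emptyset$ and the empty decoration, then apply Theorem \ref{thm:(Harmonic-Deligne-Mumford)-Every}. No discrepancy with the paper's approach.
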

\begin{singlespace}

\subsection{\label{sec:Harmonic-pertubed-pseudoholomorp}Asymptotic behaviour
of $\mathcal{H}-$holomorphic curves}
\end{singlespace}

\begin{singlespace}
\noindent To describe the behaviour of a $\mathcal{H}-$holomorphic
curve near the puncture from $\mathcal{P}$ we need some auxiliary
tools. One of these is the lemma about the removal of singularity.
Consider a $\mathcal{H}-$holomorphic curve $(S,j,\mathcal{P},u,\gamma)$,
and assume that the set of punctures $\mathcal{P}\subset S$ is not
empty. For $p\in\mathcal{P}$, consider a neighbourhood $U(p)=U\subset S$,
which is biholomorphic to the standard open unit disk $D\subset\mathbb{C}$,
such that, under this biholomorphism, the point $p$ is mapped to
$0$. 

\noindent First we mention a removable singularity result for a harmonic
$1-$form $\gamma$ defined on the punctured unit disk $D\backslash\{0\}$.
\end{singlespace}
\begin{lem}
\begin{singlespace}
\noindent \label{lem:If--is}If $\gamma$ is a harmonic $1-$form
defined on the punctured disk $D\backslash\{0\}$, and having a bounded
$L^{2}-$norm with respect to the standard complex structure $i$
on $D$, i.e. $\left\Vert \gamma\right\Vert _{L^{2}(D\backslash\{0\})}^{2}<\infty$
then $\gamma$ can be extended accross the puncture.
\end{singlespace}
\end{lem}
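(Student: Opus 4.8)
The plan is to use a Fourier (Laurent-type) expansion of $\gamma$ in polar/cylindrical coordinates and to exploit the $L^2$ hypothesis to kill the singular modes. Write the punctured disk $D\setminus\{0\}$ with the biholomorphism $(s,t)\in(0,\infty)\times S^1\mapsto e^{-2\pi(s+it)}$ onto $D\setminus\{0\}$, so that on the cylinder $Z=(0,\infty)\times S^1$ the standard complex structure $i$ is the usual one and the harmonicity conditions $d\gamma=d(\gamma\circ i)=0$ say precisely that, writing $\gamma=p\,ds+q\,dt$ (with $p,q$ functions of $(s,t)$), the pair $(p,q)$ satisfies the Cauchy--Riemann equations; equivalently $\gamma+i\,\gamma\circ i$ is (the real part of) a holomorphic $1$-form $h(w)\,dw$ on the cylinder with $w=s+it$, periodic in $t$. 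Concretely $h$ admits a Laurent-type expansion $h(w)=\sum_{k\in\mathbb{Z}} c_k e^{2\pi k w}$ (the $k$-th mode behaving like $e^{2\pi k s}$), and $\gamma = \operatorname{Re}\big(h(w)\,dw\big)$ up to the standard identification.

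Next I would compute the $L^2$-norm. Since the conformal factor cancels in $\int \gamma\circ i\wedge\gamma$ (it is a conformally invariant pairing on $1$-forms on a surface), one has $\|\gamma\|_{L^2(D\setminus\{0\})}^2=\int_Z(p^2+q^2)\,ds\,dt$. Expanding $p,q$ in Fourier series in $t$ and using orthogonality, the contribution of the $k$-th mode is a positive constant times $|c_k|^2\int_0^\infty e^{4\pi k s}\,ds$ plus possibly a term from $k=0$; for $k>0$ this integral diverges, hence finiteness of the $L^2$-norm forces $c_k=0$ for all $k>0$. (The $k=0$ constant mode $c_0\,dw$ corresponds to $\gamma_0 = \operatorname{Re}(c_0)\,ds - \operatorname{Im}(c_0)\,dt$; near the puncture $ds = -\frac{1}{2\pi}\,d\log|z| = -\frac{1}{2\pi}\operatorname{Re}(dz/z)$ and $dt$ likewise comes from $d\arg z$, so this mode is $\operatorname{Re}(\text{const}\cdot dz/z)$, whose $L^2$-norm over the punctured disk is $\int_0^\infty 1\,ds=\infty$ as well; hence $c_0=0$ too.) Thus only the modes $k<0$ survive, i.e. $h(w)=\sum_{k<0}c_k e^{2\pi k w}$, which in the coordinate $z=e^{-2\pi w}$ is $h=\sum_{k<0}c_k z^{-k}=\sum_{m>0}c_{-m}z^{m}$, a holomorphic function vanishing at $0$; correspondingly $h(w)\,dw = -\frac{1}{2\pi}\tilde h(z)\,\frac{dz}{z}$ with $\tilde h$ holomorphic and $\tilde h(0)=0$, so $h(w)\,dw$ extends to a holomorphic $1$-form on all of $D$. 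Therefore $\gamma=\operatorname{Re}(h\,dw)$ extends to a smooth (indeed harmonic) $1$-form across $0$, which is the claim.

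The routine parts are the Fourier bookkeeping and the conformal-invariance of the pairing; the one step that needs care — and which I regard as the main (though mild) obstacle — is justifying the mode-by-mode decomposition rigorously: a priori $\gamma$ is only known to be $L^2$ and harmonic on the punctured disk, so I must first invoke interior elliptic regularity (harmonic $1$-forms are smooth away from $0$) to legitimately Fourier-expand on each circle $\{s\}\times S^1$, then show the series and the $L^2$-norm computation converge and commute (monotone convergence suffices since all terms are nonnegative after taking moduli), and finally argue that the surviving series defines a genuinely holomorphic — not merely formal — $1$-form near $0$, which follows from the exponential decay $e^{2\pi k s}\to 0$ as $s\to\infty$ for $k<0$ giving uniform convergence on $\{s\ge s_0\}$, i.e. on a punctured neighbourhood of $0$, together with Riemann's removable singularity theorem for the bounded holomorphic function $\tilde h$.
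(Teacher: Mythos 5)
Your proof is correct and is essentially the same argument as the paper's: in both cases one associates to $\gamma$ a holomorphic object (you take the holomorphic $1$-form $h(w)\,dw$ on a conformally equivalent half-infinite cylinder and expand it in Fourier modes $e^{2\pi k w}$; the paper takes the holomorphic function $F=f+ig$ directly on $D\setminus\{0\}$ and expands it in a Laurent series), and in both cases the finiteness of the $L^2$-norm forces every mode with a non-integrable $|z|$-singularity to vanish, leaving a holomorphic extension across the puncture. The two expansions are related by the Jacobian factor $dw=-\tfrac{1}{2\pi}\,dz/z$, which shifts the index by one (your killed modes $k\ge 0$ correspond exactly to the paper's killed Laurent coefficients $F_n$, $n<0$), and the cylinder coordinates merely repackage the paper's polar-coordinate Fubini computation, so the difference is cosmetic rather than substantive.
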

\begin{proof}
\begin{singlespace}
\noindent With $z=s+it=(s,t)$ being the coordinates on $D$, we express
$\gamma$ as $\gamma=f(s,t)ds+g(s,t)dt$, where $f,g:D\backslash\{0\}\rightarrow\mathbb{R}$
are harmonic functions. As $\gamma$ is harmonic with respect to the
standard complex structure $i$, $F:=f+ig:D\backslash\{0\}\rightarrow\mathbb{C}$
is a mermorphic function with a bounded $L^{2}-$norm, i.e.,
\[
\int_{D\backslash\{0\}}|F(s,t)|^{2}dsdt=\int_{D\backslash\{0\}}\left(|f(s,t)|^{2}+|g(s,t)|^{2}\right)dsdt<\infty.
\]
Consider the Laurent series of $F$,
\[
F(z)=\sum_{n=-\infty}^{\infty}F_{n}z^{n},
\]
where $F_{n}\in\mathbb{C}$. Since the Laurent series converges in
$C_{\text{loc}}^{0}$ to $F$ and $e^{2\pi in\theta}$ is an orthonormal
system in $L^{2}(S^{1})$, we infer that for every fixed $0<\rho<1$,
\[
\int_{0}^{1}|F(\rho e^{2\pi i\theta})|^{2}d\theta=\sum_{n=-\infty}^{\infty}|F_{n}|^{2}\rho^{2n}.
\]
Consequently, due to Fubini's theorem,
\[
\int_{D\backslash\{0\}}|F(z)|^{2}dsdt=2\pi\int_{(0,1]\times S^{1}}\rho|F(\rho e^{2\pi i\theta})|^{2}d\theta d\rho=2\pi\int_{0}^{1}\sum_{n=-\infty}^{\infty}|F_{n}|^{2}\rho^{2n+1}d\rho.
\]
As the terms in the sum are all non-negative, it follows that
\[
\int_{D\backslash\{0\}}|F(z)|^{2}dsdt\geq2\pi|F_{n}|^{2}\int_{0}^{1}\rho^{2n+1}d\rho
\]
for all $n\in\mathbb{Z}$. However, for $n<0$ and because of
\[
\int_{0}^{1}\rho^{2n+1}d\rho=\infty,
\]
this yields a contradiction to the finiteness of the $L^{2}-$norm
of $F$. Hence $F_{-n}=0$ for all $n\geq1$, and so, $F$ can be
extended to a holomorphic function on $D$. Therefore $\gamma$ can
be extended accross the puncture. 
\end{singlespace}
\end{proof}
\begin{singlespace}
\noindent A removable singularity result for $\mathcal{H}-$holomorphic
curves is the following
\end{singlespace}
\begin{lem}
\begin{singlespace}
\noindent \label{lem:Let--be_Removable_Singularity}Let $(D,i,\{0\},u,\gamma)$
be a $\mathcal{H}-$holomorphic curve defined on $D\backslash\{0\}$
such that the image of $u$ lies in a compact subset of $\mathbb{R}\times M$.
Then $u$ extends continously to a $\mathcal{H}-$holomorphic map
on the whole disk $D$.
\end{singlespace}
\end{lem}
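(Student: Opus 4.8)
The plan is to reduce the statement to the classical removable-singularity theorem for finite-energy pseudoholomorphic curves in a symplectization (see \cite{key-1}), exploiting that on the simply connected disk a harmonic $1$-form is exact, so that the perturbation term $\gamma$ can be absorbed into the $\mathbb{R}$-coordinate. The first task is therefore to show that $\gamma$ itself extends across the puncture. To this end I would observe that $a$ is bounded (part of the hypothesis) and subharmonic: differentiating the second equation in (\ref{eq:H-hol}) and using $d(\gamma\circ j)=0$ gives $f^{*}d\alpha=-d(da\circ j)$, which in a conformal coordinate equals $(\Delta a)\,ds\wedge dt$, and $f^{*}d\alpha\geq 0$ pointwise because the first equation in (\ref{eq:H-hol}) makes $\pi_{\alpha}df$ complex linear. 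Choosing $\varphi\in\mathcal{A}$ to be a suitably increasing function on the bounded range of $a$, finiteness of the energy $E(u;D\backslash\{0\})$ forces $da\in L^{2}(D\backslash\{0\})$; feeding this back into the second equation of (\ref{eq:H-hol}) one bounds $\|\gamma\|_{L^{2}(D\backslash\{0\})}$, and Lemma \ref{lem:If--is} then produces a harmonic extension of $\gamma$ over $0$. In particular $\gamma$ is smooth on $D$.

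Granting this, fix a smaller disk $D'\Subset D$. Since $\gamma$ is closed and smooth on the simply connected $D'$, write $\gamma=d\beta$ with $\beta$ harmonic, hence smooth and bounded together with its derivatives on $\overline{D'}$, and set $\tilde{u}:=(a+\beta,f):D'\backslash\{0\}\to\mathbb{R}\times M$. The first equation of (\ref{eq:H-hol}) does not involve the $\mathbb{R}$-coordinate and is unchanged, while the second becomes $f^{*}\alpha\circ j=da+\gamma=d(a+\beta)$, which is the unperturbed pseudoholomorphic curve equation in $(\mathbb{R}\times M,\overline{g})$. Since $\beta$ is bounded and $u$ has relatively compact image, so does $\tilde{u}$; and $\tilde{u}$ has finite Hofer energy, its $d\alpha$-energy being that of $u$ and its $\alpha$-energy being finite since $\int_{D'\backslash\{0\}}|d(a+\beta)|^{2}<\infty$ by the previous paragraph. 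The classical removable-singularity theorem (\cite{key-1}) then applies to $\tilde{u}$: it extends continuously, hence smoothly by interior elliptic regularity, across $0$. Therefore $u=\tilde{u}-(\beta,0)$ extends continuously across $0$, and letting $z\to 0$ in the two equations of (\ref{eq:H-hol}) — using that $\gamma=d\beta$ is smooth on $D$ — shows the extension is $\mathcal{H}$-holomorphic on all of $D$.

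The delicate point is the $L^{2}$-bound on $\gamma$. The $\mathcal{H}$-holomorphic energy controls directly only $\|da\|_{L^{2}}$ (once $a$ is known bounded) and the $d\alpha$-energy $\int f^{*}d\alpha$; it does not control $f^{*}\alpha$, equivalently $\gamma$, which is precisely the failure of monotonicity that pervades the paper. I would attack it through the representation $\gamma=\operatorname{Re}(F\,dz)$ with $F$ holomorphic on $D\backslash\{0\}$ and its Laurent expansion: a nonzero coefficient of negative order contributes to the radial primitive $\int_{r}^{r_{0}}\partial_{\rho}a\,d\rho$ a term diverging as $r\to 0$, which must be played against the contribution of $f^{*}\alpha\circ j$ along the ray, the latter controlled by the finiteness of the $d\alpha$-energy and the relative compactness of the image of $f$. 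Making this estimate rigorous is the heart of the matter; if instead one reads the statement with $\gamma$ assumed harmonic already on $D$, this step is vacuous and the proof is just the reduction carried out above.
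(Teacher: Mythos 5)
Your second paragraph is exactly the paper's proof: on the simply connected disk write $\gamma=d\Gamma$ with $\Gamma$ harmonic, pass to $\overline{u}=(a+\Gamma,f)$, note that $\overline{u}$ is honestly pseudoholomorphic and still has relatively compact image because $\Gamma$ is bounded, and invoke the classical removable-singularity theorem (Lemma 5.5 of \cite{key-1}). So the core of your argument coincides with what the paper does.

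Your first paragraph, however, is unnecessary and is also where the argument is gappy. In the definition of a $\mathcal{H}$-holomorphic curve $(S,j,\mathcal{P},u,\gamma)$ the perturbation $\gamma$ lives in $\mathcal{H}_{j}^{1}(S)$, i.e.\ it is a harmonic $1$-form on the \emph{closed} surface $S$, not merely on $S\backslash\mathcal{P}$. So in this Lemma $\gamma$ is by hypothesis a smooth harmonic $1$-form on all of $D$, and Lemma \ref{lem:If--is} never needs to be invoked; you concede as much in your final sentence, which is the correct reading. Had the extension of $\gamma$ actually been at issue, the $L^{2}$-bound you sketch would not go through as stated: from $f^{*}\alpha\circ i = da+\gamma$ one has $\gamma = f^{*}\alpha\circ i - da$, and while $\left\Vert da\right\Vert _{L^{2}}$ is controlled by the $\alpha$-energy once $a$ is bounded, the term $\left\Vert f^{*}\alpha\right\Vert _{L^{2}}$ is \emph{not} independently controlled by the energy (\ref{eq:energy}) — the $d\alpha$-energy only sees $\pi_{\alpha}df$ — and in fact $f^{*}\alpha=-(da+\gamma)\circ i$, so the estimate is circular. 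You flag this as the heart of the matter and sketch a Laurent-coefficient argument without carrying it out; fortunately the whole step is vacuous given the correct reading of the hypothesis, and the proof you give after that point is complete.
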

\begin{proof}
\begin{singlespace}
\noindent Since $D$ is contractible and $d\gamma=d(\gamma\circ i)=0$,
the harmonic perturbation $\gamma$ can be written as $\gamma=d\Gamma$,
where $\Gamma:D\rightarrow\mathbb{R}$ is a harmonic function. Hence
$\overline{u}=(\overline{a},\overline{f}):=(a+\Gamma,f)$ is a pseudoholomorphic
curve (unperturbed), which still has the property that its image lies
in a (maybe larger) compact subset of $\mathbb{R}\times M$. Application
of the usual removable singularity theorem (see Lemma 5.5 of \cite{key-1})
finishes the proof of the lemma.
\end{singlespace}
\end{proof}
\begin{singlespace}
\noindent In a neighbourhood of a puncture, the map $a$ is either
bounded or unbounded. In the first case, Lemma \ref{lem:Let--be_Removable_Singularity}
can be used to extend the $\mathcal{H}-$holomorphic curve across
the puncture. In the second case, in which $a:D\backslash\{0\}\rightarrow\mathbb{R}$
is unbounded we have the following
\end{singlespace}
\begin{prop}
\begin{singlespace}
\noindent \label{prop:Let--be}Let $(D,i,\{0\},u,\gamma)$ be a $\mathcal{H}-$holomorphic
curve defined on $D\backslash\{0\}$ such that the image of $u$ is
unbounded in $\mathbb{R}\times M$. Then $u$ is asymptotic to a trivial
cylinder over a periodic orbit of $X_{\alpha}$, i.e. after identifying
$D\backslash\{0\}$ with the half open cylinder $[0,\infty)\times S^{1}$
there exists a periodic orbit $x$ of period $|T|$ of $X_{\alpha}$,
where $T\not=0$ such that 
\[
\lim_{s\rightarrow\infty}f(s,t)=x(Tt)\text{ and }\lim_{s\rightarrow\infty}\frac{a(s,t)}{s}=T\text{ in }C^{\infty}(S^{1})
\]
where $(s,t)$ denote the coordinates on $[0,\infty)\times S^{1}$.
\end{singlespace}
\end{prop}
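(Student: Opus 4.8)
Proof proposal.

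The plan is to reduce Proposition~\ref{prop:Let--be} to the corresponding statement for ordinary pseudoholomorphic curves in symplectizations (the asymptotic analysis of Hofer, see e.g.\ \cite{key-1,key-12}), by absorbing the harmonic perturbation $\gamma$ into the $\mathbb{R}$--coordinate. Since $D$ is simply connected and $\gamma$ is closed with $\gamma\circ i$ also closed, we may write $\gamma=d\Gamma$ for a harmonic function $\Gamma:D\backslash\{0\}\to\mathbb{R}$. A priori $\Gamma$ need not extend over $0$; writing $\gamma=f\,ds+g\,dt$ and $F=f+ig$, the function $F$ is holomorphic on $D\backslash\{0\}$ with Laurent expansion $F(z)=\sum_{n\in\mathbb{Z}}F_nz^n$, so $\gamma$ has a (possibly nonzero) period $P_\gamma=\int_{|z|=\rho}\gamma = 2\pi\,\mathrm{Re}(iF_{-1})$ and co-period $S_\gamma=\int_{|z|=\rho}\gamma\circ i$ around the puncture, and $\Gamma$ has a corresponding logarithmic term. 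First I would split off this logarithmic part: set $\Gamma_0(z)=\frac{1}{2\pi}(S_\gamma\log|z| + P_\gamma\arg z)$ (so $d\Gamma_0$ carries exactly the periods of $\gamma$), and on the cylinder $[0,\infty)\times S^1$ with $z=e^{-2\pi(s+it)}$ this becomes an affine function $\Gamma_0 = -S_\gamma s - P_\gamma t + \text{const}$; the remainder $\gamma - d\Gamma_0$ is exact on the disk, equal to $d\tilde\Gamma$ with $\tilde\Gamma$ extending smoothly over $0$ and hence bounded near the puncture.

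Next I would perform the change of coordinate $\overline{u}=(\overline a,\overline f):=(a+\tilde\Gamma,\,f)$. Because $J$ is $\mathbb{R}$--invariant and only $da$ (not $a$ itself) enters the equations \eqref{eq:H-hol}, and because $d\tilde\Gamma = \gamma - d\Gamma_0$ while $d\Gamma_0\circ i$ has the form needed, a direct computation shows $\overline u$ satisfies the unperturbed Cauchy--Riemann equations $\pi_\alpha d\overline f\circ j = J_\xi(\overline u)\circ\pi_\alpha d\overline f$ and $\overline f^{*}\alpha\circ j = d\overline a$ up to the affine correction coming from $\Gamma_0$; more precisely, on the cylinder $\overline a(s,t) = a(s,t) + \tilde\Gamma$, and since $f^{*}\alpha\circ j = da+\gamma = d\overline a + d\Gamma_0$, the curve $\overline u$ is pseudoholomorphic for a \emph{translated} complex structure or, equivalently, one checks directly that $(\overline a + S_\gamma s + P_\gamma t,\ \overline f)$ is genuinely pseudoholomorphic on $[0,\infty)\times S^1$. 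The $\mathcal H$--energy bound $E(u;D\backslash\{0\})<\infty$ together with the $L^2$ bound on $\gamma$ (automatic here since $\|\gamma\|_{L^2}$ is finite once $F$ has at worst a simple pole, which is exactly the case $P_\gamma,S_\gamma$ finite) translates into a finite Hofer energy bound for this pseudoholomorphic curve, and unboundedness of $a$ translates into unboundedness of its $\mathbb{R}$--component since $\tilde\Gamma$ and the bounded-$t$ term $P_\gamma t$ are bounded.

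Then I would invoke the classical asymptotic result for finite-energy pseudoholomorphic half-cylinders with unbounded $\mathbb{R}$--component (Hofer's theorem; the non-degeneracy of all periodic orbits is assumed globally in the paper): there is a periodic Reeb orbit $x$ of some period $|T|\neq0$ with $\overline f(s,\cdot)\to x(T\cdot)$ and the $\mathbb{R}$--component divided by $s$ tending to $T$, both in $C^\infty(S^1)$. Finally I would undo the coordinate change: $f=\overline f$ directly gives $\lim_{s\to\infty}f(s,t)=x(Tt)$ in $C^\infty(S^1)$, and $a(s,t)/s = \overline a(s,t)/s - \tilde\Gamma/s = \big(\text{pseudoholomorphic }\mathbb{R}\text{-component} - S_\gamma s - P_\gamma t\big)/s - \tilde\Gamma/s \to T$ as well, because $\tilde\Gamma$ is bounded, $P_\gamma t/s\to0$, and the constant-period adjustment is absorbed. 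The main obstacle is the bookkeeping in the coordinate change: one must verify carefully that after peeling off the logarithmic/affine piece $\Gamma_0$ the curve $\overline u$ is \emph{exactly} pseudoholomorphic (not merely approximately), and that the finiteness of the Hofer energy of $\overline u$ genuinely follows from the finiteness of the $\mathcal H$--energy of $u$ plus $\|\gamma\|_{L^2}<\infty$ — this is where the decomposition $\gamma=d\Gamma_0+d\tilde\Gamma$ and the fact that finite $L^2$--norm forces $F$ to have at most a simple pole (so $S_\gamma,P_\gamma$ are finite and $\Gamma_0$ is the only non-removable piece) are essential. Once these are in place, the statement follows immediately from the cited classical results.
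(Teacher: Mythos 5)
Your core strategy coincides with the paper's: write $\gamma=d\Gamma$, absorb $\Gamma$ into the $\mathbb{R}$-coordinate to obtain an unperturbed pseudoholomorphic half-cylinder $\overline{u}=(a+\Gamma,f)$, and invoke the classical asymptotic theorem (Proposition 5.6 of \cite{key-1}). But your handling of the perturbation near the puncture is both unnecessary and in places erroneous. By definition (\ref{eq:H-hol}), the perturbation $\gamma$ lies in $\mathcal{H}_{j}^{1}(S)$ --- it is harmonic on the \emph{closed} surface $S$, not merely on $S\backslash\mathcal{P}$. In the local model $(D,i,\{0\},u,\gamma)$ this means $\gamma$ is harmonic on all of $D$, including $0$. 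Since $D$ is contractible, $\gamma=d\Gamma$ with $\Gamma$ a harmonic function on $D$, hence bounded near $0$. There is no logarithmic or affine piece $\Gamma_{0}$ to split off; the paper's short argument applies immediately, and $\lim_{s\to\infty}a(s,t)/s=\lim_{s\to\infty}\overline{a}(s,t)/s=T$ precisely because $\Gamma$ is bounded.

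Beyond being superfluous, the extra machinery contains two concrete errors. First, you claim that finiteness of $\left\Vert\gamma\right\Vert_{L^{2}}$ allows the holomorphic function $F=f+ig$ to have ``at most a simple pole'', so that $P_{\gamma}$ and $S_{\gamma}$ could be nonzero. Lemma \ref{lem:If--is} shows the opposite: $\int_{0}^{1}\rho^{2n+1}d\rho$ already diverges for $n=-1$, so $\left\Vert F\right\Vert_{L^{2}}<\infty$ forces $F_{-1}=0$, hence $P_{\gamma}=S_{\gamma}=0$; even a simple pole is excluded. Second, the ``corrected'' curve $(\overline{a}+S_{\gamma}s+P_{\gamma}t,\overline{f})$ you propose is not a well-defined map on $[0,\infty)\times S^{1}$ when $P_{\gamma}\neq0$, since $t\in\mathbb{R}/\mathbb{Z}$ and $P_{\gamma}t$ is multivalued; one would have to pass to the universal cover to make sense of it. Both issues evaporate once one notes that $\gamma$ is harmonic across the puncture to begin with.
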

\begin{proof}
\begin{singlespace}
\noindent As we restrict the curve to the disk, the harmonic perturbation
$\gamma$ is exact, i.e. there exists a harmonic function $\Gamma$
defined on the open unit disk such that $\gamma=d\Gamma$. The new
curve $\overline{u}=(\overline{a},\overline{f})=(a+\Gamma,f)$ is
pseudoholomorphic. Let $\psi:\mathbb{R}_{+}\times S^{1}\rightarrow D\backslash\{0\}$,
$(s,t)\mapsto e^{-2\pi(s+it)}$ be a biholomorphism which maps $D\backslash\{0\}$
to the cylinder $\mathbb{R}_{+}\times S^{1}$. We consider the pseudoholomorphic
curve $\overline{u}$ as beeing defined on the cylinder $\mathbb{R}_{+}\times S^{1}$
with finite energy and having an unbounded image in $\mathbb{R}\times M$.
Since we assumed that $X_{\alpha}$ is of Morse type, we obtain by
Proposition 5.6 of \cite{key-1}, that there exists $T\not=0$ and
a periodic orbit $x$ of $X_{\alpha}$ of period $|T|$ such that
\[
\lim_{s\rightarrow\infty}\overline{f}(s,t)=x(Tt)\text{ and }\lim_{s\rightarrow\infty}\frac{\overline{a}(s,t)}{s}=T\text{ in }C^{\infty}(S^{1}).
\]
By the boundedness of the harmonic function $\Gamma$, we have
\[
\lim_{s\rightarrow\infty}\frac{a(s,t)}{s}=T\text{ in }C^{\infty}(S^{1}),
\]
and the proof is finished.
\end{singlespace}
\end{proof}
\begin{singlespace}
\noindent The puncture $p\in\mathcal{P}$ is called \emph{positive}
or \emph{negative} depending on the sign of the coordinate function
$a$ when approaching the puncture. Keep in mind that the holomorphic
coordinates near the puncture affects only the choice of the origin
on the orbit $x$ of $X_{\alpha}$; the parametrization of the asymptotic
orbits induceed by the holomorphic polar coordinates remains otherwise
the same. Hence, the orientation induced on $x$ by the holomorphic
coordinates coincides with the orientation defined by the vector field
$X_{\alpha}$ if and only if the puncture is positive.

\noindent Let $S^{\mathcal{P}}$ be the oriented blow-up of $S$ at
the punctures $\mathcal{P}=\{p_{1},...,p_{k}\}$ as defined in the
previous section or in Section 4.3 of \cite{key-1}. $S^{\mathcal{P}}$
is a compact surface with boundary circles $\Gamma_{1},...,\Gamma_{k}$.
Noting that each of these circles is endowed with a canonical $S^{1}-$action
and letting $\varphi_{i}:S^{1}\rightarrow\Gamma_{i}$ be (up to a
choice of the base point) the canonical parametrization of the boundary
circle $\Gamma_{i}$, for $i=1,...,k$, we reformulate Proposition
\ref{prop:Let--be} as follows.
\end{singlespace}
\begin{prop}
\begin{singlespace}
\noindent \label{prop:Let--be-1}Let $(S,j,\mathcal{P},u,\gamma)$
be a $\mathcal{H}-$holomorphic map without removable singularities.
The map $f:\dot{S}\rightarrow M$ extends to a continous map $\overline{f}:S^{\mathcal{P}}\rightarrow M$
such that
\begin{equation}
\overline{f}(\varphi_{i}(e^{2\pi it}))=x_{i}(Tt),\label{eq:sign_orientation_periodic_orbit}
\end{equation}
where $x_{i}:S^{1}=\mathbb{R}/\mathbb{Z}\rightarrow M$ is a periodic
orbit of the Reeb vector field $X_{\alpha}$ of period $T$, parametrized
by the vector field $X_{\alpha}$. The sign of $T$ coincides with
the sign of the puncture $p_{i}\in\mathcal{P}$.
\end{singlespace}
\end{prop}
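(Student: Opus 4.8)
The plan is to reduce this statement directly to Proposition \ref{prop:Let--be} by working in suitable holomorphic coordinates near each puncture. First I would fix a puncture $p_i \in \mathcal{P}$ and choose, as in the discussion preceding Lemma \ref{lem:If--is}, a neighbourhood $U(p_i) \subset S$ biholomorphic to the open unit disk $D$ with $p_i \mapsto 0$; composing with the biholomorphism $\psi: \mathbb{R}_+ \times S^1 \to D \backslash \{0\}$, $(s,t) \mapsto e^{-2\pi(s+it)}$, this identifies a punctured neighbourhood of $p_i$ with the half-open cylinder $[0,\infty) \times S^1$. Since the puncture is not removable (i.e.\ $u$ restricted to this neighbourhood is not extendable, which by Lemma \ref{lem:Let--be_Removable_Singularity} forces $a$ to be unbounded there), Proposition \ref{prop:Let--be} applies and yields $T \neq 0$ and a periodic orbit $x_i$ of $X_\alpha$ of period $|T|$ with $\lim_{s\to\infty} f(s,t) = x_i(Tt)$ in $C^\infty(S^1)$.

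The second step is to translate this cylindrical-coordinate limit into the asserted continuity statement on the oriented blow-up $S^{\mathcal{P}}$. By construction, the boundary circle $\Gamma_i$ of $S^{\mathcal{P}}$ is $(T_{p_i}S \backslash \{0\})/\mathbb{R}_+$, and the canonical parametrization $\varphi_i: S^1 \to \Gamma_i$ is exactly the asymptotic-direction parametrization induced by the holomorphic polar coordinate $t$, so that in the $(s,t)$ picture approaching $s \to \infty$ corresponds to approaching $\Gamma_i$ and the angular coordinate matches. Hence defining $\overline{f}$ on $\Gamma_i$ by $\overline{f}(\varphi_i(e^{2\pi it})) = x_i(Tt)$ and by $f$ on the interior, the $C^\infty(S^1)$ (in particular $C^0$) convergence $f(s,\cdot) \to x_i(T\,\cdot)$ as $s \to \infty$ is precisely the statement that this extension is continuous at $\Gamma_i$; doing this for each $i = 1,\dots,k$ gives a continuous $\overline{f}: S^{\mathcal{P}} \to M$. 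Re-scaling the parametrization of the orbit by $t \mapsto Tt$ so that $x_i$ is $X_\alpha$-parametrized of period $T$ (allowing $T$ to carry a sign) absorbs the factor $T$, as in the statement.

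The sign assertion is then the remaining point. Here I would invoke the observation recorded in the paragraph after Proposition \ref{prop:Let--be}: passing to holomorphic polar coordinates only changes the base point on the orbit, not the orientation it inherits, and from $\lim_{s\to\infty} a(s,t)/s = T$ one reads off that $a \to +\infty$ (positive puncture) exactly when $T > 0$, i.e.\ exactly when the holomorphic-coordinate orientation on $x_i$ agrees with the $X_\alpha$-orientation. So the sign of $T$ coincides with the sign of $p_i$. The only mild subtlety — and the step I would be most careful about — is checking that the identification of the abstract blow-up circle $\Gamma_i$ with the $S^1$ appearing in Proposition \ref{prop:Let--be} is orientation- and base-point-compatible, so that the formula $\overline{f}(\varphi_i(e^{2\pi it})) = x_i(Tt)$ holds on the nose rather than up to a rotation or reflection; this is bookkeeping about conventions for the oriented blow-up and the map $\psi$, and is handled exactly as in Section 4.3 of \cite{key-1}.
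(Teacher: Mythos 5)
Your proposal takes the same route the paper (implicitly) does: the paper presents Proposition \ref{prop:Let--be-1} without a formal proof block, explicitly as a "reformulation" of Proposition \ref{prop:Let--be} after identifying a punctured disk neighbourhood of each $p_i$ with a half-cylinder and using the remark preceding the statement that holomorphic coordinates only shift the base point on $x_i$, so that the sign of $T$ matches the sign of the puncture. Your reduction to Proposition \ref{prop:Let--be} via $\psi(s,t)=e^{-2\pi(s+it)}$, the observation that $C^\infty(S^1)$ convergence of $f(s,\cdot)$ gives continuity of $\overline{f}$ across $\Gamma_i$, and your treatment of the sign are exactly that argument (the only minor slip is your mention of "absorbing" the factor $T$ by rescaling — the factor $T$ remains explicitly in formula (\ref{eq:sign_orientation_periodic_orbit}), so no rescaling is needed; the period $|T|$ versus $T$ discrepancy between the two propositions is just notational).
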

\begin{singlespace}

\subsection{\label{subsec:Notion-of-convergence}Notion of convergence}
\end{singlespace}
\begin{singlespace}

\subsubsection{\label{subsec:Stratified-broken-holomorphic}Stratified $\mathcal{H}-$holomorphic
buildings}
\end{singlespace}

\begin{singlespace}
\noindent In this section we introduce the notion of a stratified
$\mathcal{H}-$holomorphic building. These are the objects which are
needed for the compactification of the moduli space of $\mathcal{H}-$holomorphic
curves. First we define a $\mathcal{H}-$holomorphic building of height
$1$, and then we introduce the general notion of a $\mathcal{H}-$holomorphic
building of height greater than $1$. Let $(S,j)$ be a Riemann surface,
and $\underline{\mathcal{P}}\subset S$ and $\overline{\mathcal{P}}\subset S$
two disjoint unordered finite subsets called the sets of \emph{negative
}and\emph{ positive punctures, }respectively. Let $\underline{\mathcal{P}}=\{\underline{p}_{1},...,\underline{p}_{l}\}$,
$\overline{\mathcal{P}}=\{\overline{p}_{1},...,\overline{p}_{f}\}$
and $\mathcal{P}=\underline{\mathcal{P}}\amalg\overline{\mathcal{P}}$.
The set of nodes, defined by $\mathcal{D}=\{d'_{1},d''_{1},...,d'_{k},d''_{k}\}\subset S$,
is a finite subset of $S$, where the significance of the pair $\{d'_{i},d''_{i}\}$
will be clarified later on. Denote by $S^{\mathcal{P}}$ the blow-up
of the surface $\dot{S}=S\backslash\mathcal{P}$ at the punctures
$\mathcal{P}$. The surface $S^{\mathcal{P}}$ has $|\mathcal{P}|$
boundary components, which due to the splitting of $\mathcal{P}$,
are denoted by $\underline{\Gamma}=\{\underline{\Gamma}_{1},...,\underline{\Gamma}_{l}\}$
and $\overline{\Gamma}=\{\overline{\Gamma}_{1},...,\overline{\Gamma}_{f}\}$.
\end{singlespace}
\begin{defn}
\begin{singlespace}
\noindent \label{def:A-map-}$(S,j,u,\mathcal{P},\mathcal{D},\gamma,\tau,\sigma)$,
where $\tau=\{\tau_{i}\}_{i=1,...,|\mathcal{D}|/2}$, $\sigma=\{\sigma_{i}\}_{i=1,...,|\mathcal{D}|/2}$
and $\tau_{i},\sigma_{i}\in\mathbb{R}$ for all $i=1,...,|\mathcal{D}|/2$
is called a \emph{stratified} \emph{$\mathcal{H}-$holomorphic building
of height $1$} if the following conditions are satisfied.
\end{singlespace}
\begin{enumerate}
\begin{singlespace}
\item $(S,j,u,\mathcal{P},\gamma)$ is a $\mathcal{H}-$holomorphic curve
as defined in (\ref{eq:H-hol}).
\item For each $\{d'_{i},d''_{i}\}\in\mathcal{D}$, $\tau_{i},\sigma_{i}\in\mathbb{R}$
the points $u(d'_{i})$ and $u(d''_{i})$ are connected by the map
$[-1/2,1/2]\rightarrow\mathbb{R}\times M$, $s\mapsto(-2\sigma_{i}s+b,\phi_{-2\tau_{i}s}^{\alpha}(w_{f}))$
for some $b\in\mathbb{R}$ and $w_{f}\in M$ such that $u(d'_{i})=(\sigma_{i}+b,\phi_{\tau_{i}}^{\alpha}(w_{f}))$
and $u(d''_{i})=(-\sigma_{i}+b,\phi_{-\tau_{i}}^{\alpha}(w_{f}))$.
\end{singlespace}
\end{enumerate}
\end{defn}
\begin{rem}
\begin{singlespace}
\noindent The $M-$component $f:\dot{S}\rightarrow M$ of a stratified
$\mathcal{H}-$holomorphic building $u=(a,f):\dot{S}\rightarrow\mathbb{R}\times M$
of height $1$ can be continously extended to $S^{\mathcal{P}}$.
For the extension $\overline{f}:S^{\mathcal{P}}\rightarrow M$, it
is apparent that $\overline{f}|_{\Gamma}$, where $\Gamma=\underline{\Gamma}\amalg\overline{\Gamma}$,
defines parametrizations of Reeb orbits.
\end{singlespace}
\end{rem}
\begin{singlespace}
\noindent 

\end{singlespace}\begin{rem}
\begin{singlespace}
\noindent The energy of a stratified $\mathcal{H}-$holomorphic building
of height $1$ is the sum of the $\alpha-$ and $d\alpha-$energies
of the $\mathcal{H}-$holomorphic curve, as defined in (\ref{eq:energy}).
\end{singlespace}
\end{rem}
\begin{singlespace}
\noindent In a second step we define a stratified $\mathcal{H}-$holomorphic
building of height $N$. Let $(S_{1},j_{1}),...,(S_{N},j_{N})$ be
closed (possibly disconected) Riemann surfaces, and for any $i\in\{1,...,N\}$,
let $\underline{\mathcal{P}}_{i}\subset S_{i}$ and $\overline{\mathcal{P}}_{i}\subset S_{i}$
be the sets of \emph{negative }and\emph{ positive punctures on level
$i$, }respectively.\emph{ }There is a one-to-one correspondence between
the elements $\overline{\mathcal{P}}_{i-1}$ and $\underline{\mathcal{P}}_{i}$
given by a bijective map $\varphi_{i}:\overline{\mathcal{P}}_{i-1}\rightarrow\underline{\mathcal{P}}_{i}$.
The pair $\{\overline{p}_{i-1,j},\underline{p}_{ij}\}$, where $\underline{p}_{ij}=\varphi_{i}(\overline{p}_{i-1,j})$,
is called the \emph{breaking point} between the levels $S_{i-1}$
and $S_{i}$.

\noindent Let $\mathcal{P}=\coprod_{i=1}^{N}\underline{\mathcal{P}}_{i}\amalg\overline{\mathcal{P}}_{i}$
be the set of \emph{punctures,} $\mathcal{P}_{i}=\underline{\mathcal{P}}_{i}\amalg\overline{\mathcal{P}}_{i}$
the set\emph{ }of \emph{punctures at level $i$}, $\mathcal{D}_{i}=\{d'_{i1},d''_{i1},...,d'_{ik_{i}},d''_{ik_{i}}\}$
the set of \emph{nodes at level $i$,} and $\mathcal{D}=\coprod_{i=1}^{N}\mathcal{D}_{i}$
the set of all nodes.

\noindent If $S_{i}^{\mathcal{P}_{i}}$ is the blow-up of $S_{i}$
at the punctures $\mathcal{P}_{i}=\underline{\mathcal{P}}_{i}\amalg\overline{\mathcal{P}}_{i}$,
then accounting of the splitting of the punctures $\mathcal{P}_{i}$,
we denote the boundary components of $S_{i}^{\mathcal{P}_{i}}$ by
$\underline{\Gamma}_{i}$ and $\overline{\Gamma}_{i}$; they correspond
to the negative and positive punctures $\underline{\mathcal{P}}_{i}$
and $\overline{\mathcal{P}}_{i}$, respectively. There is a one-to-one
correspondence between the elements of $\overline{\Gamma}_{i-1}$
and $\underline{\Gamma}_{i}$ given by an orientation reversing diffeomorphism
$\Phi_{i}:\overline{\Gamma}_{i-1}\rightarrow\underline{\Gamma}_{i}$.
The pair $\{\overline{\Gamma}_{i-1,j},\underline{\Gamma}_{ij}\}$,
where $\underline{\Gamma}_{ij}=\Phi_{i}(\overline{\Gamma}_{i-1,j})$,
is called a \emph{breaking orbit} for all $i=2,...,N$. This gives
an identification of the boundary components $\overline{\Gamma}_{i-1}$
from $S_{i-1}^{\mathcal{P}_{i-1}}$ and the boundary components $\underline{\Gamma}_{i}$
from $S_{i}^{\mathcal{P}_{i}}$. Further on, let \[
S^{P,\Phi} := S_{1}^{P_{1}} \cup_{\Phi_{2}} S_{2}^{P_{2}} \cup_{\Phi_{3}} ... \cup_{\Phi_{N}} S_{N}^{P_{N}} := \bigslant{\left ( \coprod_{i=1}^{N} S_{i}^{P_{i}} \right )}{\sim}
\] where $\sim$ is defined by identifying the circles $\overline{\Gamma}_{i-1,j}$
and $\underline{\Gamma}_{ij}$ via the diffeomorphism $\Phi_{i}$
for all $i=2,...,N$ and $j=1,...,|\underline{\mathcal{P}}_{i}|$.
Obviously, $S^{\mathcal{P},\Phi}$ is a compact surface with $|\underline{\mathcal{P}}_{1}|+|\overline{\mathcal{P}}_{N}|$
boundary components. The equivalence class of $\overline{\Gamma}_{i-1,j}$
in $S^{\mathcal{P},\Phi}$, denoted by $\Gamma_{ij}$ for all $i=2,...,N$
and $j=1,...,|\underline{\mathcal{P}}_{i}|$, is called a \emph{special
circle}; the collection of all special circles is denoted by $\Gamma$.

\noindent A tuple $(S,j,\mathcal{P},\mathcal{D})$ with the properties
described above will be called a \emph{broken building of height $N$}.
We are now well prepared to introduce a stratified $\mathcal{H}-$holomorphic
building of height $N$. 
\end{singlespace}
\begin{defn}
\begin{singlespace}
\noindent \label{def:A-mapwhere-}A tuple $(S,j,u,\mathcal{P},\mathcal{D},\gamma,\tau,\sigma)$,
where $\tau=\{\hat{\tau}_{ij_{i}}\mid i=1,...,N\text{ and }j_{i}=1,...,|\mathcal{D}_{i}|/2\}\cup\{\tau_{ij_{i}}\mid i=1,...,N-1\text{ and }j_{i}=1,...,|\overline{\Gamma}_{i}|\}$,
$\sigma=\{\hat{\sigma}_{ij_{i}}\mid i=1,...,N\text{ and }j_{i}=1,...,|\mathcal{D}_{i}|/2\}$
and $(S,j,\mathcal{P},\mathcal{D})$ is a broken building of height
$N$, is called a \emph{stratified} \emph{$\mathcal{H}-$holomorphic
building of height $N$} if the following are satisfied:
\end{singlespace}
\begin{enumerate}
\begin{singlespace}
\item For any $i=1,...,N$, $(S_{i},j_{i},u_{i},\underline{\mathcal{P}}_{i}\amalg\overline{\mathcal{P}}_{i},\mathcal{D}_{i},\gamma_{i},\{\hat{\tau}_{ij_{i}}\mid j_{i}=1,...,|\mathcal{D}_{i}|/2\},\{\hat{\sigma}_{ij_{i}}\mid j_{i}=1,...,|\mathcal{D}_{i}|/2\})$
is a stratified $\mathcal{H}-$holomorphic building of height $1$,
where $u_{i}=u|_{S_{i}\backslash\mathcal{P}_{i}}$, and $j_{i}$ is
the complex structure on $S_{i}$. 
\item For all breaking points $\{\overline{p}_{i-1,j},\underline{p}_{ij}\}$
and $\tau_{ij}\in\tau$, there exist $T_{ij}>0$ such that the $\mathcal{H}-$holomorphic
building of height $1$, $u_{i-1}:\dot{S}_{i-1}\rightarrow\mathbb{R}\times M$
is asymptotic at $\overline{p}_{i-1,j}$ to a trivial cylinder over
the Reeb orbit $x_{ij}$ of period $T_{ij}>0$, and $u_{i}:\dot{S}_{i}\rightarrow\mathbb{R}\times M$
is asymptotic at $\underline{p}_{ij}$ to the trivial cylinder over
the Reeb orbit $x_{ij}(\cdot+\tau_{ij})$ of period $-T_{ij}<0$.
\end{singlespace}
\end{enumerate}
\end{defn}
\begin{rem}
\begin{singlespace}
\noindent The energy of a stratified $\mathcal{H}-$holomorphic buidling
of height $N$ is defined by
\[
E(u)=\max_{1\leq i\leq N}E_{\alpha}(u_{i})+\sum_{i=1}^{N}E_{d\alpha}(u_{i}).
\]
\end{singlespace}
\end{rem}
\begin{singlespace}

\subsubsection{\label{subsec:Generalized-Blow-up}Collar Blow-up}
\end{singlespace}

\begin{singlespace}
\noindent In this section we introduce a version of blow-up similar
to that from \cite{key-1}. Let $(S,j,\mathcal{P},\mathcal{D})$ be
a broken building of height $N$, and consider the setting used in
the previous section. In addition, let $S_{i}^{\mathcal{P}_{i}\cup\mathcal{D}_{i}}$
be the blow-up of $S_{i}$ at the punctures $\mathcal{P}_{i}$ and
nodes $\mathcal{D}_{i}$. To each pair of nodes $\{d'_{ij},d''_{ij}\},$
the corresponding boundary of $S_{i}^{\mathcal{P}_{i}\cup\mathcal{D}_{i}}$
is denoted by $\{\Gamma'_{ij},\Gamma''_{ij}\}$, and for each such
pair of boundary circles, let $r_{ij}:\Gamma'_{ij}\rightarrow\Gamma''_{ij}$
be orientation reversing diffeomorphisms. The diffeomorphisms $r_{ij}$
are used to glue the boundary circles $\Gamma'_{ij}$ and $\Gamma''_{ij}$
together. Consider the surface $\hat{S}:=S^{\mathcal{P}\cup\mathcal{D},\Phi\cup r}$
which is obtained from $S$ by blowing-up the punctures $\mathcal{P}$
and the nodes $\mathcal{D}$, and by using the orientation reversing
diffomorphisms $\Phi$ and $r_{ij}$. $\hat{S}$ is a compact surface
with boundary components given by the sets $\underline{\Gamma}_{1}$
and $\overline{\Gamma}_{N}$. The equivalence class of $\Gamma'_{ij}$
in $\hat{S}$ is denoted by $\Gamma_{ij}^{\text{nod}}$ and is called
\emph{nodal special circles}; the set of all nodal special circles
is denoted by $\Gamma^{\text{nod}}$. The \emph{collar blow-up} $\overline{S}$
is a modification of the usual blow-up $\hat{S}$ defined in \cite{key-1}.
Essentially, we insert the cylinders $[-1/2,1/2]\times S^{1}$ between
the special circles $\overline{\Gamma}_{i-1,j}$ and $\underline{\Gamma}_{ij}$,
and between $\Gamma'_{ij}$ and $\Gamma''_{ij}$. To obtain a surface
with boundary components $\underline{\Gamma}_{1}$ and $\overline{\Gamma}_{N}$
that has the same topology as $\hat{S}$ we modify the orientation
reversing the diffeomorphismsm $\Phi_{ij}$ and $r_{ij}$ as follows: 
\end{singlespace}
\begin{description}
\begin{singlespace}
\item [{B1}] \noindent The orientation reversing diffeomorphisms $\Phi_{ij}$
correspond to two orientation reversing diffeomorphisms $\overline{\Phi}_{ij}:\overline{\Gamma}_{i-1,j}\rightarrow\{-1/2\}\times S^{1}$
and $\underline{\Phi}_{ij}:\{1/2\}\times S^{1}\rightarrow\underline{\Gamma}_{ij}$
for all $i=2,...,N$ and $j=1,...,|\underline{\mathcal{P}}_{i}|$. 
\item [{B2}] \noindent Instead of glueing $\overline{\Gamma}_{i-1,j}$
and $\underline{\Gamma}_{ij}$ via the orientation reversing diffeomorphisms
$\Phi_{ij}$, we glue $\overline{\Gamma}_{i-1,j}$, the cylinder $[-1/2,1/2]\times S^{1}$,
and $\underline{\Gamma}_{ij}$ via the orientation reversing diffeomorphisms
$\overline{\Phi}_{ij}$ and $\underline{\Phi}_{ij}$ (see Figure \ref{fig28}).
\begin{figure}     
	\centering  
	\def\svgscale{1} 	
	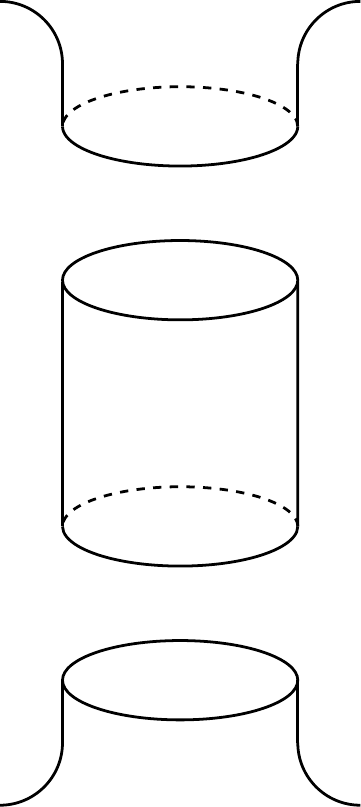  
	\caption{The glueing of $\overline{\Gamma}_{i-1,j}$, the cylinder $[-1/2,1/2]\times S^{1}$ and $\underline{\Gamma}_{ij}$ via the orientation reversing diffeomorphisms $\overline{\Phi}_{ij} : \overline{\Gamma}_{i-1,j} \rightarrow \{-1/2\}\times S^{1}$ and $\underline{\Phi}_{ij} : \{ 1/2 \}\times S^{1} \rightarrow \underline{\Gamma}_{i,j}$.}
	\label{fig28}
\end{figure}
\item [{B3}] \noindent For the nodal special circles $\Gamma'_{ij}$ and
$\Gamma''_{ij}$, we proceed analogously, and denote by $r'_{ij}:\Gamma'_{ij}\rightarrow\{-1/2\}\times S^{1}$
and $r''_{ij}:\{1/2\}\times S^{1}\rightarrow\Gamma''_{ij}$ the orientation
reversing diffeomorphisms that glue $\Gamma'_{ij}$, the cylinder
$[-1/2,1/2]\times S^{1}$ and $\Gamma''_{ij}$ together. 
\end{singlespace}
\end{description}
\begin{singlespace}
\noindent Let $\overline{S}$ be the surface obtained by applying
the above construction to all special and nodal special circles. The
equivalence class of the cylinder $[-1/2,1/2]\times S^{1}$ in $\overline{S}$
corresponding to the special circle $\Gamma_{ij}$ is denoted by $A_{ij}$,
and is called \emph{special cylinder}. The equivalence class of the
cylinder $[-1/2,1/2]\times S^{1}$ in $\overline{S}$ corresponding
to the nodal special circle $\Gamma_{ij}^{\text{nod}}$ is denoted
by $A_{ij}^{\text{nod}}$, and is called \emph{nodal special cylinder}.
The boundary circles of $A_{ij}$ are still denoted by $\overline{\Gamma}_{i-1,j}$
and $\underline{\Gamma}_{ij}$, while the boundary circles of $A_{ij}^{\text{nod}}$
are also still denoted by $\Gamma'_{ij}$ and $\Gamma''_{ij}$. Finally,
the collections of all special and nodal special cylinders are denoted
by $A$ and $A^{\text{nod}}$, respectively. Take notice that there
exists a natural projection between the collar blow-up $\overline{S}$
and the blow-up surface $\hat{S}$, which is defined similarly to
\cite{key-1}, i.e. it maps $\overline{S}\backslash(A\amalg A^{\text{nod}})$
diffeomorphically to $\hat{S}\backslash(\Gamma\amalg\Gamma^{\text{nod}})$
and the annulli $A$ and $A^{\text{nod}}$ are mapped to $\Gamma$
and $\Gamma^{\text{nod}}$. This induces a conformal structure on
$\overline{S}\backslash(A\amalg A^{\text{nod}})$. Let $\tilde{S}$
be the closed surface obtained from $\overline{S}$ by identifying
the boundary components $\underline{\Gamma}_{1}$ and $\overline{\Gamma}_{N}$
to points.

\noindent Having now a stratified $\mathcal{H}-$holomorphic building
$(S,j,u,\mathcal{P},\mathcal{D},\gamma,\tau,\sigma)$ of height $N$,
we define the continous extension $\overline{f}$ of $f$ on the surface
$\overline{S}$ and the continous extension $\overline{a}$ of $a$
on $\overline{S}\backslash A$. The extension $\overline{f}$ may
be defined on the clinders $A_{ij}$ and $A_{ij}^{\text{nod}}$, while
the extension $\overline{a}$ is defined only on $A_{ij}^{\text{nod}}$.
Set
\begin{align*}
\overline{f}(s,t) & =\phi_{-2s\hat{\tau}_{ij}}^{\alpha}(w_{f}),\ \text{ for all }(s,t)\in A_{ij}^{\text{nod}}=[-1/2,1/2]\times S^{1},\\
\overline{f}(s,t) & =\phi_{-\left(s+\frac{1}{2}\right)\tau_{ij}}^{\alpha}(x_{ij}(T_{ij}t)),\ \text{ for all }(s,t)\in A_{ij}=[-1/2,1/2]\times S^{1}
\end{align*}
and
\[
\overline{a}(s,t)=2\hat{\sigma}_{ij}s+b,\ \text{ for all }(s,t)\in A_{ij}^{\text{nod}}=[-1/2,1/2]\times S^{1}
\]
for some $b\in\mathbb{R}$ and $w_{f}\in M$. Here $x_{ij}$ is the
Reeb orbit of period $T_{ij}>0$. 
\end{singlespace}
\begin{singlespace}

\subsubsection{\label{subsec:Convergence}Convergence}
\end{singlespace}

\begin{singlespace}
\noindent In this section we define the notion of convergence using
the notation from the previous two sections.
\end{singlespace}
\begin{defn}
\begin{singlespace}
\noindent \label{def:A-sequence-of}A sequence of $\mathcal{H}-$holomorphic
curves $(S_{n},j_{n},u_{n},\mathcal{P}_{n}'=\underline{\mathcal{P}}_{n}'\amalg\overline{\mathcal{P}}_{n}',\gamma_{n})$
converges in the $C_{\text{loc}}^{\infty}$ sense to a $\mathcal{H}-$holomorphic
curve $(S,j,u,\mathcal{P},\mathcal{D},\gamma)$, if the tuple $(S,j,\mathcal{P},\mathcal{D})$
is a broken building of height $N$ and there exists a sequence of
diffeomorphisms $\varphi_{n}:\tilde{S}\rightarrow S_{n}$, where $\tilde{S}$
is the modified collar blow-up defined in Section \ref{subsec:Generalized-Blow-up},
such that $\varphi_{n}^{-1}(\overline{\mathcal{P}}_{n}')=\overline{\mathcal{P}}_{1}$
and $\varphi_{n}^{-1}(\underline{\mathcal{P}}_{n}')=\underline{\mathcal{P}}_{N}$
and such that the following conditions are satisfied:
\end{singlespace}
\begin{enumerate}
\begin{singlespace}
\item The sequence of complex structures $(\varphi_{n})_{*}j_{n}$ converges
in $C_{\text{loc}}^{\infty}$ on $\tilde{S}\backslash(A\amalg A^{\text{nod}})$
to $j$.
\item The special circles of $(S_{n},j_{n},\mathcal{P}_{n})$ are mapped
by $\varphi_{n}^{-1}$ bijectively onto $\{0\}\times S^{1}$ of $A_{ij}$
or $A_{ij}^{\text{nod}}$. For every special cylinder $A_{ij}$ there
exists an annulus $\overline{A}_{ij}\cong[-1,1]\times S^{1}$ such
that $A_{ij}\subset\overline{A}_{ij}$ and $(\overline{A}_{ij},(\varphi_{n})_{*}j_{n})$
and $(A_{ij},(\varphi_{n})_{*}j_{n})$ are conformally equivalent
to $([-R_{n},R_{n}]\times S^{1},i)$ and $([-R_{n}+h_{n},R_{n}-h_{n}]\times S^{1},i)$,
respectively, where $R_{n},h_{n},R_{n}/h_{n}\rightarrow\infty$ as
$n\rightarrow\infty$, $i$ is the standard complex structure and
the diffeomorphisms are of the form $(s,t)\mapsto(\kappa(s),t)$.
\item The $\mathcal{H}-$holomorphic curves $u_{n}\circ\varphi_{n}:\dot{\tilde{S}}:=\tilde{S}\backslash(\overline{\mathcal{P}}_{1}\amalg\underline{\mathcal{P}}_{N})\rightarrow\mathbb{R}\times M$
together with the harmonic perturbation $(\varphi_{n})^{*}\gamma_{n}$
which are defined on $\tilde{S}$ converge in $C_{\text{loc}}^{\infty}$
on $\dot{\tilde{S}}\backslash\left(A\amalg A^{\text{nod}}\right)$
to the $\mathcal{H}-$holomorphic curve $u$ with harmonic perturbation
$\gamma$. Note that $\dot{\tilde{S}}\backslash(A\amalg A^{\text{nod}})$
may be conformally identified with $S\backslash(\mathcal{P}\amalg\mathcal{D})$.
\end{singlespace}
\end{enumerate}
\end{defn}
\begin{singlespace}
\noindent Next we describe the $C^{0}-$convergence. Let $(S_{n},j_{n},u_{n},\mathcal{P}_{n}',\gamma_{n})$
be a sequence of $\mathcal{H}-$holomorphic curves. For any special
circle $\Gamma_{ij}$, let $\tau_{ij}^{n}\in\mathbb{R}$ and $\sigma_{ij}^{n}\in\mathbb{R}$
be the conformal period of $\varphi_{n}^{*}\gamma_{n}$ on $\Gamma_{ij}$
with respect to the complex structure $\varphi_{n}^{*}j_{n}$, and
the conformal co-period of $\varphi_{n}^{*}\gamma_{n}$ on $\Gamma_{ij}$
with respect to the complex structure $\varphi_{n}^{*}j_{n}$, respectively.
For any nodal special circle $\Gamma_{ij}^{\text{nod}}$ consider
the numbers $\hat{\tau}_{ij}^{n}\in\mathbb{R}$ and $\hat{\sigma}_{ij}^{n}\in\mathbb{R}$,
where $\hat{\tau}_{ij}^{n}$ is the conformal period of $\varphi_{n}^{*}\gamma_{n}$
on $\Gamma_{ij}^{\text{nod}}$ with respect to the complex structure
$\varphi_{n}^{*}j_{n}$, and $\hat{\sigma}_{ij}^{n}$ is the conformal
co-period of $\varphi_{n}^{*}\gamma_{n}$ on $\Gamma_{ij}^{\text{nod}}$
with respect to the complex structure $\varphi_{n}^{*}j_{n}$, respectively.
\end{singlespace}
\begin{rem}
\begin{singlespace}
\noindent For a sequence $(S_{n},j_{n},u_{n},\mathcal{P}_{n}',\gamma_{n})$
of $\mathcal{H}-$holomorphic curves that converges to a $\mathcal{H}-$holomorphic
curve $(S,j,u,\mathcal{P},\mathcal{D},\gamma)$ in the sense of Definition
\ref{def:A-sequence-of}, the quantities $\tau_{ij}^{n}$, $\sigma_{ij}^{n}$,
$\hat{\tau}_{ij}^{n}$ and $\hat{\sigma}_{ij}^{n}$ can be unbounded
(see, e.g, Section \ref{chap:Discussion-on-conformal}). If $\tau_{ij}^{n}$,
$\sigma_{ij}^{n}$, $\hat{\tau}_{ij}^{n}$ and $\hat{\sigma}_{ij}^{n}$
are bounded, then after going over to a further subsequence, and assuming
that there exist the real numbers $\tau_{ij},\sigma_{ij},\hat{\tau}_{ij},\hat{\sigma}_{ij}\in\mathbb{R}$
such that
\begin{align}
\tau_{ij}^{n} & \rightarrow\tau_{ij},\label{eq:con_period}\\
\sigma_{ij}^{n} & \rightarrow\sigma_{ij},\label{eq:con_coperiod}\\
\hat{\tau}_{ij}^{n} & \rightarrow\hat{\tau}_{ij},\label{eq:con_period_nodes}\\
\hat{\sigma}_{ij}^{n} & \rightarrow\hat{\sigma}_{ij}\label{eq:con_period_coperiod}
\end{align}
as $n\rightarrow\infty$, we are able to derive a $C^{0}-$ convergence
result.
\end{singlespace}
\end{rem}
\begin{singlespace}
\noindent The convergence of a sequence of $\mathcal{H}-$holomorphic
curves to a stratified $\mathcal{H}-$holomorphic building of height
$N$ should be understood in the following sense:
\end{singlespace}
\begin{defn}
\begin{singlespace}
\noindent \label{def:A-sequence-of-1}A sequence of $\mathcal{H}-$holomorphic
curves $(S_{n},j_{n},\mathcal{P}_{n}',u_{n},\gamma_{n})$ converges
in the $C^{0}$ sense to a statified $\mathcal{H}-$holomorphic building
$(S,j,u,\mathcal{P},\mathcal{D},\gamma,\tau,\sigma)$ of height $N$
if the following conditions are satisfied.
\end{singlespace}
\begin{enumerate}
\begin{singlespace}
\item The parameters $\tau_{ij}^{n}$, $\sigma_{ij}^{n}$, $\hat{\tau}_{ij}^{n}$
and $\hat{\sigma}_{ij}^{n}$ converge as in (\ref{eq:con_period})-(\ref{eq:con_period_coperiod}).
\item The sequence $(S_{n},j_{n},\mathcal{P}_{n}',u_{n},\gamma_{n})$ converges
to the underlying $\mathcal{H}-$holomorphic curve $(S,j,u,\mathcal{P},\mathcal{D},\gamma)$
in the sense of Definition \ref{def:A-sequence-of} with respect to
a sequence of diffeomorphisms $\varphi_{n}:\tilde{S}\rightarrow S_{n}$.
\item $(S,j,u,\mathcal{P},\mathcal{D},\gamma,\tau,\sigma)$ is a stratified
$\mathcal{H}-$holomorphic building of height $N$ corresponding to
the constants $\tau_{ij}$, $\hat{\tau}_{ij}$ and $\hat{\sigma}_{ij}$,
as in Definition \ref{def:A-mapwhere-}. 
\item The maps $u_{n}\circ\varphi_{n}$ converges in $C_{\text{loc}}^{0}$
on $\overline{S}\backslash A$ to the blow-up map $\overline{u}$
defined on $\dot{\tilde{S}}\backslash A$.
\item The maps $f_{n}\circ\varphi_{n}$ converges in $C^{0}$ on $\overline{S}$
to the blow-up map $\overline{f}$ defined on $\overline{S}$.
\end{singlespace}
\end{enumerate}
\end{defn}
\begin{singlespace}

\section{\label{chap:Compactness-results}Proof of the Compactness Theorem}
\end{singlespace}

\begin{singlespace}
\noindent Let $(S_{n},j_{n},u_{n},\mathcal{P}_{n}',\gamma_{n})$ be
a sequence of $\mathcal{H}-$holomorphic curves satisfying Assumptions
A1 and A2. After introducing an additional finite set of points $\mathcal{M}_{n}$
disjoint from the set of punctures $\mathcal{P}_{n}'$ we assume that
the domains $(S_{n},j_{n},\mathcal{P}_{n}'\amalg\mathcal{M}_{n})$
of the sequence of $\mathcal{H}-$holomorphic curves are stable. This
condition enables us to use the Deligne-Mumford convergence (see Section
\ref{sec:Harmonic-Deligne-Mumford-converg}) which makes it possible
to formulate a convergence result for the domains $(S_{n},j_{n},\mathcal{P}_{n}'\amalg\mathcal{M}_{n})$.
Note that $\mathcal{M}_{n}$ can be choosen in such a way that their
cardinality is independent of the index $n$. As an additional structure,
let $h^{j_{n}}$ be the hyperbolic metric on $\dot{S}_{n}:=S_{n}\backslash(\mathcal{P}_{n}'\amalg\mathcal{M}_{n})$.
By the Deligne-Mumford convergence result (Corollary \ref{cor:Any-sequence-of})
there exists a stable nodal decorated surface $(S,j,\mathcal{P}\amalg\mathcal{M},\mathcal{D},r)$
and a sequence of diffeomorphisms $\varphi_{n}:S^{\mathcal{D},r}\rightarrow S_{n}$,
where $S^{\mathcal{D},r}$ is the closed surface obtained by blowing
up the nodes and glueing pairs of nodal points according to the decoration
$r$ as described in Section \ref{sec:Harmonic-Deligne-Mumford-converg},
such that the following holds: Let $h$ be the hyperbolic metric on
$S\backslash(\mathcal{P}\amalg\mathcal{M}\amalg\mathcal{D})$. The
diffeomorphisms $\varphi_{n}$ map marked points into marked points
and punctures into punctures, i.e. $\varphi_{n}(\mathcal{M})=\mathcal{M}_{n}$
and $\varphi_{n}(\mathcal{P})=\mathcal{P}_{n}'$. Via $\varphi_{n}$
we pull-back the complex structures $j_{n}$ and the hyperbolic metrics
$h^{j_{n}}$, i.e. we define $j^{(n)}:=\varphi_{n}^{*}j_{n}$ on $S^{\mathcal{D},r}$
and $h_{n}:=\varphi_{n}^{*}h^{j_{n}}$ on $\dot{S}^{\mathcal{D},r}:=S^{\mathcal{D},r}\backslash(\mathcal{M}\amalg\mathcal{P})$.
By the Deligne-Mumford convergence, $h_{n}\rightarrow h$ in $C_{\text{loc}}^{\infty}(\dot{S}^{\mathcal{D},r}\backslash\coprod_{j}\Gamma_{j})$
as $n\rightarrow\infty$, where $\Gamma_{j}$ are the special circles
in $S^{\mathcal{D},r}$ (see Section \ref{sec:Harmonic-Deligne-Mumford-converg}
for the definition of special circles). This yields $j^{(n)}\rightarrow j$
in $C_{\text{loc}}^{\infty}(S^{\mathcal{D},r}\backslash\coprod_{j}\Gamma_{j})$
as $n\rightarrow\infty$.

\noindent Consider now the maps $\tilde{u}_{n}=(\tilde{a}_{n},\tilde{f}_{n}):=u_{n}\circ\varphi_{n}:S^{\mathcal{D},r}\backslash\mathcal{P}\rightarrow\mathbb{R}\times M$
and $\tilde{\gamma}_{n}:=\varphi_{n}^{*}\gamma_{n}\in\mathcal{H}_{j^{(n)}}^{1}(S^{\mathcal{D},r})$.
Then $\tilde{u}_{n}$ is a $\mathcal{H}-$holomorphic curve with harmonic
perturbation $\tilde{\gamma}_{n}$; it satisfies the equation
\[
\begin{array}{cl}
\pi_{\alpha}d\tilde{f}_{n}\circ j^{(n)} & =J(\tilde{f}_{n})\circ\pi_{\alpha}d\tilde{f}_{n}\\
(\tilde{f}_{n}^{*}\alpha)\circ j^{(n)} & =d\tilde{a}_{n}+\tilde{\gamma}_{n}
\end{array}\text{ on }S^{\mathcal{D},r}\backslash\mathcal{P}
\]
and has uniformly bounded energies, i.e. for $E_{0}>0$ and all $n\in\mathbb{N}$
we have $E(\tilde{u}_{n};S^{\mathcal{D},r}\backslash\mathcal{P})\leq E_{0}$.
The $L^{2}-$norm of $\tilde{\gamma}_{n}$ over $S^{\mathcal{D},r}$
is equal to the $L^{2}-$norm of $\gamma_{n}$ over $S_{n}$ and it
is apparent that the $L^{2}-$norm of $\tilde{\gamma}_{n}$ is uniformly
bounded by the constant $C_{0}>0$. Hence A1 and A2 are satisfied
for $\tilde{u}_{n}$.

\noindent In the following, we first establish a convergence result
on the thick part, i.e. on $S^{\mathcal{D},r}$ away from special
circles, punctures and certain additional marked points, and then
treat the components from the thin part. With the convergence on the
thick components, the first statement of Theorem \ref{thm:Let--be-2-1}
is proved. 
\end{singlespace}
\begin{singlespace}

\subsection{\label{2_1_sec:First_Theorem}The Thick Part}
\end{singlespace}

\begin{singlespace}
\noindent For the sequence $\tilde{u}_{n}:S^{\mathcal{D},r}\backslash\mathcal{P}\rightarrow\mathbb{R}\times M$
as defined above, we prove the $C_{\text{loc}}^{\infty}-$convergence
in the complement of the special circles and of a finite collection
of points in $\dot{S}^{\mathcal{D},r}:=S^{\mathcal{D},r}\backslash(\mathcal{P}\amalg\mathcal{M})$.
Set $\dot{\tilde{S}}^{\mathcal{D},r}:=\dot{S}^{\mathcal{D},r}\backslash\coprod_{j}\Gamma_{j}$.
To simplify the notation we continue to denote the maps $\tilde{u}_{n}$
by $u_{n}$ and $\tilde{\gamma}_{n}$ by $\gamma_{n}$. The main result
of this section is the following
\end{singlespace}
\begin{thm}
\begin{singlespace}
\noindent \label{thm:There-exists-finitely}There exists a subsequence
of $u_{n}$, still denoted by $u_{n}$, a finite subset $\mathcal{Z}\subset\dot{\tilde{S}}^{\mathcal{D},r}$,
and a $\mathcal{H}-$holomorphic curve $u:\dot{\tilde{S}}^{\mathcal{D},r}\backslash\mathcal{Z}\rightarrow\mathbb{R}\times M$
with harmonic perturbation $\gamma$ defined on $\dot{\tilde{S}}^{\mathcal{D},r}$
with respect to the complex structure $j$ such that $u_{n}\rightarrow u$
and $\gamma_{n}\rightarrow\gamma$ in $C_{\text{loc}}^{\infty}(\dot{\tilde{S}}^{\mathcal{D},r}\backslash\mathcal{Z})$.
\end{singlespace}
\end{thm}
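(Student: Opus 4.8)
The plan is to follow the classical bubbling-off scheme for finite-energy pseudoholomorphic curves (as in \cite{key-1}, Section 10) and adapt it to the $\mathcal{H}$-holomorphic setting, using the Deligne-Mumford convergence already set up for the domains. First I would fix an exhaustion of $\dot{\tilde{S}}^{\mathcal{D},r}$ by compact sets $K_{1}\subset K_{2}\subset\cdots$ that stay away from the special circles and the punctures, on each of which $j^{(n)}\to j$ in $C^{\infty}$. On such compacta the harmonic $1$-forms $\gamma_{n}$ satisfy $\|\gamma_{n}\|_{L^{2}}\le C_{0}$ and are harmonic for $j^{(n)}$; since the $j^{(n)}$ converge smoothly, interior elliptic estimates for the Cauchy--Riemann-type operator governing harmonic $1$-forms give uniform $C^{\infty}_{\mathrm{loc}}$ bounds on $\gamma_{n}$, so after a diagonal subsequence $\gamma_{n}\to\gamma$ in $C^{\infty}_{\mathrm{loc}}(\dot{\tilde{S}}^{\mathcal{D},r})$, and $\gamma$ is harmonic with respect to $j$.

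The heart of the argument is to control the gradients of $u_{n}$. I would argue by contradiction using Hofer's lemma: if $|du_{n}|$ (measured in the metrics $h_{n}$ and $\overline{g}$) is not uniformly bounded on some compact $K$, rescale near a sequence of points where the gradient blows up. Because the equation \eqref{eq:H-hol} differs from the unperturbed pseudoholomorphic curve equation only by the term $\gamma_{n}$, and the $\gamma_{n}$ are \emph{uniformly bounded} in $C^{\infty}_{\mathrm{loc}}$, the rescaled maps converge (after the usual reparametrization) to a nonconstant finite-energy $J$-holomorphic plane in $\mathbb{R}\times M$ (the perturbation term scales away, since $\gamma_{n}$ does not blow up while $|du_{n}|$ does). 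This plane carries a definite amount of $d\alpha$-energy, at least $\hbar$ by the properties of the action spectrum, which contradicts the uniform energy bound $E_{0}$ once one has extracted more than $E_{0}/\hbar$ bubble points. Hence only finitely many blow-up points can occur; collecting them over the exhaustion and over the subsequence and applying a diagonal argument produces a finite set $\mathcal{Z}\subset\dot{\tilde{S}}^{\mathcal{D},r}$ such that $|du_{n}|$ is uniformly bounded on compact subsets of $\dot{\tilde{S}}^{\mathcal{D},r}\backslash\mathcal{Z}$. One subtlety, flagged already in the outline, is that, unlike in \cite{key-1}, I do \emph{not} modify the hyperbolic structure after inserting each new point of $\mathcal{Z}$; this has to be justified by noting that the $h_{n}$-geometry on the relevant compacta is already controlled by the Deligne-Mumford convergence, so the bubbling analysis can be carried out with a fixed background metric.

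Once gradients are uniformly bounded on compacta of $\dot{\tilde{S}}^{\mathcal{D},r}\backslash\mathcal{Z}$, elliptic bootstrapping finishes the job: writing the equation locally in holomorphic coordinates, with $j^{(n)}\to j$ and $\gamma_{n}\to\gamma$ smoothly, the standard interior elliptic regularity for pseudoholomorphic curves (applied to the first line of \eqref{eq:H-hol}, treating $\gamma_{n}$ as a converging inhomogeneous term in the second line) upgrades the uniform $C^{0}$ and $C^{1}$ bounds to uniform $C^{k}_{\mathrm{loc}}$ bounds for every $k$. By Arzel\`a--Ascoli and a diagonal subsequence, $u_{n}\to u$ in $C^{\infty}_{\mathrm{loc}}(\dot{\tilde{S}}^{\mathcal{D},r}\backslash\mathcal{Z})$, and passing to the limit in the equations shows $u$ is $\mathcal{H}$-holomorphic with harmonic perturbation $\gamma$. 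Finiteness of the energy of $u$ follows from Fatou/lower semicontinuity together with the bound $E_{0}$. The main obstacle I anticipate is precisely the bubbling step: ensuring that the perturbation term $\gamma_{n}$ genuinely disappears under rescaling (this uses the $L^{2}$-bound A2 in an essential way, via the removable-singularity Lemma \ref{lem:If--is} to rule out the perturbation itself concentrating) and that the bubble is nonconstant with quantized energy, so that the counting argument against $E_{0}$ works; the non-modification of the hyperbolic metric after each added marked point is the secondary technical point that needs care.
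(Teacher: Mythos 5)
Your proposal follows essentially the same route as the paper: Deligne--Mumford control of the domains, then bubbling off via Hofer's lemma on points where $|du_n|$ blows up, noting that the rescaled perturbation $\gamma_n/R_n\to 0$ (using the $L^2$-bound A2) so the bubble is an honest nonconstant finite-energy $J$-holomorphic plane with $d\alpha$-energy at least $\hbar$, bounding $|\mathcal{Z}|$ by $E_0/\hbar$ (paper gets $2E_0/\hbar$ from using $\hbar/2$ per bubble point, an immaterial discrepancy), and then elliptic bootstrapping plus Arzel\`a--Ascoli plus a diagonal subsequence on a compact exhaustion of $\dot{\tilde{S}}^{\mathcal{D},r}\backslash\mathcal{Z}$, with the $C^\infty_{\mathrm{loc}}$ convergence of the harmonic forms obtained from their uniform $L^2$-bound and elliptic estimates. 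This is the paper's argument (Lemma~\ref{2_1_lem:There-exists-open-neighbourhood-1} for the holomorphic coordinates behaving well under $j^{(n)}\to j$, Lemma~\ref{2_1_lem:There-exists-a} for the $\hbar$ quantization, Proposition~\ref{2_1_prop:There-exists-a} for finiteness of $\mathcal{Z}$, then the diagonal argument); your aside about not re-hyperbolizing after each added puncture is exactly the remark the paper makes in contrast to Lemma 10.7 of \cite{key-1}, and your invocation of Lemma~\ref{lem:If--is} is not actually needed at this step, but this is a harmless extra safeguard rather than a deviation.
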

\begin{singlespace}
\noindent Before proving Theorem \ref{thm:There-exists-finitely}
we establish some preliminary results.

\noindent Assume that there exists a point $z^{1}\in\mathcal{K}\subset\dot{\tilde{S}}^{\mathcal{D},r}$,
where $\mathcal{K}$ is compact, and a sequence $z_{n}\in\mathcal{K}$
such that 
\[
z_{n}\rightarrow z^{1}\ \text{ and }\ \left\Vert du_{n}(z_{n})\right\Vert \rightarrow\infty
\]
as $n\rightarrow\infty$. The next lemma describing the convergence
of conformal structures on Riemann surfaces is similar to Lemma 10.7
of \cite{key-1}.
\end{singlespace}
\begin{lem}
\begin{singlespace}
\noindent \label{2_1_lem:There-exists-open-neighbourhood-1}There
exist the open neighbourhoods $U_{n}(z^{1})=U_{n}$ and $U(z^{1})=U$
of $z^{1}$, and the diffeomorphisms 
\[
\psi_{n}:D\rightarrow U_{n},\ \psi:D\rightarrow U
\]
such that 

\end{singlespace}\begin{enumerate}
\begin{singlespace}
\item $\psi_{n}$ are $i-j^{(n)}-$biholomorphisms and $\psi$ is a $i-j-$biholomorphism; 
\item \label{2_1_enu:bounded_grad_of_coordinate_maps_psi_n-1}$\psi_{n}\rightarrow\psi$
in $C_{\text{loc}}^{\infty}(D)$ as $n\rightarrow\infty$ with respect
to the Euclidean metric on $D$ and the hyperbolic metric $h$ on
their images; 
\item $\psi_{n}(0)=z^{1}$ for every $n$ and $\psi(0)=z^{1}$; 
\item $z_{n}\in U_{n}$ for every sufficiently large $n$; 
\item $z^{(n)}:=\psi_{n}^{-1}(z_{n})\rightarrow0$ as $n\rightarrow\infty$. 
\end{singlespace}
\end{enumerate}
\end{lem}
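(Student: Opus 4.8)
The plan is to construct the local coordinate charts via the Deligne-Mumford convergence already established for the metrics $h_n$. Since $z^1$ lies in a compact subset of $\dot{\tilde S}^{\mathcal D,r}$, which is an open surface whose complex structures $j^{(n)}$ converge in $C^\infty_{\text{loc}}$ to $j$ (equivalently $h_n\to h$ in $C^\infty_{\text{loc}}$ away from the special circles), we work entirely away from the special circles, punctures and marked points. First I would fix a small geodesic ball $B$ around $z^1$ for the limiting hyperbolic metric $h$ on which $j$ admits an isothermal coordinate, and use the uniformization theorem to produce a $j$-holomorphic embedding $\psi\colon D\to B$ with $\psi(0)=z^1$; this handles $\psi$ and its properties (1) and (3).

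Next I would produce the $\psi_n$. The natural approach is to solve the Beltrami equation: on the fixed disk $U=\psi(D)$, the structure $j^{(n)}$ pulls back under $\psi$ to a complex structure that differs from the standard $i$ by a Beltrami coefficient $\mu_n$ with $\|\mu_n\|_\infty\to 0$ (because $\psi^*j^{(n)}\to\psi^*j=i$ in $C^\infty_{\text{loc}}$). Applying the measurable Riemann mapping theorem, or rather its smooth-dependence version, one obtains quasiconformal—in fact smooth, by elliptic regularity, since the $j^{(n)}$ are smooth—maps $\phi_n$ of a slightly smaller disk that are $i$-$j^{(n)}$-biholomorphic and converge to the identity in $C^\infty_{\text{loc}}$. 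Composing, $\psi_n:=\psi\circ\phi_n$ (after a harmless affine reparametrization of the disk to arrange $\psi_n(0)=z^1$ exactly) gives the desired $i$-$j^{(n)}$-biholomorphisms with $\psi_n\to\psi$ in $C^\infty_{\text{loc}}(D)$, establishing (1), (2), (3). The convergence (2) with respect to the hyperbolic metric $h$ on the target is then immediate since $h_n\to h$.

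Finally, properties (4) and (5) are a formal consequence of what has been built: since $z_n\to z^1=\psi(0)$ and $\psi_n\to\psi$ uniformly near $0$ with $\psi_n$ locally biholomorphic, for large $n$ the point $z_n$ lies in $U_n=\psi_n(D)$, and $z^{(n)}:=\psi_n^{-1}(z_n)$ is well-defined; then $z^{(n)}=\psi_n^{-1}(z_n)\to\psi^{-1}(z^1)=0$ by continuity of the inverses, which converge uniformly on compact subsets because $\psi_n\to\psi$ with nondegenerate differentials.

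The main obstacle is making the dependence of the solution of the Beltrami equation on $\mu_n$ sufficiently controlled: one needs not merely that $\phi_n\to\mathrm{id}$ uniformly (which is classical, e.g.\ from the Ahlfors-Bers theorem) but convergence in $C^\infty_{\text{loc}}$, which requires combining the quasiconformal theory with interior elliptic estimates for the $\bar\partial$-type equation with smooth, $C^\infty_{\text{loc}}$-convergent coefficients. This is exactly the point where the proof parallels Lemma 10.7 of \cite{key-1}; the new feature here—as emphasized in the outline—is that we do not alter the hyperbolic structure after adding bubbling-off marked points, so the charts $\psi_n$ must be produced directly from the fixed Deligne-Mumford limit $h_n\to h$ rather than from a re-uniformized surface. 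Once the smooth convergence of the Beltrami solutions is in hand, the rest is routine.
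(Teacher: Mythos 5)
Your proposal is correct, but it takes a genuinely different route from the paper's. The paper's proof of this lemma is essentially a citation: it applies Lemma \ref{2_1_lem:There-exists-open-neighbourhood} of Appendix \ref{chap:Special-coordinates} to the compact surface $\mathcal{K}$ and notes that the last two assertions follow immediately from $z_n\to z^1$. The heavy lifting in Appendix \ref{chap:Special-coordinates} is done by solving the Dirichlet boundary value problem $dd_n^{\mathbb{C}}f=0$ on $D_1(0)$, $f=p_x$ on $\partial D_1(0)$, for the second-order elliptic operator $dd_n^{\mathbb{C}}$, then passing to the harmonic conjugate to obtain the $j^{(n)}$-holomorphic coordinate $F_n$; the convergence $F_n\to\mathrm{id}$ in $C^\infty_{\text{loc}}$ is extracted from the $C^\infty$-convergence of the coefficients and the uniform boundedness of $(dd_n^{\mathbb{C}})^{-1}$ on Sobolev scales. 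You instead pull back $j^{(n)}$ by the fixed $j$-chart $\psi$, reduce to a Beltrami coefficient $\mu_n\to 0$ in $C^\infty_{\text{loc}}$, and invoke the measurable Riemann mapping theorem (Ahlfors--Bers) plus an interior elliptic bootstrap to produce $\phi_n\to\mathrm{id}$ in $C^\infty_{\text{loc}}$. Both are standard mechanisms for producing converging isothermal charts; the paper's choice is more elementary and self-contained (a single second-order Dirichlet problem with explicit Sobolev estimates, no quasiconformal black box), while yours is shorter once the smooth-dependence version of Ahlfors--Bers is accepted as given. You correctly identify the one technical point that needs care in your route, namely upgrading the classical uniform convergence of Beltrami solutions to $C^\infty_{\text{loc}}$ convergence via interior estimates for the $\bar\partial$-type equation with smoothly converging coefficients; this is precisely what the paper's Dirichlet argument delivers without appealing to quasiconformal theory. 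Assertions (4) and (5) are handled the same way in both.
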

\begin{proof}
\begin{singlespace}
\noindent Lemma \ref{2_1_lem:There-exists-open-neighbourhood} applied
to the compact Riemann surface with boundary $\mathcal{K}$ and the
interior point $z^{1}$, yields the diffeomorphisms $\psi_{n}:D\rightarrow U_{n}$
and $\psi:D\rightarrow U$ for which the first three assertions hold
true. The fourth and fifth assertions are obvious since $z_{n}$ converge
to $z^{1}$. 
\end{singlespace}
\end{proof}
\begin{rem}
\begin{singlespace}
\noindent The coordinate maps $\psi_{n}$ and $\psi$ have uniformly
bounded gradients with respect to the Euclidian metric on $D$ and
the hyperbolic metric $h$ on their images. This follows from the
second assertion of Lemma \ref{2_1_lem:There-exists-open-neighbourhood-1}.
\end{singlespace}
\end{rem}
\begin{singlespace}
\noindent Let $\hbar>0$ be defined by (\ref{eq:hbar}). The next
lemma essentially states that the $d\alpha-$energy concentrates around
the point $z^{1}$ and is at least $\hbar/2>0$. The proof relies
on bubbling-off analysis and proceeds as in Section 5.6 of \cite{key-1}.
\end{singlespace}
\begin{lem}
\begin{singlespace}
\noindent \label{2_1_lem:There-exists-a}For every open neighbourhood
$U(z^{1})=U\subset\dot{\tilde{S}}^{\mathcal{D},r}$ we have 
\[
0<\hbar\leq\lim_{n\rightarrow\infty}E_{d\alpha}(u_{n};U)\leq E_{0}.
\]
In particular, for each open neighbourhood $U$ of $z^{1}$ there
exists an integer $N_{1}\in\mathbb{N}$ such that for all $n\geq N_{1}$
we have
\[
E_{d\alpha}(u_{n};U)\geq\frac{\hbar}{2}.
\]
\end{singlespace}
\end{lem}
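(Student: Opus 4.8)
The plan is to combine a standard bubbling-off argument with the non-degeneracy/quantization of energy coming from the action spectrum. Since $\|du_n(z_n)\|\to\infty$ with $z_n\to z^1$, I would first rescale: work in the coordinates $\psi_n$ from Lemma \ref{2_1_lem:There-exists-open-neighbourhood-1}, so that the maps $v_n:=u_n\circ\psi_n$ are $\mathcal H-$holomorphic on $D$ (with respect to $i$ and the pulled-back harmonic forms $\psi_n^*\gamma_n$, which are still harmonic with bounded $L^2-$norm) and $\|dv_n(z^{(n)})\|\to\infty$ with $z^{(n)}\to0$. On a small disk the harmonic perturbation is exact, $\psi_n^*\gamma_n=d\Gamma_n$, so after absorbing $\Gamma_n$ into the $\mathbb R-$coordinate (as in the proof of Lemma \ref{lem:Let--be_Removable_Singularity} and Proposition \ref{prop:Let--be}) the $v_n$ become \emph{genuine} pseudoholomorphic curves on $D$. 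Here I must make sure the $L^2-$bound on $\gamma_n$ gives enough control on $\Gamma_n$ (up to normalization of the additive constant) so that the shift does not destroy the energy bound on a fixed smaller disk; this is where Lemma \ref{lem:If--is}-type estimates enter.

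Next I would apply the Hofer-style rescaling lemma (Hofer's lemma / the standard ``bubbling-off'' argument, Section 5.6 of \cite{key-1}): choose points $z_n'$ near $z^{(n)}$ and radii $\varepsilon_n\to0$ with $\varepsilon_n\|dv_n(z_n')\|\to\infty$ and $\|dv_n\|\le 2\|dv_n(z_n')\|$ on $B_{\varepsilon_n}(z_n')$, then set $R_n:=\|dv_n(z_n')\|$ and consider $w_n(z):=v_n(z_n'+z/R_n)$. These are pseudoholomorphic on larger and larger balls, with $\|dw_n(0)\|=1$ and $\|dw_n\|\le2$, so by elliptic bootstrapping they converge in $C^\infty_{\mathrm{loc}}(\mathbb C)$ to a non-constant pseudoholomorphic plane $w:\mathbb C\to\mathbb R\times M$ of finite energy $\le E_0$. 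There are two cases for $w$. If the $\mathbb R-$component of $w$ is bounded, the removable singularity theorem (Lemma \ref{lem:Let--be_Removable_Singularity}, i.e. Lemma 5.5 of \cite{key-1}) shows $w$ extends to a non-constant pseudoholomorphic sphere, which is impossible in a symplectization (no $d\alpha-$energy can be trapped: $\int_{S^2}f^*d\alpha=0$ and $J-$holomorphicity forces $w$ constant). Hence the $\mathbb R-$component is unbounded, and by Proposition \ref{prop:Let--be} (Proposition 5.6 of \cite{key-1}) $w$ is asymptotic, at the puncture at $\infty$, to a non-trivial periodic Reeb orbit $x$ of some period $|T|>0$, with $T\in\mathcal P_\alpha$ and $|T|\le E_0$.

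The quantization step then gives the lower bound. By Stokes and the asymptotic behaviour, the $d\alpha-$energy of the limit plane $w$ equals the period of its asymptotic orbit: $E_{d\alpha}(w)=\int_{\mathbb C}w^*d\alpha=|T|>0$. Since $|T|\in\mathcal P_\alpha$ and $|T|\le E_0$, and $\hbar$ in \eqref{eq:hbar} is a lower bound for such positive actions (being the minimal gap in the truncated action spectrum — more precisely one uses that the smallest element of $\mathcal P_\alpha$ below $E_0$ is $\ge\hbar$, or absorbs the trivial bound into the definition), we get $E_{d\alpha}(w)\ge\hbar$. Now fix an arbitrary neighbourhood $U$ of $z^1$; for $n$ large $\psi_n(B_\delta(0))\subset U$ for a fixed small $\delta$, and by Fatou/weak convergence of the energy densities the $d\alpha-$energy of $v_n$ (equivalently $u_n$, since the $\Gamma_n$-shift does not change $f_n^*d\alpha$) on $B_\delta(0)$ is at least $E_{d\alpha}(w)-o(1)\ge\hbar-o(1)$. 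Taking $n\to\infty$ yields $\liminf_n E_{d\alpha}(u_n;U)\ge\hbar$; the upper bound is just A2. Passing to a subsequence so the limit exists gives the first displayed inequality, and for $n$ large enough $E_{d\alpha}(u_n;U)\ge\hbar/2$, which is the ``in particular'' claim.

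\textbf{Main obstacle.} The delicate point is the reduction to honest pseudoholomorphic curves before bubbling: I need the primitives $\Gamma_n$ of $\psi_n^*\gamma_n$ on a \emph{fixed} small disk to be controlled (in $C^0$, after subtracting their mean value) uniformly in $n$, so that (i) the rescaled limit $w$ is genuinely pseudoholomorphic rather than $\mathcal H-$holomorphic with a possibly non-vanishing limiting form, and (ii) the energy bound $E_0$ survives the coordinate change on the small disk. The $L^2-$bound $C_0$ on $\gamma_n$ plus the mean-value normalization and elliptic estimates for harmonic functions on the disk should suffice, but this is the step requiring care; everything else is a faithful transcription of the bubbling-off and energy-quantization arguments of \cite{key-1}, Sections 5.6 and 5.7.
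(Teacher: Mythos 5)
Your proof is correct in outline but takes a genuinely different route from the paper's, and the difference is worth noting because the paper's approach sidesteps exactly the ``main obstacle'' you flag. You propose to \emph{first} exactify the harmonic perturbation in the disk coordinates ($\psi_n^*\gamma_n = d\Gamma_n$), absorb $\Gamma_n$ into the $\mathbb{R}$-coordinate to obtain honest pseudoholomorphic curves, and only \emph{then} perform the Hofer bubbling-off. This works (and the uniform $C^0$ control on $\Gamma_n$ after normalization is indeed obtainable from the $L^2$ bound on $\gamma_n$ via Poincar\'e plus the mean value inequality — the paper carries out precisely this estimate in Proposition \ref{prop:There-exists-a-1} and Remark \ref{rem:From-Poincare-inequlity}), but it obliges you to also verify that the coordinate change does not ruin the pointwise gradient blow-up $\|du_n(z_n)\|\to\infty$, since shifting $a_n\mapsto a_n+\Gamma_n$ adds $d\Gamma_n=\hat\gamma_n$ to $da_n$ — this too can be handled by interior $C^0$ estimates for the harmonic form, but it is an extra step. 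The paper avoids all of this by rescaling the $\mathcal{H}$-holomorphic curve \emph{as is}: setting $v_n(z)=\hat u_n(\tilde z^{(n)}+z/R_n)-\hat a_n(\tilde z^{(n)})$, the rescaled maps solve the $\mathcal{H}$-holomorphic equations with perturbation $\underline\gamma_n=\hat\gamma_n/R_n$, and since $\hat\gamma_n$ has uniformly bounded $L^2$-norm while $R_n\to\infty$, the perturbation goes to zero \emph{automatically}, so the $C^\infty_{\mathrm{loc}}$-limit is a genuine pseudoholomorphic plane with no exactification needed. Both routes then invoke the energy quantization ($E_{d\alpha}(v;\mathbb{C})\ge\hbar$ for a non-constant finite-energy plane, Remark 2.38 of \cite{key-6}) and propagate the bound back to $E_{d\alpha}(u_n;U)$; your remaining steps (Fatou/lower semicontinuity of the $d\alpha$-energy under $C^\infty_{\mathrm{loc}}$-convergence, $\psi_n(D_\epsilon(0))\subset U$ for $n$ large) match the paper. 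In short: the paper's scaling trick buys you freedom from the uniform control of the primitive, which is exactly what you identified as the delicate point; your route is viable but longer.
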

\begin{proof}
\begin{singlespace}
\noindent Consider the maps $\hat{u}_{n}:=u_{n}\circ\psi_{n}:D\rightarrow\mathbb{R}\times M$,
where $\psi_{n}$ are the biholomorphisms given by Lemma \ref{2_1_lem:There-exists-open-neighbourhood-1}.
They satisfy the $\mathcal{H}-$holomorphic equations 
\[
\begin{array}{cl}
\pi_{\alpha}d\hat{f}_{n}\circ i & =J(\hat{f}_{n})\circ\pi_{\alpha}d\hat{f}_{n}\\
(\hat{f}_{n}^{*}\alpha)\circ i & =d\hat{a}_{n}+\hat{\gamma}_{n}
\end{array}\text{ on }D,
\]
where $\hat{\gamma}_{n}:=\psi_{n}^{*}\gamma_{n}$ is a harmonic $1-$form
on $D$ with respect to $i$. The energy of $\hat{u}_{n}$ on $D$
is uniformly bounded as $E(\hat{u}_{n};D)\leq E_{0}$, while the $L_{2}-$norm
of the $i-$harmonic $1-$form $\hat{\gamma}_{n}$ is uniformly bounded
on $D$ by the constant $C_{0}$. Furthermore, for $z^{(n)}:=\psi_{n}^{-1}(z_{n})$,
$\left\Vert d\hat{u}_{n}(z^{(n)})\right\Vert \rightarrow\infty$ as
$n\rightarrow\infty$. This can be seen as follows. If $v_{n}\in T_{z^{(n)}}D$
with $\left\Vert v_{n}\right\Vert _{\text{eucl.}}=1$ is such that
\[
\left\Vert du_{n}(z_{n})\frac{d\psi_{n}(z^{(n)})v_{n}}{\left\Vert d\psi_{n}(z^{(n)})v_{n}\right\Vert _{h_{n}}}\right\Vert _{\overline{g}}=\left\Vert du_{n}(z_{n})\right\Vert ,
\]
then, 
\begin{eqnarray*}
\left\Vert d\hat{u}_{n}(z^{(n)})v_{n}\right\Vert _{\tilde{g}} & = & \left\Vert du_{n}(z_{n})\frac{d\psi_{n}(z^{(n)})v_{n}}{\left\Vert d\psi_{n}(z^{(n)})v_{n}\right\Vert _{h_{n}}}\right\Vert _{\overline{g}}\left\Vert d\psi_{n}(z^{(n)})v_{n}\right\Vert _{h_{n}}\\
 & = & \left\Vert du_{n}(z_{n})\right\Vert \left\Vert d\psi_{n}(z^{(n)})v_{n}\right\Vert _{h_{n}}\\
 & \geq & \left\Vert du_{n}(z_{n})\right\Vert \frac{1}{2}\left\Vert d\psi_{n}(z^{(n)})\right\Vert \\
 & \geq & \left\Vert du_{n}(z_{n})\right\Vert \frac{1}{4}\left\Vert d\psi(0)\right\Vert \rightarrow\infty
\end{eqnarray*}
as $n\rightarrow\infty$. The first inequality follows from the $i-j^{(n)}-$holomorphicity
of $\psi_{n}$. Applying Hofer's topological lemma (Lemma 2.39 of
\cite{key-6}), we obtain another sequence $\{\tilde{z}^{(n)}\}_{n\in\mathbb{N}}\subset D$
with $\tilde{z}^{(n)}\rightarrow0$, $R_{n}:=\left\Vert d\hat{u}_{n}(\tilde{z}^{(n)})\right\Vert \rightarrow\infty$,
$\epsilon_{n}\searrow0$, $\epsilon_{n}R_{n}\rightarrow\infty$ as
$n\rightarrow\infty$ and $\left\Vert d\hat{u}_{n}(z)\right\Vert \leq2R_{n}$
for all $z\in D_{\epsilon_{n}}(\tilde{z}^{(n)})$. Doing rescaling
we define the maps 
\[
v_{n}(z)=(b_{n}(z),g_{n}(z)):=\left(\hat{a}_{n}\left(\tilde{z}^{(n)}+\frac{z}{R_{n}}\right)-\hat{a}_{n}(\tilde{z}^{(n)}),\hat{f}_{n}\left(\tilde{z}^{(n)}+\frac{z}{R_{n}}\right)\right)
\]
for all $z\in D_{\epsilon_{n}R_{n}}(0)$. The maps $v_{n}=(b_{n},g_{n}):D_{\epsilon_{n}R_{n}}(0)\rightarrow\mathbb{R}\times M$
satisfy $\left\Vert dv_{n}(0)\right\Vert =1$ and $\left\Vert dv_{n}(z)\right\Vert \leq2$
for all $z\in D_{\epsilon_{n}R_{n}}(0)$, and we have
\begin{align*}
E_{\alpha}(v_{n};D_{\epsilon_{n}R_{n}}(0))=E_{\alpha}(\hat{u}_{n};D_{\epsilon_{n}}(\tilde{z}^{(n)}))\leq E_{\alpha}(\hat{u}_{n};D)
\end{align*}
and 
\begin{align*}
E_{d\alpha}(v_{n};D_{\epsilon_{n}R_{n}}(0))=E_{d\alpha}(\hat{u}_{n};D_{\epsilon_{n}}(\tilde{z}^{(n)}))\leq E_{d\alpha}(\hat{u}_{n};D)
\end{align*}
giving $E(v_{n};D_{\epsilon_{n}R_{n}}(0))\leq E_{0}$. Moreover, $v_{n}$
solves the $\mathcal{H}-$holomorphic equations 
\[
\begin{array}{cl}
\pi_{\alpha}dg_{n}\circ i & =J\circ\pi_{\alpha}dg_{n},\\
(g_{n}^{*}\alpha)\circ i & =db_{n}+\underline{\gamma}_{n},
\end{array}
\]
where $\underline{\gamma}_{n}:=\hat{\gamma}_{n}/R_{n}$. Because $v_{n}$
has a bounded gradient, there exists a smooth map $v:\mathbb{C}\rightarrow\mathbb{R}\times M$
with a bounded energy (by $E_{0}$) such that $v_{n}\rightarrow v$
in $C_{\text{loc}}^{\infty}(\mathbb{C})$ as $n\rightarrow\infty$.
Nevertheless, because $\hat{\gamma}_{n}$ is bounded in $L^{2}-$norm,
$\underline{\gamma}_{n}\rightarrow0$ as $n\rightarrow0$. Thus $v=(b,g):\mathbb{C}\rightarrow\mathbb{R}\times M$
is a pseudoholomorphic plane, i.e. it solves the pseudoholomorphic
curve equation 
\[
\begin{array}{cl}
\pi_{\alpha}dg\circ i & =J\circ\pi_{\alpha}dg,\\
(g^{*}\alpha)\circ i & =db.
\end{array}
\]
We prove now that the $\alpha-$ and $d\alpha-$energies of $v$ are
bounded. Let $R>0$ be arbitrary and for some $\tau_{0}\in\mathcal{A}$
consider
\begin{align*}
\int_{D_{R}(0)}\tau'_{0}(b)db\circ i\wedge db & =\lim_{n\rightarrow\infty}\int_{D_{R}(0)}\tau_{0}'(b_{n})db_{n}\circ i\wedge db_{n}\\
 & =\lim_{n\rightarrow\infty}\int_{D_{R/R_{n}}(\tilde{z}^{(n)})}\tau'_{0}(\hat{a}_{n}-\hat{a}_{n}(\tilde{z}^{(n)}))d\hat{a}_{n}\circ i\wedge d\hat{a}_{n}\\
 & =\lim_{n\rightarrow\infty}\int_{D_{R/R_{n}}(\tilde{z}^{(n)})}\tau'_{n}(\hat{a}_{n})d\hat{a}_{n}\circ i\wedge d\hat{a}_{n}\\
 & \leq\lim_{n\rightarrow\infty}\sup_{\tau\in\mathcal{A}}\int_{D_{R/R_{n}}(\tilde{z}^{(n)})}\tau'(\hat{a}_{n})d\hat{a}_{n}\circ i\wedge d\hat{a}_{n}\\
 & =\lim_{n\rightarrow\infty}E_{\alpha}(\hat{u}_{n};D_{R/R_{n}}(\tilde{z}^{(n)})),
\end{align*}
where $\tau_{n}=\tau_{0}(\cdot-\hat{a}_{n}(\tilde{z}^{(n)}))$ is
a sequence of functions that belong to $\mathcal{A}$. Taking the
supremum of the left-hand side over $\tau\in\mathcal{A}$, we get
\[
E_{\alpha}(v;D_{R}(0))\leq\lim_{n\rightarrow\infty}E_{\alpha}(\hat{u}_{n};D_{R/R_{n}}(\tilde{z}^{(n)})),
\]
while picking some arbitrary $\epsilon>0$, we obtain 
\[
E_{\alpha}(v;D_{R}(0))\leq\lim_{n\rightarrow\infty}E_{\alpha}(\hat{u}_{n};D_{R/R_{n}}(\tilde{z}^{(n)}))\leq\lim_{n\rightarrow\infty}E_{\alpha}(\hat{u}_{n};D_{\epsilon}(0)).
\]
For the $d\alpha-$energy, we proceed analogously: for $R>0$ we have
\[
E_{d\alpha}(v;D_{R}(0))=\lim_{n\rightarrow\infty}\int_{D_{R}(0)}g_{n}^{*}d\alpha=\lim_{n\rightarrow\infty}\int_{D_{R/R_{n}}(\tilde{z}^{(n)})}\hat{f}_{n}^{*}d\alpha,
\]
while picking some arbitrary $\epsilon>0$, we find
\[
E_{d\alpha}(v;D_{R}(0))=\lim_{n\rightarrow\infty}\int_{D_{R/R_{n}}(\tilde{z}^{(n)})}\hat{f}_{n}^{*}d\alpha\leq\lim_{n\rightarrow\infty}\int_{D_{\epsilon}(0)}\hat{f}_{n}^{*}d\alpha\leq\lim_{n\rightarrow\infty}E_{d\alpha}(\hat{u}_{n};D_{\epsilon}(0)).
\]
Because the $\alpha-$ and $d\alpha-$energies are non-negative,
\begin{align*}
E(v;D_{R}(0)) & =E_{\alpha}(v;D_{R}(0))+E_{d\alpha}(v;D_{R}(0))\\
 & \leq\lim_{n\rightarrow\infty}E_{\alpha}(\hat{u}_{n};D_{\epsilon}(0))+\lim_{n\rightarrow\infty}E_{d\alpha}(\hat{u}_{n};D_{\epsilon}(0))\\
 & =\lim_{n\rightarrow\infty}E(\hat{u}_{n};D_{\epsilon}(0))\\
 & \leq E_{0},
\end{align*}
and since $R>0$ was arbitrary, we obtain $E_{d\alpha}(v;\mathbb{C})\leq E_{0}$.
As $v$ is a usual pseudoholomorphic curve, it follows that $E(v;\mathbb{C})=E_{\text{H}}(v;\mathbb{C})$,
where $E_{\text{H}}$ is the Hofer energy defined in \cite{key-1};
thus $E_{\text{H}}(v;\mathbb{C})\leq E_{0}$. Moreover, as $v$ is
non-constant we have by Remark 2.38 of \cite{key-6}, that for any
$\epsilon>0$,\textbf{ }
\[
0<\hbar\leq E_{d\alpha}(v;\mathbb{C})\leq\lim_{n\rightarrow\infty}E_{d\alpha}(\hat{u}_{n};D_{\epsilon}(0))\leq\lim_{n\rightarrow\infty}E_{d\alpha}(u_{n};\psi_{n}(D_{\epsilon}(0))).
\]
Choosing $\epsilon>0$ such that $\psi_{n}(D_{\epsilon}(0))\subset U$
for all $n$, we end up with
\[
0<\hbar\leq\lim_{n\rightarrow\infty}E_{d\alpha}(u_{n};U)\leq E_{0},
\]
and the proof is finished.
\end{singlespace}
\end{proof}
\begin{singlespace}
\noindent The next proposition is proved by contradiction by means
of Lemma \ref{2_1_lem:There-exists-a}.
\end{singlespace}
\begin{prop}
\begin{singlespace}
\noindent \label{2_1_prop:There-exists-a}There exists a subsequence
of $u_{n}$, still denoted by $u_{n}$, and a finite subset $\mathcal{Z}\subset\dot{\tilde{S}}^{\mathcal{D},r}$
such that for every compact subset $\mathcal{K}\subset\dot{\tilde{S}}^{\mathcal{D},r}\backslash\mathcal{Z}$,
there exists a constant \textup{$C_{\mathcal{K}}>0$} such that 
\[
\left\Vert du_{n}(z)\right\Vert :=\sup_{v\in T_{z}S^{\mathcal{D},r},\left\Vert v\right\Vert _{h_{n}}=1}\left\Vert du_{n}(z)v\right\Vert _{\overline{g}}\leq C_{\mathcal{K}}
\]
for all $z\in\mathcal{K}$. 
\end{singlespace}
\end{prop}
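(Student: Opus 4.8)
The strategy is the standard "bubbling-off produces at most finitely many concentration points" argument, adapted to the $\mathcal{H}$-holomorphic setting; it is driven entirely by the energy quantization established in Lemma \ref{2_1_lem:There-exists-a}. First I would define the \emph{bad set} $\mathcal{Z}$ as the set of points $z^{1}\in\dot{\tilde{S}}^{\mathcal{D},r}$ for which there is a sequence $z_{n}\to z^{1}$ with $\left\Vert du_{n}(z_{n})\right\Vert \to\infty$. The claim is that $|\mathcal{Z}|\le 2E_{0}/\hbar$, so in particular $\mathcal{Z}$ is finite. Indeed, suppose $z^{1}_{1},\dots,z^{1}_{m}$ are distinct points of $\mathcal{Z}$. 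Choose pairwise disjoint open neighbourhoods $U_{k}\ni z^{1}_{k}$ with $\overline{U_{k}}\subset\dot{\tilde{S}}^{\mathcal{D},r}$. By Lemma \ref{2_1_lem:There-exists-a} applied at each $z^{1}_{k}$, there is $N_{k}$ with $E_{d\alpha}(u_{n};U_{k})\ge\hbar/2$ for $n\ge N_{k}$; but actually the lemma gives the sharper lower bound $\liminf_{n}E_{d\alpha}(u_{n};U_{k})\ge\hbar$. Since the $U_{k}$ are disjoint and $d\alpha$-energy is additive and non-negative, for $n\ge\max_{k}N_{k}$ we get $m\hbar/2\le\sum_{k}E_{d\alpha}(u_{n};U_{k})\le E_{d\alpha}(u_{n};\dot{\tilde{S}}^{\mathcal{D},r})\le E(u_{n};S^{\mathcal{D},r}\backslash\mathcal{P})\le E_{0}$, hence $m\le 2E_{0}/\hbar$. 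Taking $m$ maximal shows $\mathcal{Z}$ is finite.

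Next I would pass to the subsequence and establish the gradient bound on compact subsets of the complement. Fix a compact $\mathcal{K}\subset\dot{\tilde{S}}^{\mathcal{D},r}\backslash\mathcal{Z}$. If the desired bound failed, there would be points $z_{n}\in\mathcal{K}$ with $\left\Vert du_{n}(z_{n})\right\Vert \to\infty$; by compactness of $\mathcal{K}$, after passing to a subsequence $z_{n}\to z^{1}\in\mathcal{K}$. But then $z^{1}\in\mathcal{Z}$ by definition of $\mathcal{Z}$, contradicting $\mathcal{K}\cap\mathcal{Z}=\emptyset$. This is precisely where the finiteness of $\mathcal{Z}$ combined with its \emph{definition} does the work: every blow-up point is by construction in $\mathcal{Z}$, so away from $\mathcal{Z}$ there can be no blow-up. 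The constant $C_{\mathcal{K}}$ is then obtained as $\sup_{n}\sup_{z\in\mathcal{K}}\left\Vert du_{n}(z)\right\Vert$, which is finite once we know no subsequence of the suprema diverges — and that is exactly what we just proved.

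One technical point worth spelling out is that the norm $\left\Vert du_{n}(z)\right\Vert$ is measured using the varying hyperbolic metrics $h_{n}$ on the domain (pulled back from $S_{n}$), and these only converge in $C^{\infty}_{\mathrm{loc}}$ away from the special circles $\Gamma_{j}$ to $h$. However $\mathcal{K}$ is a compact subset of $\dot{\tilde{S}}^{\mathcal{D},r}=\dot{S}^{\mathcal{D},r}\backslash\coprod_{j}\Gamma_{j}$, so on $\mathcal{K}$ the metrics $h_{n}$ are uniformly comparable to $h$ for large $n$; this ensures that finiteness of the gradient in the $h_{n}$-norm is equivalent to finiteness in the $h$-norm, and in particular that the diverging-subsequence argument above is not sensitive to which metric is used. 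The main obstacle in the whole argument is not in this proposition itself — it is entirely parasitic on Lemma \ref{2_1_lem:There-exists-a} — but rather in verifying that the diagonal extraction of subsequences is legitimate: one extracts a subsequence for each point of a candidate set, but since we first bound the cardinality of $\mathcal{Z}$ by $2E_{0}/\hbar$ \emph{a priori}, only finitely many extractions are needed, so no diagonal argument over an infinite index set is required. I would phrase the proof to make that ordering (bound $|\mathcal{Z}|$ first, then extract) explicit to avoid any circularity.
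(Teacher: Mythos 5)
Your proposal correctly identifies the two ingredients (energy quantization via Lemma~\ref{2_1_lem:There-exists-a}, plus a finiteness count), but the logical architecture has a gap that the paper's iterative construction is specifically designed to avoid. The problem is in the interplay between the definition of $\mathcal{Z}$ and the subsequence extraction, and you cannot simply ``bound $|\mathcal{Z}|$ first, then extract'' as you propose.

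If you define $\mathcal{Z}$ as the set of points where the \emph{full} sequence blows up (the only reading under which your finiteness argument is valid, since you need the energy lower bound $E_{d\alpha}(u_{n};U_{k})\geq\hbar/2$ to hold for the \emph{same} large $n$ across all the $U_{k}$ simultaneously), then your final contradiction step fails: if $\sup_{\mathcal{K}}\|du_{n}\|$ is unbounded, you only get a subsequence $n_{k}$ and points $z_{n_{k}}\to z^{1}\in\mathcal{K}$ with $\|du_{n_{k}}(z_{n_{k}})\|\to\infty$, and this does \emph{not} place $z^{1}$ in your $\mathcal{Z}$. A concrete obstruction: if the bubble alternates between two points $z^{(0)}$ and $z^{(1)}$ depending on the parity of $n$, neither point is a full-sequence blow-up point (so $\mathcal{Z}=\emptyset$ under your definition), yet the gradient is unbounded on any compact set containing either point. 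Conversely, if you instead define $\mathcal{Z}$ using arbitrary subsequences, your summation argument for $|\mathcal{Z}|\leq 2E_{0}/\hbar$ breaks, because the energy concentration at different $z^{i}$ may occur along pairwise disjoint index sets, so the lower bounds $E_{d\alpha}(u_{n};U_{i})\geq\hbar/2$ never hold simultaneously for a single $n$; the same alternating-bubble example shows that this subsequence-defined $\mathcal{Z}$ can have more than $2E_{0}/\hbar$ elements, and in fact can be infinite if the bubble location wanders.

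The paper resolves this by building the subsequence and the set $\mathcal{Z}$ \emph{together}, iteratively and with nesting: pick a blow-up point $z^{1}$, pass to a subsequence $u_{n}^{1}$ along which the blow-up at $z^{1}$ persists; pick a second blow-up point $z^{2}$ for $u_{n}^{1}$ away from $z^{1}$, pass to a further subsequence $u_{n}^{2}\subset u_{n}^{1}$; and so on. Because the subsequences are nested, every previously found $z^{i}$ remains a blow-up point for the current subsequence, so Lemma~\ref{2_1_lem:There-exists-a} applies to all of them with the same index sequence, and the summation $k\hbar/2\leq E_{0}$ terminates the process after at most $2E_{0}/\hbar$ steps. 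Your observation that only finitely many extractions are required (so no diagonal argument over an infinite family) is correct and is exactly why the nested construction terminates; but the termination bound is what makes the iteration legal, not something you can establish independently of it. To repair your proof, replace the static definition of $\mathcal{Z}$ with this recursive extraction, and let the $2E_{0}/\hbar$ bound serve as the termination criterion rather than as an a priori cardinality bound for a pre-defined set.
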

\begin{proof}
\begin{singlespace}
\noindent For the sequence $u_{n}$ and any finite subset $\mathcal{Z}\subset\dot{\tilde{S}}^{\mathcal{D},r},$
we define
\begin{align*}
\mathcal{Z}_{\{u_{n}\},\mathcal{Z}} & :=\left\{ z\in\dot{\tilde{S}}^{\mathcal{D},r}\backslash\mathcal{Z}\mid\text{ there exists a subsequence }u_{n_{k}}\text{ of }u_{n}\text{ and a}\right.\\
 & \left.\text{ sequence }z_{k}\in\dot{\tilde{S}}^{\mathcal{D},r}\backslash\mathcal{Z}\text{ such that }z_{k}\rightarrow z\text{ and }\left\Vert du_{n_{k}}(z_{k})\right\Vert \rightarrow\infty\text{ as }k\rightarrow\infty\right\} .
\end{align*}
If $\mathcal{\mathcal{Z}}_{\{u_{n}\},\emptyset}$ is empty then the
assertion is fullfilled for the sequence $u_{n}$ and the finite set
$\mathcal{Z}=\emptyset$. Otherwise, we choose $z^{1}\in\mathcal{\mathcal{Z}}_{\{u_{n}\},\emptyset}$.
In this case, there exists a sequence $z_{n}^{1}\in\dot{\tilde{S}}^{\mathcal{D},r}$
and a subsequence $u_{n}^{1}$ of $u_{n}$ such that $z_{n}^{1}\rightarrow z^{1}$
and $\left\Vert du_{n}^{1}(z_{n}^{1})\right\Vert \rightarrow\infty$.
Consider now the set $\mathcal{\mathcal{Z}}_{\{u_{n}^{1}\},\{z^{1}\}}$.
If $\mathcal{\mathcal{Z}}_{\{u_{n}^{1}\},\{z^{1}\}}$ is empty then
the assertion is fullfilled for the subsequence $u_{n}^{1}$ and the
finite set $\mathcal{Z}=\{z^{1}\}$. Otherwise, we choose an element
$z^{2}\in\mathcal{\mathcal{Z}}_{\{u_{n}^{2}\},\{z^{1}\}}$. In this
case, by definition, there exists a sequence $z_{n}^{2}\in\dot{\tilde{S}}^{\mathcal{D},r}\backslash\{z^{1}\}$
and a subsequence $u_{n}^{2}$ of $u_{n}^{1}$ such that $z_{n}^{2}\rightarrow z^{2}$
and $\left\Vert du_{n}^{2}(z_{n}^{2})\right\Vert \rightarrow\infty$.
Let us show that the set of points $\mathcal{Z}=\{z^{1},z^{2},...\}$
constructed in this way is finite, or more precisely, that $|\mathcal{Z}|\leq2E_{0}/\hbar$.
Assume $|\mathcal{Z}|>2E_{0}/\hbar$ and pick an integer $k>2E_{0}/\hbar$
and pairwise different points $z^{1},...,z^{k}\in\mathcal{Z}$. Let
$U_{1},...,U_{k}\subset\dot{\tilde{S}}^{\mathcal{D},r}$ be some open
pairwise disjoint neighbourhoods of $z^{1},...,z^{k}$. Applying Lemma
\ref{2_1_lem:There-exists-a} inductively, we deduce that there exists
a positive integer $N$ such that for every $n\geq N$, $E_{d\alpha}(u_{n};U_{i})\geq\hbar/2$
for all $i=1,...,k$. Since the $U_{i}$ are disjoint, we obtain
\[
k\frac{\hbar}{2}\leq\sum_{i=1}^{k}E_{d\alpha}(u_{n};U_{i})\leq E_{d\alpha}(u_{n};\dot{\tilde{S}}^{\mathcal{D},r})\leq E_{0}.
\]
Thus $k\leq2E_{0}/\hbar$ which is a contradiction to our assumption.
\end{singlespace}
\end{proof}
\begin{singlespace}
\noindent By means of Proposition \ref{2_1_prop:There-exists-a} we
can prove the convergence of the $\mathcal{H}-$holomorphic maps in
a punctured thick part of the Riemann surface. 
\end{singlespace}
\begin{proof}
\begin{singlespace}
\noindent \emph{(of Theorem \ref{thm:There-exists-finitely})} For
some sufficiently small $k\in\mathbb{N}$ we consider the subsets
$\Omega_{k}:=\text{Thick}_{1/k}(\dot{\tilde{S}}^{\mathcal{D},r},h)\backslash\bigcup_{i=1}^{N}D_{1/k}^{h}(z^{i})$,
where $\mathcal{Z}=\{z^{1},...,z^{N}\}$ is the subset in Proposition
\ref{2_1_prop:There-exists-a} and $D_{1/k}^{h}(z_{i})$ is the open
disk around $z_{i}$ of radius $1/k$ with respect to the metric $h$.
In order to keep the notation simple, the subsequence obtained by
applying Proposition \ref{2_1_prop:There-exists-a} is still denoted
by $u_{n}$. Obviously, $\Omega_{k}$ build an exhaustion by compact
sets of $\dot{\tilde{S}}^{\mathcal{D},r}\backslash\mathcal{Z}$. These
sets are compact surfaces with boundary. By Proposition \ref{2_1_prop:There-exists-a},
the maps $u_{n}$ have uniformly bounded gradients on $\Omega_{1}$.
Thus after a suitable translation of the maps $u_{n}$ in the $\mathbb{R}-$coordinate,
there exists a subsequence $u_{n}^{1}$ of $u_{n}$ that converges
in $C^{\infty}(\Omega_{1})$ to a map $u:\Omega_{1}\rightarrow\mathbb{R}\times M$.
Iteratively, at step $k+1$ there exists a subsequence $u_{n}^{k+1}$
of $u_{n}^{k}$ that converges in $C^{\infty}(\Omega_{k+1})$ to a
map $u:\Omega_{k+1}\rightarrow\mathbb{R}\times M$ which is an extension
from $\Omega_{k}$ to $\Omega_{k+1}$. This procedure allows us to
define a map $u:\dot{\tilde{S}}^{\mathcal{D},r}\backslash\mathcal{Z}\rightarrow\mathbb{R}\times M$.
After passing to some diagonal subsequene $u_{n}^{n}$, the maps $u_{n}^{n}$
converge in $C_{\text{loc}}^{\infty}(\dot{\tilde{S}}^{\mathcal{D},r}\backslash\mathcal{Z})$
to the map $u:\dot{\tilde{S}}^{\mathcal{D},r}\backslash\mathcal{Z}\rightarrow\mathbb{R}\times M$.
Since the $L^{2}-$norms of $\gamma_{n}$ are uniformly bounded on
$S^{\mathcal{D},r}$, they converge in $C_{\text{loc}}^{\infty}(\dot{\tilde{S}}^{\mathcal{D},r})$
to some harmonic $1-$form $\gamma$ with a bounded $L^{2}-$norm
on $\dot{\tilde{S}}^{\mathcal{D},r}$. Hence the map $u$ is a $\mathcal{H}-$holomorphic
curve on $\dot{\tilde{S}}^{\mathcal{D},r}\backslash\mathcal{Z}$ with
harmonic perturbation $\gamma$.
\end{singlespace}
\end{proof}
\begin{singlespace}

\subsection{\label{sec:The-Thin-Part}Convergence on the thin part and around
the points from $\mathcal{Z}$}
\end{singlespace}

\begin{singlespace}
\noindent In this section we investigate the convergence of the $\mathcal{H}-$holomorphic
curves $u_{n}$ on the components of the thin part and in the neighbourhood
of the points from $\mathcal{Z}$ that were constructed in Theorem
\ref{thm:There-exists-finitely}. For a sufficient small $\delta>0$,
the set $\text{Thin}_{\delta}(\dot{S}^{\mathcal{D},r},h_{n})$ can
be decomposed in two types of connected components: (I) the non-compact
components that are called cusps, which are neighbourhoods of punctures
with respect to the hyperbolic metric, and (II) the compact components
called hyperbolic cylinders. Each of these cylinders can be biholomorphically
identified with the standard cylinder $[-R,R]\times S^{1}$ for a
suitable $R>0$. In the Deligne-Mumford limiting process $R$ may
tend to $\infty$ and nodes appear. For more details we refer to Chapter
1 of \cite{key-6}. This section is organized as follows. First, we
analyze the convergence of $u_{n}$ on components that can be indentified
with hyperbolic cylinders, and describe the limit object. Second,
we treat the convergence of $u_{n}$ on components that can be identified
with cusps, and as before, describe the limit object. The convergence
results estabished here can be used to describe the convergence of
$u_{n}$ in a neighbourhood of the points from $\mathcal{Z}$. Third,
we use the description of the convergence of the $\mathcal{H}-$holomorpic
curves $u_{n}$ on the thick part (established in Section \ref{2_1_sec:First_Theorem}),
the thin part, and in the neighbourhood of the points from $\mathcal{Z}$
(established in this section) to define a new surface by glueing the
two parts together. On this surface we decribe the convergence of
$u_{n}$ completely.
\end{singlespace}
\begin{singlespace}

\subsubsection{\label{subsec:Cylinders}Cylinders}
\end{singlespace}

\begin{singlespace}
\noindent We analyze the convergence of $u_{n}$ on compact components
of the thin part which are biholomorphic to hyperbolic cylinders.
When restricted to these cylinders, the curves $u_{n}$ can have a
$d\alpha-$energy larger than the constant $\hbar>0$ defined in (\ref{eq:hbar}).
Since we do not have a version of the monotonicity lemma in the $\mathcal{H}-$holomorphic
case, the classical results on the asymptotic of holomorphic cylinders
from \cite{key-1} and \cite{key-11} are not directly applicable.
To deal with this problem we shift the maps by the Reeb flow to make
them pseudoholomorphic. Actually we proceed as follows. We decompose
the hyperbolic cylinder into a finite uniform number of smaller cylinders;
some of them having conformal modulus tending to infinity but a $d\alpha-$energy
strictly smaller than $\hbar$, and the rest of them having bounded
modulus but a $d\alpha-$energy possibly larger than $\hbar$. We
refer to these cylinders as cylinders of types $\infty$ and $b_{1}$,
respectively. We consider an alternating appearance of these cylinders,
as it can be seen in Figure \ref{fig12}.

\noindent \begin{figure}     
	\centering  
	\def\svgscale{0.8} 	
	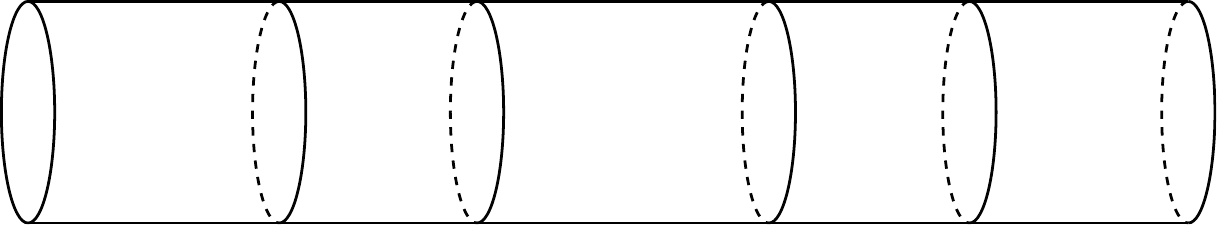  
	\caption{The component of the thin part, which is biholomorphic to a cylinder, is divided in cylinders of types $b_{1}$ and $\infty$ in an alternating order.}
	\label{fig12}
\end{figure}

\noindent The convergence and the description of the limit object
are first treated for cylinders of type $\infty$, and then for cylinders
of type $b_{1}$.

\noindent As cylinders of type $\infty$ have a small $d\alpha-$energy,
we can assume, by the classical bubbling-off analysis, that the maps
$u_{n}$ have uniformly bounded gradients. To make the curves $u_{n}$
pseudoholomorphic, we perform a transformation by pushing them along
the Reeb flow up to some specific time. This procedure is made precise
in \cite{key-23}. As the gradients of these transformed curves still
remain uniformly bounded, we can adapt the results of \cite{key-11}
to formulate a convergence result for the transformed curves (see
\cite{key-23}). Undoing the transformation we obtain a convergence
result for the $\mathcal{H}-$holomorphic curves. 

\noindent In the case of cylinders of type $b_{1}$ we proceed as
follows. Relying on a bubbling-off argument, as we did in the case
of the thick part (see Section \ref{2_1_sec:First_Theorem}), we assume
that the gradients blow up only in a finite uniform number of points
and remain uniformly bounded in a compact complement of them. In this
compact region, the Arzelá-Ascoli theorem shows that the curves $u_{n}$
together with the harmonic perturbations $\gamma_{n}$ converge in
$C^{\infty}$ to some $\mathcal{H}-$holomorphic curve. What is then
left is the convergence in a neighbourhood of the finitely many punctures
where the gradients blow up. Here, a neighbourhood of a puncture is
a disk on which the harmonic perturbation can be made exact and can
be encoded in the $\mathbb{R}-$coordinate of the curve $u_{n}$.
By this procedure we transform the $\mathcal{H}-$holomorphic curve
into a usual pseudoholomorphic curve defined on a disk $D$. By the
$C^{\infty}-$convergence of $u_{n}$ on any compact complement of
the punctures, we assume that the transformed curves converge on an
arbitrary neighbourhood of $\partial D$. This approach, which is
described in detail in Section \ref{subsec:Cylinders-of-type-1},
uses a convergence result established in Appendix \ref{sec:Pseudoholomorphic-disks-with}.
As for cylinders of type $\infty$, we undo the transformation and
derive a convergence result for the $\mathcal{H}-$homolorphic curves
on cylinders of type $b_{1}$. Finally, glueing all cylinders together,
we are led to a convergence result for the entire component which
is biholomorphic to a hyperbolic cylinder from the thin part.

\noindent Let $C_{n}$ be a component of $\text{Thin}_{\delta}(\dot{S}^{\mathcal{D},r},h_{n})$
which is conformal equivalent to the cylinder $[-\sigma_{n}^{\delta},\sigma_{n}^{\delta}]\times S^{1}$
via a map $\rho_{n}:[-\sigma_{n}^{\delta},\sigma_{n}^{\delta}]\times S^{1}\rightarrow C_{n}$.
Observe that from the definition of Deligne-Mumford convergence, $\sigma_{n}^{\delta}\rightarrow\infty$
as $n\rightarrow\infty$. In the following, we drop the fixed, sufficiently
small constant $\delta>0$, and assume that the curves $u_{n}$ are
defined on $[-\sigma_{n},\sigma_{n}]\times S^{1}$. Let $u_{n}=(a_{n},f_{n}):[-\sigma_{n},\sigma_{n}]\times S^{1}\rightarrow\mathbb{R}\times M$
be a sequence of $\mathcal{H}-$holomorphic curves with harmonic perturbations
$\gamma_{n}$, i.e.,
\begin{align*}
\pi_{\alpha}df_{n}\circ i & =J(f_{n})\circ\pi_{\alpha}df_{n},\\
(f_{n}^{*}\alpha)\circ i & =da_{n}+\gamma_{n}
\end{align*}
on $[-\sigma_{n},\sigma_{n}]\times S^{1}$, and let us assume that
the energy of $u_{n}$, as well as the $L^{2}-$norm of $\gamma_{n}$
on the cylinders are uniformly bounded, i.e. for the constants $E_{0},C_{0}>0$
we have $E(u_{n};[-\sigma_{n},\sigma_{n}]\times S^{1})\leq E_{0}$
and $\left\Vert \gamma_{n}\right\Vert _{L^{2}([-\sigma_{n},\sigma_{n}]\times S^{1})}^{2}\leq C_{0}$
for all $n\in\mathbb{N}$.

\noindent Before describing the decomposition of $[-\sigma_{n},\sigma_{n}]\times S^{1}$
into cylinders of types $\infty$ and $b_{1}$ we give a proposition
which states that the $C^{1}-$norm of the harmonic perturbation $\gamma_{n}$
is uniformly bounded. This result will play an essential role in Section
\ref{subsec:Cylinders-of-type-1}. We set $\gamma_{n}=f_{n}ds+g_{n}dt$,
where $f_{n}$ and $g_{n}$ are harmonic functions defined on $[-\sigma_{n},\sigma_{n}]\times S^{1}$
with coordinates $(s,t)$ such that $f_{n}+ig_{n}$ is holomorphic.
By the uniform $L^{2}-$bound of $\gamma_{n}$, we have
\[
\left\Vert \gamma_{n}\right\Vert _{L^{2}([-\sigma_{n},\sigma_{n}]\times S^{1})}^{2}=\int_{[-\sigma_{n},\sigma_{n}]\times S^{1}}\left(f_{n}^{2}+g_{n}^{2}\right)dsdt\leq C_{0}
\]
for all $n\in\mathbb{N}$. As a result, the $L^{2}-$norm of the holomorphic
function $f_{n}+ig_{n}$ is uniformly bounded. Denote this function
by $G_{n}=f_{n}+ig_{n}$.
\end{singlespace}
\begin{prop}
\begin{singlespace}
\noindent \label{prop:There-exists-a-1}For any $\delta>0$ there
exists a constant $C_{\delta}>0$ such that
\[
\left\Vert G_{n}\right\Vert _{C^{1}([-\sigma_{n}+\delta,\sigma_{n}-\delta]\times S^{1})}\leq C_{\delta}
\]
for all $n\in\mathbb{N}$. 
\end{singlespace}
\end{prop}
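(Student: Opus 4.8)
The plan is to derive the $C^{1}$-bound from the uniform $L^{2}$-bound $\left\Vert G_{n}\right\Vert _{L^{2}([-\sigma_{n},\sigma_{n}]\times S^{1})}^{2}\le C_{0}$ by the classical interior estimate for holomorphic functions: every derivative of $G_{n}$ at a point is controlled by its $L^{2}$-norm on a slightly larger disk. The geometric fact making this uniform in $n$ is that on the shrunk cylinder $[-\sigma_{n}+\delta,\sigma_{n}-\delta]\times S^{1}$ every point has Euclidean distance at least $\delta$ to $\partial([-\sigma_{n},\sigma_{n}]\times S^{1})$, while the circumference of $S^{1}=\mathbb{R}/\mathbb{Z}$ is fixed equal to $1$. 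First I would reduce to the case $\delta<\tfrac{1}{4}$: for $\delta'\le\delta$ one has $[-\sigma_{n}+\delta,\sigma_{n}-\delta]\times S^{1}\subseteq[-\sigma_{n}+\delta',\sigma_{n}-\delta']\times S^{1}$, so it suffices to prove the estimate for all sufficiently small $\delta$, setting $C_{\delta}:=C_{1/4}$ for larger $\delta$. Since $\sigma_{n}\to\infty$, for all but finitely many $n$ we have $\sigma_{n}\ge\delta$; the finitely many remaining indices contribute a finite bound by continuity of $G_{n}$ on the compact (possibly empty) set $[-\sigma_{n}+\delta,\sigma_{n}-\delta]\times S^{1}$, which we absorb into $C_{\delta}$.

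Fix now such a small $\delta$ and assume $\sigma_{n}\ge\delta$. For a point $z$ with $\mathrm{Re}\,z\in[-\sigma_{n}+\delta,\sigma_{n}-\delta]$, the Euclidean disk $D_{\delta}(z)$ lies in the domain of $G_{n}$ and, since $\delta<\tfrac{1}{2}$ is below the injectivity radius of the cylinder, projects injectively onto it; hence $\int_{D_{\delta}(z)}|G_{n}|^{2}\le\left\Vert G_{n}\right\Vert _{L^{2}([-\sigma_{n},\sigma_{n}]\times S^{1})}^{2}\le C_{0}$. As $G_{n}$ is holomorphic, $|G_{n}|^{2}$ is subharmonic, so the sub-mean-value inequality gives $|G_{n}(z)|^{2}\le\frac{1}{\pi\delta^{2}}\int_{D_{\delta}(z)}|G_{n}|^{2}\le\frac{C_{0}}{\pi\delta^{2}}$. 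Applying this with $\delta$ replaced by $\delta/2$ yields the uniform $C^{0}$-bound $\sup_{[-\sigma_{n}+\delta/2,\sigma_{n}-\delta/2]\times S^{1}}|G_{n}|\le\frac{2}{\delta}\sqrt{C_{0}/\pi}=:M_{\delta}$.

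Finally, for $z$ with $\mathrm{Re}\,z\in[-\sigma_{n}+\delta,\sigma_{n}-\delta]$ the circle $\partial D_{\delta/2}(z)$ is contained in $[-\sigma_{n}+\delta/2,\sigma_{n}-\delta/2]\times S^{1}$, and Cauchy's estimate for the derivative of a holomorphic function gives $|G_{n}'(z)|\le\frac{2}{\delta}\sup_{\partial D_{\delta/2}(z)}|G_{n}|\le\frac{2}{\delta}M_{\delta}$. Combining the $C^{0}$-bound and the derivative bound on $[-\sigma_{n}+\delta,\sigma_{n}-\delta]\times S^{1}$ yields $\left\Vert G_{n}\right\Vert _{C^{1}([-\sigma_{n}+\delta,\sigma_{n}-\delta]\times S^{1})}\le C_{\delta}$ with $C_{\delta}$ depending only on $C_{0}$ and $\delta$. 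I do not expect a genuine obstacle here; the estimate is classical, and the only points needing care are purely bookkeeping — checking that the auxiliary disks embed in the cylinder and stay in the domain of $G_{n}$, passing between the successively shrunk cylinders $[-\sigma_{n}+\delta,\sigma_{n}-\delta]$ and $[-\sigma_{n}+\delta/2,\sigma_{n}-\delta/2]$, and dispatching the finitely many small-$n$ indices — none of which affects the uniformity of the final constant.
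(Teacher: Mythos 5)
Your argument is correct, and it takes a genuinely different route for the derivative bound. For the $C^{0}$ bound both you and the paper use the mean-value property: the paper applies the mean-value theorem for the harmonic functions $f_{n}=\Re G_{n}$ and $g_{n}=\Im G_{n}$ on disks $D_{\delta/4}(p)$ and then H\"older's inequality; you apply the sub-mean-value inequality to the subharmonic function $|G_{n}|^{2}$. These are essentially interchangeable. Where the two proofs diverge is the gradient bound: you obtain it immediately from Cauchy's estimate on circles $\partial D_{\delta/2}(z)$ contained in the $\delta/2$-shrunk cylinder, whereas the paper runs a bubbling-off argument — assume $\sup|\nabla G_{n}|\to\infty$, pick maximal points via Hofer's topological lemma, rescale to obtain a nonconstant bounded entire holomorphic limit, and contradict Liouville's theorem. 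Your route is shorter and more elementary, exploiting the rigidity of genuine holomorphic functions; the paper's route is heavier but mirrors the bubbling-off machinery used throughout the rest of the compactness argument (and would also apply in settings, such as pseudoholomorphic maps to a manifold, where Cauchy's integral formula is unavailable). One small bookkeeping point you correctly anticipated and should keep explicit: a disk of Euclidean radius $r$ centered at a point of the cylinder $\mathbb{R}\times(\mathbb{R}/\mathbb{Z})$ embeds only for $r<1/2$, so the reduction to $\delta<1/4$ (or any fixed bound below $1/2$) is needed before invoking the disk estimates.
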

\begin{proof}
\begin{singlespace}
\noindent First, we prove that the sequence $G_{n}$ is uniformly
bounded in $C^{0}-$norm. As $G_{n}:[-\sigma_{n},\sigma_{n}]\times S^{1}\rightarrow\mathbb{C}$
is holomorphic, $f_{n}=\Re(G_{n})$ and $g_{n}=\Im(G_{n})$ are harmonic
functions defined on $[-\sigma_{n},\sigma_{n}]\times S^{1}$. For
a sufficiently small $\delta>0$ we establish $C^{0}-$bounds for
$f_{n}$ on the subcylinders $[-\sigma_{n}+(\delta/2),\sigma_{n}-(\delta/2)]\times S^{1}$.
By the mean value theorem for harmonic function, we have
\begin{align*}
f_{n}(p) & =\frac{16}{\pi\delta^{2}}\int_{D_{\frac{\delta}{4}}(p)}f_{n}(s,t)dsdt
\end{align*}
for all $p\in[-\sigma_{n}+(\delta/2),\sigma_{n}-(\delta/2)]\times S^{1}$,
where $D_{\delta/4}(p)\subset[-\sigma_{n},\sigma_{n}]\times S^{1}$.
Then Hölder's inequality yields
\[
|f_{n}(p)|\leq\frac{4}{\sqrt{\pi}\delta}\left(\int_{D_{\frac{\delta}{4}}(p)}|f_{n}(s,t)|^{2}dsdt\right)^{\frac{1}{2}}\leq\frac{4}{\sqrt{\pi}\delta}\sqrt{C_{0}}
\]
for all $n\in\mathbb{N}$. As a results, we obtain
\[
\left\Vert f_{n}\right\Vert _{C^{0}\left(\left[-\sigma_{n}+\frac{\delta}{2},\sigma_{n}-\frac{\delta}{2}\right]\times S^{1}\right)}\leq\frac{4}{\sqrt{\pi}\delta}\sqrt{C_{0}},
\]
and note that the same result holds for $g_{n}$. 

\noindent By means of bubbling-off analysis we prove now that the
gradient of $G_{n}$ is uniformly bounded. Assume
\[
\sup_{p\in[-\sigma_{n}+\delta,\sigma_{n}-\delta]\times S^{1}}\left|\nabla G_{n}(p)\right|\rightarrow\infty
\]
as $n\rightarrow\infty$. Let $p_{n}\in[-\sigma_{n}+\delta,\sigma_{n}-\delta]\times S^{1}$
be such that
\[
|\nabla G_{n}(p_{n})|=\sup_{p\in[-\sigma_{n}+\delta,\sigma_{n}-\delta]\times S^{1}}\left|\nabla G_{n}(p)\right|;
\]
then $R_{n}:=|\nabla G_{n}(p_{n})|\rightarrow\infty$ as $n\rightarrow\infty$.
Set $\epsilon_{n}:=R_{n}^{-\frac{1}{2}}\searrow0$ as $n\rightarrow\infty$,
and observe that $\epsilon_{n}R_{n}\rightarrow\infty$ as $n\rightarrow\infty$.
Choose $n_{0}\in\mathbb{N}_{0}$ sufficiently large such that $D_{10\epsilon_{n}}(p_{n})\subset[-\sigma_{n},\sigma_{n}]\times S^{1}$
for all $n\geq n_{0}$. By Hofer's topologial lemma there exist $\epsilon'_{n}\in(0,\epsilon_{n}]$
and $p'_{n}\in[-\sigma_{n},\sigma_{n}]\times S^{1}$ satisfying:

\end{singlespace}\begin{enumerate}
\begin{singlespace}
\item $\epsilon'_{n}R'_{n}\geq\epsilon_{n}R_{n}$; 
\item $p'_{n}\in D_{2\epsilon_{n}}(p_{n})\subset D_{10\epsilon_{n}}(p_{n})$; 
\item $|\nabla G_{n}(p)|\leq2R'_{n}$, for all $p\in D_{\epsilon'_{n}}(p'_{n})\subset D_{10\epsilon_{n}}(p_{n})$, 
\end{singlespace}
\end{enumerate}
\begin{singlespace}
\noindent where $R'_{n}:=|du_{n}(p'_{n})|$. Via rescaling consider
the maps $\tilde{G}_{n}:D_{\epsilon'_{n}R'_{n}}(0)\rightarrow\mathbb{C}$,
defined by 
\[
\tilde{G}_{n}(w):=G_{n}\left(p'_{n}+\frac{w}{R'_{n}}\right)
\]
for $w\in D_{\epsilon'_{n}R'_{n}}(0)$. Observe that $p'_{n}+(w/R_{n}')\in D_{\epsilon'_{n}}(p'_{n})$
for $w\in D_{\epsilon'_{n}R'_{n}}(0)$, and that for $\tilde{G}_{n}$
we have:

\end{singlespace}\begin{enumerate}
\begin{singlespace}
\item $|\nabla\tilde{G}_{n}(0)|=1$; 
\item $|\nabla\tilde{G}_{n}(w)|\leq2$ for $w\in D_{\epsilon'_{n}R'_{n}}(0)$; 
\item $\tilde{G}_{n}$ is holomorphic on $D_{\epsilon_{n}'R_{n}'}(0)$; 
\item $\tilde{G}_{n}$ is uniformly bounded on $[-\sigma_{n}+\delta,\sigma_{n}-\delta]\times S^{1}$
(by Assertion 1).
\end{singlespace}
\end{enumerate}
\begin{singlespace}
\noindent By the usual regularity theory for pseudoholomorphic maps
and Arzelá-Ascoli theorem, $\tilde{G}_{n}$ converge in $C_{\text{loc}}^{\infty}(\mathbb{C})$
to a bounded holomorphic map $\tilde{G}:\mathbb{C}\rightarrow\mathbb{C}$
with $|\nabla\tilde{G}(0)|=1$. By Liouville theorem this map can
be only the constant map, and so, we arrive at a contradiction with
$|\nabla\tilde{G}(0)|=1$.
\end{singlespace}
\end{proof}
\begin{singlespace}
\noindent Thus for $\delta>0$ we can replace the cylinder $[-\sigma_{n}+\delta,\sigma_{n}-\delta]\times S^{1}$
by $[-\sigma_{n},\sigma_{n}]\times S^{1}$ if we consider $\text{Thin}_{\delta}(\dot{S}^{\mathcal{D},r},h_{n})$
for a smaller $\delta>0$. We come now to the decomposition of $[-\sigma_{n},\sigma_{n}]\times S^{1}$
into cylinders of types $\infty$ and $b_{1}$. Consider the parameter-dependent
function with parameter $h\in[-\sigma_{n},\sigma_{n}]$ defined by
\[
F_{n,h}:[h,\sigma_{n}]\rightarrow\mathbb{R},\ s\mapsto\int_{[h,s]\times S^{1}}f_{n}^{*}d\alpha.
\]
As $f_{n}^{*}d\alpha$ is non-negative, $F_{n,h}$ is positive and
monotone. For the constant $\hbar$ defined in (\ref{eq:hbar}), we
set $h_{n}^{(0)}=-\sigma_{n}$, and define
\[
h_{n}^{(m)}:=\sup\left(F_{n,h_{n}^{(m-1)}}^{-1}\left[0,\frac{\hbar}{4}\right]\right).
\]
Since $E_{d\alpha}(u_{n};[-\sigma_{n},\sigma_{n}]\times S^{1})<E_{0}$,
the sequence $\{h_{n}^{(m)}\}_{m\in\mathbb{N}_{0}}$ has to end after
$N_{n}$ steps, where $h_{n}^{(N_{n})}=\sigma_{n}$. On the cylinder
$[h_{n}^{(N_{n}-1)},h_{n}^{(N_{n})}]\times S^{1}$, the $d\alpha-$energy
of $u_{n}$ can be smaller than $\hbar/4$. Obviously, we have $-\sigma_{n}=h_{n}^{(0)}<h_{n}^{(1)}<...<h_{n}^{(m)}<...<h_{n}^{(N_{n})}=\sigma_{n}$
giving $E_{d\alpha}(u_{n};[h_{n}^{(m-1)},h_{n}^{(m)}]\times S^{1})=\hbar/4$
for $m=1,...,N_{n}-1$ and $E_{d\alpha}(u_{n};[h_{n}^{(N_{n}-1)},h_{n}^{(N_{n})}]\times S^{1})\leq\hbar/4$.
Hence the $d\alpha-$energy can be written as
\[
E_{d\alpha}(u_{n};[-\sigma_{n},\sigma_{n}]\times S^{1})=(N_{n}-1)\frac{\hbar}{4}+E_{d\alpha}(u_{n};[h_{n}^{(N_{n}-1)},h_{n}^{(N_{n})}]\times S^{1}),
\]
 which implies the following bound on $N_{n}$: 
\[
0\leq N_{n}\leq\frac{4E_{0}}{\hbar}+1.
\]
After going over to a subsequence, we can further assume that $N_{n}$
is also independent of $n$; for this reason, we set $N_{n}=N$. Thus
the cylinders $[-\sigma_{n},\sigma_{n}]\times S^{1}$ have been decomposed
into $N$ smaller subcylinders $[h_{n}^{(0)},h_{n}^{(1)}]\times S^{1},...,[h_{n}^{(N-1)},h_{n}^{(N)}]\times S^{1}$
on which we have $E_{d\alpha}(u_{n};[h_{n}^{(m-1)},h_{n}^{(m)}]\times S^{1})=\hbar/4$
for $m\in\{1,...,N-1\}$ and $E_{d\alpha}(u_{n};[h_{n}^{(N-1)},h_{n}^{(N)}]\times S^{1})\leq\hbar/4$. 

\noindent A sequence of cylinders $[a_{n},b_{n}]\times S^{1}$, where
$a_{n},b_{n}\in\mathbb{R}$ and $a_{n}<b_{n}$ is called of type $b_{1}$
if $b_{n}-a_{n}$ is bounded from above, and of type $\infty$ if
$b_{n}-a_{n}\rightarrow\infty$ as $n\rightarrow\infty$. This is
illustrated in Figure \ref{fig13}.

\noindent \begin{figure}     
	\centering  
	\def\svgscale{0.65} 	
	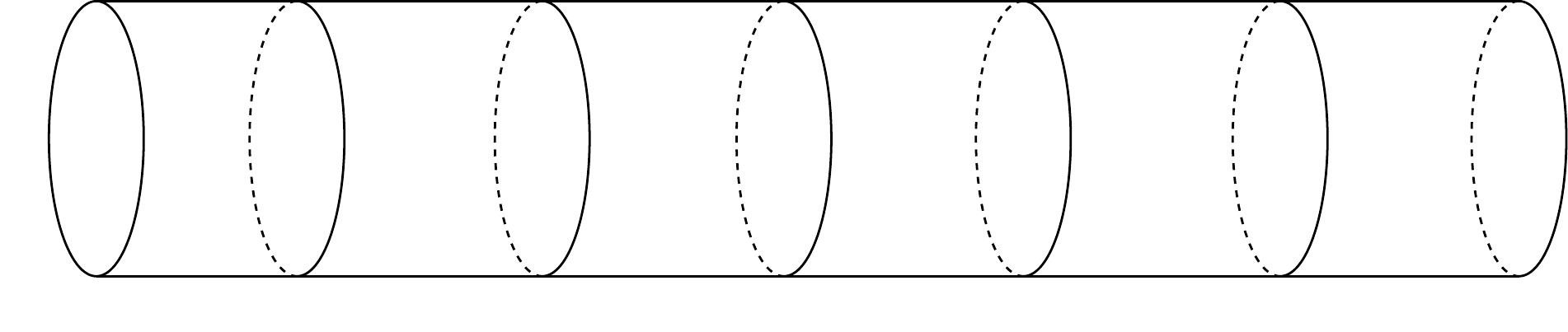  
	\caption{Decomposition of $[-\sigma_{n},\sigma_{n}] \times S^{1}$ into smaller cylinders $[h_{n}^{(m)},h_{n}^{(m+1)}]\times S^{1}$ having $d\alpha-$energy $\hbar / 4$ or less.}
	\label{fig13}
\end{figure}
\end{singlespace}
\begin{lem}
\begin{singlespace}
\noindent \label{lem:Let-=000024=00005Bh_=00007Bn=00007D^=00007B(m-1)=00007D,h_=00007Bn=00007D^=00007B(m)=00007D=00005D}Let
$[h_{n}^{(m-1)},h_{n}^{(m)}]\times S^{1}$ be a cylinder of type $\infty$
and let $h>0$ be chosen small enough such that $h_{n}^{(m)}-h_{n}^{(m-1)}-2h=(h_{n}^{(m)}-h)-(h_{n}^{(m-1)}+h)>0$
for all $n\in\mathbb{N}$. Then there exists a constant $C_{h}>0$
such that 
\[
\left\Vert du_{n}(z)\right\Vert _{C^{0}}=\sup_{\left\Vert v\right\Vert _{\text{eucl}}=1}\left\Vert du_{n}(z)v\right\Vert <C_{h}
\]
for all $z\in[h_{n}^{(m-1)}+h,h_{n}^{(m)}-h]\times S^{1}$ and $n\in\mathbb{N}$.
\end{singlespace}
\end{lem}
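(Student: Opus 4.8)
I would prove this by contradiction with a bubbling-off argument, modelled on the proof of Lemma \ref{2_1_lem:There-exists-a} but with the cylinder of type $\infty$ playing the role of the thick part. The key point is that such a cylinder carries $d\alpha$-energy at most $\hbar/4$, whereas a blow-up of the gradient would produce a nonconstant finite-energy plane, which necessarily has $d\alpha$-energy at least $\hbar$.

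Concretely: suppose the conclusion were false. Then, after passing to a subsequence, there would be points $p_n\in[h_n^{(m-1)}+h,h_n^{(m)}-h]\times S^1$ at which the Euclidean norm of $du_n$ attains its maximum $R_n$ over that compact subcylinder, with $R_n\to\infty$. Applying Hofer's topological lemma (Lemma 2.39 of \cite{key-6}) exactly as in Lemma \ref{2_1_lem:There-exists-a}, I would obtain a new sequence of points $\tilde p_n$ with $\tilde p_n-p_n\to0$ and, after renaming, $R_n\to\infty$, $\epsilon_n\searrow0$, $\epsilon_nR_n\to\infty$, and $\|du_n(z)\|\le 2R_n$ for all $z\in D_{\epsilon_n}(\tilde p_n)$. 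Since $p_n$ stays at Euclidean distance at least $h$ from the two boundary circles of $[h_n^{(m-1)},h_n^{(m)}]\times S^1$ and $\epsilon_n\to0$, for every fixed $R>0$ the disk $D_{R/R_n}(\tilde p_n)$ lies inside $[h_n^{(m-1)},h_n^{(m)}]\times S^1$ once $n$ is large.

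Rescaling, I would set $v_n=(b_n,g_n):D_{\epsilon_nR_n}(0)\to\mathbb R\times M$, $v_n(z):=\big(a_n(\tilde p_n+z/R_n)-a_n(\tilde p_n),\,f_n(\tilde p_n+z/R_n)\big)$. These solve the $\mathcal H$-holomorphic equations with harmonic perturbation $\gamma_n(\tilde p_n+\cdot/R_n)/R_n$, whose $C^1$-norm tends to $0$ by Proposition \ref{prop:There-exists-a-1}, and they satisfy $\|dv_n(0)\|=1$, $\|dv_n\|\le 2$, $E(v_n;D_{\epsilon_nR_n}(0))\le E_0$. By the usual interior regularity for pseudoholomorphic maps and the Arzel\`a--Ascoli theorem, $v_n\to v$ in $C^\infty_{\mathrm{loc}}(\mathbb C)$, where $v=(b,g):\mathbb C\to\mathbb R\times M$ is pseudoholomorphic (the perturbation disappears in the limit) and nonconstant because $\|dv(0)\|=1$. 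Repeating the energy estimates from the proof of Lemma \ref{2_1_lem:There-exists-a} gives, for every $R>0$, $E_\alpha(v;D_R(0))\le\lim_nE_\alpha\big(u_n;D_{R/R_n}(\tilde p_n)\big)\le E_0$ and $E_{d\alpha}(v;D_R(0))=\lim_nE_{d\alpha}\big(u_n;D_{R/R_n}(\tilde p_n)\big)\le\lim_nE_{d\alpha}\big(u_n;[h_n^{(m-1)},h_n^{(m)}]\times S^1\big)\le\hbar/4$; letting $R\to\infty$ shows $E(v;\mathbb C)<\infty$ and $E_{d\alpha}(v;\mathbb C)\le\hbar/4$. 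But by Remark 2.38 of \cite{key-6} a nonconstant finite-energy plane has $E_{d\alpha}(v;\mathbb C)\ge\hbar$, a contradiction; hence $\|du_n\|$ is uniformly bounded on $[h_n^{(m-1)}+h,h_n^{(m)}-h]\times S^1$.

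The only point that needs a little care — and the one I would regard as the crux — is the bookkeeping that keeps the rescaled domains $D_{R/R_n}(\tilde p_n)$ inside the full cylinder $[h_n^{(m-1)},h_n^{(m)}]\times S^1$, so that the $d\alpha$-energy of the bubble is genuinely bounded by $\hbar/4$; this works precisely because we only trim off a fixed length $h$ at each end. The vanishing of the rescaled perturbation in the limit is automatic from the uniform $C^1$-bound of Proposition \ref{prop:There-exists-a-1}.
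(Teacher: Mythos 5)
Your argument reproduces the paper's proof almost verbatim: contradiction via bubbling-off, Hofer's topological lemma, rescaling to a nonconstant finite-energy plane, energy bounded by $\hbar/4$ from the decomposition, and the lower bound $\hbar$ for a nonconstant plane (the paper cites Theorem 31 of \cite{key-2} where you cite Remark 2.38 of \cite{key-6}, but these are interchangeable here). The only cosmetic difference is that you justify the vanishing of the rescaled perturbation via the interior $C^1$-bound of Proposition \ref{prop:There-exists-a-1}, whereas the paper invokes the uniform $L^2$-bound; both are adequate, and your version is if anything the more direct.
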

\begin{proof}
\begin{singlespace}
\noindent The proof makes use of bubbling-off analysis. Assume that
there exists $h>0$ such that $h_{n}^{(m)}-h_{n}^{(m-1)}-2h>0$ and
\begin{equation}
\sup_{z\in[h_{n}^{(m-1)}+h,h_{n}^{(m)}-h]\times S^{1}}\left\Vert du_{n}(z)\right\Vert _{C^{0}}=\infty.\label{eq:BoundGrad}
\end{equation}
Then there exists a sequence $z_{n}\in(h_{n}^{(m-1)}+h,h_{n}^{(m)}-h)\times S^{1}$
with the property $R_{n}:=\left\Vert du_{n}(z_{n})\right\Vert _{C^{0}}\rightarrow\infty$
as $n\rightarrow\infty$. Let $\epsilon_{n}=R_{n}^{-\frac{1}{2}}\searrow0$
as $n\rightarrow\infty$, and observe that $\epsilon_{n}R_{n}\rightarrow\infty$
as $n\rightarrow\infty$. Choose $n_{0}\in\mathbb{N}$ sufficiently
large such that $D_{10\epsilon_{n}}(z_{n})\subset[h_{n}^{(m-1)},h_{n}^{(m)}]\times S^{1}$
for all $n\geq n_{0}$. By Hofer's topological lemma, there exist
$\epsilon'_{n}\in(0,\epsilon_{n}]$ and $z'_{n}\in[h_{n}^{(m-1)},h_{n}^{(m)}]\times S^{1}$
satisfying: 

\end{singlespace}\begin{enumerate}
\begin{singlespace}
\item $\epsilon'_{n}R'_{n}\geq\epsilon_{n}R_{n}$; 
\item $z'_{n}\in D_{2\epsilon_{n}}(z_{n})\subset D_{10\epsilon_{n}}(z_{n})$; 
\item $\left\Vert du_{n}(z)\right\Vert _{C^{0}}\leq2R'_{n}$, for all $z\in D_{\epsilon'_{n}}(z'_{n})\subset D_{10\epsilon_{n}}(z_{n})$,
\end{singlespace}
\end{enumerate}
\begin{singlespace}
\noindent where $R'_{n}:=\left\Vert du_{n}(z'_{n})\right\Vert _{C^{0}}$.
Applying rescaling consider the map $v_{n}:D_{\epsilon'_{n}R'_{n}}(0)\rightarrow\mathbb{R}\times M$,
defined by 
\[
v_{n}(w)=(b_{n}(w),g_{n}(w)):=u_{n}\left(z'_{n}+\frac{w}{R'_{n}}\right)-a_{n}(z_{n}')
\]
for $w\in D_{\epsilon'_{n}R'_{n}}(0)$. Note that $z'_{n}+(w/R_{n}')\in D_{\epsilon'_{n}}(z'_{n})$
for $w\in D_{\epsilon'_{n}R'_{n}}(0)$, and that for $v_{n}$ we have

\end{singlespace}\begin{enumerate}
\begin{singlespace}
\item $\left\Vert dv_{n}(0)\right\Vert _{C^{0}}=1$; 
\item $\left\Vert dv_{n}(w)\right\Vert _{C^{0}}\leq2$ for $w\in D_{\epsilon'_{n}R'_{n}}(0)$; 
\item $E_{d\alpha}(v_{n};D_{\epsilon'_{n}R'_{n}}(0))\leq\hbar/4$ (straightforward
calculation shows that the $\alpha-$energy is also uniformly bounded); 
\item $v_{n}$ solves 
\begin{align*}
\pi_{\alpha}dg_{n}\circ i & =J\circ\pi_{\alpha}dg_{n},\\
(g_{n}^{*}\alpha)\circ i & =db_{n}+\frac{\gamma_{n}}{R'_{n}}
\end{align*}
on $D_{\epsilon'_{n}R'_{n}}(0)$. 
\end{singlespace}
\end{enumerate}
\begin{singlespace}
\noindent As the gradients of $v_{n}$ are uniformly bounded, $v_{n}$
converge in $C_{\text{loc}}^{\infty}(\mathbb{C})$ to a finite energy
plane $v=(b,g):\mathbb{C}\rightarrow\mathbb{R}\times M$ characterized
by:

\end{singlespace}\begin{enumerate}
\begin{singlespace}
\item $\left\Vert dv(0)\right\Vert _{C^{0}}=1$;
\item $\left\Vert dv(w)\right\Vert _{C^{0}}\leq2$ for $w\in\mathbb{C}$; 
\item $E_{d\alpha}(v;\mathbb{C})\leq\hbar/4$;
\item $v$ is a finite energy holomorphic plane. 
\end{singlespace}
\end{enumerate}
\begin{singlespace}
\noindent Assertion 3 follows from the fact that for an arbitray $R>0$
we have
\[
E_{d\alpha}(v,D_{R}(0))=\lim_{n\rightarrow\infty}E_{d\alpha}(v_{n};D_{R}(0))\leq\lim_{n\rightarrow\infty}E_{d\alpha}(v_{n};D_{\epsilon'_{n}R'_{n}}(0))\leq\frac{\hbar}{4},
\]
while Assertion 4 follows from the fact that $\gamma_{n}$ has a uniformly
bounded $L^{2}-$norm. Note that by employing the above argument,
a bound for the $\alpha-$energy can be also obtained. Now, as $v$
is non-constant, Theorem 31 of \cite{key-2} gives $E_{d\alpha}(v;\mathbb{C})\geq\hbar$,
which is a contradiction to Assertion 3. Thus Assumption (\ref{eq:BoundGrad})
does not hold, and the gradient of $u_{n}$ on cylinders of type $\infty$
is uniformly bounded.
\end{singlespace}
\end{proof}
\begin{singlespace}
\noindent Now we change the above decomposition so that the lengths
of the cylinders of type $b_{1}$ are also bounded by below and describe
the alternating appearance of cylinders of types $\infty$ and $b_{1}$.
This process is necessary, because on the cylinders of type $b_{1}$
whose length tends to zero we cannot analyze the convergence behaviour
of the maps $u_{n}$ and cannot describe their limit object. We proceed
as follows.
\end{singlespace}
\begin{description}
\begin{singlespace}
\item [{\emph{Step$\ $1.}}] \noindent We consider a cylinder $[h_{n}^{(m)},h_{n}^{(m+1)}]\times S^{1}$
of type $\infty$, on which we apply Lemma \ref{lem:Let-=000024=00005Bh_=00007Bn=00007D^=00007B(m-1)=00007D,h_=00007Bn=00007D^=00007B(m)=00007D=00005D}.
When doing this we choose a sufficiently small constant $h>0$, so
that the gradients are uniformly bounded only on $[h_{n}^{(m)}+h,h_{n}^{(m+1)}-h]\times S^{1}$
by the constant $C_{h}>0$, which in turn, is again a cylinder of
type $\infty$. By this procedure, a cylinder $[h_{n}^{(m)},h_{n}^{(m+1)}]\times S^{1}$
of type $\infty$ is decomposed into three smaller cylinders: two
cylinders $[h_{n}^{(m)},h_{n}^{(m)}+h]\times S^{1}$, $[h_{n}^{(m+1)}-h,h_{n}^{(m+1)}]\times S^{1}$
of type $b_{1}$ and one cylinder $[h_{n}^{(m)}+h,h_{n}^{(m+1)}-h]\times S^{1}$
of type $\infty$. The length of these two cylinders of type $b_{1}$
is $h>0$. To any other cylinder of type $\infty$ we apply the same
procedure with a fixed constant $h>0$.
\item [{\emph{Step$\ $2.}}] \noindent We combine all cylinders of type
$b_{1}$, which are next to each other, to form a bigger cylinder
of type $b_{1}$. This can be seen in Figure \ref{fig15}. By this
procedure, we guarantee that in a constellation consisting of three
cylinders that lie next to each other, the type of the middle cylinder
is different to the types of the left and right cylinders. Thus we
got rid of the cylinders of type $b_{1}$ with length tending to zero,
and make sure that the cylinders of types $\infty$ and $b_{1}$ appear
alternately. We aditionally assume that the first and last cylinders
in the decomposition are of type $\infty$, since otherwise, we can
glue the cylinder of type $b_{1}$ to the thick part of the surface
and consider $\text{Thin}_{\delta}(\dot{S}^{\mathcal{D},r},h_{n})$
for a smaller $\delta>0$. By this procedure, we decompose $[-\sigma_{n},\sigma_{n}]\times S^{1}$
into cylinders of types $\infty$ and $b_{1}$, while the first and
last cylinders in the decomposition are of type $\infty$. \begin{figure}     
	\centering  
	\def\svgscale{0.65} 	
	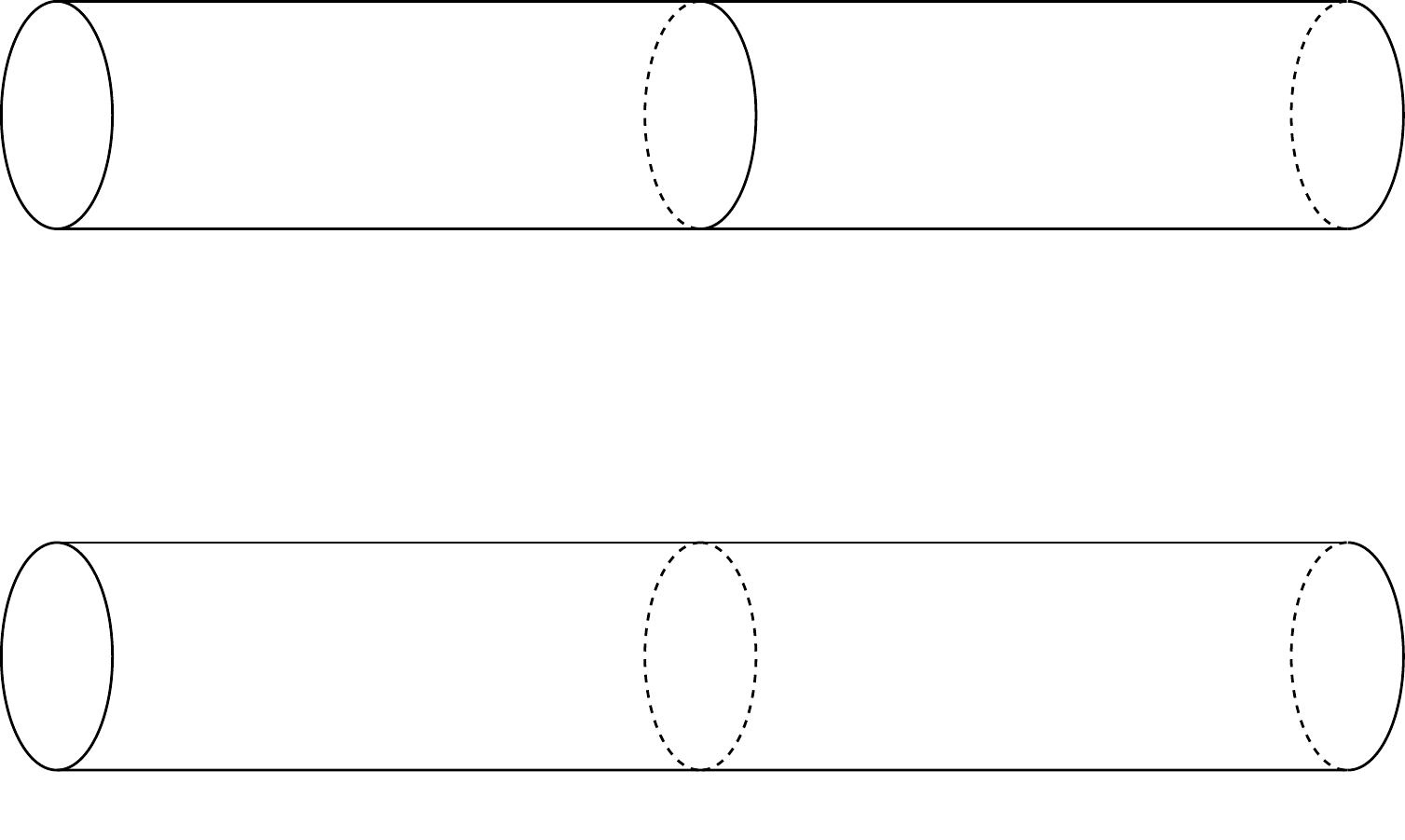  
	\caption{Two cylinders of type $b_{1}$ are combined to form a bigger cylinder of type $b_{1}$.}
	\label{fig15}
\end{figure}
\item [{\emph{Step$\ $3.}}] \noindent For $\tilde{E}_{0}=2(E_{0}+C_{h})$
and in view of the non-degeneracy of the contact manifold $(M,\alpha)$,
let the constant $\hbar_{0}$ be given by
\begin{equation}
\hbar_{0}:=\min\{|T_{1}-T_{2}|\mid T_{1},T_{2}\in\mathcal{\mathcal{P}}_{\alpha},T_{1}\not=T_{2},T_{1},T_{2}\leq\tilde{E}_{0}\}.\label{eq:hbar-0}
\end{equation}
Observe that because of $\tilde{E}_{0}\geq E_{0}$, $\hbar_{0}\leq\hbar$.
If $[h_{n}^{(m-1)},h_{n}^{(m)}]\times S^{1}$ is a cylinder of type
$\infty$ for some $m\in\{1,...,N\}$, we define the constant $\hbar_{0}$
as above and apply Step 1 and Step 2 to decompose this cylinder into
cylinders of types $\infty$ and $b_{1}$, while the first and last
cylinders in the decomposition are of type $\infty$. The cylinders
of type $\infty$ have now a $d\alpha-$energy smaller than $\hbar_{0}/4$.
We apply this procedure to all cylinders of type $\infty$. In summary,
$[-\sigma_{n},\sigma_{n}]\times S^{1}$ is decomposed into cylinders
of type $\infty$ with a $d\alpha-$energy smaller than $\hbar_{0}/4$
and cylinders of type $b_{1}$, with the first and last cylinders
being of type $\infty$.
\item [{\emph{Step$\ $4.}}] \noindent We enlarge the cylinders of type
$b_{1}$ without changing their type. Let $h>0$ be as in Lemma \ref{lem:Let-=000024=00005Bh_=00007Bn=00007D^=00007B(m-1)=00007D,h_=00007Bn=00007D^=00007B(m)=00007D=00005D}
and pick $m\in\{1,...,N\}$ such that $[h_{n}^{(m-1)},h_{n}^{(m)}]\times S^{1}$
is of type $b_{1}$. For $n$ sufficiently large, we replace the cylinder
$[h_{n}^{(m-1)},h_{n}^{(m)}]\times S^{1}$ by the bigger cylinder
$[h_{n}^{(m-1)}-3h,h_{n}^{(m)}+3h]\times S^{1}$, and apply this procedure
to all cylinders of type $b_{1}$. As a result, neighbouring cylinders
will overlap. Essentially, this means that if $[h_{n}^{(m-2)},h_{n}^{(m-1)}]\times S^{1}$
is a cylinder of type $\infty$, which lies to the left of a cylinder
$[h_{n}^{(m-1)}-3h,h_{n}^{(m)}+3h]\times S^{1}$ of type $b_{1}$,
then their intersection is $[h_{n}^{(m-1)}-3h,h_{n}^{(m-1)}]\times S^{1}$.
This can be seen in Figure \ref{fig16}.
\end{singlespace}
\end{description}
\begin{singlespace}
\noindent \begin{figure}     
	\centering  
	\def\svgscale{0.80} 	
	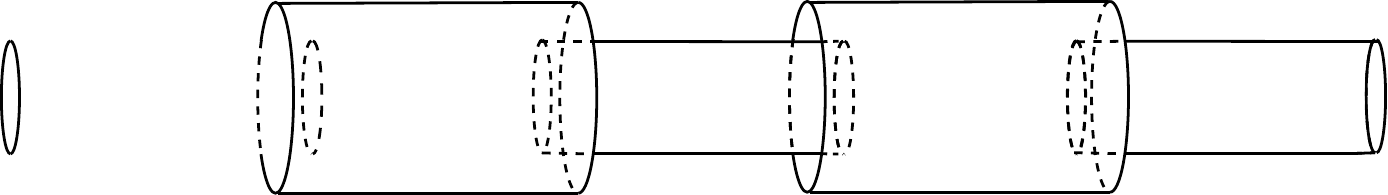  
	\caption{Decomposition of $[-\sigma_{n}, \sigma_{n}]\times S^{1}$ into cylinders of types $\infty$ and $b_{1}$ in an alternating order.}
	\label{fig16}
\end{figure}By the above procedure, the cylinder $[-\sigma_{n},\sigma_{n}]\times S^{1}$
is decomposed into an alternating constellation of cylinders of types
$\infty$ and $b_{1}$. On cylinders of type $\infty$, the $d\alpha-$energy
is smaller than $\hbar_{0}/4$, while on cylinders of type $b_{1}$,
the $d\alpha-$energy can be larger than $\hbar_{0}/4$. By Lemma
\ref{lem:Let-=000024=00005Bh_=00007Bn=00007D^=00007B(m-1)=00007D,h_=00007Bn=00007D^=00007B(m)=00007D=00005D},
the gradients of the $\mathcal{H}-$holomorphic curves on the cylinders
of type $\infty$ are uniformly bounded by the constant $C_{h}>0$
with respect to the Euclidean metric on the domain, and to the metric
described in (\ref{eq:metric-symplectization}) on the target space
$\mathbb{R}\times M$. Finally, the cylinders of types $\infty$ and
$b_{1}$ overlap. 

\noindent We are now well prepared to analyse the convergence of the
$\mathcal{H}-$holomorphic curves on cylinders of types $\infty$
and $b_{1}$. After obtaining separate convergence results, we glue
the limit objects of these cylinders on the overlaps, and obtain a
limit object on the whole cylinder $[-\sigma_{n},\sigma_{n}]\times S^{1}$.
Sections \ref{subsec:Cylinders-of-type} and \ref{subsec:Cylinders-of-type-1}
deal with the convergence and the description of the limit object
on cylinders of types $\infty$ and $b_{1}$, while in Section \ref{subsec:Glueing-cylinders-of}
we carry out the glueing of these two convergence results.
\end{singlespace}
\begin{singlespace}

\subsubsection{\label{subsec:Cylinders-of-type}Cylinders of type $\infty$}
\end{singlespace}

\begin{singlespace}
\noindent We describe the convergence and the limit object of the
sequence of $\mathcal{H}-$holomorphic curves $u_{n}$, defined on
cylinders of type $\infty$. Let $m\in\{1,...,N\}$ be such that $[h_{n}^{(m-1)},h_{n}^{(m)}]\times S^{1}$
is a cylinder of type $\infty$ as described in Section \ref{subsec:Cylinders},
i.e. $h_{n}^{(m)}-h_{n}^{(m-1)}\rightarrow\infty$ as $n\rightarrow\infty$.
Consider the diffeomorphism $\psi_{n}:[-R_{n}^{(m)},R_{n}^{(m)}]\rightarrow[h_{n}^{(m)},h_{n}^{(m+1)}]$
given by $\psi_{n}(s)=s+(h_{n}^{(m)}+h_{n}^{(m+1)})/2$ and the $\mathcal{H}-$holomorphic
maps $u_{n}=(a_{n},f_{n}):[-R_{n}^{(m)},R_{n}^{(m)}]\times S^{1}\rightarrow\mathbb{R}\times M$
with harmonic perturbation $\gamma_{n}$ . For deriving a $C_{\text{loc}}^{\infty}-$convergence
result we consider the following setting:
\end{singlespace}
\begin{description}
\begin{singlespace}
\item [{C1}] \noindent $R_{n}^{(m)}\rightarrow\infty$ as $n\rightarrow\infty$.
\item [{C2}] \noindent $\gamma_{n}$ is a harmonic $1-$form on $[-R_{n}^{(m)},R_{n}^{(m)}]\times S^{1}$
with respect to the standard complex structure $i$, i.e. $d\gamma_{n}=d\gamma_{n}\circ i=0$.
\item [{C3}] \noindent The $d\alpha-$energy of $u_{n}$ is uniformly small,
i.e. $E_{d\alpha}(u_{n};[-R_{n}^{(m)},R_{n}^{(m)}]\times S^{1})\leq\hbar_{0}/2$
for all $n$, where $\hbar_{0}$ is the constant defined in (\ref{eq:hbar-0}).
\item [{C4}] \noindent The energy of $u_{n}$ is uniformly bounded, i.e.
for the constant $E_{0}>0$ we have $E(u_{n};[-R_{n}^{(m)},R_{n}^{(m)}]\times S^{1})\leq E_{0}$
for all $n\in\mathbb{N}$.
\item [{C5}] \noindent The map $u_{n}$ together with the $1-$form $\gamma_{n}$
solve the $\mathcal{H}-$holomorphic curve equation
\begin{align*}
\pi_{\alpha}df_{n}\circ i & =J(f_{n})\circ\pi_{\alpha}df_{n},\\
(f_{n}^{*}\alpha)\circ i & =da_{n}+\gamma_{n}.
\end{align*}
\item [{C6}] \noindent The harmonic $1-$form $\gamma_{n}$ has a uniformly
bounded $L^{2}-$norm, i.e. for the constant $C_{0}>0$ we have $\left\Vert \gamma_{n}\right\Vert _{L^{2}([-R_{n}^{(m)},R_{n}^{(m)}]\times S^{1})}^{2}\leq C_{0}$
for all $n$. 
\item [{C7}] \noindent The map $u_{n}$ has a uniformly bounded gradient
due to Lemma \ref{lem:Let-=000024=00005Bh_=00007Bn=00007D^=00007B(m-1)=00007D,h_=00007Bn=00007D^=00007B(m)=00007D=00005D}
and Step 4 of Section \ref{subsec:Cylinders}, i.e. for the constant
$C_{h}>0$ we have 
\[
\left\Vert du_{n}(z)\right\Vert _{C^{0}}=\sup_{\left\Vert v\right\Vert _{\text{eucl}}=1}\left\Vert du_{n}(z)v\right\Vert <C_{h}
\]
for all $z\in[-R_{n}^{(m)},R_{n}^{(m)}]\times S^{1}$ and all $n\in\mathbb{N}$.
\item [{C8}] \noindent If $P_{n}:=P_{\gamma_{n}}(\{0\}\times S^{1})$ is
the period of $\gamma_{n}$ over the closed curve $\{0\}\times S^{1}$,
as defind in (\ref{eq:periods}), we assume that the sequence $R_{n}P_{n}$
is bounded by the constant $C>0$. Moreover, after going over to some
subsequence, we assume that $R_{n}P_{n}$ converges to some real number
$\tau$.
\item [{C9}] \noindent If $S_{n}:=S_{\gamma_{n}}(\{0\}\times S^{1})$ is
the co-period of $\gamma_{n}$ over the curve $\{0\}\times S^{1}$
as defined in (\ref{eq:co-period}), we assume that $S_{n}R_{n}\rightarrow\sigma$
as $n\rightarrow\infty$.
\end{singlespace}
\end{description}
\begin{rem}
\begin{singlespace}
\noindent \label{rem:The-special-circles}The special circles $\Gamma_{i}^{\text{nod}}$
in Remark \ref{rem:For-a-sequence} are of two types: contractible
and non-contractible. In the contractible case, $\Gamma_{i}^{\text{nod}}$
lies in the isotopy class of $(\rho_{n}\circ\psi_{n})(\{0\}\times S^{1})$
and the conformal periods and co-periods of the harmonic $1-$forms
$\gamma_{n}$ vanish. Hence, conditions C1-C9 are satisfied on the
sequence of degenerating cylinders $[-R_{n}^{(m)},R_{n}^{(m)}]\times S^{1}$.
In the non-contractible case, $\Gamma_{i}^{\text{nod}}$ also lies
in the isotopy class of $(\rho_{n}\circ\psi_{n})(\{0\}\times S^{1})$,
and by the assumptions of Theorem \ref{thm:Let--be-2-1}, conditions
C1-C9 are satisfied.
\end{singlespace}
\end{rem}
\begin{singlespace}
\noindent To simplify notation we drop the index $m$. By Theorem
11 from \cite{key-23} we consider two cases. In Case 1, there exists
a subsequence of $u_{n}$ with vanishing center action, and we use
Theorem 2 from \cite{key-23} to describe the convergence of the $\mathcal{H}-$holomorphic
curves with harmonic perturbations $\gamma_{n}$. In Case 2, each
subsequence of $u_{n}$ has a center action larger than $\hbar_{0}$,
and we use Theorem 4 from \cite{key-23} to describe the convergence. 
\end{singlespace}
\begin{rem}
\begin{singlespace}
\noindent \label{rem:For-every-sequence}For every sequence $h_{n}\in\mathbb{R}_{+}$
with $h_{n}<R_{n}$ and $h_{n},R_{n}/h_{n}\rightarrow\infty$ as $n\rightarrow\infty$,
consider a sequence of diffeomorphisms $\theta_{n}:[-R_{n},R_{n}]\rightarrow[-1,1]$
having the following properties:
\end{singlespace}
\begin{enumerate}
\begin{singlespace}
\item The left and right shifts $\theta_{n}^{\pm}(s):=\theta_{n}(s\pm R_{n})$
defined on $[0,h_{n}]\rightarrow[-1,-1/2]$ and $[-h_{n},0]\rightarrow[1/2,1]$,
respectively, converge in $C_{\text{loc}}^{\infty}$ to the diffeomorphisms
$\theta^{-}:[0,\infty)\rightarrow[-1,-1/2)$ and $\theta^{+}:(-\infty,0]\rightarrow(1/2,1]$,
respectively.
\item On $[-R_{n}+h_{n},R_{n}-h_{n}]$ we define the diffeomorphism $\theta_{n}$
to be linear by requiring
\[
\theta_{n}:\text{Op}([-R_{n}+h_{n},R_{n}-h_{n}])\rightarrow\text{Op}\left(\left[-\frac{1}{2},\frac{1}{2}\right]\right),\ s\mapsto\frac{s}{2(R_{n}-h_{n})},
\]
where $\text{Op}([-R_{n}+h_{n},R_{n}-h_{n}])$ and $\text{Op}([-1/2,1/2])$
are sufficiently small neighbourhoods of the intervals $[-R_{n}+h_{n},R_{n}-h_{n}]$
and $[-1/2,1/2]$, respectively.
\end{singlespace}
\end{enumerate}
\end{rem}
\begin{singlespace}
\noindent Note that the diffeomorphism $\theta_{n}$ give rise to
a diffeomorphism between the cylinders $[-R_{n},R_{n}]\times S^{1}$
and $[-1,1]\times S^{1}$, according to $[-R_{n},R_{n}]\times S^{1}\rightarrow[-1,1]\times S^{1}$,
$(s,t)\mapsto(\theta_{n}(s),t)$. By abuse of notation these diffeomorphisms
will be still denoted by $\theta_{n}$.

\noindent Denote by $u_{n}^{\pm}(s,t):=u_{n}(s\pm R_{n},t)$ the left
and right shifts of the maps $u_{n}$, and by $\gamma_{n}^{\pm}:=\gamma_{n}(s\pm R_{n},t)$
the left and right shifts of the harmonic perturbation, which are
defined on $[0,h_{n}]\times S^{1}$ and $[-h_{n},0]\times S^{1}$,
respectively. In both cases we use the diffeomorphisms $\theta_{n}$
to pull the structures back to the cylinder $[-1,1]\times S^{1}$.
Let $i_{n}:=d\theta_{n}\circ i\circ d\theta_{n}^{-1}$ be the induced
complex structure on $[-1,1]\times S^{1}$. Then $u_{n}\circ\theta_{n}^{-1}:[-1,1]\times S^{1}\rightarrow\mathbb{R}\times M$
is a sequence of $\mathcal{H}-$holomorphic curves with harmonic perturbations
$(\theta_{n}^{-1})^{*}\gamma_{n}$ with respect to the complex structure
$i_{n}$ on $[-1,1]\times S^{1}$ and the cylindrical almost complex
structure $J$ on the target space $\mathbb{R}\times M$. From the
result $\theta_{n}^{-1}(s)=(\theta_{n}^{-})^{-1}(s)-R_{n}$, and the
fact that $\theta_{n}^{-}$ and $\theta_{n}^{+}$ converge in $C_{\text{loc}}^{\infty}$
to $\theta^{-}$ on $[-1,-1/2)$ and $\theta^{+}$ on $(1/2,1]$,
respectively, it follows that the complex structures $i_{n}$ converge
in $C_{\text{loc}}^{\infty}$ to a complex structure $\tilde{i}$
on $[-1,-1/2)\times S^{1}$ and $(1/2,1]\times S^{1}$. First, we
formulate the convergence in the case when there exists a subsequence
of $u_{n}$, still denoted by $u_{n}$, with a vanishing center action.
\end{singlespace}
\begin{thm}
\begin{singlespace}
\noindent \label{thm:There-exists-a}Let $u_{n}$ be a sequence of
$\mathcal{H}-$holomorphic cylinders with harmonic perturbations $\gamma_{n}$
that satisfy C1-C9 and possessing a subsequence having vanishing center
action. Then there exists a subsequence of $u_{n}$, still denoted
by $u_{n}$, the $\mathcal{H}-$holomorphic cylinders $u^{\pm}$ defined
on $(-\infty,0]\times S^{1}$ and $[0,\infty)\times S^{1}$, respectively,
and a point $w=(w_{a},w_{f})\in\mathbb{R}\times M$ such that for
every sequence $h_{n}\in\mathbb{R}_{+}$ and every sequence of diffeomorphisms
$\theta_{n}:[-R_{n},R_{n}]\rightarrow[-1,1]$ constructed as in Remark
\ref{rem:For-every-sequence} the following $C_{\text{loc}}^{\infty}-$
and $C^{0}-$convergence results hold (after a suitable shift of $u_{n}$
in the $\mathbb{R}-$coordinate)

\end{singlespace}\begin{singlespace}
\noindent $C_{\text{loc}}^{\infty}-$convergence:
\end{singlespace}
\begin{enumerate}
\begin{singlespace}
\item For any sequence $s_{n}\in[-R_{n}+h_{n},R_{n}-h_{n}]$ there exists
$\tau_{\{s_{n}\}}\in[-\tau,\tau]$ such that after passing to a subsequence,
the shifted maps $u_{n}(s+s_{n},t)+S_{n}s_{n}$, defined on $[-R_{n}+h_{n}-s_{n},R_{n}-h_{n}-s_{n}]\times S^{1}$,
converge in $C_{\text{loc}}^{\infty}$ to $(w_{a},\phi_{-\tau_{\{s_{n}\}}}^{\alpha}(w_{f}))$.
The shifted harmonic perturbation $1-$forms $\gamma_{n}(s+s_{n},t)$
posses a subsequence converging in $C_{\text{loc}}^{\infty}$ to $0$.
\item The left shifts $u_{n}^{-}(s,t)-R_{n}S_{n}:=u_{n}(s-R_{n},t)-R_{n}S_{n}$,
defined on $[0,h_{n})\times S^{1}$, posses a subsequence that converges
in $C_{\text{loc}}^{\infty}$ to a pseudoholomorphic half cylinder
$u^{-}=(a^{-},f^{-})$, defined on $[0,+\infty)\times S^{1}$. The
curve $u^{-}$ is asymptotic to $(w_{a},\phi_{\tau}^{\alpha}(w_{f}))$.
The left shifted harmonic perturbation $1-$forms $\gamma_{n}^{-}$
converge in $C_{\text{loc}}^{\infty}$ to an exact harmonic $1-$form
$d\Gamma^{-}$, defined on $[0,+\infty)\times S^{1}$. Their asymptotics
are $0$. 
\item The right shifts $u_{n}^{+}(s,t)+R_{n}S_{n}:=u_{n}(s+R_{n},t)+R_{n}S_{n}$,
defined on $(-h_{n},0]\times S^{1}$, posses a subsequence that converges
in $C_{\text{loc}}^{\infty}$ to a pseudoholomorphic half cylinder
$u^{+}=(a^{+},f^{+})$, defined on $(-\infty,0]\times S^{1}$. The
curve $u^{+}$ is asymptotic to $(w_{a},\phi_{-\tau}^{\alpha}(w_{f}))$.
The right shifted harmonic perturbation $1-$forms $\gamma_{n}^{+}$
converge in $C_{\text{loc}}^{\infty}$ to an exact harmonic $1-$form
$d\Gamma^{+}$, defined on $(-\infty,0]\times S^{1}$. Their asymptitics
are $0$.
\end{singlespace}
\end{enumerate}
\begin{singlespace}
\noindent $C^{0}-$convergence: 
\end{singlespace}
\begin{enumerate}
\begin{singlespace}
\item The maps $v_{n}:[-1/2,1/2]\times S^{1}\rightarrow\mathbb{R}\times M$
defined by $v_{n}(s,t)=u_{n}(\theta_{n}^{-1}(s),t)$, converge in
$C^{0}$ to $(-2\sigma s+w_{a},\phi_{-2\tau s}^{\alpha}(w_{f}))$. 
\item The maps $v_{n}^{-}-R_{n}S_{n}:[-1,-1/2]\times S^{1}\rightarrow\mathbb{R}\times M$
defined by $v_{n}^{-}(s,t)=u_{n}((\theta_{n}^{-})^{-1}(s),t)$, converge
in $C^{0}$ to a map $v^{-}:[-1,-1/2]\times S^{1}\rightarrow\mathbb{R}\times M$
such that $v^{-}(s,t)=u^{-}((\theta^{-})^{-1}(s),t)$ and $v^{-}(-1/2,t)=(w_{a},\phi_{\tau}^{\alpha}(w_{f}))$. 
\item The maps $v_{n}^{+}+R_{n}S_{n}:[1/2,1]\times S^{1}\rightarrow\mathbb{R}\times M$
defined by $v_{n}^{+}(s,t)=u_{n}((\theta_{n}^{+})^{-1}(s),t)$, converge
in $C^{0}$ to a map $v^{+}:[1/2,1]\times S^{1}\rightarrow\mathbb{R}\times M$
such that $v^{+}(s,t)=u^{+}((\theta^{+})^{-1}(s),t)$ and $v^{+}(1/2,t)=(w_{a},\phi_{-\tau}^{\alpha}(w_{f}))$. 
\end{singlespace}
\end{enumerate}
\end{thm}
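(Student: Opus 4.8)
The plan is to reduce the statement to the convergence theory of \cite{key-23} by first turning the $\mathcal{H}-$holomorphic cylinders into (domain-dependent) pseudoholomorphic ones, then quoting \cite{key-23}, and finally transporting the conclusions through the reparametrizations $\theta_n$. For the first step, note that on $[-R_n,R_n]\times S^1$ every harmonic $1-$form decomposes as $\gamma_n=P_n\,dt+d\Gamma_n$, where $P_n=P_{\gamma_n}(\{0\}\times S^1)$ and $\Gamma_n$ is a globally defined harmonic function; replacing $a_n$ by $a_n+\Gamma_n$ absorbs the exact part. The remaining term $P_n\,dt$ is not exact and cannot be hidden in the $\mathbb{R}-$coordinate, so, following \cite{key-23}, it is removed by pushing the $M-$component along the Reeb flow by a time function interpolating the period, producing a sequence $\bar u_n$ of cylinders that are pseudoholomorphic for a domain-dependent almost complex structure $J_n$ on $\mathbb{R}\times M$. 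By C8 the conformal period $R_nP_n$ is bounded and tends to $\tau$, so the $J_n$ stay in a compact family; the $\mathbb{R}-$shifts by $S_n s$ occurring in the statement record the harmonic-conjugate (co-period) part of the same transformation, with $R_nS_n\to\sigma$ by C9. By C7, C4 and C3 the $\bar u_n$ have uniformly bounded gradient, uniformly bounded energy, and $d\alpha-$energy below $\hbar_0$, so no bubbling occurs on them.

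Next I would invoke \cite{key-23}. By Theorem 11 of \cite{key-23} the sequence $\bar u_n$ falls into one of two cases, and on the assumed subsequence we are in the vanishing center action case, so Theorem 2 of \cite{key-23} applies: it yields a subsequence, pseudoholomorphic half-cylinders $\bar u^{\pm}$ arising as $C^\infty_{\text{loc}}-$limits of the left and right shifts of $\bar u_n$, a common asymptotic constant on the long middle neck, and quantitative asymptotic estimates on the neck. Undoing the Reeb push and the shift by $\Gamma_n$ turns $\bar u^{\pm}$ into pseudoholomorphic half-cylinders $u^{\pm}$; the limiting perturbations $\gamma_n^{\pm}$ must be exact, $d\Gamma^{\pm}$, because a surviving nonzero period would have infinite $L^2-$mass over a half-cylinder end, contradicting the $L^2-$bound C6, after which $d\Gamma^{\pm}$ extends across the end by Lemma \ref{lem:If--is} and therefore has vanishing asymptotics. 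The common middle limit becomes, after the suitable $\mathbb{R}-$shift, a point $w=(w_a,w_f)\in\mathbb{R}\times M$; reinstating the period and co-period drifts produces the $C^\infty_{\text{loc}}-$limit $(w_a,\phi^\alpha_{-\tau_{\{s_n\}}}(w_f))$ along any sequence $s_n$ in the neck for some $\tau_{\{s_n\}}\in[-\tau,\tau]$, with $\gamma_n(s+s_n,t)\to 0$, and forces $u^{\pm}$ to be asymptotic to $(w_a,\phi^\alpha_{\mp\tau}(w_f))$. This gives all the $C^\infty_{\text{loc}}-$assertions.

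For the $C^0-$convergence I would reparametrize with the diffeomorphisms $\theta_n$ of Remark \ref{rem:For-every-sequence}. On $[-1/2,1/2]\times S^1$, where $\theta_n$ is linear, $\theta_n^{-1}(s)=2(R_n-h_n)s$, so $v_n(s,t)=u_n(2(R_n-h_n)s,t)$; the neck estimates of \cite{key-23} together with $2(R_n-h_n)S_n\to 2\sigma$ and $2(R_n-h_n)P_n\to 2\tau$ give $v_n\to(-2\sigma s+w_a,\phi^\alpha_{-2\tau s}(w_f))$ in $C^0$. On $[-1,-1/2]\times S^1$ and $[1/2,1]\times S^1$, the $C^\infty_{\text{loc}}-$convergence $\theta_n^{\pm}\to\theta^{\pm}$ combined with the $C^\infty_{\text{loc}}-$convergence of the shifted $\bar u_n^{\pm}$ gives $C^0-$convergence of $v_n^{\pm}$ to $v^{\pm}(s,t)=u^{\pm}((\theta^{\pm})^{-1}(s),t)$, while the asymptotic matching in Theorem 2 of \cite{key-23} forces $v^{\pm}(\pm 1/2,t)=(w_a,\phi^\alpha_{\mp\tau}(w_f))$, so the three pieces fit together continuously. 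Since nothing in the argument depends on the particular choice of $h_n$ or $\theta_n$ (only on the defining properties in Remark \ref{rem:For-every-sequence}), the conclusion holds for every such choice.

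The main obstacle is the first step together with the input it feeds into \cite{key-23}: because there is no monotonicity lemma for $\mathcal{H}-$holomorphic curves, the behaviour on the long neck cannot be controlled directly, and it becomes accessible only after the transformation to pseudoholomorphic curves — which is legitimate precisely because C8 keeps the structures $J_n$ in a compact family. Carrying the period and co-period bookkeeping faithfully through this transformation, and verifying that it produces exactly the affine drift $-2\sigma s$ and the Reeb rotation $-2\tau s$ appearing in the $C^0-$statements (and the $[-\tau,\tau]$ range for the middle limits), is the delicate point; the remaining arguments are the usual elliptic bootstrapping, Arzel\`a--Ascoli, and exponential-decay estimates already packaged in \cite{key-11} and \cite{key-23}.
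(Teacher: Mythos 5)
Your proposal is correct and takes essentially the same route as the paper: the paper's own ``proof'' of this theorem is simply a citation to Theorem 2 of \cite{key-23} (where the Reeb-flow push that removes the period part of $\gamma_n$, the resulting domain-dependent compact family of almost complex structures, and the neck/half-cylinder convergence analysis all live), and your sketch reconstructs exactly that reduction, including the role of C8--C9 in keeping the pushed structures in a compact family and the $L^2$-argument forcing the limiting perturbations on the half-cylinders to be exact. The only thing worth noting is that the paper does not spell out any of these steps locally — it simply states the theorem after the sentence ``we use Theorem 2 from \cite{key-23} to describe the convergence'' — so your write-up is in fact more detailed than the source at this point.
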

\begin{singlespace}
\noindent An immediate Corollary is
\end{singlespace}
\begin{cor}
\begin{singlespace}
\noindent Unter the same hypothesis of Theorem \ref{thm:There-exists-a}
the following $C_{\text{loc}}^{\infty}-$convergence results hold.
\end{singlespace}
\begin{enumerate}
\begin{singlespace}
\item The maps $v_{n}^{-}-R_{n}S_{n}$ converges in $C_{\text{loc}}^{\infty}$
to $v^{-}$, where $v^{-}$ is asymptotic to $(w_{a},\phi_{\tau}^{\alpha}(w_{f}))$
as $s\rightarrow-1/2$. The harmonic $1-$forms $[(\theta_{n}^{-})^{-1}]^{*}\gamma_{n}^{-}$
with respect to the complex structrue $[(\theta_{n}^{-})^{-1}]^{*}i$
converges in $C_{\text{loc}}^{\infty}$ to a harmonic $1-$form $[(\theta^{-})^{-1}]^{*}d\Gamma^{-}$
with respect to the complex structure $[(\theta^{-})^{-1}]^{*}i$
which is asymptotic to some constant as $s\rightarrow-1/2$.
\item The maps $v_{n}^{+}+R_{n}S_{n}$ converges in $C_{\text{loc}}^{\infty}$
to $v^{+}$, where $v^{+}$ is asymptotic to $(w_{a},\phi_{-\tau}^{\alpha}(w_{f}))$
as $s\rightarrow1/2$. The harmonic $1-$forms $[(\theta_{n}^{+})^{-1}]^{*}\gamma_{n}^{-}$
with respect to the complex structrue $[(\theta_{n}^{+})^{-1}]^{*}i$
converges in $C_{\text{loc}}^{\infty}$ to a harmonic $1-$form $[(\theta^{+})^{-1}]^{*}d\Gamma^{+}$
with respect to the complex structure $[(\theta^{+})^{-1}]^{*}i$
which is asymptotic to some constant as $s\rightarrow1/2$.
\end{singlespace}
\end{enumerate}
\end{cor}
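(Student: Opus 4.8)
The plan is to obtain this Corollary from Theorem~\ref{thm:There-exists-a} by pure reparametrization, using that $C_{\text{loc}}^{\infty}$-convergence is preserved under composition with, and pullback by, a $C_{\text{loc}}^{\infty}$-convergent family of diffeomorphisms. I treat the left end; the right end is identical with the obvious sign changes. The inputs are as follows. By Theorem~\ref{thm:There-exists-a} (item~2 of the $C_{\text{loc}}^{\infty}$-part), along the stated subsequence the left shifts $u_{n}^{-}-R_{n}S_{n}=u_{n}(\cdot-R_{n},\cdot)-R_{n}S_{n}$ converge in $C_{\text{loc}}^{\infty}([0,\infty)\times S^{1})$ to the pseudoholomorphic half-cylinder $u^{-}$, which is asymptotic to $(w_{a},\phi_{\tau}^{\alpha}(w_{f}))$, and $\gamma_{n}^{-}\to d\Gamma^{-}$ in $C_{\text{loc}}^{\infty}([0,\infty)\times S^{1})$ with vanishing asymptotics. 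By Remark~\ref{rem:For-every-sequence}, $\theta_{n}^{-}\colon[0,h_{n}]\to[-1,-1/2]$ converges in $C_{\text{loc}}^{\infty}$ to the diffeomorphism $\theta^{-}\colon[0,\infty)\to[-1,-1/2)$, hence $(\theta_{n}^{-})^{-1}\to(\theta^{-})^{-1}$ in $C_{\text{loc}}^{\infty}([-1,-1/2))$, with $(\theta^{-})^{-1}(s)\to+\infty$ as $s\to-1/2$.

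For the maps, note that shifting the $\mathbb{R}$-coordinate commutes with precomposition in the domain, so $v_{n}^{-}-R_{n}S_{n}=(u_{n}^{-}-R_{n}S_{n})\circ\big((\theta_{n}^{-})^{-1}\times\mathrm{id}_{S^{1}}\big)$ and $v^{-}=u^{-}\circ\big((\theta^{-})^{-1}\times\mathrm{id}_{S^{1}}\big)$. Fixing a compact $K\subset[-1,-1/2)\times S^{1}$, the images $\big((\theta_{n}^{-})^{-1}\times\mathrm{id}\big)(K)$ lie in a common compact subset of $[0,\infty)\times S^{1}$ for all large $n$, on which $u_{n}^{-}-R_{n}S_{n}\to u^{-}$ in $C^{\infty}$; combining this with the $C^{\infty}$-convergence $(\theta_{n}^{-})^{-1}\to(\theta^{-})^{-1}$ on the $s$-projection of $K$ and the chain rule for higher derivatives, every derivative of $v_{n}^{-}-R_{n}S_{n}$ converges uniformly on $K$ to the corresponding derivative of $v^{-}$, i.e. $v_{n}^{-}-R_{n}S_{n}\to v^{-}$ in $C_{\text{loc}}^{\infty}([-1,-1/2)\times S^{1})$. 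For the forms, $[(\theta_{n}^{-})^{-1}]^{*}\gamma_{n}^{-}$ is the pullback $\big((\theta_{n}^{-})^{-1}\times\mathrm{id}\big)^{*}\gamma_{n}^{-}$, whose coefficients are polynomial in the first derivatives of $(\theta_{n}^{-})^{-1}$ and in the coefficients of $\gamma_{n}^{-}$, so the same stability gives $[(\theta_{n}^{-})^{-1}]^{*}\gamma_{n}^{-}\to[(\theta^{-})^{-1}]^{*}d\Gamma^{-}$ in $C_{\text{loc}}^{\infty}([-1,-1/2)\times S^{1})$, and likewise $[(\theta_{n}^{-})^{-1}]^{*}i\to[(\theta^{-})^{-1}]^{*}i$. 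Moreover, from $d\gamma_{n}^{-}=d(\gamma_{n}^{-}\circ i)=0$ and the naturality of $d$ and of $\gamma\mapsto\gamma\circ i$ under pullback (the latter intertwining $i$ with the pulled-back structure), $[(\theta_{n}^{-})^{-1}]^{*}\gamma_{n}^{-}$ is harmonic with respect to $[(\theta_{n}^{-})^{-1}]^{*}i$ and its limit $[(\theta^{-})^{-1}]^{*}d\Gamma^{-}$ is harmonic with respect to $[(\theta^{-})^{-1}]^{*}i$.

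The asymptotics of $v^{-}$ then follow by composition: since $(\theta^{-})^{-1}(s)\to+\infty$ as $s\to-1/2$ and $u^{-}$ is asymptotic to $(w_{a},\phi_{\tau}^{\alpha}(w_{f}))$, we get $v^{-}(s,t)=u^{-}((\theta^{-})^{-1}(s),t)\to(w_{a},\phi_{\tau}^{\alpha}(w_{f}))$ as $s\to-1/2$. For the pulled-back harmonic form, writing $d\Gamma^{-}$ in the cylindrical coordinate $\sigma$, the vanishing asymptotics mean that $d\Gamma^{-}$ is dominated by exponentially decaying Fourier modes, while $[(\theta^{-})^{-1}]'(s)$ blows up as $s\to-1/2$; one checks, using the explicit normalization of $\theta^{-}$ in Remark~\ref{rem:For-every-sequence} (which fixes the rate at which $\theta^{-}(\sigma)$ approaches $-1/2$), that $[(\theta^{-})^{-1}]^{*}d\Gamma^{-}$ extends continuously across the end $s=-1/2$ to a constant-coefficient $1$-form; this limiting form is the ``constant'' in the statement. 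I expect this last point to be the only step that is not purely formal: the convergence of the maps is the routine stability of $C_{\text{loc}}^{\infty}$-convergence under reparametrization, whereas for the forms one must verify that the decay of $d\Gamma^{\pm}$ survives the blow-up of $[(\theta^{\pm})^{-1}]'$ near the end, which is exactly where the construction of $\theta^{\pm}$ in Remark~\ref{rem:For-every-sequence} enters.
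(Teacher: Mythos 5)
Your reparametrization argument for the maps and the $C_{\text{loc}}^\infty$-convergence of the pulled-back forms on compact subsets of $[-1,-1/2)\times S^1$ is correct and is essentially what the paper means by calling the corollary ``immediate.'' Fixing a compact $K\subset[-1,-1/2)\times S^1$, the $C_{\text{loc}}^\infty$-convergence $(\theta_n^-)^{-1}\to(\theta^-)^{-1}$ and $u_n^--R_nS_n\to u^-$ (resp.\ $\gamma_n^-\to d\Gamma^-$) combine via the chain rule exactly as you say, and the harmonicity with respect to $[(\theta_n^-)^{-1}]^*i$ follows from naturality of $d$ and $\cdot\circ j$ under pullback.

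The one place where your argument has a real gap is the final step, and you have correctly singled it out as the non-formal one, but your proposed resolution does not work as written. You appeal to ``the explicit normalization of $\theta^-$ in Remark \ref{rem:For-every-sequence} (which fixes the rate at which $\theta^-(\sigma)$ approaches $-1/2$).'' Remark \ref{rem:For-every-sequence} does not fix that rate: it only requires $\theta_n^-\to\theta^-$ in $C_{\text{loc}}^\infty$ and that $\theta^-\colon[0,\infty)\to[-1,-1/2)$ be a diffeomorphism; the decay of $(\theta^-)'$ at infinity is otherwise unconstrained. Consequently a bare comparison ``exponential decay of $d\Gamma^-$ versus blow-up of $[(\theta^-)^{-1}]'$'' in the $(s,t)$ coordinates does not close: if $(\theta^-)'$ decays faster than the decay rate of $d\Gamma^-$, the $ds$-coefficient of $[(\theta^-)^{-1}]^*d\Gamma^-$ will blow up as $s\to-1/2$.

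The intended argument, which the paper signals in the sentence immediately following the corollary, is conformal rather than metric. Since $\theta^-\times\mathrm{id}\colon([0,\infty)\times S^1,i)\to([-1,-1/2)\times S^1,[(\theta^-)^{-1}]^*i)$ is a biholomorphism, and $[0,\infty)\times S^1\cong D\setminus\{0\}$ holomorphically, the pair $([-1,-1/2)\times S^1,[(\theta^-)^{-1}]^*i)$ is biholomorphic to the punctured unit disk with its standard structure. The $L^2$-norm $\int\gamma\circ j\wedge\gamma$ of a $1$-form is invariant under biholomorphisms, so $[(\theta^-)^{-1}]^*d\Gamma^-$ has finite $L^2$-norm with respect to $[(\theta^-)^{-1}]^*i$, hence so does its image on $D\setminus\{0\}$; Lemma \ref{lem:If--is} then extends it as a smooth harmonic $1$-form over the puncture, and the ``constant'' in the statement is its value at the origin in the disk chart. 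The asymptotic statement should thus be read through this identification (not coefficientwise in $(s,t)$), and no rate control on $\theta^-$ is needed. Replacing your last paragraph by this conformal-invariance-plus-Lemma-\ref{lem:If--is} argument closes the gap.
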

\begin{singlespace}
\noindent Next we formulate the convergence in the case when there
is no subsequence of $u_{n}$ with a vanishing center action. This
result follows from Theorem 4 in \cite{key-23}. 
\end{singlespace}
\begin{thm}
\begin{singlespace}
\noindent \label{thm:There-exist-the}Let $u_{n}$ be a sequence of
$\mathcal{H}-$holomorphic cylinders with harmonic perturbations $\gamma_{n}$
satisfy C1-A9 and possesing no subsequence with vanishing center action.
Then there exist a subsequence of $u_{n}$, still denoted by $u_{n}$,
the $\mathcal{H}-$holomorphic half cylinders $u^{\pm}$ defined on
$(-\infty,0]\times S^{1}$ and $[0,\infty)\times S^{1}$, respectively,
a periodic orbit $x$ of period $T\in\mathbb{R}\backslash\{0\}$,
and the sequences $\overline{r}_{n}^{\pm}\in\mathbb{R}$ with $|\overline{r}_{n}^{+}-\overline{r}_{n}^{n}|\rightarrow\infty$
as $n\rightarrow\infty$ such that for every sequence $h_{n}\in\mathbb{R}_{+}$
and every sequence of diffeomorphisms $\theta_{n}:[-R_{n},R_{n}]\rightarrow[-1,1]$
as in Remark \ref{rem:For-every-sequence}, the following convergence
results hold (after a suitable shift of $u_{n}$ in the $\mathbb{R}-$coordinate).

\end{singlespace}\begin{singlespace}
\noindent $C_{\text{loc}}^{\infty}-$convergence:
\end{singlespace}
\begin{enumerate}
\begin{singlespace}
\item For any sequence $s_{n}\in[-R_{n}+h_{n},R_{n}-h_{n}]$ there exists
$\tau_{\{s_{n}\}}\in[-\tau,\tau]$ such that after passing to a subsequence,
the shifted maps $u_{n}(s+s_{n},t)-s_{n}T-S_{n}s_{n}$, defined on
$[-R_{n}+h_{n}-s_{n},R_{n}-h_{n}-s_{n}]\times S^{1}$, converge in
$C_{\text{loc}}^{\infty}$ to $(Ts+a_{0},\phi_{-\tau_{\{s_{n}\}}}^{\alpha}(x(Tt))=x(Tt+\tau_{\{s_{n}\}}))$.
The shifted harmonic perturbation $1-$forms $\gamma_{n}(s+s_{n},t)$
posses a subsequence converging in $C_{\text{loc}}^{\infty}$ to $0$.
\item The left shifts $u_{n}^{-}(s,t)-R_{n}S_{n}$, defined on $[0,h_{n})\times S^{1}$,
posses a subsequence that converges in $C_{\text{loc}}^{\infty}$
to a $\mathcal{H}-$holomorphic half cylinder $u^{-}=(a^{-},f^{-})$,
defined on $[0,+\infty)\times S^{1}$. The curve $u^{-}$ is asymptotic
to $(Ts+a_{0},\phi_{\tau}^{\alpha}(x(Tt))=x(Tt+\tau))$. The left
shifted harmonic perturbation $1-$forms $\gamma_{n}^{-}$ converge
in $C_{\text{loc}}^{\infty}$ to an exact harmonic $1-$form $d\Gamma^{-}$,
defined on $[0,+\infty)\times S^{1}$. Their asymptotics are $0$.
\item The right shifts $u_{n}^{+}(s,t)+R_{n}S_{n}$, defined on $(-h_{n},0]\times S^{1}$
posses a subsequence that converges in $C_{\text{loc}}^{\infty}$
to a $\mathcal{H}-$holomorphic half cylinder $u^{+}=(a^{+},f^{+})$,
defined on $(-\infty,0]\times S^{1}$. The curve $u^{+}$ is asymptotic
to $(Ts+a_{0},\phi_{-\tau}^{\alpha}(x(Tt))=x(Tt-\tau))$. The right
shifted harmonic perturbation $1-$forms $\gamma_{n}^{+}$ converge
in $C_{\text{loc}}^{\infty}$ to an exact harmonic $1-$form $d\Gamma^{+}$,
defined on $(-\infty,0]\times S^{1}$. Their asymptotics are $0$.
\end{singlespace}
\end{enumerate}
\begin{singlespace}
\noindent $C^{0}-$convergence: 
\end{singlespace}
\begin{enumerate}
\begin{singlespace}
\item The maps $f_{n}\circ\theta_{n}^{-1}:[-1/2,1/2]\times S^{1}\rightarrow M$
converge in $C^{0}$ to $\phi_{-2\tau s}^{\alpha}(x(Tt))=x(Tt-2\tau s)$.
\item The maps $f_{n}^{-}\circ(\theta_{n}^{-})^{-1}:[-1,-1/2]\times S^{1}\rightarrow M$
converge in $C^{0}$ to a map $f^{-}\circ(\theta^{-})^{-1}:[-1,-1/2]\times S^{1}\rightarrow M$
such that $f^{-}((\theta^{-})^{-1}(-1/2),t)=\phi_{\tau}^{\alpha}(x(Tt))=x(Tt+\tau)$.
\item The maps $f_{n}^{+}\circ(\theta_{n}^{+})^{-1}:[1/2,1]\times S^{1}\rightarrow M$
converge in $C^{0}$ to a map $f^{+}\circ(\theta^{+})^{-1}:[1/2,1]\times S^{1}\rightarrow M$
such that $f^{+}((\theta^{+})^{-1}(1/2),t)=\phi_{-\tau}^{\alpha}(x(Tt))=x(Tt-\tau)$.
\item There exist $C>0$, $\rho>0$ and $N\in\mathbb{N}$ such that for
any $R>0$, $a_{n}\circ\theta_{n}^{-1}(s,t)\in[\overline{r}_{n}^{-}+R-C,\overline{r}_{n}^{+}-R+C]$
for all $n\geq N$ and all $(s,t)\in[-\rho,\rho]\times S^{1}$.
\end{singlespace}
\end{enumerate}
\end{thm}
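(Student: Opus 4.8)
The plan is to reduce the $\mathcal{H}$-holomorphic problem to the pseudoholomorphic one treated in \cite{key-23} by absorbing the exact part of $\gamma_n$ into the $\mathbb{R}$-coordinate and cancelling the remaining periodic part by a domain-dependent push along the Reeb flow, and then to transport the resulting convergence statements back through this transformation.

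First, on the cylinder $[-R_n,R_n]\times S^1$ I decompose $\gamma_n = P_n\, dt + dH_n$, where $P_n = P_{\gamma_n}(\{0\}\times S^1)$ carries the (one-dimensional) de Rham cohomology and $H_n$ is the harmonic function with $dH_n = \gamma_n - P_n\, dt$; a Fourier expansion of $H_n$ gives $H_n(s,t) = c_n - S_n s + \rho_n(s,t)$ (signs depending on orientation conventions), where $S_n = S_{\gamma_n}(\{0\}\times S^1)$ is the co-period and $\rho_n$ collects the nonzero modes and is exponentially localized near the two ends. By Proposition~\ref{prop:There-exists-a-1} the $C^1$-norm of $\gamma_n$, hence of $dH_n$, is uniformly bounded. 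Set $\bar a_n := a_n + H_n$ and $g_n := \phi^\alpha_{P_n s}\circ f_n$. Using that $\phi^\alpha_\rho$ preserves $\alpha$, $X_\alpha$ and $\xi$---so that $g_n^*\alpha = f_n^*\alpha + P_n\, ds$ and $\pi_\alpha dg_n = (d\phi^\alpha_{P_n s}|_\xi)\,\pi_\alpha df_n$---a direct computation from C5 shows that $(\bar a_n,g_n)$ solves
\[
\pi_\alpha dg_n\circ i = \bar J_n(g_n)\circ\pi_\alpha dg_n,\qquad g_n^*\alpha\circ i = d\bar a_n ,
\]
i.e.\ it is an honest pseudoholomorphic cylinder for the $\mathbb{R}$-invariant structure built from the domain-dependent complex structure $\bar J_n|_\xi = (\phi^\alpha_{P_n s})_*J_\xi$ on $\xi$. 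This is where C8 enters: because $R_nP_n\to\tau$ and $|s|\le R_n$, the shift $P_n s$ stays in a fixed compact interval, so $\bar J_n$ ranges in the compact family $\{(\phi^\alpha_\rho)_*J_\xi: |\rho|\le|\tau|+1\}$; without this the transformed structures would escape every compact set and the asymptotic analysis of \cite{key-11} could not be quoted. Moreover $E_{d\alpha}(g_n;\cdot)=E_{d\alpha}(u_n;\cdot)\le\hbar_0/2$ is preserved (C3), the total energy stays bounded (C4), and by C7 and the bound on $dH_n$ the gradients of $(\bar a_n,g_n)$ are uniformly bounded.

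Second, since $g_n^*\alpha$ and $f_n^*\alpha$ agree on every circle $\{s\}\times S^1$, the center action is unchanged by the transformation, so the absence of a subsequence of $u_n$ with vanishing center action carries over to $(\bar a_n,g_n)$ and we are in the situation of Theorem~4 of \cite{key-23}: after passing to a subsequence there are a periodic orbit $x$ of period $T\neq0$, sequences $\bar r_n^\pm$ with $|\bar r_n^+-\bar r_n^-|\to\infty$, and pseudoholomorphic half-cylinders $v^\pm$ on $[0,\infty)\times S^1$ and $(-\infty,0]\times S^1$, together with the corresponding $C^\infty_{\mathrm{loc}}$ and $C^0$ convergence of the shifted, $\theta_n$-rescaled and translated maps $g_n$ and $\bar a_n$ to the central orbit cylinder $(Ts+a_0,x(Tt))$ and to $v^\pm$---but with the \emph{trivial} twist. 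Undoing the transformation through $f_n = \phi^\alpha_{-P_n s}\circ g_n$ and $a_n = \bar a_n - H_n$ then yields the stated assertions: on the middle block $\theta_n^{-1}(s)\approx 2R_n s$ and $-P_n\theta_n^{-1}(s)\to -2\tau s$, so $\phi^\alpha_{-P_n s}$ turns $x(Tt)$ into $x(Tt-2\tau s)$ and inserts $\phi^\alpha_{\mp\tau}$ at the two ends; the shifted perturbations $\gamma_n(s+s_n,t)=P_n\,dt+dH_n$ tend to $0$ in $C^\infty_{\mathrm{loc}}$ on the middle (because $P_n,S_n\to0$ and $\rho_n$ is localized) and tend at the ends to the exact forms $d\Gamma^\pm$, with $\Gamma^\pm$ the exponential limit of $\rho_n$; and $a_n = \bar a_n - H_n$ together with the breaking bound for $\bar a_n$ and $R_nS_n\to\sigma$ gives the $C^0$-containment statement of item~4.

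The routine parts are the transformation identities and the Fourier analysis of $H_n$; the genuine analytic content is imported from Theorem~4 of \cite{key-23}. The main point to get right is the bookkeeping of how the conformal period $\tau$ and co-period $\sigma$ survive the two changes of variable---in particular that the Reeb shift $P_n s$ is bounded precisely by C8, which is what legitimizes the reduction---and the verification that, after reinserting $\phi^\alpha_{\mp P_n s}$, the asymptotic orbits of $v^\pm$ become the twisted orbits $x(Tt\pm\tau)$ that appear in the statement.
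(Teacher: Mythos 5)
Your proposal is correct in structure and reproduces the argument that the paper delegates to \cite{key-23}: the paper's ``proof'' of Theorem \ref{thm:There-exist-the} is the single sentence that it ``follows from Theorem 4 in \cite{key-23},'' and you have correctly reconstructed the mechanism behind that result --- decompose $\gamma_n = P_n\,dt + dH_n$, absorb the exact part $dH_n$ into the $\mathbb{R}$-coordinate via $\bar a_n = a_n + H_n$, kill the periodic part by pushing the $M$-component along the Reeb flow by time $P_n s$, obtain a genuine pseudoholomorphic cylinder for the domain-dependent structure $\bar J_n = (\phi^\alpha_{P_n s})_* J_\xi$, use C8/C9 to keep that family compact, apply the asymptotic analysis for small-area cylinders, and finally undo both transformations to read off the twist $x(Tt \mp \tau)$ at the two ends and the drift $x(Tt - 2\tau s)$ in the middle. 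Your computations that $g_n^*\alpha \circ i = d\bar a_n$, that $\pi_\alpha dg_n \circ i = \bar J_n(g_n)\pi_\alpha dg_n$, that the Fourier zero-mode of $H_n$ is $-S_n s$, and that the center action is invariant under the transformation are all correct, and the bookkeeping of how $R_nP_n\to\tau$ produces the Reeb twist matches the statement.

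One point of phrasing you should repair, since as written it is mildly circular: after the transformation you invoke ``Theorem~4 of \cite{key-23}'' applied to $(\bar a_n, g_n)$, but Theorem~4 of \cite{key-23} is precisely the $\mathcal{H}$-holomorphic statement that the paper cites as yielding Theorem \ref{thm:There-exist-the}, and its proof is the transformation you have just carried out. What you should apply to $(\bar a_n, g_n)$ is the Hofer--Wysocki--Zehnder asymptotic result for pseudoholomorphic cylinders of small $d\alpha$-energy (\cite{key-11}), in the extended form valid for a compact family of domain-dependent cylindrical almost complex structures --- this extension is exactly the internal machinery of \cite{key-23} and is where the compactness of $\{\bar J_n\}$ enforced by C8 is actually used. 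Since $\bar J_n$ is \emph{not} $s$-independent, the vanilla version of \cite{key-11} does not apply directly; one either needs to verify that its proof is insensitive to a $C^\infty$-small, compactly varying $s$-dependence, or to quote the precise intermediate proposition of \cite{key-23} that records this. With that reference corrected, the proof is sound and is the same argument the paper points to.
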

\begin{singlespace}
\noindent An immediate corallary is
\end{singlespace}
\begin{cor}
\begin{singlespace}
\noindent Unter the same hypothesis of Theorem \ref{thm:There-exist-the}
and the notations from Theorem \ref{rem:For-every-sequence} we have
the following $C_{\text{loc}}^{\infty}-$convergence results.
\end{singlespace}
\begin{enumerate}
\begin{singlespace}
\item The maps $v_{n}^{-}-R_{n}S_{n}$ converges in $C_{\text{loc}}^{\infty}$
to $v^{-}$ where $f^{-}((\theta^{-})^{-1}(-1/2),t)=x(Tt+\tau)$.
The harmonic $1-$forms $[(\theta_{n}^{-})^{-1}]^{*}\gamma_{n}^{-}$
with respect to the complex structrue $[(\theta_{n}^{-})^{-1}]^{*}i$
converges in $C_{\text{loc}}^{\infty}$ to a harmonic $1-$form $[(\theta^{-})^{-1}]^{*}d\Gamma^{-}$
with respect to the complex structure $[(\theta^{-})^{-1}]^{*}i$
which is asymptotic to some constant as $s\rightarrow-1/2$.
\item The maps $v_{n}^{+}+R_{n}S_{n}$ converges in $C_{\text{loc}}^{\infty}$
to $v^{+}$ where $f^{+}((\theta^{+})^{-1}(1/2),t)=x(Tt-\tau)$. The
harmonic $1-$forms $[(\theta_{n}^{+})^{-1}]^{*}\gamma_{n}^{-}$ with
respect to the complex structrue $[(\theta_{n}^{+})^{-1}]^{*}i$ converges
in $C_{\text{loc}}^{\infty}$ to a harmonic $1-$form $[(\theta^{+})^{-1}]^{*}d\Gamma^{+}$
with respect to the complex structure $[(\theta^{+})^{-1}]^{*}i$
which is asymptotic to some constant as $s\rightarrow1/2$.
\end{singlespace}
\end{enumerate}
\end{cor}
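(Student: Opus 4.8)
The plan is to obtain the assertion by transporting the conclusions of Theorem~\ref{thm:There-exist-the} through the reparametrizing diffeomorphisms $\theta_n^\pm$ of Remark~\ref{rem:For-every-sequence}. Those conclusions are phrased for the left and right shifts $u_n^\pm$ on half cylinders, and $v_n^\pm$ is by definition $u_n^\pm\circ(\theta_n^\pm)^{-1}$, so all that is needed is the continuity of composition and pullback under $C^{\infty}_{\text{loc}}$-convergence together with the asymptotic description of $u^\pm$ at infinity. I treat the ``$-$'' case; the ``$+$'' case is symmetric, with $\theta^+\colon(-\infty,0]\to(1/2,1]$ replacing $\theta^-$ and the asymptotics of $u^+$ at $-\infty$ replacing those of $u^-$ at $+\infty$.

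First I would recall from Theorem~\ref{thm:There-exist-the} that, after passing to the subsequence produced there, the maps $u_n^-(\cdot,\cdot)-R_nS_n$ on $[0,h_n)\times S^1$ converge in $C^{\infty}_{\text{loc}}([0,\infty)\times S^1)$ to the $\mathcal{H}$-holomorphic half cylinder $u^-=(a^-,f^-)$, which is asymptotic as $s\to\infty$ to $(Ts+a_0,x(Tt+\tau))$, while the left-shifted harmonic forms $\gamma_n^-$ converge in $C^{\infty}_{\text{loc}}$ to the exact harmonic form $d\Gamma^-$ with vanishing asymptotics. Next I would invoke Remark~\ref{rem:For-every-sequence}: the left shifts $\theta_n^-$ converge in $C^{\infty}_{\text{loc}}([0,\infty))$ to the diffeomorphism $\theta^-\colon[0,\infty)\to[-1,-1/2)$ with $\theta^-(s)\to-1/2$ as $s\to\infty$, hence $(\theta_n^-)^{-1}\to(\theta^-)^{-1}$ in $C^{\infty}_{\text{loc}}([-1,-1/2))$. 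I then set $v^-:=u^-\circ(\theta^-)^{-1}$ on $[-1,-1/2)\times S^1$. On any fixed compact $K\subset[-1,-1/2)\times S^1$ the sets $(\theta_n^-)^{-1}(K)$ lie in a common compact subset of $[0,\infty)\times S^1$ for $n$ large, so the chain rule applied to the two convergences above yields $v_n^--R_nS_n=(u_n^--R_nS_n)\circ(\theta_n^-)^{-1}\to v^-$ in $C^{\infty}(K)$, and likewise $[(\theta_n^-)^{-1}]^*\gamma_n^-\to[(\theta^-)^{-1}]^*d\Gamma^-=d(\Gamma^-\circ(\theta^-)^{-1})$ in $C^{\infty}_{\text{loc}}$; since $(\theta^-)^{-1}$ is holomorphic from $[-1,-1/2)$ with the complex structure $[(\theta^-)^{-1}]^*i$ to $[0,\infty)$ with $i$, this limiting $1$-form is harmonic with respect to $[(\theta^-)^{-1}]^*i=\tilde i$. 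Finally, since $(\theta^-)^{-1}(s)\to\infty$ as $s\to-1/2$ and $f^-(\sigma,\cdot)\to x(T\cdot+\tau)$ in $C^{\infty}(S^1)$ as $\sigma\to\infty$, the $f$-component of $v^-$ extends continuously to $\{-1/2\}\times S^1$ with $f^-((\theta^-)^{-1}(-1/2),t)=x(Tt+\tau)$; and since $d\Gamma^-$ has vanishing asymptotics, $\Gamma^-$ is asymptotically constant, so $[(\theta^-)^{-1}]^*d\Gamma^-$ is asymptotic to a constant as $s\to-1/2$.

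There is no deep obstacle here---the statement is a genuine corollary of Theorem~\ref{thm:There-exist-the}---but two points need care. First, the convergence is only $C^{\infty}_{\text{loc}}$ on the half-open interval $[-1,-1/2)$, so the behaviour at the endpoint $s=-1/2$ cannot be read off from it and must instead come from the asymptotic analysis of $u^\pm$ at infinity in Theorem~\ref{thm:There-exist-the}; this is precisely where the ``twist'' parameter $\tau$ enters the boundary values $x(Tt\pm\tau)$. Second, one must keep careful track of the additive normalizations, namely the shift $R_nS_n$ and the $\mathbb{R}$-translation used in Theorem~\ref{thm:There-exist-the} to normalize $u_n$, so that the $\mathbb{R}$-components of $v_n^\pm$ and $v^\pm$ match up; the $M$-components and the harmonic forms are unaffected by these shifts. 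Everything else reduces to the routine continuity of composition and pullback under $C^{\infty}_{\text{loc}}$-convergence on common compacta.
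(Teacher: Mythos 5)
Your argument is correct and coincides with what the paper implicitly has in mind when it labels this statement an immediate corollary: transport the $C^{\infty}_{\text{loc}}$-conclusions of Theorem~\ref{thm:There-exist-the} through the reparametrizations $\theta_n^{\pm}$ of Remark~\ref{rem:For-every-sequence}, using continuity of composition and pullback on compacta and the asymptotics of $u^{\pm}$ at infinity to read off the boundary values at $s=\mp1/2$. Your two cautionary remarks (the convergence being only on the half-open interval, with the endpoint values supplied by the asymptotic analysis, and the bookkeeping of the $R_nS_n$ shift) correctly identify the only points needing attention.
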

\begin{singlespace}
\noindent Since $\theta^{-}:[0,\infty)\times S^{1}\rightarrow[-1,-1/2)\times S^{1}$
is a biholomorphism with respect to the standard complex structrue
$i$ on the domain and the pull-back structure $\tilde{i}:=[(\theta^{-})^{-1}]^{*}i$,
we can identify $[-1,-1/2)\times S^{1}$ with the punctured disk equipped
with the standard complex structure, that extends over the puncture.

\noindent We use now Theorems \ref{thm:There-exists-a} and \ref{thm:There-exist-the}
to describe the limit object. 

\noindent In Case 1, the ``limit surface'' in the symplectization
consists of two disks which are connected by a straight line at the
origin. The limit map $u=(a,f):[-1,1]\times S^{1}\rightarrow\mathbb{R}\times M$
with the limit perturbation $1-$form $\gamma$ can be described as
follows (see Figure \ref{fig18}).
\end{singlespace}
\begin{description}
\begin{singlespace}
\item [{D1}] \noindent On $[-1,-1/2)\times S^{1}$, $u$ is a $\mathcal{H}-$holomorphic
curve with harmonic perturbation $\gamma$ such that at the puncture
it is asymptotic to $(\sigma+w_{a},\phi_{\tau}^{\alpha}(w_{f}))$,
while the harmonic perturbation is asymptotic to a constant. 
\item [{D2}] \noindent On $(1/2,1]\times S^{1}$, $u$ is a $\mathcal{H}-$holomorphic
curve with harmonic perturbation $\gamma$ such that at the puncture
it is asymptotic to $(-\sigma+w_{a},\phi_{-\tau}^{\alpha}(w_{f}))$,
while the harmonic\textbf{ }perturbation is asymptotic to a constant.
\item [{D3}] \noindent On the middle part $[-1/2,1/2]\times S^{1}$, $u$
is given by $u(s,t)=(-2\sigma s+w_{a},\phi_{-2\tau s}^{\alpha}(w_{f}))$.
On this part the $1-$form $\gamma$ is not defined.
\end{singlespace}
\end{description}
\begin{singlespace}
\noindent \begin{figure}     
	\centering  
	\def\svgscale{1.3} 	
	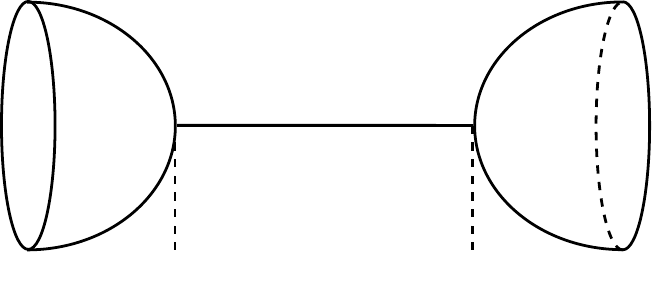  
	\caption{The limit surface consists of two cones connected by a straight line.}
	\label{fig18}
\end{figure}

\noindent In Case 2, the limit surface is the disjoint union of the
cylinders $[-1,-1/2)\times S^{1}$ and $(1/2,1]\times S^{1}$. The
$\mathcal{H}-$holomorphic curve $u=(a,f):([-1,-1/2)\coprod(1/2,1])\times S^{1}\rightarrow\mathbb{R}\times M$
with harmonic perturbation $\gamma$ can be described as follows.
\end{singlespace}
\begin{description}
\begin{singlespace}
\item [{D1'}] \noindent $u$ is asymptotic on $[-1,-1/2)\times S^{1}$
and $(1/2,1]\times S^{1}$ to a trivial cylinder over the Reeb orbit
$x(Tt+\tau)$ or $x(Tt-\tau)$, respectively, while the harmonic perturbation\textbf{
}is asymptotic to a constant. 
\item [{D2'}] \noindent On the middle part $[-1/2,1/2]\times S^{1}$, the
$M-$component $f$ is given by $f(s,t)=x(Tt-2\tau s)$.
\end{singlespace}
\end{description}
\begin{singlespace}

\subsubsection{\label{subsec:Cylinders-of-type-1}Cylinders of type $b_{1}$}
\end{singlespace}

\begin{singlespace}
\noindent We analyze the convergence on cylinders of type $b_{1}$
by using the results of Appendix \ref{sec:Pseudoholomorphic-disks-with}.
Let $m\in\{1,...,N\}$ be such that the cylinders $[h_{n}^{(m-1)}-3h,h_{n}^{(m)}+3h]\times S^{1}$
are of type $b_{1}$. By the construction described in the previous
section and Lemma \ref{lem:Let-=000024=00005Bh_=00007Bn=00007D^=00007B(m-1)=00007D,h_=00007Bn=00007D^=00007B(m)=00007D=00005D},
the $\mathcal{H}-$holomorphic curves have uniform gradient bounds
on the two boundary cylinders $[h_{n}^{(m-1)}-3h,h_{n}^{(m-1)}]\times S^{1}$
and $[h_{n}^{(m)},h_{n}^{(m)}+3h]\times S^{1}$. 

\noindent The convergence analysis is organized as follows. As in
Section \ref{2_1_sec:First_Theorem} we apply bubbling-off analysis
on the cylinder $[h_{n}^{(m-1)},h_{n}^{(m)}]\times S^{1}$ to show
that on any compact set in the complement of a finite number of points
$\mathcal{Z}^{(m)}$ in $[h_{n}^{(m-1)}-3h,h_{n}^{(m)}+3h]\times S^{1}$,
the gradient of $u_{n}$ is uniformly bounded. The points on which
the gradient might blow up are located in $(h_{n}^{(m-1)}-h,h_{n}^{(m)}+h)\times S^{1}$.
Each resulting puncture from $\mathcal{Z}^{(m)}$ lies in a disk $D_{r}$
of radius $r$ smaller than $h/2$. For a smaller radius $r$, we
assume that all disks $D_{r}$ are pairwise disjoint and that their
union lies in $(h_{n}^{(m-1)}-h,h_{n}^{(m)}+h)\times S^{1}$ (see
Figure \ref{fig17}). 

\noindent \begin{figure}     
	\centering  
	\def\svgscale{0.80} 	
	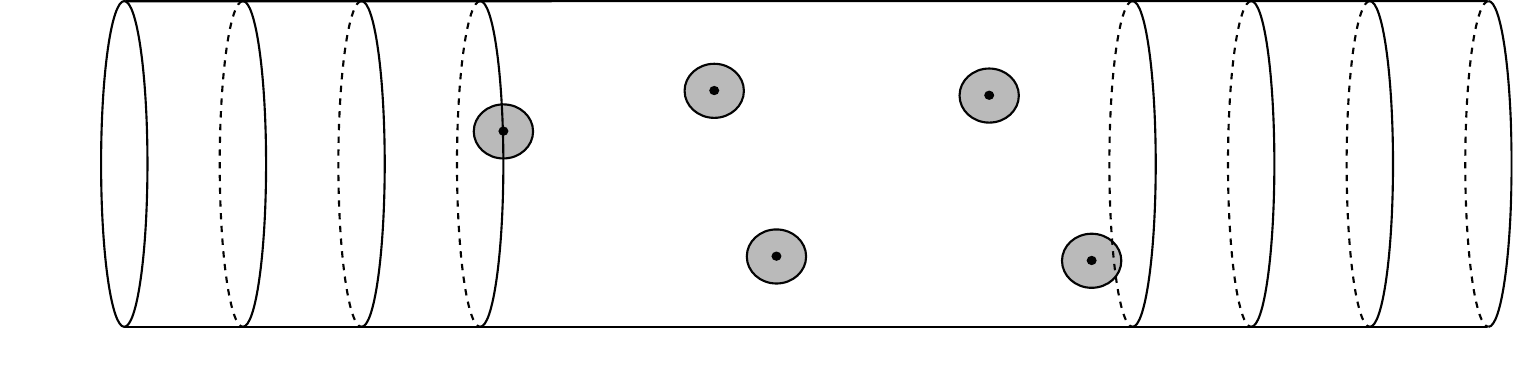  
	\caption{The gradient might blow up on the discs $D_{r}(z_{i})$ contained in $(h_{n}^{(m-1)} - h, h_{n}^{(m)} + h) \times S^{1}$.}
	\label{fig17}
\end{figure}

\noindent Under these assumptions, the $\mathcal{H}-$holomorphic
curves converge in $C^{\infty}$ on the complement of the union of
these disks (centered at the punctures) to a $\mathcal{H}-$holomorphic
curve. What is left to prove is the convergence in each $D_{r}$;
for this we use the results of Appendix \ref{sec:Pseudoholomorphic-disks-with}.
In the final step, we glue the convergence results on the disks to
the rest of the cylinder, and obtain the desired description on the
entire cylinder of type $b_{1}$. 

\noindent Under the biholomorphic map $[h_{n}^{(m-1)}-3h,h_{n}^{(m)}+3h]\times S^{1}\rightarrow[0,H_{n}^{(m)}]\times S^{1}$,$(s,t)\mapsto(s-h_{n}^{(m-1)}+3h,t)$,
where $H_{n}^{(m)}:=h_{n}^{(m)}-h_{n}^{(m-1)}+6h$, assume that the
$\mathcal{H}-$holomorphic curves $u_{n}$ together with the harmonic
perturbations $\gamma_{n}$ are defined on $[0,H_{n}^{(m)}]\times S^{1}$.
By going over to a subsequence, we have $H_{n}^{(m)}\rightarrow H^{(m)}$
as $n\rightarrow\infty$. Consider the translated $\mathcal{H}-$holomorphic
curves $u_{n}-a_{n}(0,0)=(a_{n},f_{n})-a_{n}(0,0):[0,H_{n}^{(m)}]\times S^{1}\rightarrow\mathbb{R}\times M$
with harmonic perturbations $\gamma_{n}$. In order to keep the notation
simple, let the curve $u_{n}-a_{n}(0,0)$ be still denoted by $u_{n}$.
The analysis is performed in the following setting:
\end{singlespace}
\begin{description}
\begin{singlespace}
\item [{E1}] \noindent The maps $u_{n}=(a_{n},f_{n})$ are $\mathcal{H}-$holomorphic
curves with harmonic perturbation $\gamma_{n}$ on $[0,H_{n}^{(m)}]\times S^{1}$
with respect to the standard complex structure $i$ on the domain
and the almost complex structure $J$ on $\xi$.
\item [{E2}] \noindent The maps $u_{n}$ have uniformly bounded energies,
while the harmonic perturbations $\gamma_{n}$ have uniformly bounded
$L^{2}-$norms, i.e., with the constants $E_{0},C_{0}>0$ we have
$E(u_{n};[0,H_{n}^{(m)}]\times S^{1})\leq E_{0}$ and $\left\Vert \gamma_{n}\right\Vert _{L^{2}([0,H_{n}^{(m)}]\times S^{1})}^{2}\leq C_{0}$
for all $n\in\mathbb{N}$.
\item [{E3}] \noindent The maps $u_{n}$ have uniformly bounded gradients
on $[0,3h]\times S^{1}$ and $[H_{n}^{(m)}-3h,H_{n}^{(m)}]\times S^{1}$
with respect to the Euclidean metric on the domain and the cylindrical
metric on the target space $\mathbb{R}\times M$, i.e.
\[
\left\Vert du_{n}(z)\right\Vert =\sup_{\left\Vert v\right\Vert _{\text{eucl.}}=1}\left\Vert du_{n}(z)v\right\Vert _{\overline{g}}<C_{h}
\]
for all $z\in([0,3h]\cup[H_{n}^{(m)}-3h,H_{n}^{(m)}])\times S^{1}$
and $n\in\mathbb{N}$.
\end{singlespace}
\end{description}
\begin{singlespace}
\noindent The next lemma states the existence of a finite set $\mathcal{Z}^{(m)}$
of punctures on which the gradient of $u_{n}$ blows up.
\end{singlespace}
\begin{lem}
\begin{singlespace}
\noindent \label{lem:There-exists-a-1}There exists a finite set of
points $\mathcal{Z}^{(m)}\subset[3h,H_{n}^{(m)}-3h]\times S^{1}$
such that for any compact subset $\mathcal{K}\subset([0,H_{n}^{(m)}]\times S^{1})\backslash\mathcal{Z}^{(m)}$
there exists a constant $C_{\mathcal{K}}>0$ such that
\[
\left\Vert du_{n}(z)\right\Vert =\sup_{\left\Vert v\right\Vert _{\text{eucl.}}=1}\left\Vert du_{n}(z)v\right\Vert _{\overline{g}}<C_{\mathcal{K}}
\]
for all $z\in\mathcal{K}$ and $n\in\mathbb{N}$.
\end{singlespace}
\end{lem}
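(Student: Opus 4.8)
The plan is to run the standard bubbling-off/hard-rescaling argument already used in Section~\ref{2_1_sec:First_Theorem} (Lemma~\ref{2_1_lem:There-exists-a} and Proposition~\ref{2_1_prop:There-exists-a}), but now on the cylinder $[h_{n}^{(m-1)},h_{n}^{(m)}]\times S^{1}$, and to exploit the given boundary gradient bounds E3 to confine the blow-up set to the interior stated in the lemma. First I would define, for a subsequence $\{u_{n_{k}}\}$ and a finite set $\mathcal{Z}\subset[0,H_{n}^{(m)}]\times S^{1}$, the ``bad set''
\[
\mathcal{Z}_{\{u_{n}\},\mathcal{Z}}:=\{z\mid\exists\,u_{n_{k}},\ \exists\,z_{k}\to z\ \text{with}\ \|du_{n_{k}}(z_{k})\|\to\infty\},
\]
exactly as in the proof of Proposition~\ref{2_1_prop:There-exists-a}. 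If this set is empty for $\mathcal{Z}=\emptyset$ we are done; otherwise pick $z^{1}$ in it and extract a subsequence and points $z_{n}^{1}\to z^{1}$ with $\|du_{n}^{1}(z_{n}^{1})\|\to\infty$.

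The key step is to show each such $z^{1}$ lies in $[3h,H_{n}^{(m)}-3h]\times S^{1}$ and that only finitely many can occur. The first point is immediate from E3: on the two boundary collars $[0,3h]\times S^{1}$ and $[H_{n}^{(m)}-3h,H_{n}^{(m)}]\times S^{1}$ the gradients are bounded by $C_{h}$ uniformly in $n$, so no blow-up sequence can accumulate there; hence $z^{1}\in(3h,H_{n}^{(m)}-3h)\times S^{1}$, and refining the interval slightly gives the claimed closed region (here one uses that the disks around the punctures can be taken of radius $<h/2$ contained in $(h_{n}^{(m-1)}-h,h_{n}^{(m)}+h)\times S^{1}$, as announced before the lemma). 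For finiteness I would repeat the energy-concentration argument verbatim: around any blow-up point $z^{i}$, rescale $\hat{u}_{n}(w):=u_{n}(z'_{n}+w/R'_{n})$ using Hofer's topological lemma (Lemma~2.39 of \cite{key-6}); since $\underline{\gamma}_{n}:=\gamma_{n}/R'_{n}\to 0$ in $L^{2}$ (from the uniform $L^{2}$-bound C0/E2 on $\gamma_{n}$), the limit $v$ is a non-constant finite-energy pseudoholomorphic plane, so by Remark~2.38 of \cite{key-6} its $d\alpha$-energy is $\ge\hbar$. Choosing pairwise disjoint neighbourhoods $U_{1},\dots,U_{k}$ of distinct blow-up points and applying the concentration estimate to each (this is precisely Lemma~\ref{2_1_lem:There-exists-a} transplanted to the cylinder) gives $k\,\hbar/2\le\sum_{i}E_{d\alpha}(u_{n};U_{i})\le E_{d\alpha}(u_{n};[0,H_{n}^{(m)}]\times S^{1})\le E_{0}$, hence $k\le 2E_{0}/\hbar$. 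Thus the inductive choice of blow-up points terminates, producing a finite set $\mathcal{Z}^{(m)}$, and on any compact $\mathcal{K}\subset([0,H_{n}^{(m)}]\times S^{1})\setminus\mathcal{Z}^{(m)}$ the gradients are uniformly bounded by some $C_{\mathcal{K}}$.

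The main obstacle I expect is a bookkeeping one rather than an analytic one: one must make sure the disks $D_{r}(z_{i})$ around the punctures stay inside the interior region $(h_{n}^{(m-1)}-h,h_{n}^{(m)}+h)\times S^{1}$ uniformly in $n$, and that the extraction of subsequences (one per blow-up point, plus a diagonal argument as in the proof of Theorem~\ref{thm:There-exists-finitely}) is compatible with the already-fixed subsequences from the thick-part analysis and from the decomposition into cylinders of types $\infty$ and $b_{1}$. Since all of these involve only finitely many further passages to subsequences, this causes no real difficulty. One subtlety worth isolating is that the rescaled limit $v$ genuinely is non-constant: this follows because $\|dv_{n}(0)\|=1$ is preserved in the $C^{\infty}_{\text{loc}}$ limit, exactly as in Lemma~\ref{lem:Let-=000024=00005Bh_=00007Bn=00007D^=00007B(m-1)=00007D,h_=00007Bn=00007D^=00007B(m)=00007D=00005D} and Lemma~\ref{2_1_lem:There-exists-a}. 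With these ingredients the lemma follows by the same contradiction scheme as Proposition~\ref{2_1_prop:There-exists-a}.
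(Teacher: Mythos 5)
Your proposal is correct and takes essentially the same route as the paper, which simply refers to the bubbling-off argument of Theorem~\ref{thm:There-exists-finitely} (via Lemma~\ref{2_1_lem:There-exists-a} and Proposition~\ref{2_1_prop:There-exists-a}); you spell out that argument on the cylinder and correctly identify the one extra ingredient, namely that the uniform gradient bound E3 on the boundary collars $[0,3h]\times S^{1}$ and $[H_{n}^{(m)}-3h,H_{n}^{(m)}]\times S^{1}$ forces any blow-up point into the interior region $[3h,H_{n}^{(m)}-3h]\times S^{1}$. The only small imprecision is describing the vanishing of the rescaled perturbation $\gamma_{n}/R'_{n}$ as an $L^{2}$ statement; what is actually used is the pointwise ($C^{0}_{\mathrm{loc}}$) bound on $\gamma_{n}$ from the mean value theorem (Proposition~\ref{prop:There-exists-a-1}) divided by $R'_{n}\to\infty$, but this does not affect the argument.
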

\begin{proof}
\begin{singlespace}
\noindent The proof relies on the same arguments of bubbling-off analysis,
which have been employed in Theorem \ref{thm:There-exists-finitely}
from Section \ref{2_1_sec:First_Theorem} for the thick part.
\end{singlespace}
\end{proof}
\begin{singlespace}
\noindent Pick some $r>0$ such that $r<h/2$, and let $D_{r}(\mathcal{Z}^{(m)})$
consists of $|\mathcal{Z}^{(m)}|$ pairwise disjoint closed disks
of radius $r>0$, centered at the punctures of $\mathcal{Z}^{(m)}$.
Obviously, $D_{r}(\mathcal{Z}^{(m)})\subset(2h,H_{n}^{(m)}-2h)\times S^{1}$.
Then by Lemma \ref{lem:There-exists-a-1}, $u_{n}$ has a uniformly
bounded gradient on $([0,H_{n}^{(m)}]\times S^{1})\backslash D_{r}(\mathcal{Z}^{(m)})$.
As $([0,H_{n}^{(m)}]\times S^{1})\backslash D_{r}(\mathcal{Z}^{(m)})$
is connected, we assume, after going over to some subsequence, that
$u_{n}|_{([0,H_{n}^{(m)}]\times S^{1})\backslash D_{r}(\mathcal{Z}^{(m)})}$
converge in $C^{\infty}$ to some smooth map $u|_{([0,H^{(m)}]\times S^{1})\backslash D_{r}(\mathcal{Z}^{(m)})}=(a,f)|_{([0,H^{(m)}]\times S^{1})\backslash D_{r}(\mathcal{Z}^{(m)})}$.
Before treating the convergence of the $\mathcal{H}-$holomorphic
curves in a neighbourhood of the punctures of $\mathcal{Z}^{(m)}$,
we establish the convergence of the harmonic perturbations $\gamma_{n}$
on $[0,H_{n}^{(m)}]\times S^{1}$, so that at the end
\end{singlespace}
\begin{itemize}
\begin{singlespace}
\item $u_{n}|_{([0,H_{n}^{(m)}]\times S^{1})\backslash D_{r}(\mathcal{Z}^{(m)})}$
converge in $C^{\infty}$ to a $\mathcal{H}-$holomorphic curve $u|_{([0,H^{(m)}]\times S^{1})\backslash D_{r}(\mathcal{Z}^{(m)})}$,
and
\item the harmonic perturbations $\gamma_{n}$ have uniformly bounded $C^{k}-$norms
on the disks $D_{r}(\mathcal{Z}^{(m)})$ for all $k\in\mathbb{N}_{0}$. 
\end{singlespace}
\end{itemize}
\begin{singlespace}
\noindent The latter result is needed to describe the convergence
of the harmonic perturbations $\gamma_{n}$ on the disks $D_{r}(\mathcal{Z}^{(m)})$.
As in the previous section, we set $\gamma_{n}=f_{n}ds+g_{n}dt$,
where $f_{n}$ and $g_{n}$ are harmonic functions defined on $[0,H_{n}^{(m)}]\times S^{1}$
such that $f_{n}+ig_{n}$ are holomorphic. By the uniform $L^{2}-$bound
of $\gamma_{n}$ it follows that
\[
\left\Vert \gamma_{n}\right\Vert _{L^{2}([0,H_{n}^{(m)}]\times S^{1})}^{2}=\int_{[0,H_{n}^{(m)}]\times S^{1}}\left(f_{n}^{2}+g_{n}^{2}\right)dsdt\leq C_{0}
\]
for all $n\in\mathbb{N}$, and so, that the $L^{2}-$norms of the
holomorphic functions $f_{n}+ig_{n}$ are uniformly bounded. Letting
$G_{n}=f_{n}+ig_{n}$ we state the following 
\end{singlespace}
\begin{prop}
\begin{singlespace}
\noindent \label{prop:There-exists-a-1-1}There exists a subsequence
of $G_{n}$, also denoted by $G_{n}$, that converges in $C^{\infty}$
to some holomorphic map $G$ defined on $[0,H^{(m)}]\times S^{1}$.
Moreover, the harmonic perturbations $\gamma_{n}$ converge in $C^{\infty}$
to a harmonic map $\gamma$. 
\end{singlespace}
\end{prop}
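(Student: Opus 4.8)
The plan is to repeat, in this slightly different geometric situation, the argument from the proof of Proposition \ref{prop:There-exists-a-1}, and then to upgrade the resulting a priori bound to $C^\infty$ convergence by elliptic regularity and the Arzel\`a--Ascoli theorem. First I would prove a uniform $C^0$ bound for $G_n$ on $[0,H_n^{(m)}]\times S^1$. On any subcylinder whose distance to the two boundary circles $\{0\}\times S^1$ and $\{H_n^{(m)}\}\times S^1$ is bounded below by a fixed positive constant, the mean value property for the harmonic functions $f_n=\Re(G_n)$ and $g_n=\Im(G_n)$, together with the Cauchy--Schwarz inequality and the uniform bound $\|\gamma_n\|_{L^2([0,H_n^{(m)}]\times S^1)}^2\le C_0$, gives $|G_n|\le C(C_0)$, exactly as in Proposition \ref{prop:There-exists-a-1}. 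Near the two boundary circles the mean value argument cannot be applied directly; here I would use that the $b_1$ cylinder was produced in Step 4 of Section \ref{subsec:Cylinders} as a subcylinder of $[-\sigma_n,\sigma_n]\times S^1$ with an overlapping collar, so that $\gamma_n$ in fact extends to a slightly larger cylinder and the mean value argument applies there. Alternatively, one may invoke E3 together with the second $\mathcal{H}$-holomorphic equation $(f_n^*\alpha)\circ i=da_n+\gamma_n$ to bound $\gamma_n=(f_n^*\alpha)\circ i-da_n$ in $C^0$ on the collars $[0,3h]\times S^1$ and $[H_n^{(m)}-3h,H_n^{(m)}]\times S^1$. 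Combining the two regions yields a constant, independent of $n$, bounding $\|G_n\|_{C^0([0,H_n^{(m)}]\times S^1)}$.

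Next I would bootstrap. Since $G_n$ is holomorphic with respect to the standard complex structure, Cauchy's integral formula on disks of a fixed radius (equivalently, interior estimates for the Laplacian), applied to the uniform $C^0$ bound, gives uniform $C^k$ bounds on every slightly smaller cylinder, for all $k\in\mathbb{N}_0$; the boundary circles are again handled by the slight enlargement of the domain. To deal with the fact that the domains vary with $n$, I would reparametrise each $[0,H_n^{(m)}]\times S^1$ onto the fixed cylinder $[0,H^{(m)}]\times S^1$ via the affine map $s\mapsto sH^{(m)}/H_n^{(m)}$, which converges to the identity in $C^\infty$ because $H_n^{(m)}\to H^{(m)}$; the pulled-back functions then live on a common domain, obey uniform $C^k$ bounds, and are holomorphic for complex structures converging to the standard one. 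The Arzel\`a--Ascoli theorem together with a diagonal argument then produces a subsequence converging in $C^\infty([0,H^{(m)}]\times S^1)$ to a limit $G$, which is holomorphic since the Cauchy--Riemann equations pass to $C^1$-limits. Finally $\gamma_n=\Re(G_n)\,ds+\Im(G_n)\,dt$ is determined by $G_n$ as in the set-up preceding the statement, so the $C^\infty$ convergence $G_n\to G$ yields $C^\infty$ convergence of $\gamma_n$ to $\gamma:=\Re(G)\,ds+\Im(G)\,dt$, which is harmonic because $G$ is holomorphic.

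The only genuinely delicate point is making all the estimates uniform up to the boundary circles $\{0\}\times S^1$ and $\{H_n^{(m)}\}\times S^1$, since the mean value property and the Cauchy estimates require interior disks; this forces one to exploit either the overlap built into the construction of the $b_1$ cylinders in Section \ref{subsec:Cylinders} or the gradient bound E3 and the $\mathcal{H}$-holomorphic equation. Everything else is a routine repetition of Proposition \ref{prop:There-exists-a-1} followed by elliptic bootstrapping and a standard compactness argument.
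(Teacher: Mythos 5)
Your proposal is correct and takes essentially the same route as the paper: the paper simply cites Proposition \ref{prop:There-exists-a-1} for the uniform $C^1$-bound on $G_n$ (which already applies here since the $b_1$-cylinder sits strictly inside the larger thin cylinder $[-\sigma_n,\sigma_n]\times S^1$), then invokes standard interior elliptic estimates to get uniform $C^k$-bounds and Arzel\`a--Ascoli to extract a $C^\infty$-convergent subsequence. Your extra care about uniformity near the boundary circles and the alternative estimate via E3 and the equation $\gamma_n=(f_n^*\alpha)\circ i-da_n$ are sound observations but not needed, precisely because the $b_1$-cylinder is an interior subcylinder of the ambient thin component.
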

\begin{proof}
\begin{singlespace}
\noindent By Proposition \ref{prop:There-exists-a-1}, $G_{n}$ has
a uniformly bounded $C^{1}-$norm, while by the standard regularity
results from the theory of pseudoholomorphic curves (see, for example,
Section 2.2.3 of \cite{key-6}), the $C^{k}$ derivatives of $G_{n}$
are also uniformly bounded. Hence, in view of Arzelá-Ascoli theorem,
we can extract a subsequence that converges to some holomorphic function
$G$.
\end{singlespace}
\end{proof}
\begin{singlespace}
\noindent Let us analyze the convergence of the $\mathcal{H}-$holomorphic
curves in a neighbourhood of the punctures of $\mathcal{Z}^{(m)}$,
which are given by Lemma \ref{lem:There-exists-a-1}. For $r>0$ as
above and $z\in\mathcal{Z}^{(m)}$, consider the closed disks $D_{r}(z)$
and the $\mathcal{H}-$holomorphic curves $u_{n}=(a_{n},f_{n}):D_{r}(z)\rightarrow\mathbb{R}\times M$
with harmonic perturbations $\gamma_{n}$ that converge in $C^{\infty}$
to some harmonic $1-$form $\gamma$. According to the biholomorphism
$D\rightarrow D_{r}(z)$, $p\mapsto rp+z$, where $D$ is the standard
closed unit disk, regard the $\mathcal{H}-$holomorphic curves $u_{n}$
together with the harmonic perturbations as beeing defined on $D$
instead of $D_{r}(z)$. The following setting is pertinent to our
analysis:
\end{singlespace}
\begin{description}
\begin{singlespace}
\item [{F1}] \noindent The maps $u_{n}=(a_{n},f_{n}):D\rightarrow\mathbb{R}\times M$
are $\mathcal{H}-$holomorphic curves with harmonic perturbations
$\gamma_{n}$ with respect to the standard complex structure $i$
on $D$ and the almost complex structure $J$ on $\xi$.
\item [{F2}] \noindent The maps $u_{n}=(a_{n},f_{n})$ and $\gamma_{n}$
have uniformly bounded energies and $L^{2}-$norms.
\item [{F3}] \noindent For any constant $1>\tau>0$, $u_{n}|_{A_{1,\tau}}=(a_{n},f_{n})|_{A_{1,\tau}}$
converge in $C^{\infty}$ to a $\mathcal{H}-$holomorphic map with
harmonic perturbation $\gamma$, where $A_{1,\tau}=\{z\in D\mid\tau\leq|z|\leq1\}$. 
\end{singlespace}
\end{description}
\begin{singlespace}
\noindent As the domain of definition $D$ is simply connected, we
infer that $\gamma_{n}$ is exact, i.e. it can be written as $\gamma_{n}=d\tilde{\Gamma}_{n}$,
where $\tilde{\Gamma}_{n}:D\rightarrow\mathbb{R}$ is a harmonic function.
By Condition F2, $\tilde{\Gamma}_{n}$ has a uniformly bounded gradient
$\nabla\tilde{\Gamma}_{n}$ in the $L^{2}-$norm, and it is apparent
that the existence of $\tilde{\Gamma}_{n}$ is unique up to addition
by a constant. Let us make some remarks on the choice of $\tilde{\Gamma}_{n}$
and discuss some of its properties. By using the mean value theorem
for harmonic functions as in Proposition \ref{prop:There-exists-a-1}
we conclude (after eventually, shrinking $D$) that the gradient $\nabla\tilde{\Gamma}_{n}$
are uniformly bounded in $C^{0}$. Denote by $z=s+it$ the coordinates
on $D$, and let
\[
K_{n}=\frac{1}{\pi}\int_{D}\tilde{\Gamma}_{n}(s,t)dsdt
\]
be the mean value of $\tilde{\Gamma}_{n}$, so that by the mean value
theorem for harmonic functions, $K_{n}=\tilde{\Gamma}_{n}(0)$. Finally,
define the map $\Gamma_{n}(z):=\tilde{\Gamma}_{n}(z)-\tilde{\Gamma}_{n}(0)$
which obviously satisfies $\gamma_{n}=d\Gamma_{n}$. 
\end{singlespace}
\begin{rem}
\begin{singlespace}
\noindent \label{rem:From-Poincare-inequlity}From Poincaré inequality
it follows that $\left\Vert \Gamma_{n}\right\Vert _{L^{2}(D)}\leq c\left\Vert \nabla\tilde{\Gamma}_{n}\right\Vert _{L^{2}(D)}$
for some constant $c>0$ and so, that $\Gamma_{n}$ is uniformly bounded
in $L^{2}-$norm. Again, by using the mean value theorem for harmonic
functions, we deduce (after maybe shrinking $D$) that $\Gamma_{n}$
has a uniformly bounded $C^{0}-$norm, and consequently, that $\Gamma_{n}$
has a uniformly bounded $C^{1}-$norm. Because $\gamma_{n}=d\Gamma_{n}$
is a harmonic $1-$form, $\partial_{s}\Gamma_{n}+i\partial_{t}\Gamma_{n}$
is a holomorphic function. In this context, by Proposition \ref{prop:There-exists-a-1},
$\Gamma_{n}$ converge in $C^{\infty}$ to a harmonic function $\Gamma:D\rightarrow\mathbb{R}$. 
\end{singlespace}
\end{rem}
\begin{singlespace}
\noindent In the following we transform the $\mathcal{H}-$holomorphic
curves defined on the disk in a usual pseudoholomorphic curve by encoding
the harmonic perturbation $\gamma_{n}=d\Gamma_{n}$ in the $\mathbb{R}-$coordinate
of the $\mathcal{H}-$holomorphic curve $u_{n}$. Specifically, we
define the maps $\overline{u}_{n}=(\overline{a}_{n},\overline{f}_{n})=(a_{n}+\Gamma_{n},f_{n})$
which are obviously pseudoholomorphic. The transformation is usable
if we ensure that the energy bounds are still satisfied. For an ordinary
pseudoholomorphic curve, the sum of the $\alpha-$ and $d\alpha-$energies,
that are both positive, yield the Hofer energy $E_{\text{H}}(\overline{u}_{n};D)$.
A uniform bound on the Hofer energy, which ensures a uniform bound
on the $\alpha-$ and $d\alpha-$energies of $\overline{u}_{n}$,
is
\[
E_{\text{H}}(\overline{u}_{n};D)=\sup_{\varphi\in\mathcal{A}}\int_{D}\overline{u}_{n}^{*}d(\varphi\alpha)=\sup_{\varphi\in\mathcal{A}}\int_{\partial D}\varphi(\overline{a}_{n})\overline{f}_{n}^{*}\alpha\leq\int_{\partial D}|\overline{f}_{n}^{*}\alpha|\leq C_{h}.
\]
Here, the last inequality follows from Condition F3, according to
which, $u_{n}$ converge in $C^{\infty}$ in a fixed neighbourhood
of $\partial D$. Note that the constant $C_{h}$ is guaranteed by
Lemma \ref{lem:There-exists-a-1}.

\noindent In a next step we use the results of Appendix \ref{sec:Pseudoholomorphic-disks-with}
to establish the convergence of the maps $\overline{u}_{n}$ and to
describe their limit object. Then we undo the transformation in the
$\mathbb{R}-$coordinate (more precisely, the encoding of $\gamma_{n}$
in the $\mathbb{R}-$coordinate of the curve $u_{n}$) and give a
convergence result together with a description of the limit object
for $u_{n}$. Before proceeding we state the setting corresponding
to the pseudoholomorphic curves $\overline{u}_{n}$.
\end{singlespace}
\begin{description}
\begin{singlespace}
\item [{G1}] \noindent The maps $\overline{u}_{n}=(\overline{a}_{n},\overline{f}_{n}):D\rightarrow\mathbb{R}\times M$
solve the pseudoholomorphic curve equation
\begin{align*}
\pi_{\alpha}d\overline{f}_{n}\circ i & =J(\overline{f}_{n})\circ\pi_{\alpha}d\overline{f}_{n},\\
\overline{f}_{n}^{*}\alpha\circ i & =d\overline{a}_{n}
\end{align*}
on $D$. 
\item [{G2}] \noindent The maps $\overline{u}_{n}$ have uniformly bounded
energies.
\item [{G3}] \noindent For any $\tau>0$, $\overline{u}_{n}|_{A_{1,\tau}}=(\overline{a}_{n},\overline{f}_{n})|_{A_{1,\tau}}$
converge in $C^{\infty}$ to a pseudoholomorphic map.
\end{singlespace}
\end{description}
\begin{singlespace}
\noindent We consider two cases. In the first case, the $\mathbb{R}-$components
of $\overline{u}_{n}$ are uniformly bounded, while in the second
case they are not. Actually, the first case does not occur. We will
prove this result in the next lemma by using standard bubbling-off
analysis. Let $z_{n}\in D$ be the sequence choosen from the bubbling-off
argument of Lemma \ref{lem:There-exists-a-1}, i.e. for which we have
that
\begin{equation}
\left\Vert d\overline{u}_{n}(z_{n})\right\Vert _{C^{0}}=\sup_{z\in D}\left\Vert d\overline{u}_{n}(z)\right\Vert _{C^{0}}\rightarrow\infty\label{eq:grad_blow_disc}
\end{equation}
as $n\rightarrow\infty$.
\end{singlespace}
\begin{lem}
\begin{singlespace}
\noindent The $\mathbb{R}-$coordinates of the maps $\overline{u}_{n}$
are unbounded on $D$.
\end{singlespace}
\end{lem}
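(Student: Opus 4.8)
The plan is to argue by contradiction: assume there is a constant $C>0$ with $\|\overline{a}_{n}\|_{C^{0}(D)}\leq C$ for all $n$, and produce from the gradient blow-up a non-constant finite energy pseudoholomorphic plane whose $\mathbb{R}-$component is bounded, which is impossible in a symplectization.

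First I would localise the blow-up. By Condition G3 the maps $\overline{u}_{n}$ converge in $C^{\infty}$ on a fixed neighbourhood of $\partial D$, so $\|d\overline{u}_{n}\|_{C^{0}}$ is uniformly bounded there; hence the points $z_{n}$ with $\|d\overline{u}_{n}(z_{n})\|_{C^{0}}=\sup_{D}\|d\overline{u}_{n}\|_{C^{0}}\rightarrow\infty$ remain in a compact subset of the interior of $D$, and after passing to a subsequence $z_{n}\rightarrow z_{\infty}$ in the interior. Applying Hofer's topological lemma (Lemma 2.39 of \cite{key-6}) exactly as in the proofs of Lemma \ref{2_1_lem:There-exists-a} and Lemma \ref{lem:There-exists-a-1}, I would obtain $R_{n}:=\|d\overline{u}_{n}(z_{n})\|_{C^{0}}\rightarrow\infty$, $\epsilon_{n}:=R_{n}^{-1/2}\searrow 0$, and sequences $\epsilon'_{n}\in(0,\epsilon_{n}]$ and $z'_{n}\in D$ with $z'_{n}\rightarrow z_{\infty}$, $\epsilon'_{n}R'_{n}\rightarrow\infty$ where $R'_{n}:=\|d\overline{u}_{n}(z'_{n})\|_{C^{0}}$, and $\|d\overline{u}_{n}(z)\|_{C^{0}}\leq 2R'_{n}$ for all $z\in D_{\epsilon'_{n}}(z'_{n})$. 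Rescaling, I would set $v_{n}=(b_{n},g_{n}):D_{\epsilon'_{n}R'_{n}}(0)\rightarrow\mathbb{R}\times M$, $v_{n}(w):=(\overline{a}_{n}(z'_{n}+w/R'_{n})-\overline{a}_{n}(z'_{n}),\,\overline{f}_{n}(z'_{n}+w/R'_{n}))$. By Condition G1 these are pseudoholomorphic, they satisfy $\|dv_{n}(0)\|=1$ and $\|dv_{n}\|_{C^{0}}\leq 2$, and, by conformal invariance of the two energy terms together with Condition G2 (as in the proof of Lemma \ref{2_1_lem:There-exists-a}), they have uniformly bounded energies.

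By elliptic regularity for pseudoholomorphic curves and the Arzel\`a--Ascoli theorem, a subsequence of $v_{n}$ would converge in $C_{\text{loc}}^{\infty}(\mathbb{C})$ to a pseudoholomorphic plane $v=(b,g):\mathbb{C}\rightarrow\mathbb{R}\times M$ with $\|dv(0)\|=1$ --- hence non-constant --- and with finite Hofer energy. The decisive observation is that $|b_{n}(w)|=|\overline{a}_{n}(z'_{n}+w/R'_{n})-\overline{a}_{n}(z'_{n})|\leq 2C$ on the entire domain of $v_{n}$, so the limit satisfies $|b|\leq 2C$ on all of $\mathbb{C}$; that is, $v$ is a non-constant finite energy plane with bounded $\mathbb{R}-$component. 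This is impossible: by Hofer's asymptotic analysis for finite energy planes (Proposition 5.6 of \cite{key-1}, the same result underlying Proposition \ref{prop:Let--be}), after identifying $\mathbb{C}\setminus\{0\}$ with the half-cylinder $[0,\infty)\times S^{1}$, a non-constant finite energy plane in the symplectization is asymptotic at the puncture to a periodic Reeb orbit of period $T\neq 0$, so its $\mathbb{R}-$component behaves like $Ts$ and is in particular unbounded, contradicting boundedness of $b$. (Alternatively, $b$ is subharmonic and bounded on $\mathbb{C}$, hence constant, which forces $g^{*}\alpha\equiv 0$ and then $g^{*}d\alpha\equiv 0$, so $v$ is constant --- again contradicting $\|dv(0)\|=1$.) This contradiction shows the $\mathbb{R}-$coordinates of $\overline{u}_{n}$ are unbounded on $D$.

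The argument is standard SFT bubbling-off, parallel to Theorem \ref{thm:There-exists-finitely} and Lemma \ref{lem:There-exists-a-1}; the only point needing a little care is keeping the blow-up points $z_{n}$ away from $\partial D$, so that the rescaled domains $D_{\epsilon'_{n}R'_{n}}(0)$ genuinely exhaust $\mathbb{C}$ --- this is supplied by the $C^{\infty}-$convergence near $\partial D$ from Condition G3. The conceptual heart of the proof is the last step: gradient blow-up produces a non-constant holomorphic plane which, in a symplectization, must escape to $\pm\infty$ in the $\mathbb{R}-$direction, and that is incompatible with the assumed uniform $\mathbb{R}-$bound on the $\overline{u}_{n}$.
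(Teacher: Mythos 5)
Your proof is correct and follows essentially the same bubbling-off strategy as the paper: assume a uniform $C^{0}-$bound on $\overline{a}_{n}$, apply Hofer's topological lemma and rescale to obtain a non-constant finite energy pseudoholomorphic plane whose $\mathbb{R}-$component is globally bounded, and derive a contradiction. Your two closing arguments (Hofer's asymptotics for finite energy planes, and the subharmonic Liouville observation) are standard equivalents of the paper's route via removable singularity, extension to a sphere, and the maximum principle.
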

\begin{proof}
\begin{singlespace}
\noindent We prove by contradiction using bubbling-off analysis. Assume
that the $\mathbb{R}-$coordinates of the maps $\overline{u}_{n}$
are uniformly bounded. Employing the same arguments as in the proof
of Lemma \ref{lem:Let-=000024=00005Bh_=00007Bn=00007D^=00007B(m-1)=00007D,h_=00007Bn=00007D^=00007B(m)=00007D=00005D}
for the sequence $R_{n}:=\left\Vert d\overline{u}_{n}(z_{n})\right\Vert _{C^{0}}$,
we find that the maps $v_{n}:D_{\epsilon'_{n}R'_{n}}(0)\rightarrow\mathbb{R}\times M$
converge in $C_{\text{loc}}^{\infty}(\mathbb{C})$ to a non-constant
finite energy holomorphic plane $v$. Note that the boundedness of
$E_{d\alpha}(v;\mathbb{C})$ follows from the fact that for an arbitray
$R>0$ we have 
\begin{align*}
E_{d\alpha}(v,D_{R}(0))=\lim_{n\rightarrow\infty}E_{d\alpha}(v_{n};D_{R}(0))\leq\lim_{n\rightarrow\infty}E_{d\alpha}(v_{n};D_{\epsilon'_{n}R'_{n}}(0))\leq C_{h},
\end{align*}
yielding $E_{d\alpha}(v;\mathbb{C})\leq C_{h}$. As we have assumed
that the $\mathbb{R}-$coordinates of $\overline{u}_{n}$ are uniformly
bounded it follows that the $\mathbb{R}-$coordiantes of $v_{n}$,
and so, of $v$, are also uniformly bounded. By singularity removal,
$v$ can be extended to a pseudoholomorphic sphere. Thus the $d\alpha-$energy
vanishes and by the maximum principle, the function $a$ is constant.
For this reason, $v$ must be constant and we are lead to a contradiction. 
\end{singlespace}
\end{proof}
\begin{singlespace}
\noindent We consider now the second case in which the $\mathbb{R}-$coordinates
of the maps $\overline{u}_{n}$ are unbounded, and make extensively
use of the results of Appendix \ref{sec:Pseudoholomorphic-disks-with}.
By the maximum principle, the function $\overline{a}_{n}$ tends to
$-\infty$, while by Proposition \ref{prop:Upto-passing-to}, the
maps $\overline{u}_{n}=(\overline{a}_{n},f_{n}):(D,i)\rightarrow\mathbb{R}\times M$
converge to a broken holomorphic curve $\overline{u}=(\overline{a},\overline{f}):(Z,j)\rightarrow\mathbb{R}\times M$.
Here, $Z$ is obtained as follows. Let $Z$ be a surface diffeomorphic
to $D$, and let $\Delta=\Delta_{n}\amalg\Delta_{p}\subset Z$ be
a collection of finitely many disjoint loops away from $\partial Z$.
Further on, let $Z\backslash\Delta_{p}=\coprod_{\nu=0}^{N+1}Z^{(\nu)}$
for some $N\in\mathbb{N}$ as described in Appendix \ref{sec:Pseudoholomorphic-disks-with}.
For a loop $\delta\in\Delta_{p}$, there exists $\nu\in\left\{ 0,...,N\right\} $
such that $\delta$ is adjacent to $Z^{(\nu)}$ and $Z^{(\nu+1)}$.
Fix an embedded annuli
\[
A^{\delta,\nu}\cong[-1,1]\times S^{1}\subset Z\backslash\Delta_{n}
\]
such that $\left\{ 0\right\} \times S^{1}=\delta$, $\left\{ -1\right\} \times S^{1}\subset Z^{(\nu)}$,
and $\left\{ 1\right\} \times S^{1}\subset Z^{(\nu+1)}$. In this
context, there exist a sequence of diffeomorphism $\varphi_{n}:D\rightarrow Z$
and a sequence of negative real numbers $\min(a_{n})=r_{n}^{(0)}<r_{n}^{(1)}<...<r_{n}^{(N+1)}=-K-2$,
where $K\in\mathbb{R}$ is the constant determined in Appendix \ref{sec:Pseudoholomorphic-disks-with}
and $r_{n}^{(\nu+1)}-r_{n}^{(\nu)}\rightarrow\infty$ as $n\rightarrow\infty$
such that the following hold:
\end{singlespace}
\begin{description}
\begin{singlespace}
\item [{H1}] \noindent $i_{n}:=(\varphi_{n})_{*}i\rightarrow j$ in $C_{\text{loc}}^{\infty}$
on $Z\backslash\Delta$.
\item [{H2}] \noindent The sequence $\overline{u}_{n}\circ\varphi_{n}^{-1}|_{Z^{(\nu)}}:Z^{(\nu)}\rightarrow\mathbb{R}\times M$
converges in $C_{\text{loc}}^{\infty}$ on $Z^{(\nu)}\backslash\Delta_{n}$
to a punctured nodal pseudoholomorphic curve $\overline{u}^{(\nu)}:(Z^{(\nu)},j)\rightarrow\mathbb{R}\times M$,
and in $C_{\text{loc}}^{0}$ on $Z^{(\nu)}$.
\item [{H3}] \noindent The sequence $\overline{f}_{n}\circ\varphi_{n}^{-1}:Z\rightarrow M$
converges in $C^{0}$ to a map $f:Z\rightarrow M$, whose restriction
to $\Delta_{p}$ parametrizes the Reeb orbits and to $\Delta_{n}$
parametrizes points. 
\item [{H4}] \noindent For any $S>0$ , there exist $\rho>0$ and $\tilde{N}\in\mathbb{N}$
such that $\overline{a}_{n}\circ\varphi_{n}^{-1}(s,t)\in[r_{n}^{(\nu)}+S,r_{n}^{(\nu+1)}-S]$
for all $n\geq\tilde{N}$ and all $(s,t)\in A^{\delta,\nu}$ with
$|s|\leq\rho$.
\end{singlespace}
\end{description}
\begin{singlespace}
\noindent To establish a convergence result for the $\mathcal{H}-$holomorphic
curve $u_{n}$ we undo the tranformation. The maps $u_{n}$ are given
by $u_{n}=\overline{u}_{n}-\Gamma_{n}$, where $\Gamma_{n}:D\rightarrow\text{\ensuremath{\mathbb{R}}}$
is the harmonic function defined in Remark \ref{rem:From-Poincare-inequlity}.
Observe that by Remark \ref{rem:From-Poincare-inequlity}, the $\Gamma_{n}$
converge in $C^{\infty}(D)$ to some harmonic function and are uniformly
bounded in $C^{0}(D)$. Via the above diffeomorphisms $\varphi_{n}:D\rightarrow Z$,
consider the functions $\mathscr{G}_{n}:=\Gamma_{n}\circ\varphi_{n}^{-1}:Z\rightarrow\mathbb{R}$.
Since $\Gamma_{n}$ are harmonic functions with respect to $i$, $\mathscr{G}_{n}$
are harmonic functions on $Z$ with respect to $i_{n}$. Moreover,
their gradients and absolute values are bounded in $L^{2}-$ and $C^{0}-$norms,
respectively, i.e. 
\begin{equation}
\int_{Z}d\mathscr{G}_{n}\circ i_{n}\wedge d\mathscr{G}_{n}\leq C_{0}\label{eq:grad_l2}
\end{equation}
and
\begin{equation}
\left\Vert \mathscr{G}_{n}\right\Vert _{C^{0}(Z)}\leq C_{1}\label{eq:uniform_bdd}
\end{equation}
for some constant $C_{1}>0$ and for all $n\in\mathbb{N}$, respectively.
\end{singlespace}
\begin{lem}
\begin{singlespace}
\noindent \label{lem:For-every-compact}For any compact subset $\mathcal{K}\subset(Z\backslash\Delta)$
there exists a subsequence of $\mathscr{G}_{n}$, also denoted by
$\mathscr{G}_{n}$, such that $\mathscr{G}_{n}\rightarrow\mathscr{G}$
in $C^{\infty}(\mathcal{K})$ as $n\rightarrow\infty$, where $\mathscr{G}$
is a harmonic function defined on a neighbourhood of $\mathcal{K}$.
\end{singlespace}
\end{lem}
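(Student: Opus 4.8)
The plan is to run a standard Arzelà--Ascoli argument on the functions $\mathscr{G}_n$, using their harmonicity together with the a priori bounds \eqref{eq:grad_l2} and \eqref{eq:uniform_bdd}. First I would fix a compact subset $\mathcal{K}\subset Z\backslash\Delta$ and a slightly larger open set $\mathcal{K}\subset\mathcal{K}'\Subset Z\backslash\Delta$ with $\overline{\mathcal{K}'}$ compact and disjoint from $\Delta$. By the Deligne--Mumford-type convergence H1, the complex structures $i_n$ converge in $C^\infty_{\text{loc}}$ on $Z\backslash\Delta$ to $j$, so on $\overline{\mathcal{K}'}$ the $i_n$ are uniformly bounded in every $C^k$-norm and uniformly elliptic for the associated Laplace--Beltrami operators $\Delta_{i_n}$. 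Since $\mathscr{G}_n$ is $i_n$-harmonic, i.e. $\Delta_{i_n}\mathscr{G}_n=0$ on $Z\backslash\Delta_p$ (and in particular on $\mathcal{K}'$), interior elliptic estimates give, for every $k$, a constant $C_{k,\mathcal{K},\mathcal{K}'}$ (depending only on $\mathcal{K},\mathcal{K}'$ and the uniform bounds on the coefficients of $\Delta_{i_n}$) with
\[
\left\Vert \mathscr{G}_n\right\Vert_{C^{k}(\mathcal{K})}\leq C_{k,\mathcal{K},\mathcal{K}'}\left\Vert \mathscr{G}_n\right\Vert_{C^{0}(\mathcal{K}')}\leq C_{k,\mathcal{K},\mathcal{K}'}\,C_1,
\]
where the last inequality is \eqref{eq:uniform_bdd}. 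Thus $\{\mathscr{G}_n\}$ is bounded in $C^k(\mathcal{K})$ for every $k$.

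Next, exhausting $Z\backslash\Delta$ by countably many such compact sets $\mathcal{K}_1\subset\mathcal{K}_2\subset\cdots$, a diagonal subsequence argument combined with the Arzelà--Ascoli theorem produces a subsequence, still denoted $\mathscr{G}_n$, converging in $C^\infty_{\text{loc}}(Z\backslash\Delta)$ — and in particular in $C^\infty(\mathcal{K})$ — to a limit function $\mathscr{G}$ defined on a neighbourhood of $\mathcal{K}$ (indeed on all of $Z\backslash\Delta$). It remains to check that $\mathscr{G}$ is harmonic with respect to the limit structure $j$. This follows by passing to the limit in the equation $\Delta_{i_n}\mathscr{G}_n=0$: since $\mathscr{G}_n\to\mathscr{G}$ in $C^2_{\text{loc}}$ and the coefficients of $\Delta_{i_n}$ converge in $C^0_{\text{loc}}$ to those of $\Delta_j$ by H1, we obtain $\Delta_j\mathscr{G}=0$, i.e. $\mathscr{G}$ is $j$-harmonic on the neighbourhood of $\mathcal{K}$. (Alternatively, since $\partial_s\Gamma_n+i\partial_t\Gamma_n$ is holomorphic in the flat $z=s+it$ coordinate on $D$, one can transport this to $Z$ via $\varphi_n$ and argue that the $i_n$-holomorphic functions $d\mathscr{G}_n$ converge, by the standard regularity theory for pseudoholomorphic maps as in Section 2.2.3 of \cite{key-6}, to an $j$-holomorphic limit $d\mathscr{G}$; the bound \eqref{eq:grad_l2} then merely records that the $L^2$-energy of $d\mathscr{G}_n$ stays finite and is not itself needed for the local $C^\infty$ convergence once \eqref{eq:uniform_bdd} is in hand.)

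The only subtlety — and the place I expect to spend the most care — is the behaviour near $\Delta$: the estimates above are purely interior and degenerate as one approaches the loops in $\Delta$, which is exactly why the statement is restricted to compact subsets of $Z\backslash\Delta$. One must be sure that the elliptic constant $C_{k,\mathcal{K},\mathcal{K}'}$ genuinely depends only on the geometry of $\mathcal{K}\Subset\mathcal{K}'\Subset Z\backslash\Delta$ and on the locally uniform $C^\infty$ bounds for $i_n$ supplied by H1, with no hidden dependence on $n$ creeping in through the degeneration near $\Delta$; this is guaranteed precisely because $\overline{\mathcal{K}'}$ is a fixed compact set disjoint from $\Delta$, on which H1 gives genuine (not merely $C^\infty_{\text{loc}}$ away from something smaller) uniform control. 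With that observed, the argument is routine and essentially identical to the corresponding step in the $C^\infty_{\text{loc}}$-convergence of the harmonic perturbations carried out for the thick part and for cylinders of type $\infty$.
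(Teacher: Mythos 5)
Your argument is correct, and it reaches the same conclusion as the paper, but it takes a genuinely different (and somewhat more streamlined) route to the uniform $C^{k}$ bounds.

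The paper does not work with the variable operators $\Delta_{i_n}$ directly. Instead, it first invokes Lemma~\ref{2_1_lem:There-exists-open-neighbourhood} from Appendix~\ref{chap:Special-coordinates} to produce a finite cover of $\mathcal{K}$ by $i$--$i_n$-biholomorphic charts $\psi_n^{(l)}:D\to U_n^{(l)}$ with $\psi_n^{(l)}\to\psi^{(l)}$ in $C_{\mathrm{loc}}^{\infty}(D)$. After pulling back, the functions $\mathscr{G}_n^{(l)}=\mathscr{G}_n\circ\psi_n^{(l)}$ are honestly harmonic for the flat Laplacian on the disk, and the desired $C^{k}$ bounds are then extracted by purely classical complex-analytic means: the mean value theorem bounds $\nabla\mathscr{G}_n^{(l)}$ in $C^{0}$, the function $\partial_s\mathscr{G}_n^{(l)}+i\partial_t\mathscr{G}_n^{(l)}$ is holomorphic, and the Cauchy integral formula bounds all of its higher derivatives uniformly on smaller concentric disks. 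The local limits are then transported back via $(\psi_n^{(l)})^{-1}$ and glued by uniqueness. By contrast, you skip the coordinate construction entirely and invoke interior elliptic estimates for the family $\Delta_{i_n}$ with constants controlled, via H1, by the uniform $C^{\infty}$ bounds of $i_n$ on a fixed $\mathcal{K}'\supset\mathcal{K}$ disjoint from $\Delta$. This is correct and shorter, at the cost of treating the uniformity of elliptic constants under $C^{\infty}$-convergent coefficients as a black box; the paper's route is longer but self-contained, relying only on elementary facts about harmonic and holomorphic functions on $D$, together with the coordinate Lemma in Appendix~\ref{chap:Special-coordinates} which is needed elsewhere in the paper anyway. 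Two minor inaccuracies worth noting in your write-up: $\mathscr{G}_n$ is $i_n$-harmonic on all of $Z$, not merely on $Z\setminus\Delta_p$ (the restriction to $Z\setminus\Delta$ in the Lemma is only because the elliptic constants degenerate near $\Delta$), and in your parenthetical alternative you transport $\partial_s\Gamma_n+i\partial_t\Gamma_n$ via $\varphi_n$ rather than via the local charts $\psi_n^{(l)}$ --- the global $\varphi_n$ are diffeomorphisms but not $i$--$j$-biholomorphisms, so the flat-coordinate argument really does require the local charts, which is exactly what the paper's proof supplies.
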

\begin{proof}
\begin{singlespace}
\noindent Let $\mathcal{K}\subset(Z\backslash\Delta)$ be a compact
subset. By Lemma \ref{2_1_lem:There-exists-open-neighbourhood} there
exists a finite covering of $\mathcal{K}$ by the charts $\psi_{n}^{(l)}:D\rightarrow U_{n}^{(l)}$
and $\psi^{(l)}:D\rightarrow U^{(l)}$, where $l\in\{1,...,N\}$ and
$N\in\mathbb{N}$. For some $r\in(0,1)$, the following hold: 

\end{singlespace}\begin{enumerate}
\begin{singlespace}
\item $\psi_{n}^{(l)}$ are $i-i_{n}-$biholomorphisms and $\psi^{(l)}$
is an $i-j-$biholomorphism;
\item $\psi_{n}^{(l)}\rightarrow\psi^{(l)}$ in $C_{\text{loc}}^{\infty}(D)$
as $n\rightarrow\infty$;
\item $\mathcal{K}\subset\bigcup_{l=1}^{N}\psi_{n}^{(l)}(D_{r}(0))$ for
all $n\in\mathbb{N}$, and $\mathcal{K}\subset\bigcup_{l=1}^{N}\psi^{(l)}(D_{r}(0))$.
\end{singlespace}
\end{enumerate}
\begin{singlespace}
\noindent Consider the function $\mathscr{G}_{n}^{(l)}:=\mathscr{G}_{n}\circ\psi_{n}^{(l)}:D\rightarrow\mathbb{R}$
for some $l\in\{1,...,N\}$. Because $\psi_{n}^{(l)}$ are $i-i_{n}-$biholomorphisms,
$\mathscr{G}_{n}^{(l)}$ is a harmonic function with respect to $i$.
From (\ref{eq:grad_l2}) and (\ref{eq:uniform_bdd}), $\mathscr{G}_{n}^{(l)}$
satisfies
\[
\int_{D}d\mathscr{G}_{n}^{(l)}\circ i\wedge d\mathscr{G}_{n}^{(l)}\leq C_{0}
\]
and $\left\Vert \mathscr{G}_{n}^{(l)}\right\Vert _{C^{0}(D)}\leq C_{1}$.
Relying on the compactness result for harmonic functions we assume
that $\mathscr{G}_{n}^{(l)}$ converges in $C^{0}(D_{3r/2}(0))$ to
a harmonic function $\mathscr{G}^{(l)}$ defined on $D_{3r/2}(0)$.
By the mean value theorem for harmonic functions, there exists a constant
$c>0$ such that $\left\Vert \nabla\mathscr{G}_{n}^{(l)}\right\Vert _{C^{0}(D_{4r/3}(0))}\leq c$
for all $n\in\mathbb{N}$. Hence $\mathscr{G}_{n}^{(l)}$ is uniformly
bounded in $C^{1}(D_{4r/3}(0))$. Because $d\mathscr{G}_{n}^{(l)}$
defines a harmonic $1-$form, $\partial_{s}\mathscr{G}_{n}^{(l)}+i\partial_{t}\mathscr{G}_{n}^{(l)}$
is a uniformly bounded holomorphic function defined on $D_{4r/3}(0)$,
where $s,t$ are the coordinates on $D_{4r/3}(0)$. By means of the
Cauchy integral formula, all derivatives of $\partial_{s}\mathscr{G}_{n}^{(l)}+i\partial_{t}\mathscr{G}_{n}^{(l)}$
are uniformly bounded on $D_{5r/4}(0)$. From this and the fact that
$\mathscr{G}_{n}^{(l)}$ converges uniformly to $\mathscr{G}^{(l)}$
we deduce that there exists a further subsequence, also denoted by
$\mathscr{G}_{n}^{(l)}$, that converges in $C^{\infty}(D_{6r/5}(0))$
to a harmonic function $\mathscr{G}^{(l)}:D_{6r/5}(0)\rightarrow\mathbb{R}$.
For $n$ sufficiently large, $\psi^{(l)}(\overline{D_{r}(0)})\subset\psi^{(l)}(D_{6r/5}(0))$
and $\psi^{(l)}(\overline{D_{r}(0)})\subset\psi_{n}^{(l)}(D_{6r/5}(0))$.
Hence the harmonic function $\mathscr{G}_{n}=\mathscr{G}_{n}^{(l)}\circ(\psi_{n}^{(l)})^{-1}:\psi^{(l)}(\overline{D_{r}(0)})\rightarrow\mathbb{R}$
converges in $C^{\infty}(\psi^{(l)}(\overline{D_{r}(0)}))$ to a harmonic
function $\tilde{\mathscr{G}}^{(l)}:=\mathscr{G}^{(l)}\circ(\psi^{(l)})^{-1}:\psi^{(l)}(\overline{D_{r}(0)})\rightarrow\mathbb{R}$.
Obviously, if $l,l'\in\{1,...,N\}$ are such that $\psi^{(l)}(D_{r}(0))\cap\psi^{(l')}(D_{r}(0))\not=\emptyset$,
the uniqueness of the limit yields $\tilde{\mathscr{G}}^{(l)}|_{\psi^{(l)}(D_{r}(0))\cap\psi^{(l')}(D_{r}(0))}=\tilde{\mathscr{G}}^{(l')}|_{\psi^{(l)}(D_{r}(0))\cap\psi^{(l')}(D_{r}(0))}$.
Hence all $\tilde{\mathscr{G}}^{(l)}$ glue together to a harmonic
function defined in a neighbourhood of $\mathcal{K}$.
\end{singlespace}
\end{proof}
\begin{singlespace}
\noindent By Lemma \ref{lem:For-every-compact} it is apparent that
after going over to a diagonal subsequence, $\mathscr{G}_{n}$ converges
in $C_{\text{loc}}^{\infty}(Z\backslash\Delta)$ to a harmonic function
$\mathscr{G}:Z\backslash\Delta\rightarrow\mathbb{R}$ with respect
to $j$. This shows that the $\mathcal{H}-$holomorphic curve $u_{n}\circ\varphi_{n}^{-1}|_{Z^{(\nu)}}:Z^{(\nu)}\rightarrow\mathbb{R}\times M$
with harmonic perturbation $d\mathscr{G}_{n}$ converges in $C_{\text{loc}}^{\infty}$
on $Z^{(\nu)}\backslash\Delta_{n}$ to a $\mathcal{H}-$holomorphic
curve $u^{(\nu)}:(Z^{(\nu)},j)\rightarrow\mathbb{R}\times M$ with
harmonic perturbation $d\mathcal{G}$, where $u^{(\nu)}=\overline{u}^{(\nu)}-\mathscr{G}$
for all $\nu$. What is left is the description of the convergence
of the $\mathcal{H}-$holomorphic curves $u_{n}\circ\varphi_{n}^{-1}$
with harmonic perturbation $d\mathscr{G}_{n}$ in a neighbourhood
of the loops from $\Delta_{n}$, i.e. accross the nodes from $\Delta_{n}$.
Observe that, from (\ref{eq:uniform_bdd}), $\mathscr{G}_{n}$ is
uniformly bounded on $Z$ by the constant $C_{1}$ and the $L^{2}-$norm
of $d\mathscr{G}_{n}$ is uniformly bounded by the constant $C_{0}$.
A neighbourhood $C_{n}$ of a loop in $\Delta_{n}$ can be biholomorphically
parametrized as $[-r_{n},r_{n}]\times S^{1}$ by the biholomorphism
$\psi_{n}:[-r_{n},r_{n}]\times S^{1}\rightarrow C_{n}$, where $r_{n}\rightarrow\infty$
as $n\rightarrow\infty$. From the $C^{0}$ bound of $\mathscr{G}_{n}$
on $Z$, the maps $u_{n}\circ\varphi_{n}^{-1}$ are uniformly bounded
in $C^{0}$ on $C_{n}$ (maybe after some shift in the $\mathbb{R}-$coordinaten).
Thus we consider the $\mathcal{H}-$holomorphic cylinder $u_{n}\circ\varphi_{n}^{-1}\circ\psi_{n}$
with harmonic perturbation $\psi_{n}^{*}d\mathscr{G}_{n}$ defined
on $[-r_{n},r_{n}]\times S^{1}$. Note that the energy of $u_{n}\circ\varphi_{n}^{-1}\circ\psi_{n}$
is uniformly bounded by the constant $E_{0}$. As in Section \ref{sec:The-Thin-Part}
we divide the cylinder $[-r_{n},r_{n}]\times S^{1}$ into cylinders
of type $\infty$ with an energy less than $\hbar_{0}/2$ and cylinders
of type $b_{1}$. We apply the result of Section \ref{subsec:Cylinders-of-type}
to cylinders of type $\infty$. Keep in mind that according to Remark
\ref{rem:The-special-circles}), conditions C1-C9 are satisfied. For
cylinders of type $b_{1}$, the maps $u_{n}\circ\varphi_{n}^{-1}\circ\psi_{n}$,
after a specific shift in the $\mathbb{R}-$coordinate, are contained
in a compact subset of $\mathbb{R}\times M$. By the usual bubbling-off
analysis and the maximum principle, these maps together with the harmonic
perturbation converge in $C^{\infty}$ on cylinders of type $b_{1}$.

\noindent We ``glue'' the convergence result for the $\infty$-type
subcylinders of \textbf{$[-r_{n},r_{n}]\times S^{1}$ }introduced
in Section \ref{subsec:Cylinders-of-type} together with the $C^{\infty}-$convergence
result for the cylinders of type $b_{1}$. This process is similar
to that described in Section \ref{subsec:Glueing-cylinders-of}. However,
we are faced with a more simple situation because we can choose the
surface $C^{(m)}$ as the cylinder $[-1-4h,1+4h]\times S^{1}$, where
$h$ is the constant defined in Section \ref{subsec:Cylinders}. By
the method described in Section \ref{subsec:Glueing-cylinders-of},
the diffeomorphism $\varphi_{n}$, the surface $Z$, and the set of
nodes $\Delta_{n}$ having properties H1-H4 are replaced by some modified
versions, which are still denoted by $\varphi_{n}$, $Z$ and $\Delta_{n}$.
More precisely, the loops from $\Delta_{n}$ are the center loops
that correspond to subcylinders of type $\infty$ in the decomposition
of the components from the thick part which are conformal equivalent
to $[-r_{n},r_{n}]\times S^{1}$, where $r_{n}\rightarrow\infty$
as $n\rightarrow\infty$. As in the convergence description (see Section
\ref{subsec:Convergence}), we choose the nodal special cylinders
$A^{\text{nod}}$ around the elements from $\Delta_{n}$.
\end{singlespace}
\begin{rem}
\begin{singlespace}
\noindent Around a puncture from $Z^{(\nu)}$, the $\mathcal{H}-$holomorphic
curve $u_{n}\circ\varphi_{n}$ is asymptotic to a trivial cylinder
over a Reeb orbit (see Section \ref{sec:Harmonic-pertubed-pseudoholomorp}).
This result is a consequence of the uniform $C^{0}-$bound of the
harmonic functions $\mathscr{G}_{n}$.
\end{singlespace}
\end{rem}
\begin{singlespace}
\noindent We are now in the position to formulate the convergence
result for the $\mathcal{H}-$holomorphic curves $u_{n}$ with harmonic
perturbation $\gamma_{n}$ defined on the disk $D$. There exist the
diffeomorphisms $\varphi_{n}:D\rightarrow Z$ such that the following
hold:
\end{singlespace}
\begin{description}
\begin{singlespace}
\item [{I1}] \noindent $i_{n}\rightarrow j$ in $C_{\text{loc}}^{\infty}$
on $Z\backslash\Delta_{p}\amalg A^{\text{nod}}$.
\item [{I2}] \noindent For every special cylinder $A_{ij}$ of $Z$ there
exists an annulus $\overline{A}_{ij}\cong[-1,1]\times S^{1}$ such
that $A_{ij}\subset\overline{A}_{ij}$ and $(\overline{A}_{ij},i_{n})$
and $(A_{ij},i_{n})$ are conformally equivalent to $([-R_{n},R_{n}]\times S^{1},i)$
and $([-R_{n}+h_{n},R_{n}-h_{n}]\times S^{1},i)$, respectively, where
$R_{n}-h_{n},h_{n}\rightarrow\infty$ as $n\rightarrow\infty$, $i$
is the standard complex structure and the diffeomorphisms are of the
form $(s,t)\mapsto(\kappa(s),t)$.
\item [{I3}] \noindent The sequence of $\mathcal{H}-$holomorphic curves
$(D,i,u_{n},\gamma_{n})$ with boundary converges to a stratified
$\mathcal{H}-$holomorphic building $(Z,j,u,\mathcal{P},D,\gamma)$
in the sense of Definition \ref{def:A-sequence-of-1} from Section
\ref{subsec:Convergence}. Moreover, the curves converge in $C^{\infty}$
in a neighbourhood of the boundary $\partial D$.
\end{singlespace}
\end{description}
\begin{singlespace}
\noindent This convergence result can be applied to disks such as
neighbourhoods of all points of $\mathcal{Z}^{(m)}$. To deal with
the entire cylinder of type $b_{1}$, we glue the obtained convergence
result on disks centered at points of $\mathcal{Z}^{(m)}$ to the
complement of\textbf{ }disk neighbourhoods of $\mathcal{Z}^{(m)}$.
During the convergence description of the $\mathcal{H}-$holomorphic
curves $u_{n}$ restricted to disk neighbourhoods of the points of
$\mathcal{Z}^{(m)}$, the diffeomorphism $\varphi_{n}$, describing
the convergence, have the property that in a neighbourhood of $\partial D$
they are independent of $n$ (see Appendix \ref{sec:Pseudoholomorphic-disks-with}).
Coming back to the puncture $z\in\mathcal{Z}^{(m)}$ we focus on the
neighbourhood $D_{r}(z)$. Considering the translation and streching
diffeomorphism $D\rightarrow D_{r}(z)$, $p\mapsto z+rp$, we see
that $\varphi_{n}:D_{r}(z)\backslash D_{r\tau}(z)\hookrightarrow Z$
is independent of $n$; hereafter, we drop the index $n$ and denote
it by $\varphi:D_{r}(z)\backslash D_{r\tau}(z)\hookrightarrow Z$.
This map is used to glue $Z$ and $([0,H^{(m)}]\times S^{1})\backslash D_{r\tau}(z)$
along the collar $D_{r}(z)\backslash D_{r\tau}(z)$. Consider the
surface
\[
C^{(m)}=\left.(([0,H^{(m)}]\times S^{1})\backslash D_{r\tau}(z))\coprod Z\right/\sim,
\]
where $x\sim y$ if and only if $x\in D_{r}(z)\backslash D_{r\tau}(z)$,
$y\in\varphi(D_{r}(z)\backslash D_{r\tau}(z))$ and $\varphi(x)=y$.
This gives rise to the diffeomorphism $\psi_{n}^{(m)}:[0,H_{n}^{(m)}]\times S^{1}\rightarrow C^{(m)}$,
defined by
\[
\psi_{n}^{(m)}(x)=\begin{cases}
x, & x\in C^{(m)}\backslash D_{r}(z)\\
\varphi_{n}(x), & x\in D_{r}(z).
\end{cases}
\]
We are now able to describe the convergence on cylinders of type $b_{1}$.
Let $\Delta_{n}$, $\Delta_{p}$ and $A^{\text{nod}}$ be the collection
of loops from $C^{(m)}$ obtained by the above convergence process
for each point of $\mathcal{Z}^{(m)}$. Take notice that the complex
structure $j^{(m)}$ on $C^{(m)}$ is given by
\[
j^{(m)}(p):=\begin{cases}
i, & p\in C^{(m)}\backslash D_{r}(\mathcal{Z}^{(m)})\\
j, & p\in Z
\end{cases}
\]
and that it is well-defined since $\varphi$ is a biholomorphism.
There exists a sequence of diffeomorphisms $\psi_{n}^{(m)}:[0,H_{n}^{(m)}]\times S^{1}\rightarrow C^{(m)}$
such that the following hold:
\end{singlespace}
\begin{description}
\begin{singlespace}
\item [{J1}] \noindent $(\psi_{n}^{(m)})_{*}i\rightarrow j^{(m)}$ in $C_{\text{loc}}^{\infty}$
on $C^{(m)}\backslash\Delta_{p}\amalg A^{\text{nod}}$.
\item [{J2}] \noindent For every special cylinder $A_{ij}$ of $C^{(m)}$
there exists an annulus $\overline{A}_{ij}\cong[-1,1]\times S^{1}$
such that $A_{ij}\subset\overline{A}_{ij}$ and $(\overline{A}_{ij},i_{n})$
and $(A_{ij},i_{n})$ are conformally equivalent to $([-R_{n},R_{n}]\times S^{1},i)$
and $([-R_{n}+h_{n},R_{n}-h_{n}]\times S^{1},i)$, respectively, where
$R_{n},R_{n}-h_{n}\rightarrow\infty$ as $n\rightarrow\infty$, $i$
is the standard complex structure and the diffeomorphisms are of the
form $(s,t)\mapsto(\kappa(s),t)$.
\item [{J3}] \noindent The $\mathcal{H}-$holomorphic curves $([0,H_{n}^{(m)}]\times S^{1},i,u_{n},\gamma_{n})$
with boundary converges to a stratified broken $\mathcal{H}-$holomorphic
building $(C^{(m)},j,u,\mathcal{P},\mathcal{D},\gamma)$ with boundary
in the sense of Definition \ref{def:A-sequence-of-1} from Section
\ref{subsec:Convergence}. By construction, the curves converge in
$C^{\infty}$ in a neighbourhood of the boundary $\partial D$.
\end{singlespace}
\end{description}
\begin{singlespace}

\subsubsection{\label{subsec:Glueing-cylinders-of}Glueing cylinders of type $\infty$
with cylinders of type $b_{1}$}
\end{singlespace}

\begin{singlespace}
\noindent By a modified version of the diffeomorphisms $\theta_{n}$
we identify the cylinders of type $\infty$ with the cylinder $[-1-4h,1+4h]\times S^{1}$
where $h>0$ is the constant from Lemma \ref{lem:Let-=000024=00005Bh_=00007Bn=00007D^=00007B(m-1)=00007D,h_=00007Bn=00007D^=00007B(m)=00007D=00005D},
so that after the glueing process, we end up with a bigger cylinder
of finite length and a sequence of diffeomorphisms. Let us make this
precedure more precise.

\noindent Let $[h_{n}^{(m-1)},h_{n}^{(m)}]\times S^{1}$ and $[h_{n}^{(m)}-4h,h_{n}^{(m+1)}+4h]\times S^{1}$
be cylinders of types $\infty$ and $b_{1}$, respectively. First
we consider the cylinders $[h_{n}^{(m-1)},h_{n}^{(m)}]\times S^{1}$
of type $\infty$. With the constant $h>0$ defined in Section \ref{subsec:Cylinders},
let $[h_{n}^{(m-1)}+4h,h_{n}^{(m)}-4h]\times S^{1}$ be a subcylinder.
By the uniform gradient bounds of $u_{n}$ on cylinders of type $\infty$,
we conclude that the $\mathcal{H}-$holomorphic curves $u_{n}$ together
with the harmonic perturbations $\gamma_{n}$ converge in $C^{\infty}$
on $[h_{n}^{(m)}-4h,h_{n}^{(m)}]\times S^{1}$ to a $\mathcal{H}-$holomorphic
curve $u$ with harmonic perturbation $\gamma$. For the subcylinders
$[h_{n}^{(m-1)}+4h,h_{n}^{(m)}-4h]\times S^{1}$ we perform the same
analysis as in Theorems \ref{thm:There-exists-a} and \ref{thm:There-exist-the}.
After going over to a subsequence we obtain a sequence of diffeomorphisms
\[
\theta_{n}:[h_{n}^{(m-1)}+4h,h_{n}^{(m)}-4h]\times S^{1}\rightarrow[-1,1]\times S^{1},
\]
so that Theorems \ref{thm:There-exists-a} and \ref{thm:There-exist-the}
hold for the cylinders $[h_{n}^{(m-1)}+4h,h_{n}^{(m)}-4h]\times S^{1}$.
Next we extend the diffeomorphisms $\theta_{n}$, $\theta^{-}$ and
$\theta^{+}$ to $[h_{n}^{(m-1)},h_{n}^{(m)}]\times S^{1}$, $[-4h,\infty)\times S^{1}$
and $(-\infty,4h]\times S^{1}$, respectively, such that 
\begin{align*}
\theta_{n}|_{([h_{n}^{(m-1)},h_{n}^{(m-1)}+3h]\times S^{1})\amalg([h_{n}^{(m)}-3h,h_{n}^{(m)}]\times S^{1})} & =\text{id},\\
\theta^{-}|_{[-4h,-h]\times S^{1}} & =\text{id},\\
\theta^{+}|_{[h,4h]\times S^{1}} & =\text{id},
\end{align*}
$\theta_{n}^{-}\rightarrow\theta^{-}$ in $C^{\infty}([-4h,-h]\times S^{1})$,
and $\theta_{n}^{+}\rightarrow\theta^{+}$in $C^{\infty}([h,4h]\times S^{1})$.

\noindent We consider now the cylinders of type $b_{1}$ and note
that the diffomorphisms 
\[
\psi_{n}:[h_{n}^{(m)}-4h,h_{n}^{(m+1)}+4h]\times S^{1}\rightarrow C^{(m)}
\]
have the property that
\[
\psi_{n}|_{([h_{n}^{(m)}-4h,h_{n}^{(m)}-h]\times S^{1})\amalg([h_{n}^{(m+1)}-3h,h_{n}^{(m+1)}]\times S^{1})}=\text{id}.
\]
In this regard we consider the surface\[
\bigslant{\left ( ([-1-4h,1+4h]\times S^{1}) \amalg C^{(m)} \right )}{\sim}
\]where $x\sim y$ if and only if $x\in[h,4h]\times S^{1}$ and $y\in[h_{n}^{(m)}-4h,h_{n}^{(m)}-h]\times S^{1}$
such that $\theta_{n}(y)=x$.

\noindent By this procedure we glue all cylinders of types $\infty$
and $b_{1}$, and obtain a bigger cylinder $C_{n}$ together with
a sequence of diffeomorphisms $\Phi_{n}:[-\sigma_{n},\sigma_{n}]\times S^{1}\rightarrow C_{n}$,
where $[-\sigma_{n},\sigma_{n}]\times S^{1}$ is the parametrization
of the $\delta-$thin part, i.e. of $\text{Thin}_{\delta}(\dot{S}^{\mathcal{D},r},h_{n})$.
Let $\varphi_{n}:\mathcal{C}_{n}\rightarrow[-\sigma_{n},\sigma_{n}]\times S^{1}$
be the conformal parametrization of the cylindrical component of $\text{Thin}_{\epsilon}(\dot{S}^{\mathcal{D},r},h_{n})$.
Since both ends of $[-\sigma_{n},\sigma_{n}]\times S^{1}$ contain
cylinders of type $\infty$, we infer by the above construction, that
$\Phi_{n}$ is identity near the boundary. Specifically, with the
constant $h>0$ we have
\[
\text{\ensuremath{\Phi}}_{n}|_{([-\sigma_{n},-\sigma_{n}+3h]\times S^{1})\amalg([\sigma_{n}-3h,\sigma_{n}]\times S^{1})}=\text{id}.
\]
Then we consider the surface\[
\bigslant{\left ( \left ( \dot{S}^{D,r} \backslash \varphi_{n}^{-1}([-\sigma_{n} + 3h, \sigma_{n} - 3h] \times S^{1}) \right ) \amalg C_{n} \right )}{\sim}
\]where $x\sim y$ if and only if $x\in\dot{S}^{\mathcal{D},r}\backslash\varphi_{n}^{-1}([-\sigma_{n}+3h,\sigma_{n}-3h]\times S^{1})$
and $y\in C_{n}$ such that $\Phi_{n}\circ\varphi_{n}(x)=y$. 

\noindent In this way we handle all components of $\text{Thin}_{\delta}(\dot{S}^{\mathcal{D},r},h_{n})$
that are conformal equivalent to hyperbolic cylinders.
\end{singlespace}
\begin{singlespace}

\subsubsection{\label{subsec:Punctures}Punctures and elements of $\mathcal{Z}$}
\end{singlespace}

\begin{singlespace}
\noindent We analyze the convergence of $u_{n}$ on components of
the thin part which are biholomorphic to cusps, as well as, in a neighbourhood
of the points from $\mathcal{Z}$. Recall that cusps correspond to
neighbourhoods of punctures. Let $p\in S^{\mathcal{D},r}$ be a puncture
or an element from $\mathcal{Z}$. By Lemma \ref{2_1_lem:There-exists-open-neighbourhood-1}
of Appendix \ref{chap:Special-coordinates}, there exist the open
neighbourhoods $U_{n}$ and $U$ of $p$, and the biholomorphisms
$\psi_{n}:D\rightarrow U_{n}$ and $\psi:D\rightarrow U$ such that
$\psi_{n}$ converge in $C^{\infty}$ to $\psi$. We consider the
sequence of $\mathcal{H}-$holomorphic curves $u_{n}$ with harmonic
perturbations $\gamma_{n}$ restricted to $U_{n}$. By the convergence
of $u_{n}$ on the thick part, for every open neighbourhoods $U$
and $V$ of $p$, such that $V\Subset U$, the $\mathcal{H}-$holomorphic
curves $u_{n}$ together with the harmonic perturbations $\gamma_{n}$
converge in $C^{\infty}$ on $\overline{U\backslash V}$ to some $\mathcal{H}-$holomorphic
curve $u$ with harmonic perturbation $\gamma$. Via the biholomorphisms
$\psi_{n}$ and $\psi$, we consider the $\mathcal{H}-$holomorphic
curves $u_{n}$ and the harmonic perturbations $\gamma_{n}$ as beeing
defined on $D\backslash\{0\}$. Actually, we consider the following
setup: For the sequence of $\mathcal{H}-$holomorphic curves $u_{n}=(a_{n},f_{n}):D\backslash\{0\}\rightarrow\mathbb{R}\times M$
with the harmonic perturbations $\gamma_{n}$ defined on the whole
disk $D$, the following are satisfied:
\end{singlespace}
\begin{description}
\begin{singlespace}
\item [{K1}] \noindent The energy of $u_{n}$ is uniformly bounded, i.e.
with the constant $E_{0}>0$ we have $E(u_{n};D\backslash\{0\})\leq E_{0}$
for all $n\in\mathbb{N}$.
\item [{K2}] \noindent The $L^{2}-$norms of $\gamma_{n}$ are uniformly
bounded, i.e. with the constant $C_{0}>0$ we have $\left\Vert \gamma_{n}\right\Vert _{L^{2}(D\backslash\{0\})}^{2}\leq C_{0}$
for all $n\in\mathbb{N}$.
\item [{K3}] \noindent For every open neighbourhoods $U$ and $V$ of $p$
such that $V\Subset U$, the $\mathcal{H}-$holomorphic curves $u_{n}$
with harmonic perturbations $\gamma_{n}$ converge in $C^{\infty}$
on $\overline{U\backslash V}$ to a $\mathcal{H}-$holomorphic curve
$u$ with harmonic perturbation $\gamma$.
\end{singlespace}
\end{description}
\begin{singlespace}
\noindent We consider two cases. In the first case there exists a
subsequence of $u_{n}$ for which the singularity at $0$ is removable,
i.e. the $\mathbb{R}-$coordinate $a_{n}$ is bounded in a neighbourhood
of $0$, but not necessarily uniformly bounded. In particular, this
case is typically for neighbourhoods of points from $\mathcal{Z}$.
Hence the sequence of $\mathcal{H}-$holomorphic curves $u_{n}$ can
be defined accross the puncture $0$ and we end up with a sequence
of $\mathcal{H}-$holomorphic disks with fixed boundary. To describe
the compactness we use the results of Section \ref{subsec:Cylinders-of-type-1}. 

\noindent In the second case, there exists no subsequence of the $u_{n}$
that has a bounded $\mathbb{R}-$coordinate $a_{n}$ near $0$. Since
$D$ is simply connected, there exists a harmonic function $\tilde{\Gamma}_{n}:D\rightarrow\mathbb{R}$
such that $\gamma_{n}=d\tilde{\Gamma}_{n}$. By the second condition
from above, the gradients $\nabla\tilde{\Gamma}_{n}$ are uniformly
bounded in $L^{2}-$norm by the constant $C_{0}>0$. Denote by
\[
K_{n}=\frac{1}{\pi}\int_{D}\tilde{\Gamma}_{n}(x,y)dxdy
\]
the mean value of $\tilde{\Gamma}_{n}$ on the disk $D$. Furthermore,
define $\Gamma_{n}:=\tilde{\Gamma}_{n}-K_{n}$; $\Gamma_{n}$ is a
harmonic function on the disk with vanishing average and satisfying
$\gamma_{n}=d\Gamma_{n}$, while the gradients $\nabla\Gamma_{n}$
have uniformly bounded $L^{2}-$norms. By Poincaré inequality, the
$L^{2}-$norm of $\Gamma_{n}$ is uniformly bounded, i.e. with the
constant $C_{0}>0$ we have $\left\Vert \Gamma_{n}\right\Vert _{L^{2}(D)}\leq C_{0}$
for all $n\in\mathbb{N}$. Pick $\tau\in(0,1)$ and denote by $D_{\tau}$
the disk around $0$ of radius $\tau$. From the mean value inequality
for harmonic functions, $\Gamma_{n}$ is uniformly bounded in $C^{0}(D_{\tau})$.
Via the biholomorphism $[0,\infty)\times S^{1}\rightarrow D\backslash\{0\}$,
$(s,t)\mapsto e^{-2\pi(s+it)}$ we consider the $\mathcal{H}-$holomorphic
maps $u_{n}$ together with the harmonic perturbations $\gamma_{n}$
as beeing defined on the half open cylinder $[0,\infty)\times S^{1}$.
Specifically we consider the following setup: For the sequence $u_{n}=(a_{n},f_{n}):[0,\infty)\times S^{1}\rightarrow\mathbb{R}\times M$
of $\mathcal{H}-$holomorphic half cylinders with harmonic perturbations
$\gamma_{n}$ the following are satisfied:
\end{singlespace}
\begin{description}
\begin{singlespace}
\item [{L1}] \noindent The energy of $u_{n}$ and the $L^{2}-$norm of
the harmonic pertuabtions $\gamma_{n}$ are uniformly bounded, i.e.
with the constants $E_{0},C_{0}>0$ we have $E(u_{n};[0,\infty)\times S^{1})\leq E_{0}$
and $\left\Vert \gamma_{n}\right\Vert _{L^{2}(D\backslash\{0\})}^{2}\leq C_{0}$
for all $n\in\mathbb{N}$.
\item [{L3}] \noindent The $\mathcal{H}-$holomorphic curves $u_{n}$ converge
in $C_{\text{loc}}^{\infty}$ to a $\mathcal{H}-$holomorphic curve
$u$ with harmonic perturbation $\gamma$.
\item [{L4}] \noindent The harmonic perturbations $\gamma_{n}$ satisfy
$\gamma_{n}=d\Gamma_{n}$, where $\Gamma_{n}:[0,\infty)\times S^{1}\rightarrow\mathbb{R}$
is a harmonic function with a uniformly bounded gradient $\nabla\Gamma_{n}$
in $L^{2}-$norm. Furthermore, $\Gamma_{n}$ is uniformly bounded
in $C^{0}([0,\infty)\times S^{1})$. 
\end{singlespace}
\end{description}
\begin{singlespace}
\noindent By using the decomposition discussed in Section \ref{subsec:Cylinders}
we split the half cylinder into smaller cylinders with $d\alpha-$energies
smaller than $\hbar_{0}/2$. As described in Section \ref{subsec:Cylinders}
we end up with a sequence of finitely many cylinder of types $\infty$
and $b_{1}$, and a half cylinder with a small $d\alpha-$energy.
The appearance of the cylinders of types $b_{1}$ and $\infty$ is
alternating; the decomposition starts with a cylinder of type $\infty$
and ends with a cylinder of type $b_{1}$ followed by the half cylinder
(see Figure \ref{fig19}). 

\noindent \begin{figure}     
	\centering  
	\def\svgscale{1} 	
	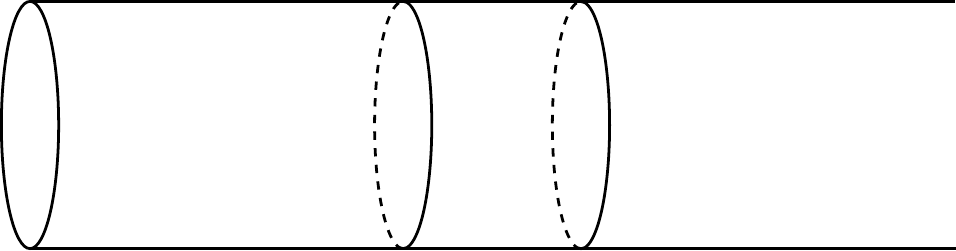  
	\caption{Decomposition of a punctured neighbourhood into cylinders of type $\infty$, $b_{1}$ and a half open cylinder.}
	\label{fig19}
\end{figure}For the cylinders of types $\infty$ and $b_{1}$ we formulate the
convergence results as in Sections \ref{subsec:Cylinders-of-type-1}
and \ref{subsec:Cylinders-of-type}. Since the harmonic $1-$forms
$\gamma_{n}$ are defined over the puncture $p$, the period of the
harmonic perturbation $\gamma_{n}$ over each cylinder (either of
type $\infty$ or type $b_{1}$) is $0$. Hence, the converge properties
of the cylinders of type $\infty$ are the same as in the classical
theory of Hofer (see \cite{key-11}), and we are left with the half
cylinder having a $d\alpha-$energy smaller than $\hbar_{0}/2$. We
have the following setup:
\end{singlespace}
\begin{description}
\begin{singlespace}
\item [{M1}] \noindent $u_{n}=(a_{n},f_{n}):[0,\infty)\times S^{1}\rightarrow\mathbb{R}\times M$
is a $\mathcal{H}-$holomorphic curve with harmonic perturbation $\gamma_{n}$.
\item [{M2}] \noindent The energy of $u_{n}$ and the $L^{2}-$norm of
$\gamma_{n}$ are uniformly bounded by the constants $E_{0}$ and
$C_{0}$, respectively, while the $d\alpha-$energy of $u_{n}$ is
smaller than $\hbar_{0}/2$.
\item [{M3}] \noindent The harmonic perturbations $\gamma_{n}$ satisfy
$\gamma_{n}=d\Gamma_{n}$, where $\Gamma_{n}:[0,\infty)\times S^{1}\rightarrow\mathbb{R}$
is a harmonic function with a uniformly bounded gradient $\nabla\Gamma_{n}$
in $L^{2}-$norm. Furthermore, $\Gamma_{n}$ is uniformly bounded
in $C^{0}([0,\infty)\times S^{1})$. 
\item [{M4}] \noindent The gradients of $u_{n}$ are uniformly bounded,
i.e. there exists a constant $\tilde{C}>0$ such that
\begin{equation}
\left\Vert du_{n}(z)\right\Vert =\sup_{\left\Vert v\right\Vert _{\text{eucl.}}=1}\left\Vert du_{n}(z)(v)\right\Vert _{\overline{g}}\leq\tilde{C}\label{eq:bdd_gradient}
\end{equation}
for all $z\in[0,\infty)\times S^{1}$ and all $n\in\mathbb{N}$.
\end{singlespace}
\end{description}
\begin{singlespace}
\noindent By bubbling-off analysis and in view of the uniformly small
$d\alpha-$energy, Assumption (\ref{eq:bdd_gradient}) is also valid.
Moreover, by the mean value thorem for harmonic functions and the
uniformly boundedness of the $L^{2}-$norms of $\nabla\Gamma_{n}$,
the harmonic perturbation $\gamma_{n}$ is uniformly bounded in $C^{0}$
on $[0,\infty)\times S^{1}$ with respect to the standard Euclidean
metric. We turn the $\mathcal{H}-$holomorphic curve $u_{n}$ with
harmonic perturbation $\gamma_{n}$ into a usual pseudoholomorphic
curve $\overline{u}_{n}$ by setting $\overline{u}_{n}=(\overline{a}_{n},\overline{f}_{n})=(a_{n}+\Gamma_{n},f_{n})$
as in Section \ref{subsec:Cylinders-of-type-1}. In the following
we show that the $\alpha-$ and $d\alpha-$ energies of $\overline{u}_{n}$
are uniformly bounded. As $\overline{f}_{n}=f_{n}$ we have
\[
E_{d\alpha}(\overline{u}_{n};[0,\infty)\times S^{1})=E_{d\alpha}(u_{n};[0,\infty)\times S^{1})\leq\frac{\hbar_{0}}{2}
\]
and therefore the $d\alpha-$energy is uniformly small. By definition
and accounting on the uniform bound on the gradients (\ref{eq:bdd_gradient})
and the uniform $C^{0}-$bound of the harmonic $1-$forms $\gamma_{n}$,
we obtain
\begin{align*}
E_{\alpha}(\overline{u}_{n};[0,\infty)\times S^{1}) & \leq-\sup_{\varphi\in\mathcal{A}}\int_{[0,\infty)\times S^{1}}d(\varphi(\overline{a}_{n})d\overline{a}_{n}\circ i)+E_{d\alpha}(u_{n})\\
 & =-\sup_{\varphi\in\mathcal{A}}\left[\lim_{r\rightarrow\infty}\int_{\{r\}\times S^{1}}\varphi(\overline{a}_{n})d\overline{a}_{n}\circ i-\int_{\{0\}\times S^{1}}\varphi(\overline{a}_{n})d\overline{a}_{n}\circ i\right]+E_{d\alpha}(u_{n})\\
 & \leq2C+\frac{\hbar_{0}}{2}.
\end{align*}
Thus the $\alpha-$energy is uniformly bounded. From the definition
of $\tilde{E}_{0}$ (see Section \ref{subsec:Cylinders}) we have
$E(\overline{u}_{n};[0,\infty)\times S^{1})\leq\tilde{E}_{0}$ for
all $n\in\mathbb{N}$. In this regard, we consider the following setup:
\end{singlespace}
\begin{description}
\begin{singlespace}
\item [{N1}] \noindent $\overline{u}_{n}=(\overline{a}_{n},f_{n}):[0,\infty)\times S^{1}\rightarrow\mathbb{R}\times M$
is a pseudoholomorphic curve.
\item [{N2}] \noindent The energy of $\overline{u}_{n}$ is uniformly bounded,
while the $d\alpha-$energy of $\overline{u}_{n}$ is uniformly smaller
than $\hbar_{0}/2$.
\end{singlespace}
\end{description}
\begin{singlespace}
\noindent Using the diffeomorphism $\theta$ defined above together
with the notation (\ref{eq:C0-conv}), and employing Theorem \ref{thm:Let--be}
of Appendix \ref{chap:Half-cylinders-with} we have the following
\end{singlespace}
\begin{thm}
\begin{singlespace}
\noindent \label{thm:Let--be-1}There exists a subsequence of $\overline{u}_{n}$,
still denoted by $\overline{u}_{n}$ such that the following is satisfied.
\end{singlespace}
\begin{enumerate}
\begin{singlespace}
\item $\overline{u}_{n}$ is asysmptotic to the same Reeb orbit, i.e. there
exists a Reeb orbit $x$ of period $T\not=0$ with $|T|\leq\tilde{E}_{0}$
and a sequence $c_{n}\in S^{1}$ such that
\[
\lim_{s\rightarrow\infty}\overline{f}_{n}(s,t)=x(T(t+c_{n}))\ \text{ and }\ \lim_{s\rightarrow\infty}\frac{\overline{a}_{n}(s,t)}{s}=T
\]
for all $n\in\mathbb{N}$.
\item $\overline{u}_{n}$ converge in $C_{\text{loc}}^{\infty}$ to a pseudoholomorphic
half cylinder $\overline{u}:[0,\infty)\times S^{1}\rightarrow\mathbb{R}\times M$
having a bounded energy and a $d\alpha-$energy smaller than $\hbar_{0}/2$.
Moreover, there exists $c^{*}\in S^{1}$ such that 
\[
\lim_{s\rightarrow\infty}\overline{f}(s,t)=x(T(t+c^{*}))\ \text{ and }\ \lim_{s\rightarrow\infty}\frac{\overline{a}(s,t)}{s}=T.
\]
\item The maps $g_{n}=\overline{f}_{n}\circ\theta^{-1}:[0,1]\times S^{1}\rightarrow M$,
where $g_{n}(1,t)=x(T(t+c_{n}))$ converge in $C^{0}$ to a map $g:[0,1]\times S^{1}\rightarrow M$,
that satisfy $g(1,t)=x(T(t+c^{*}))$, where $x$ is a Reeb orbit of
period $T\not=0$ from part 1.
\end{singlespace}
\end{enumerate}
\end{thm}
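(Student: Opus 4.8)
The plan is to reduce the statement to the asymptotic analysis of genuine pseudoholomorphic half-cylinders with uniformly small $d\alpha$-energy and uniformly bounded gradient, where the classical SFT toolbox applies, and then invoke the half-cylinder convergence result stated as Theorem \ref{thm:Let--be} in Appendix \ref{chap:Half-cylinders-with}. By setup N1--N2 the maps $\overline{u}_n=(\overline{a}_n,f_n)$ are already pseudoholomorphic with $E(\overline{u}_n;[0,\infty)\times S^1)\le\tilde E_0$ and $E_{d\alpha}(\overline{u}_n)<\hbar_0/2$, and by M4 together with the passage to $\overline{u}_n$ (the $C^0$-bound on $\Gamma_n$ does not affect gradients) the sequence $\|d\overline{u}_n\|_{C^0}$ is uniformly bounded on $[0,\infty)\times S^1$. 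So first I would record that $\overline{u}_n$ is a finite-energy pseudoholomorphic half-cylinder in the sense of \cite{key-1}, hence by Proposition \ref{prop:Let--be} (applied to $\overline{u}_n$, which is the unperturbed case $\gamma=0$) each $\overline{u}_n$ is asymptotic to a trivial cylinder over some Reeb orbit $x_n$ of period $T_n\ne 0$; the uniform energy bound $\tilde E_0$ gives $|T_n|\le\tilde E_0$, and since $E_{d\alpha}(\overline u_n)<\hbar_0/2<\hbar_0$ the period cannot "jump" — using the definition of $\hbar_0$ in \eqref{eq:hbar-0}, the action of the asymptotic orbit determines $T_n$ up to an error strictly smaller than the minimal gap in the action spectrum below $\tilde E_0$, so after passing to a subsequence $T_n\equiv T$ is constant. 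This is part~1, with $c_n\in S^1$ the phase shift coming from the choice of holomorphic polar coordinate; $x=x_n$ converges to a fixed orbit $x$ by non-degeneracy (the orbits of action $\le\tilde E_0$ form a discrete set).

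Next I would prove part~2. The uniform gradient bound plus the standard elliptic regularity for pseudoholomorphic curves (bootstrapping via the $C^k$-estimates, e.g. Section 2.2.3 of \cite{key-6}) gives uniform $C^\infty_{\text{loc}}$-bounds, so by Arzelà--Ascoli a subsequence of $\overline u_n$ converges in $C^\infty_{\text{loc}}([0,\infty)\times S^1)$ to a smooth map $\overline u=(\overline a,\overline f)$, which solves the pseudoholomorphic curve equation. Lower semicontinuity of energy under $C^\infty_{\text{loc}}$-convergence (the argument already used in Lemma \ref{2_1_lem:There-exists-a} to push energy bounds through a rescaling limit) gives $E(\overline u)\le\tilde E_0$ and $E_{d\alpha}(\overline u)\le\hbar_0/2$, so $\overline u$ is again a finite-energy half-cylinder and Proposition \ref{prop:Let--be} applies to it, producing a Reeb orbit of some period $T^*\ne 0$. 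The key point is that $T^*=T$: this follows because the $d\alpha$-energy on any compact sub-cylinder $[s,s']\times S^1$ of $\overline u$ is the limit of the corresponding energies of $\overline u_n$, all of which are $<\hbar_0/2$, so the total $d\alpha$-energy of the limit is $<\hbar_0/2$; combined with the uniform bound $\tilde E_0$ and the spectral gap $\hbar_0$, the asymptotic period of the limit must coincide with the common period of the $\overline u_n$. This yields the asymptotic formulas in part~2 with the appropriate phase $c^*\in S^1$.

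Finally part~3 is the $C^0$ statement on the compactified half-cylinder $[0,1]\times S^1$, where $\theta:[0,\infty)\to[0,1)$ is the diffeomorphism used throughout the thin-part analysis (sending $\infty$ to $1$) and the notation \eqref{eq:C0-conv} refers to the associated compactification of maps. This is exactly the content of Theorem \ref{thm:Let--be} in Appendix \ref{chap:Half-cylinders-with}: the combination of the $C^\infty_{\text{loc}}$-convergence away from the puncture (part~2), the uniform asymptotic convergence to $x(T(t+c_n))$ at rates controlled uniformly in $n$ (from the exponential-decay estimates near a non-degenerate orbit, uniform because the orbit and its linearized data are fixed), and Arzelà--Ascoli applied to the reparametrized maps $g_n=\overline f_n\circ\theta^{-1}$ on the compact cylinder, gives $C^0$-convergence of $g_n$ to $g$ with $g(1,t)=x(T(t+c^*))$. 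I expect the main obstacle to be making the ``period cannot jump'' argument fully rigorous in the limit: one must control the $d\alpha$-energy concentrated in the asymptotic end of $\overline u_n$ for $s$ large but uniformly in $n$, so that no $d\alpha$-energy escapes to $s=\infty$ in the limit — this is where the uniform gradient bound M4 is essential, since it rules out bubbling at the puncture and forces the convergence $g_n\to g$ to remain continuous up to the boundary circle $\{1\}\times S^1$, and it is the reason the whole transformation to $\overline u_n$ and the small-energy decomposition of Section \ref{subsec:Cylinders} were needed in the first place.
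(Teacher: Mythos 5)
Your plan is correct in spirit, but it does substantially more work than the paper, whose proof of this theorem is a one-line invocation: the setup N1--N2 establishes precisely the hypotheses of Theorem \ref{thm:Let--be} of Appendix \ref{chap:Half-cylinders-with} (with $E_0$ and $\hbar$ replaced by $\tilde E_0$ and $\hbar_0$), and that theorem's conclusion is verbatim the three parts of Theorem \ref{thm:Let--be-1}. You re-derive parts~1 and~2 by hand and only cite Theorem \ref{thm:Let--be} for part~3, not noticing that it already delivers all three at once. In effect you are re-proving Proposition \ref{prop:After-going-over} and its corollary rather than applying the theorem they culminate in.

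Within that redundant portion there are two imprecise steps worth flagging. First, the parenthetical ``the $C^0$-bound on $\Gamma_n$ does not affect gradients'' is not a valid argument: a $C^0$-bound on $\Gamma_n$ says nothing about $d\Gamma_n$, and it is $d\Gamma_n=\gamma_n$ that enters $d\overline a_n=da_n+d\Gamma_n$. The paper separately establishes (via the mean value theorem for harmonic functions and the $L^2$-bound on $\nabla\Gamma_n$ from M3) that $\gamma_n$ is uniformly $C^0$-bounded, which together with M4 gives the gradient bound for $\overline u_n$; alternatively one can bypass this entirely since Theorem \ref{thm:Let--be} derives the gradient bound from the small $d\alpha$-energy via bubbling-off and does not need it as a hypothesis. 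Second, your argument that $T^*=T$ in part~2 gestures at the spectral gap but never pins down why no $d\alpha$-energy escapes at $s\to\infty$. The paper's Proposition \ref{prop:After-going-over} handles this concretely by choosing intermediate sequences $\underline N_n<\overline N_n\to\infty$ so that the restrictions $v_n=u_n|_{[\underline N_n,\overline N_n]\times S^1}$ interpolate between the asymptotic orbit of the limit and that of the $u_n$ with $d\alpha$-energy tending to $0$, whence the two orbits and periods coincide; your sketch should be replaced by this (or by the monotonicity of $s\mapsto\int_{\{s\}\times S^1}\overline f_n^*\alpha$ and the gap $\hbar_0$) to be a proof.

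Finally, the ``period cannot jump'' invocation of $\hbar_0$ in your treatment of part~1 is unnecessary for that step: the constancy of $T_n$ along a subsequence follows just from the non-degeneracy hypothesis (discreteness of the action spectrum below $\tilde E_0$), which is exactly what the paper uses in the proof of Proposition \ref{prop:After-going-over}. The spectral gap $\hbar_0$ only becomes essential in part~2 when comparing the period of the limit to the common period of the sequence.
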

\begin{singlespace}
\noindent With this result we are in the position to formulate the
convergence of the sequence of $\mathcal{H}-$holomorphic half cylinders
$u_{n}$ with harmonic perturbations $\gamma_{n}$.
\end{singlespace}
\begin{thm}
\begin{singlespace}
\noindent There exists a subsequence $u_{n}$ still denoted by $u_{n}$
such that the following is satisfied.
\end{singlespace}
\begin{enumerate}
\begin{singlespace}
\item $u_{n}$ is asysmptotic to the same Reeb orbit, i.e. there exists
a Reeb orbit $x$ of period $T\not=0$ with $|T|\leq\tilde{E}_{0}$
and a sequence $c_{n}\in S^{1}$ such that
\[
\lim_{s\rightarrow\infty}f_{n}(s,t)=x(T(t+c_{n}))\ \text{ and }\ \lim_{s\rightarrow\infty}\frac{a_{n}(s,t)}{s}=T
\]
for all $n\in\mathbb{N}$.
\item $u_{n}$ converge in $C_{\text{loc}}^{\infty}$ to a $\mathcal{H}-$holomorphic
half cylinder $u:[0,\infty)\times S^{1}\rightarrow\mathbb{R}\times M$
with harmonic perturbation $\gamma$ having a bounded energy and a
$d\alpha-$energy smaller than $\hbar_{0}/2$. Moreover, there exists
$c^{*}\in S^{1}$ such that 
\[
\lim_{s\rightarrow\infty}f(s,t)=x(T(t+c^{*}))\ \text{ and }\ \lim_{s\rightarrow\infty}\frac{a(s,t)}{s}=T.
\]
\item The maps $g_{n}=f_{n}\circ\theta^{-1}:[0,1]\times S^{1}\rightarrow M$,
where $g_{n}(1,t)=x(T(t+c_{n}))$ converge in $C^{0}$ to a map $g:[0,1]\times S^{1}\rightarrow M$,
and satisfy $g(1,t)=x(T(t+c^{*}))$, where $x$ is a Reeb orbit of
period $T\not=0$ from part 1.
\end{singlespace}
\end{enumerate}
\end{thm}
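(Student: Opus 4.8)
The plan is to transfer the conclusions of Theorem~\ref{thm:Let--be-1}, which concern the pseudoholomorphic curves $\overline{u}_{n}=(\overline{a}_{n},\overline{f}_{n})=(a_{n}+\Gamma_{n},f_{n})$, back to the $\mathcal{H}$-holomorphic curves $u_{n}$ by undoing the encoding of the harmonic perturbation in the $\mathbb{R}$-coordinate. Since $\overline{f}_{n}=f_{n}$, every statement about the $M$-component carries over verbatim; the only genuinely new point is the behaviour of the harmonic potentials $\Gamma_{n}$ in the limit, which is governed by the uniform $C^{0}$-bound on $\Gamma_{n}$ from Condition~M3.

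First I would establish, just as in the proof of Proposition~\ref{prop:There-exists-a-1} and Remark~\ref{rem:From-Poincare-inequlity}, that after passing to a further subsequence $\Gamma_{n}\to\Gamma$ in $C^{\infty}_{\mathrm{loc}}([0,\infty)\times S^{1})$ for some harmonic function $\Gamma$ with $\lVert\Gamma\rVert_{C^{0}}\le C_{1}$: the $\Gamma_{n}$ are uniformly bounded in $C^{0}$ and their gradients are uniformly bounded in $L^{2}$, so the mean value inequality for harmonic functions yields uniform $C^{0}_{\mathrm{loc}}$-bounds on $\nabla\Gamma_{n}$; since $\partial_{s}\Gamma_{n}+i\partial_{t}\Gamma_{n}$ is holomorphic, the Cauchy integral formula upgrades this to uniform $C^{k}_{\mathrm{loc}}$-bounds for every $k$, and Arzela-Ascoli together with a diagonal argument gives the claim.

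Next I would combine this with Theorem~\ref{thm:Let--be-1}. Because $a_{n}=\overline{a}_{n}-\Gamma_{n}$ and $f_{n}=\overline{f}_{n}$, the convergence $\overline{u}_{n}\to\overline{u}$ in $C^{\infty}_{\mathrm{loc}}$ immediately gives $u_{n}\to u:=(\overline{a}-\Gamma,\overline{f})$ in $C^{\infty}_{\mathrm{loc}}$ with harmonic perturbations $\gamma_{n}=d\Gamma_{n}\to d\Gamma=:\gamma$, and $u$ together with $\gamma$ solves the $\mathcal{H}$-holomorphic equations (being a $C^{\infty}_{\mathrm{loc}}$-limit of $\mathcal{H}$-holomorphic curves whose harmonic perturbations converge to $\gamma$); finiteness of its energy follows from lower semicontinuity of the $\alpha$- and $d\alpha$-energies under $C^{\infty}_{\mathrm{loc}}$-convergence, exactly as in Lemma~\ref{2_1_lem:There-exists-a}, giving $E_{d\alpha}(u)=E_{d\alpha}(\overline{u})<\hbar_{0}/2$ and $E_{\alpha}(u)\le\liminf_{n}E_{\alpha}(u_{n})\le E_{0}$, so $u$ is a genuine $\mathcal{H}$-holomorphic half cylinder with bounded energy and $d\alpha$-energy smaller than $\hbar_{0}/2$. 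For the asymptotics in part~1, for each fixed $n$ the bound $\lvert\Gamma_{n}(s,t)\rvert\le C_{1}$ forces $\Gamma_{n}(s,t)/s\to0$ as $s\to\infty$ uniformly in $t$, hence $\lim_{s\to\infty}a_{n}(s,t)/s=\lim_{s\to\infty}\overline{a}_{n}(s,t)/s=T$ and $\lim_{s\to\infty}f_{n}(s,t)=\lim_{s\to\infty}\overline{f}_{n}(s,t)=x(T(t+c_{n}))$ by part~1 of Theorem~\ref{thm:Let--be-1}; the identical computation with $\overline{a},\Gamma$ in place of $\overline{a}_{n},\Gamma_{n}$ gives $\lim_{s\to\infty}a(s,t)/s=T$ and $\lim_{s\to\infty}f(s,t)=x(T(t+c^{*}))$, which is part~2. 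Finally part~3 is exactly part~3 of Theorem~\ref{thm:Let--be-1}, since $g_{n}=f_{n}\circ\theta^{-1}=\overline{f}_{n}\circ\theta^{-1}$ and $g=\overline{f}\circ\theta^{-1}$.

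The one step I expect to be delicate, and the heart of the matter, is checking that subtracting $\Gamma_{n}$ changes neither the asymptotic slope $T$ of the $\mathbb{R}$-coordinate nor the identification of the breaking Reeb orbit $x$; this is exactly where the uniform $C^{0}$-bound M3 on $\Gamma_{n}$ — which ultimately rests on the a priori bound on $\lVert\gamma_{n}\rVert_{L^{2}}$ and the mean value inequality — is indispensable, for without it the contribution $\Gamma_{n}(s,t)/s$ need not tend to $0$ as $s\to\infty$ and the limit could fail to be asymptotic to a trivial cylinder.
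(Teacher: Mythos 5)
Your proposal is correct and follows essentially the same line of reasoning as the paper's own proof: conclude from the $C^{0}$-bound on $\Gamma_{n}$ that part 1 holds, use the mean value theorem and Cauchy integral formula to extract a $C^{\infty}_{\mathrm{loc}}$-limit $\Gamma$ (the paper, rather than noting that pointwise limits inherit the uniform $C^{0}$-bound as you do, detours through the conformal identification $[0,\infty)\times S^{1}\cong D\setminus\{0\}$ to get the $C^{0}$-bound on $\Gamma$; your shortcut is fine and slightly cleaner), and then transfer parts 2 and 3 from Theorem~\ref{thm:Let--be-1} via $a_{n}=\overline{a}_{n}-\Gamma_{n}$, $f_{n}=\overline{f}_{n}$.
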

\begin{proof}
\begin{singlespace}
\noindent Since the $\Gamma_{n}$ are uniformly bounded in $C^{0}-$norm,
the first assertion is obvious. Employing the same arguments as in
\cite{key-23}, i.e. the mean value theorem for harmonic functions
and Cauchy integral formula, we deduce that $\Gamma_{n}$ have uniformly
bounded derivatives, and so, converge in $C_{\text{loc}}^{\infty}$
on $[0,\infty)\times S^{1}$ to a harmonic function $\Gamma:[0,\infty)\times S^{1}\rightarrow\mathbb{R}$
with a gradient bounded in $L^{2}-$norm. Let us show that $\Gamma:[0,\infty)\times S^{1}\rightarrow\mathbb{R}$
is bounded in $C^{0}$. Via the conformal diffomorphism $[0,\infty)\times S^{1}\rightarrow D\backslash\{0\},(s,t)\mapsto e^{-2\pi(s+it)}$
we assume that the harmonic functions $\Gamma_{n}$ and $\Gamma$
are defined on the disk $D$. Then, since the $\Gamma_{n}$ are uniformly
bounded in $C^{0}$ and have gradients with uniformly bounded $L^{2}-$norms,
it follows that $\Gamma_{n}\rightarrow\Gamma$ in $C^{\infty}(\overline{D_{\rho}(0)})$
for some $0<\rho<1$. This shows that $\Gamma$ is uniformly bounded
on $D$ and hence, via the conformal map $[0,\infty)\times S^{1}\rightarrow D\backslash\{0\}$
it is uniformly bounded on $[0,\infty)\times S^{1}$. Thus, the second
assertion is proved, and by means of $\overline{f}_{n}=f_{n}$, the
third assertion is evident.
\end{singlespace}
\end{proof}
\begin{singlespace}
\noindent By cutting a smal piece of finite length from the infinite
half cylinder, we can make the cylinder preceding the infinite half
cylinder to be of type $b_{1}$. Assuming that the infinite half cylinder
is of type $\infty$, we glue all cylinders of types $\infty$ and
$b_{1}$ together (as described in the previous section). Via the
map $[0,1)\times S^{1}\rightarrow D\backslash\{0\},(s,t)\mapsto(1-s)e^{2\pi it}$,
we identify the cylinder $[0,1)\times S^{1}$, which is diffeomorphic
with the infinite half open cylinder, with a punctured disk $D\backslash\{0\}$.
In this way the upper half open cylinder $[0,1)\times S^{1}$ can
be identified with a neighbourhood of a puncture.
\end{singlespace}
\begin{singlespace}

\section{\label{chap:Discussion-on-conformal}Discussion on conformal period}
\end{singlespace}

\begin{singlespace}
\noindent In this section we analyze Condition C8 and C9 of Section
\ref{subsec:Cylinders-of-type} dealing with the boundedness of the
sequence $R_{n}P_{n}$, and which can be regarded as a connection
between the conformal data of the Riemann surface and the harmonic
$1-$forms $\gamma_{n}$. Without this additional condition the convergence
result from \cite{key-23} can not be estabished. The reason is that
the almost complex structure constructed on the contact manifold $M$
might not vary in a compact interval. We show that this condition
is not automatically satisfied by giving a counterexample. It should
be pointed out that this example contradicts Lemma A.2 of \cite{key-15}.
Essentially, we will construct a sequence of harmonic $1-$forms $\gamma_{n}$
on a sequence of stable Riemann surfaces, that degenerate along a
single circle, have uniformly bounded $L^{2}-$norms but unbounded
$P_{n}/\ell_{n}$, where $P_{n}$ denotes the period of $\gamma_{n}$
along the degenerating circle and $\ell_{n}$ its length with respect
to the hyperbolic metric. Observe that the quantity $1/\ell_{n}$
is similar to $R_{n}$ . 

\noindent Let $(S_{n},j_{n},\mathcal{M}_{n})$ be a sequence of stable
Riemann surfaces of genus $g$, where $\mathcal{M}_{n}\subset S_{n}$
are finite sets of marked points with the same cardinality. Choose
a basis $c_{1},...,c_{2g}\in H_{1}(S_{n};\mathbb{Z})$ which is independent
of $n$. This choice is possible because all $S_{n}$ have genus $g$
and are closed (they are topologically the same). By the Deligne-Mumford
convergence,
\[
(S_{n},j_{n},\mathcal{M}_{n})\rightarrow(S,j,\mathcal{M},\mathcal{D},r),
\]
where $(S,j,\mathcal{M},\mathcal{D},r)$ is a decorated nodal Riemann
surface. Again, according to the definition of the Deligne-Mumford
convergence, there exist the diffeomorphisms $\varphi_{n}:S^{\mathcal{D},r}\rightarrow S_{n}$,
such that $j_{n}\rightarrow j$ on $S^{\mathcal{D},r}\backslash\coprod_{j=1}^{l}\Gamma_{j}$
or equivalently, $h_{n}\rightarrow h$ on $\dot{S}^{\mathcal{D},r}\backslash\coprod_{j=1}^{l}\Gamma_{j}$
where $\Gamma_{j}$ are special circles, and $h_{n}$ and $j_{n}$
are the pull-back of the complex structure and the hyperbolic metric
from $S_{n}$ and $\dot{S}_{n}$ via the diffeomorphism $\varphi_{n}$.
Assume that $l=1$, i.e. that there exists only one degenerating geodesic
in the Deligne-Mumford convergence. Denote this geodesic by $\Gamma$.
Furthermore, assume that $\Gamma=c_{1}$ ($\Gamma$ lies in the class
of $c_{1}$). The main result of this section is the following 
\end{singlespace}
\begin{prop}
\begin{singlespace}
\noindent \label{prop:There-exists-a-2}There exists a sequence of
harmonic $1-$forms $\gamma_{n}\in\mathcal{H}_{j_{n}}^{1}(S_{n})$
with uniformly bounded $L^{2}-$norms, periods, and co-periods, but
unbounded conformal periods.
\end{singlespace}
\end{prop}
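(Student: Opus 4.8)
The statement concerns \emph{some} degenerating family, so the plan is to build one explicit plumbing family and one explicit sequence of harmonic forms on it. Fix a closed stable Riemann surface $(\Sigma,j_{\Sigma})$ of genus $g-1$ with two distinct points $p_{+},p_{-}$ and enough extra marked points to keep everything stable, choose holomorphic coordinates around $p_{\pm}$, and for a sequence $t_{n}\to 0$ let $(S_{n},j_{n},\mathcal{M}_{n})$ be obtained by the standard plumbing $xy=t_{n}$ at the node $p_{+}\sim p_{-}$. This is a sequence of stable marked Riemann surfaces of genus $g$ which Deligne--Mumford converges to $\Sigma/(p_{+}\sim p_{-})$ and degenerates along a single non-separating circle $\Gamma_{n}$; choosing the homology basis so that $[\Gamma_{n}]=c_{1}$ and $c_{1}\cdot c_{2}=1$, the degenerating neck of $S_{n}$ is conformally a flat cylinder $Z_{n}\cong[0,M_{n}]\times S^{1}$ with $M_{n}\asymp\log(1/|t_{n}|)\to\infty$, the complement $S_{n}\setminus Z_{n}$ is $n$-independent and conformally fixed, and, by the collar lemma together with the definition of the conformal modulus in \cite{key-10}, $R_{[\Gamma_{n}]}\ge cM_{n}$ for a fixed $c>0$ (indeed $R_{[\Gamma_{n}]}\asymp M_{n}\asymp 1/\ell_{n}$, with $\ell_{n}$ the hyperbolic length of $\Gamma_{n}$).

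Let $\theta_{n}\in\mathcal{H}_{j_{n}}^{1}(S_{n})$ be the harmonic $1$-form with $P_{\theta_{n}}([c_{1}])=1$ and $P_{\theta_{n}}([c_{j}])=0$ for $j=2,\dots,2g$; it exists and is unique, since harmonic $1$-forms on a closed Riemann surface are parametrised by their period vector in $H^{1}(S_{n};\mathbb{R})$. The two facts I would establish are
\[
\|\theta_{n}\|_{L^{2}(S_{n})}^{2}\le C\,M_{n}\qquad\text{and}\qquad \theta_{n}\longrightarrow\theta_{\infty}\ \text{ in }\ C_{\text{loc}}^{\infty}\big(\Sigma\setminus\{p_{+},p_{-}\}\big).
\]
The first is a Dirichlet-principle estimate: since $\theta_{n}$ minimises the $L^{2}$-norm in its cohomology class, it suffices to test against the smooth closed $1$-form equal to $dt$ on $Z_{n}$ and extended to a closed form on the fixed piece $S_{n}\setminus Z_{n}$ — possible because the two boundary circles of that piece are homologous there with the orientations induced by the collar, so the obstruction $\int_{\partial}dt$ vanishes — whose $L^{2}$-energy is $M_{n}$ on $Z_{n}$ plus an $n$-independent constant off $Z_{n}$. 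The second is the standard degeneration statement for the harmonic form dual to a vanishing cycle, which converges away from the node to a fixed harmonic form with $\tfrac{1}{2\pi}d\arg$-type singularities at $p_{\pm}$. Writing the $0$-th Fourier mode of $\theta_{n}$ on $Z_{n}$ as $\mu_{n}\,ds+\nu_{n}\,dt$, the period normalisation forces $\nu_{n}=1$, while pairing $\int_{c_{2}}\theta_{n}=0$ with the neck-crossing cycle $c_{2}$ gives $\mu_{n}M_{n}=O(1)$, hence $\mu_{n}\to 0$; thus the period and co-period of $\theta_{n}$ over $\Gamma_{n}$ are $1$ and $-\mu_{n}\to 0$, and its periods and co-periods over the remaining basis cycles are $C_{\text{loc}}^{\infty}$-convergent, hence bounded.

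Finally set $\gamma_{n}:=\|\theta_{n}\|_{L^{2}(S_{n})}^{-1}\,\theta_{n}\in\mathcal{H}_{j_{n}}^{1}(S_{n})$. Then $\|\gamma_{n}\|_{L^{2}(S_{n})}=1$; the period over the degenerating circle is $P_{\gamma_{n}}([\Gamma_{n}])=\|\theta_{n}\|_{L^{2}}^{-1}\to 0$, the periods over $c_{2},\dots,c_{2g}$ vanish, the co-period $S_{\gamma_{n}}([\Gamma_{n}])=-\mu_{n}\|\theta_{n}\|_{L^{2}}^{-1}\to 0$, and all periods and co-periods over the fixed basis stay bounded. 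On the other hand, using $\|\theta_{n}\|_{L^{2}}^{2}\le CM_{n}$ and $R_{[\Gamma_{n}]}\ge cM_{n}$,
\[
\big|\tau_{[\Gamma_{n}],\gamma_{n}}\big|=R_{[\Gamma_{n}]}\,\|\theta_{n}\|_{L^{2}}^{-1}\ \ge\ \frac{cM_{n}}{\sqrt{C\,M_{n}}}\ =\ \frac{c}{\sqrt{C}}\,\sqrt{M_{n}}\ \longrightarrow\ \infty ,
\]
so the conformal periods are unbounded (equivalently $P_{n}/\ell_{n}\asymp\sqrt{M_{n}}\to\infty$ while $\|\gamma_{n}\|_{L^{2}}=1$), which contradicts the bound claimed in Lemma A.2 of \cite{key-15}. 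Note that the conformal co-period $\sigma_{[\Gamma_{n}],\gamma_{n}}=R_{[\Gamma_{n}]}S_{\gamma_{n}}([\Gamma_{n}])\asymp M_{n}^{-1/2}$ remains bounded, so it is precisely the conformal period that explodes.

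I expect the only real work to be the first paragraph together with estimate (i): one must arrange the plumbing family so that $S_{n}\setminus Z_{n}$ is genuinely conformally $n$-independent (or at least of uniformly bounded geometry), must match the normalisation of $R_{[\Gamma_{n}]}$ in \cite{key-10} with the length of the flat collar via the collar lemma to get $R_{[\Gamma_{n}]}\ge cM_{n}$, and must check that the $O(1)$ off-neck contribution to $\|\theta_{n}\|_{L^{2}}^{2}$ and the $o(1)$ corrections to the period and co-period of $\theta_{n}$ do not affect the leading asymptotics. Once these are in place, the key inequality $\tau_{[\Gamma_{n}],\gamma_{n}}\gtrsim\sqrt{M_{n}}$ follows immediately from the Dirichlet-principle comparison with the explicit competitor.
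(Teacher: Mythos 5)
Your proof takes a genuinely different (and in one respect cleaner) route than the paper's. The paper works throughout with the $L^{2}$-normalized form $\gamma_{n}$: the Fourier-theoretic estimate of Lemma \ref{lem:Any-harmonic-functions} shows that on the degenerating collar $\|F_{n}\|_{L^{2}(\mathcal{C}_{n})}$ is asymptotic to $|b_{0}|\sqrt{2R_{n}}$ with $b_{0}=-\tilde{S}_{n}-iP_{n}$, forcing $\sqrt{P_{n}^{2}+\tilde{S}_{n}^{2}}\asymp R_{n}^{-1/2}$ and hence $\sqrt{(P_{n}R_{n})^{2}+(\tilde{S}_{n}R_{n})^{2}}\asymp R_{n}^{1/2}\to\infty$; a case distinction (replace $\gamma_{n}$ by $\gamma_{n}\circ j_{n}$ if the conformal period stays bounded) then finishes. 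You instead fix an explicit plumbing family, work with the period-normalized $\theta_{n}$, and bound $\|\theta_{n}\|_{L^{2}}^{2}\lesssim M_{n}$ by a Dirichlet-principle comparison with the constant form $dt$ on the neck. This replaces Lemma \ref{lem:Any-harmonic-functions} by a Dirichlet/Parseval argument on the collar and avoids the case distinction entirely, since the conformal period itself is forced to infinity. The price is that you must take more care with the competitor and with which periods are actually controlled.

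Two points need repair. First, the Dirichlet competitor you describe ($dt$ on $Z_{n}$, extended to a closed form off the neck) has period $1$ over $\Gamma$, but nothing guarantees its period over the dual cycle $c_{2}$ vanishes, so it need not lie in the cohomology class of $\theta_{n}$ and the Dirichlet principle is not yet applicable. The fix is to subtract $\big(\int_{c_{2}}\omega_{n}\big)\beta$, where $\beta$ is a fixed-$L^{2}$-norm closed $1$-form supported in an $n$-independent tubular neighbourhood of $\Gamma$ inside the neck and Poincar\'e dual to $\pm\Gamma$; this corrects the $c_{2}$-period without affecting the bound $\|\theta_{n}\|_{L^{2}}^{2}\leq CM_{n}$. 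You signal awareness of this in your closing paragraph, but the step as written skips it.

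Second, and more substantively, the statement that "all periods and co-periods over the fixed basis stay bounded" is false. Taking a symplectic basis with $a_{1}=\Gamma$, $b_{1}=c_{2}$, the Riemann bilinear relation together with the vanishing of all periods of $\theta_{n}$ except $\int_{\Gamma}\theta_{n}=1$ gives
\[
\|\theta_{n}\|_{L^{2}}^{2}=\int_{S_{n}}\theta_{n}\circ j_{n}\wedge\theta_{n}=-\int_{c_{2}}\theta_{n}\circ j_{n},
\]
so the co-period of $\theta_{n}$ over $c_{2}$ is $-\|\theta_{n}\|_{L^{2}}^{2}\asymp-M_{n}$, and after normalizing, $\int_{c_{2}}\gamma_{n}\circ j_{n}=-\|\theta_{n}\|_{L^{2}}\to-\infty$. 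This divergence is unavoidable for this kind of example (the paper's $\gamma_{n}$ has it too, as the identity shows once $P_{n}\to0$ with $\|\gamma_{n}\|_{L^{2}}=1$), so it does not invalidate your construction; rather, the Proposition is to be read as controlling periods and co-periods along the degenerating circle, as the preceding paragraph of the section and the hypotheses of Theorem \ref{thm:Let--be-2-1} indicate. For that reading, note that the Parseval bound on the zero Fourier mode on $Z_{n}$ already gives $\sqrt{\mu_{n}^{2}+\nu_{n}^{2}}/\|\theta_{n}\|_{L^{2}}\leq M_{n}^{-1/2}$, hence both $|P_{\gamma_{n}}([\Gamma])|$ and $|S_{\gamma_{n}}([\Gamma])|\leq M_{n}^{-1/2}$ directly; the separate and more delicate claim $\mu_{n}M_{n}=O(1)$ is not needed and, as stated (the path $c_{2}\cap\Sigma'$ approaches the nodal points where convergence is not uniform), would require additional justification.
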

\begin{proof}
\begin{singlespace}
\noindent Choose a sequence of harmonic $1-$forms $\gamma_{n}\in\mathcal{H}_{j_{n}}^{1}(S^{\mathcal{D},r})$
with vanishing periods except on $\Gamma$ (on all of $c_{i}$ with
$i\not=1$ except on $c_{1}=\Gamma$). By normalization, assume that
$\left\Vert \gamma_{n}\right\Vert _{L^{2}(S^{\mathcal{D},r})}=1$.
The uniform bounds on the $L^{2}-$norms imply that the periods $P_{n}$
of $\gamma_{n}$ over $\Gamma$ converge to $0$. Thus $\gamma_{n}$
converge in $C_{\text{loc}}^{\infty}$ to $\gamma$ on $S^{\mathcal{D},r}\backslash\Gamma$
which can be seen as a harmonic $1-$form on $S$ with vanishing periods.
By Hodge theory, we have $\gamma=0$. For $n$ sufficiently large,
the $L^{2}-$norms of $\gamma_{n}$ concentrate in the collar neighbourhood
around $\Gamma$. Indeed, from 
\[
1=\left\Vert \gamma_{n}\right\Vert _{L^{2}(S^{\mathcal{D},r})}^{2}=\left\Vert \gamma_{n}\right\Vert _{L^{2}(\mathcal{C}_{n})}^{2}+\left\Vert \gamma_{n}\right\Vert _{L^{2}(S^{\mathcal{D},r}\backslash\mathcal{C}_{n})}^{2},
\]
where $\mathcal{C}_{n}$ is the cylindrical component of the $\delta-$thin
part for some sufficiently small but fixed $\delta>0$, it follows
that $S^{\mathcal{D},r}\backslash\mathcal{C}_{n}$ is contained in
a compact subset of $S^{\mathcal{D},r}\backslash\Gamma$, and so,
that $\left\Vert \gamma_{n}\right\Vert _{L^{2}(S^{\mathcal{D},r}\backslash\mathcal{C}_{n})}^{2}$
converge to $0$, and for $n$ sufficiently large we have $\left\Vert \gamma_{n}\right\Vert _{L^{2}(\mathcal{C}_{n})}\leq1$
and $\left\Vert \gamma_{n}\right\Vert _{L^{2}(\mathcal{C}_{n})}\rightarrow1$
as $n\rightarrow\infty$. If $F_{n}$ is the unique holomorphic $1-$form
with $\text{Re}(F_{n})=\gamma_{n}$,
\[
\left\Vert F_{n}\right\Vert _{L^{2}(S^{\mathcal{D},r})}^{2}=\frac{i}{2}\int_{S^{\mathcal{D},r}}F_{n}\wedge\overline{F}_{n}.
\]
The collar $\mathcal{C}_{n}$ is conformaly equivalent to $[-R_{n},R_{n}]\times S^{1}$,
where $R_{n}\sim1/\ell_{n}$ and $\ell_{n}$ is the length of $\Gamma$
with respect to $h_{n}$. On $\mathcal{C}_{n}$ we write $\gamma_{n}=f_{n}ds+g_{n}dt$,
where $f_{n}$ and $g_{n}$ are harmonic functions on the cylinder
$[-R_{n},R_{n}]\times S^{1}$ ($s$ is the coordinate in $[-R_{n},R_{n}]$
and $t$ is the coordinate on $S^{1}$), express the holomorphic $1-$form
$F_{n}$ as $F_{n}=(f_{n}-ig_{n})dz=(f_{n}-ig_{n})(ds+idt)$, and
note that $\left\Vert F_{n}\right\Vert _{L^{2}(\mathcal{C}_{n})}=\left\Vert \gamma_{n}\right\Vert _{L^{2}(\mathcal{C}_{n})}$.
Consider the quantity $|\left\Vert F_{n}\right\Vert _{L^{2}(\mathcal{C}_{n})}-|b_{0}|\left\Vert dz\right\Vert _{L^{2}(\mathcal{C}_{n})}|$,
where $b_{0}=-\tilde{S}_{n}-iP_{n}$ and $\tilde{S}_{n}$ is the co-period
defined by
\begin{eqnarray*}
\tilde{S}_{n} & = & \int_{\Gamma}\gamma_{n}\circ j_{n}=-\int_{\{0\}\times S^{1}}f_{n}(0,t)dt.
\end{eqnarray*}
Recalling that
\begin{eqnarray*}
P_{n} & = & \int_{\Gamma}\gamma_{n}=\int_{\{0\}\times S^{1}}g(0,t)dt,
\end{eqnarray*}
we obtain
\begin{align*}
\left|\left\Vert F_{n}\right\Vert _{L^{2}(\mathcal{C}_{n})}-|b_{0}|\left\Vert dz\right\Vert _{L^{2}(\mathcal{C}_{n})}\right| & =\left\Vert \left[(f_{n}+\tilde{S}_{n})-i(g_{n}-P_{n})\right]dz\right\Vert _{L^{2}(\mathcal{C}_{n})}\\
 & \leq\left\Vert (f_{n}+\tilde{S}_{n})dz\right\Vert _{L^{2}(\mathcal{C}_{n})}+\left\Vert (g_{n}-P_{n})dz\right\Vert _{L^{2}(\mathcal{C}_{n})}.
\end{align*}
Further calculation gives $\left\Vert dz\right\Vert _{L^{2}(\mathcal{C}_{n})}=\sqrt{2R_{n}}$,
$\left\Vert (f_{n}+\tilde{S}_{n})dz\right\Vert _{L^{2}(\mathcal{C}_{n})}=\left\Vert f_{n}+\tilde{S}_{n}\right\Vert _{L^{2}([-R_{n},R_{n}]\times S^{1})}$
and similarly $\left\Vert (g_{n}-P_{n})dz\right\Vert _{L^{2}(\mathcal{C}_{n})}=\left\Vert g_{n}-P_{n}\right\Vert _{L^{2}([-R_{n},R_{n}]\times S^{1})}$.
Application of Lemma \ref{lem:Any-harmonic-functions} yields
\begin{align*}
\left\Vert f_{n}+\tilde{S}_{n}\right\Vert _{L^{2}([-R_{n},R_{n}]\times S^{1})}^{2} & =\int_{-R_{n}}^{R_{n}}\left\Vert f_{n}(s)+\tilde{S}_{n}\right\Vert _{L^{2}(S^{1})}^{2}ds\\
 & \leq\left(36\int_{-R_{n}}^{R_{n}}\rho^{2}(s)ds\right)\max\left\{ \left\Vert f_{n}(\pm R_{n})+\tilde{S}_{n}\right\Vert _{L^{2}(S^{1})}^{2}\right\} 
\end{align*}
and
\[
\left\Vert g_{n}-P_{n}\right\Vert _{L^{2}([-R_{n},R_{n}]\times S^{1})}^{2}\leq\left(36\int_{-R_{n}}^{R_{n}}\rho^{2}(s)ds\right)\max\left\{ \left\Vert g_{n}(\pm R_{n})-P_{n}\right\Vert _{L^{2}(S^{1})}^{2}\right\} .
\]
Using
\[
\int_{-R_{n}}^{R_{n}}\rho^{2}(s)ds=4(1-e^{-4R_{n}})\leq4,
\]
we obtain
\begin{align*}
\left\Vert f_{n}+\tilde{S}_{n}\right\Vert _{L^{2}([-R_{n},R_{n}]\times S^{1})}^{2} & \leq144\max\left\{ \left\Vert f_{n}(\pm R_{n})+\tilde{S}_{n}\right\Vert _{L^{2}(S^{1})}^{2}\right\} ,\\
\left\Vert g_{n}-P_{n}\right\Vert _{L^{2}([-R_{n},R_{n}]\times S^{1})}^{2} & \leq144\max\left\{ \left\Vert g_{n}(\pm R_{n})-P_{n}\right\Vert _{L^{2}(S^{1})}^{2}\right\} .
\end{align*}
Because the harmonic $1-$forms $\gamma_{n}$ converge to $0$ in
$C_{\text{loc}}^{\infty}(S^{\mathcal{D},r}\backslash\Gamma)$, $f_{n}(\pm R_{n})$
, $g_{n}(\pm R_{n})$, $\tilde{S}_{n}$, and $P_{n}$ converge to
zero. Hence
\[
\left|\left\Vert F_{n}\right\Vert _{L^{2}(\mathcal{C}_{n})}-\sqrt{2}|b_{0}|\sqrt{R_{n}}\right|\rightarrow0
\]
as $n\rightarrow\infty$. As $\left\Vert F_{n}\right\Vert _{L^{2}(\mathcal{C}_{n})}$
is almost $1$, there exists the constants $C_{0},C_{1}>0$ such that
\begin{eqnarray*}
C_{0}\frac{1}{\sqrt{2R_{n}}} & \leq & |b_{0}|\leq C_{1}\frac{1}{\sqrt{2R_{n}}}
\end{eqnarray*}
giving
\[
C_{0}\frac{1}{\sqrt{2R_{n}}}\leq\sqrt{P_{n}^{2}+\tilde{S}_{n}^{2}}\leq C_{1}\frac{1}{\sqrt{2R_{n}}},
\]
or equivalently,
\[
C_{0}\sqrt{\frac{R_{n}}{2}}\leq\sqrt{(P_{n}R_{n})^{2}+(\tilde{S}_{n}R_{n})^{2}}\leq C_{1}\sqrt{\frac{R_{n}}{2}.}
\]
These inequalities show that either $P_{n}R_{n}$ or $\tilde{S}_{n}R_{n}$
tend to $\infty$, although $P_{n}$ and $\tilde{S}_{n}$ stay uniformly
bounded ($\gamma_{n}$ have uniformly bounded $L^{2}-$norms). If
$P_{n}R_{n}$ remains uniformly bounded then we replace $\gamma_{n}$
by $\gamma_{n}\circ j_{n}$.
\end{singlespace}
\end{proof}
\begin{lem}
\begin{singlespace}
\noindent \label{lem:Any-harmonic-functions}For any harmonic functions
$f$ and $g$ on the cylinder $[-R,R]\times S^{1}$ such that $\eta=fds+gdt$
is a harmonic $1-$form on $[-R,R]\times S^{1}$ we have
\begin{eqnarray*}
\left\Vert f(s)+\tilde{S}\right\Vert _{L^{2}(S^{1})} & \leq & 6\rho(s)\max\left\{ \left\Vert f(\pm R)+\tilde{S}\right\Vert _{L^{2}(S^{1})}\right\} \\
\left\Vert g(s)-P\right\Vert _{L^{2}(S^{1})} & \leq & 6\rho(s)\max\left\{ \left\Vert g(\pm R)-P\right\Vert _{L^{2}(S^{1})}\right\} 
\end{eqnarray*}
 for all $s\in[-R,R]$. Here $\tilde{S}$ and $P$ are the co-period
and the period of $\eta$, respectively, and $\rho(s)^{2}=8e^{-2R}\cosh(2s)$.
\end{singlespace}
\end{lem}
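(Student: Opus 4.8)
The plan is to reduce the estimate, via a Fourier expansion in the angular variable, to a one-parameter family of elementary ordinary differential equations on $[-R,R]$. Write $z=s+it$ with $t\in S^{1}=\mathbb{R}/\mathbb{Z}$. Since $\eta=f\,ds+g\,dt$ is harmonic it is closed, so $P=\int_{\{s\}\times S^{1}}\eta$ and $\tilde{S}=\int_{\{s\}\times S^{1}}\eta\circ j$ do not depend on $s$, and the harmonicity is equivalent to $\phi:=f-ig$ being holomorphic (so that $F:=\phi\,dz$ is a holomorphic $1-$form with $\operatorname{Re}F=\eta$). Expanding $\phi$ in its Fourier series in $t$ and using holomorphicity, the $k$-th coefficient must be proportional to $e^{2\pi ks}$, giving $\phi(s,t)=\sum_{k\in\mathbb{Z}}c_{k}e^{2\pi ks}e^{2\pi ikt}$; taking real and imaginary parts yields $f(s,t)=\sum_{k}a_{k}(s)e^{2\pi ikt}$ and $g(s,t)=\sum_{k}b_{k}(s)e^{2\pi ikt}$ with $a_{k}(s)=\tfrac{1}{2}(c_{k}e^{2\pi ks}+\overline{c_{-k}}e^{-2\pi ks})$ and an analogous formula for $b_{k}$. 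A direct computation identifies $c_{0}=-\tilde{S}-iP$, hence $a_{0}\equiv-\tilde{S}$ and $b_{0}\equiv P$; therefore $f(s,\cdot)+\tilde{S}$ and $g(s,\cdot)-P$ are exactly the zero-average parts of $f(s,\cdot)$ and $g(s,\cdot)$, and by Parseval the quantities to be bounded are $\Vert f(s)+\tilde{S}\Vert_{L^{2}(S^{1})}^{2}=\sum_{k\neq0}|a_{k}(s)|^{2}$ and $\Vert g(s)-P\Vert_{L^{2}(S^{1})}^{2}=\sum_{k\neq0}|b_{k}(s)|^{2}$.

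Next I would estimate each mode. For $k\neq0$, $a_{k}$ (and likewise $b_{k}$) solves $y''=(2\pi k)^{2}y$, so it equals the unique combination of $\cosh(2\pi ks)$ and $\sinh(2\pi ks)$ taking its boundary values at $s=\pm R$; writing this out and using $|\tanh(2\pi ks)|\le\tanh(2\pi kR)$ for $|s|\le R$ to bound the $\sinh$-term by the $\cosh$-term gives $|a_{k}(s)|\le 2\max\{|a_{k}(R)|,|a_{k}(-R)|\}\cdot\cosh(2\pi ks)/\cosh(2\pi kR)$. The key observation is that the amplification factor $\cosh(2\pi ks)/\cosh(2\pi kR)$ is controlled uniformly in $|k|\ge1$: regarded as a function of a real parameter $k\ge1$ it is non-increasing (its $k$-derivative has the sign of $|s|\tanh(2\pi k|s|)-R\tanh(2\pi kR)$, which is $\le0$ when $|s|\le R$), hence bounded by its value at $k=1$, and $\cosh(2\pi s)/\cosh(2\pi R)\le 2e^{-2\pi(R-|s|)}\le 2e^{-(R-|s|)}\le\rho(s)$, where the last step uses $\rho(s)^{2}=8e^{-2R}\cosh(2s)\ge 4e^{-2(R-|s|)}$. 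Thus $|a_{k}(s)|\le 2\rho(s)\max\{|a_{k}(R)|,|a_{k}(-R)|\}$ for all $k\neq0$, and likewise for $b_{k}$.

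Finally, summing over $k\neq0$ and using Parseval once more, $\Vert f(s)+\tilde{S}\Vert_{L^{2}(S^{1})}^{2}\le 4\rho(s)^{2}\sum_{k\neq0}(|a_{k}(R)|^{2}+|a_{k}(-R)|^{2})=4\rho(s)^{2}(\Vert f(R)+\tilde{S}\Vert_{L^{2}(S^{1})}^{2}+\Vert f(-R)+\tilde{S}\Vert_{L^{2}(S^{1})}^{2})\le 8\rho(s)^{2}\max\{\Vert f(\pm R)+\tilde{S}\Vert_{L^{2}(S^{1})}^{2}\}$, which yields the asserted inequality with the better constant $2\sqrt{2}<6$, and the inequality for $g-P$ follows verbatim since $b_{k}$ obeys the same ODE and boundary relations. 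I do not expect a genuine obstacle here: the statement is a quantitative form of the convexity of $s\mapsto\Vert f(s)+\tilde{S}\Vert_{L^{2}(S^{1})}^{2}$ on a long cylinder, with the exponential gain coming entirely from the spectral gap (the lowest nonzero angular mode $k=1$). The only points that need care are checking that the extremal mode really is $k=1$, so that the decay rate built into $\rho(s)$ is the correct one, and the bookkeeping of the elementary hyperbolic inequalities that fix the constants $8$ and $6$ and the exponent appearing in $\rho$.
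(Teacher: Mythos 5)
Your proof is correct and is essentially the same Fourier-mode argument as the paper's, with a noticeably cleaner execution. Both proofs expand in the angular modes, identify the zeroth mode with $-\tilde S$ and $P$, reduce each higher mode to the two-point boundary-value ODE $y''\propto k^{2}y$, bound the hyperbolic amplification factor, and close with Parseval. What you streamline: you write each modal solution explicitly as the $\cosh$--$\sinh$ interpolant of its boundary values, so the single elementary inequality $|\tanh(2\pi ks)|\le\tanh(2\pi kR)$ replaces the paper's sign-splitting by the indicator functions $K(k)$, $G(k)$, whose whole purpose is to absorb the cross-term $2c_{k}d_{k}\cosh(ks)\sinh(ks)$ when applying the modal bounds; you treat $f$ and $g$ symmetrically, sidestepping the Cauchy--Riemann coefficient relations ($\delta_{k}=f_{k}$, etc.) that the paper records but does not actually use in the final estimate; and you observe that the amplification factor $\cosh(2\pi ks)/\cosh(2\pi kR)$ is non-increasing in $k\ge1$, so only the $k=1$ mode needs to be compared against $\rho(s)$, rather than proving the uniform-in-$k$ inequality $\cosh(ks)/\cosh(kR)\le 3e^{-R}\cosh(s)$ invoked in the paper (where, incidentally, the displayed $\sinh(Rs)$ in the $\sinh$-ratio should read $\sinh(kR)$). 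Your constant $2\sqrt{2}$ is also sharper than the lemma's $6$. One point worth flagging: you consistently normalize $S^{1}=\mathbb{R}/\mathbb{Z}$ (growth $e^{\pm 2\pi ks}$), which matches the convention used elsewhere in the paper, whereas the paper's own proof of this lemma writes $\cos(kt)$ and $\cosh(ks)$; under your normalization the lowest mode decays like $e^{-2\pi(R-|s|)}$, much faster than $\rho(s)\sim e^{-(R-|s|)}$, so your estimate has considerable slack, but since the lemma only asserts an inequality against the stated $\rho$ this is harmless and the conclusion is correct either way.
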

\begin{proof}
\begin{singlespace}
\noindent Any harmonic $1-$form $\eta$ defined on the cylinder $[-R,R]\times S^{1}$
can be written as $\eta=(-\tilde{S}ds+Pdt)+\tilde{f}(s,t)ds+\tilde{g}(s,t)dt$
where $\tilde{f}$ and $\tilde{g}$ are harmonic functions on $[-R,R]\times S^{1}$
with vanishing average. Note that the average of $f$ corresponds
to the co-period $\tilde{S}$ and the average of $g$ corresponds
to $-P$. To show this, write $\eta$ in the form $\eta=f(s,t)ds+g(s,t)dt$
and compute the averages of $f$ and $g$ as
\[
\frac{1}{2R}\int_{[-R,R]\times S^{1}}f(s,t)ds\wedge dt=\frac{1}{2R}\int_{[-R,R]\times S^{1}}\eta\wedge dt=\int_{\{0\}\times S^{1}}\eta\circ j=-\tilde{S}
\]
and
\[
\frac{1}{2R}\int_{[-R,R]\times S^{1}}g(s,t)ds\wedge dt=\frac{1}{2R}\int_{[-R,R]\times S^{1}}ds\wedge\eta=-\frac{1}{2R}\int_{-R}^{R}\left(\int_{\{s\}\times S^{1}}\eta\right)ds=P,
\]
respectively. Hence the $1-$form $\eta-(-\tilde{S}ds+Pdt)=\tilde{f}(s,t)ds+\tilde{g}(s,t)dt$
has vanishing average twist and vanishing periods. The Fourier series
of $\tilde{f}$ and $\tilde{g}$ in the $t$ variable are
\begin{align*}
\tilde{f}(s,t) & =\frac{a_{0}(s)}{2}+\sum_{k=1}^{\infty}a_{k}(s)\cos(kt)+b_{k}(s)\sin(kt),\\
\tilde{g}(s,t) & =\frac{\alpha_{0}(s)}{2}+\sum_{k=1}^{\infty}\alpha_{k}(s)\cos(kt)+\beta_{k}(s)\sin(kt).
\end{align*}
Since $\tilde{f}$ and $\tilde{g}$ are harmonic, the Fourier expansion
coefficients solve $a_{k}''=k^{2}a_{k}$, $b_{k}''=k^{2}b_{k}$, $\alpha_{k}''=k^{2}\alpha_{k}$
and $\beta_{k}''=k^{2}\beta_{k}$ for $k\in\mathbb{N}_{0}$. The solutions
to these ordinaty differential equations are of the form
\begin{align*}
a_{0}(s) & =c_{0}+sd_{0},\\
a_{k}(s) & =c_{k}\cosh(ks)+d_{k}\sinh(ks),\\
b_{k}(s) & =e_{k}\cosh(ks)+f_{k}\sinh(ks),\\
\alpha_{0}(s) & =\delta_{0}+\epsilon_{0}s,\\
\alpha_{k}(s) & =\delta_{k}\cosh(ks)+\epsilon_{k}\sinh(ks),\\
\beta_{k}(s) & =\eta_{k}\cosh(ks)+\theta_{k}\sinh(ks).
\end{align*}
Since $d\eta=d(\eta\circ j)=0$ we obtain $\partial_{t}\tilde{f}=\partial_{s}\tilde{g}$
and $\partial_{s}\tilde{f}=-\partial_{t}\tilde{g}$, giving $a_{0}(s)=c_{0}$
and $\alpha_{0}(s)=\delta_{0}$. As $\tilde{f}ds+\tilde{g}dt$ has
vanishing co-period and vanishing period, we find $a_{0}(s)=\alpha_{0}(s)=0$,
and the following relations relating the coefficients $a_{k}$, $b_{k}$,
$\alpha_{k}$, and $\beta_{k}$ for $k\in\mathbb{N}$: $\delta_{k}=f_{k}$,
$\epsilon_{k}=e_{k}$, $\eta_{k}=-d_{k}$ and $\theta_{k}=-c_{k}$.
Consequently, $a_{k}$, $b_{k}$, $\alpha_{k}$, and $\beta_{k}$
can be written as
\begin{align*}
a_{k}(s) & =c_{k}\cosh(ks)+d_{k}\sinh(ks),\\
b_{k}(s) & =e_{k}\cosh(ks)+f_{k}\sinh(ks),\\
\alpha_{k}(s) & =f_{k}\cosh(ks)+e_{k}\sinh(ks),\\
\beta_{k}(s) & =-d_{k}\cosh(ks)-c_{k}\sinh(ks).
\end{align*}
Let us express $\tilde{f}$ and $\tilde{g}$ as
\begin{align*}
\tilde{f}(s,t) & =\sum_{k=1}^{\infty}a_{k}(s)\cos(kt)+b_{k}(s)\sin(kt)=\sum_{k\in\mathbb{Z}\backslash\{0\}}F_{k}(s)e^{2\pi ikt},\\
\tilde{g}(s,t) & =\sum_{k=1}^{\infty}\alpha_{k}(s)\cos(kt)+\beta_{k}(s)\sin(kt)=\sum_{k\in\mathbb{Z}\backslash\{0\}}\varGamma_{k}(s)e^{2\pi ikt},
\end{align*}
where $F_{k}=\frac{1}{2}(a_{k}-ib_{k})$, $F_{-k}=\frac{1}{2}(a_{k}+ib_{k})$,
$\varGamma_{k}=\frac{1}{2}(\alpha_{k}-i\beta_{k})$ , and $\varGamma_{-k}=\frac{1}{2}(\alpha_{k}+i\beta_{k})$
for $k\geq1$. From
\[
\frac{\cosh(ks)}{\cosh(kR)}\leq3e^{-R}\cosh(s)\leq3\rho(s)\text{ and }\frac{|\sinh(ks)|}{|\sinh(Rs)|}\leq3e^{-R}\cosh(s)\leq3\rho(s),
\]
where $\rho(s)^{2}=8e^{-2R}\cosh(2s)$, it follows that
\[
\cosh(ks)\leq3\rho(s)\cosh(kR)\text{ and }|\sinh(ks)|\leq3\rho(s)\sinh(Rs).
\]
Define the functions
\[
K(k)=\begin{cases}
+1, & c_{k}\text{ and }d_{k}\text{ have the same parity}\\
-1, & \text{otherwise}
\end{cases}
\]
and
\[
G(k)=\begin{cases}
+1, & e_{k}\text{ and }f_{k}\text{ have the same parity}\\
-1, & \text{otherwise.}
\end{cases}
\]
For $s\in[0,R]\times S^{1}$ we then have
\begin{align*}
\left\Vert \tilde{f}(s)\right\Vert _{L^{2}(S^{1})}^{2} & =\sum_{k\in\mathbb{Z}\backslash\{0\}}|F_{k}(s)|^{2}\\
 & =\frac{1}{2}\sum_{k=1}^{\infty}(c_{k}\cosh(ks)+d_{k}\sinh(ks))^{2}+\frac{1}{2}\sum_{k=1}^{\infty}(e_{k}\cosh(ks)+f_{k}\sinh(ks))^{2}\\
 & =\frac{1}{2}\sum_{k=1,K(k)=1}^{\infty}(c_{k}\cosh(ks)+d_{k}\sinh(ks))^{2}+\frac{1}{2}\sum_{k=1,K(k)=-1}^{\infty}(c_{k}\cosh(ks)+d_{k}\sinh(ks))^{2}\\
 & +\frac{1}{2}\sum_{k=1,G(k)=1}^{\infty}(e_{k}\cosh(ks)+f_{k}\sinh(ks))^{2}+\frac{1}{2}\sum_{k=1,G(k)=-1}^{\infty}(e_{k}\cosh(ks)+f_{k}\sinh(ks))^{2}\\
 & =\frac{1}{2}\sum_{k=1,K(k)=1}^{\infty}c_{k}^{2}\cosh^{2}(ks)+d_{k}^{2}\sinh^{2}(ks)+2c_{k}d_{k}\cosh(ks)\sinh(ks)\\
 & +\frac{1}{2}\sum_{k=1,K(k)=-1}^{\infty}c_{k}^{2}\cosh^{2}(ks)+d_{k}^{2}\sinh^{2}(ks)-(-2c_{k}d_{k})\cosh(ks)\sinh(ks)\\
 & +\frac{1}{2}\sum_{k=1,G(k)=1}^{\infty}e_{k}^{2}\cosh^{2}(ks)+f_{k}^{2}\sinh^{2}(ks)+2e_{k}f_{k}\cosh(ks)\sinh(ks)\\
 & +\frac{1}{2}\sum_{k=1,G(k)=-1}^{\infty}e_{k}^{2}\cosh^{2}(ks)+f_{k}^{2}\sinh^{2}(ks)-(-2e_{k}f_{k})\cosh(ks)\sinh(ks)\\
 & \leq\frac{1}{2}9\rho(s)^{2}\sum_{k=1,K(k)=1}^{\infty}c_{k}^{2}\cosh^{2}(kR)+d_{k}^{2}\sinh^{2}(kR)+2c_{k}d_{k}\cosh(kR)\sinh(kR)\\
 & +\frac{1}{2}9\rho(s)^{2}\sum_{k=1,K(k)=-1}^{\infty}c_{k}^{2}\cosh^{2}(kR)+d_{k}^{2}\sinh^{2}(kR)+2c_{k}d_{k}\cosh(kR)\sinh(-kR)\\
 & +\frac{1}{2}9\rho(s)^{2}\sum_{k=1,G(k)=1}^{\infty}e_{k}^{2}\cosh^{2}(kR)+f_{k}^{2}\sinh^{2}(kR)+2e_{k}f_{k}\cosh(kR)\sinh(kR)\\
 & +\frac{1}{2}9\rho(s)^{2}\sum_{k=1,G(k)=-1}^{\infty}e_{k}^{2}\cosh^{2}(Rk)+f_{k}^{2}\sinh^{2}(Rk)+2e_{k}f_{k}\cosh(Rk)\sinh(-kR)\\
 & =\frac{9}{2}\rho(s)^{2}\sum_{k=1,K(k)=1}^{\infty}(c_{k}\cosh(kR)+d_{k}\sinh(kR))^{2}+\frac{9}{2}\rho(s)^{2}\sum_{k=1,K(k)=-1}^{\infty}(c_{k}\cosh(-kR)+d_{k}\sinh(-kR))^{2}\\
 & +\frac{9}{2}\rho(s)^{2}\sum_{k=1,G(k)=1}^{\infty}(e_{k}\cosh(kR)+f_{k}\sinh(kR))^{2}+\frac{9}{2}\rho(s)^{2}\sum_{k=1,G(k)=-1}^{\infty}(e_{k}\cosh(-kR)+f_{k}\sinh(-kR))^{2}
\end{align*}
\begin{align*}
 & \leq\frac{9}{2}\rho(s)^{2}\sum_{k=1}^{\infty}(c_{k}\cosh(kR)+d_{k}\sinh(kR))^{2}+\frac{9}{2}\rho(s)^{2}\sum_{k=1}^{\infty}(c_{k}\cosh(-kR)+d_{k}\sinh(-kR))^{2}\\
 & +\frac{9}{2}\rho(s)^{2}\sum_{k=1}^{\infty}(e_{k}\cosh(kR)+f_{k}\sinh(kR))^{2}+\frac{9}{2}\rho(s)^{2}\sum_{k=1}^{\infty}(e_{k}\cosh(-kR)+f_{k}\sinh(-kR))^{2}\\
 & =\frac{9}{2}\rho(s)^{2}\sum_{k=1}^{\infty}a_{k}(R)^{2}+\frac{9}{2}\rho(s)^{2}\sum_{k=1}^{\infty}a_{k}(-R)^{2}+\frac{9}{2}\rho(s)^{2}\sum_{k=1}^{\infty}b_{k}(R)^{2}+\frac{9}{2}\rho(s)^{2}\sum_{k=1}^{\infty}b_{k}(-R)^{2}\\
 & =9\rho(s)^{2}\left(\left\Vert \tilde{f}(R)\right\Vert _{L^{2}(S^{1})}^{2}+\left\Vert \tilde{f}(-R)\right\Vert _{L^{2}(S^{1})}^{2}\right).
\end{align*}
The same inequality holds for negative $s$, and a similar estimate
can be derived for the harmonic function $\tilde{g}$. Thus
\begin{align*}
\left\Vert \tilde{f}(s)\right\Vert _{L^{2}(S^{1})}^{2} & \leq9\rho(s)^{2}\left(\left\Vert \tilde{f}(R)\right\Vert _{L^{2}(S^{1})}^{2}+\left\Vert \tilde{f}(-R)\right\Vert _{L^{2}(S^{1})}^{2}\right),\\
\left\Vert \tilde{g}(s)\right\Vert _{L^{2}(S^{1})}^{2} & \leq9\rho(s)^{2}\left(\left\Vert \tilde{g}(R)\right\Vert _{L^{2}(S^{1})}^{2}+\left\Vert \tilde{g}(-R)\right\Vert _{L^{2}(S^{1})}^{2}\right),
\end{align*}
and from $\tilde{f}(s,t):=f(s,t)+\tilde{S}$ and $\tilde{g}(s,t):=g(s,t)-P$,
we end up with
\begin{align*}
\left\Vert f(s)+\tilde{S}\right\Vert _{L^{2}(S^{1})}^{2} & \leq9\rho(s)^{2}\left(\left\Vert f(R)+\tilde{S}\right\Vert _{L^{2}(S^{1})}^{2}+\left\Vert f(-R)+\tilde{S}\right\Vert _{L^{2}(S^{1})}^{2}\right)\\
 & \leq18\rho(s)^{2}\max\left\{ \left\Vert f(R)+\tilde{S}\right\Vert _{L^{2}(S^{1})}^{2},\left\Vert f(-R)+\tilde{S}\right\Vert _{L^{2}(S^{1})}^{2}\right\} ,\\
\left\Vert g(s)-P\right\Vert _{L^{2}(S^{1})}^{2} & \leq9\rho(s)^{2}\left(\left\Vert g(R)-P\right\Vert _{L^{2}(S^{1})}^{2}+\left\Vert g(-R)-P\right\Vert _{L^{2}(S^{1})}^{2}\right)\\
 & \leq18\rho(s)^{2}\max\left\{ \left\Vert g(R)-P\right\Vert _{L^{2}(S^{1})}^{2},\left\Vert g(-R)-P\right\Vert _{L^{2}(S^{1})}^{2}\right\} .
\end{align*}
\end{singlespace}
\end{proof}
\begin{rem}
\begin{singlespace}
\noindent In \cite{key-15}, a notion of convergence for $\mathcal{H}-$holomorphic
curves is derived by using a result (Lemma A.2) which states that
the conformal co-period of a harmonic $1-$form on a Riemann surface
can be universally controlled by its periods. Proposition \ref{prop:There-exists-a-2}
gives a counterexample to this statement.
\end{singlespace}
\end{rem}

\appendix
\begin{singlespace}

\section{\label{sec:Pseudoholomorphic-disks-with}Holomorphic disks with fixed
boundary}
\end{singlespace}

\begin{singlespace}
\noindent This appendix is devoted to the description of the convergence
of pseudoholomorphic disks with fixed boundaries in symplectization,
as well as, of their limit object. The results are used for proving
the convergence of a cylinder of ''finite length'', i.e. of type
$b_{1}$ as discussed in Section \ref{subsec:Cylinders-of-type-1}.

\noindent Let $u_{n}=(a_{n},f_{n}):D\rightarrow\mathbb{R}\times M$
be a sequence of pseudoholomorphic curves in the symplectization $\mathbb{R}\times M$
of the contact manifold $(M,\alpha)$, and being defined on the open
unit disk $D$ with respect to the standard complex structure $i$
and the cylindrical almost complex structure $J$ on $\mathbb{R}\times M$.
For any $\tau>0$ we assume that there exists a subsequence of $u_{n}$,
also denoted by $u_{n}$, such that 
\begin{equation}
u_{n}\rightarrow u\label{eq:gradient_on_boundary}
\end{equation}
as $n\rightarrow\infty$ in $C^{\infty}(D\backslash\overline{D_{\tau}(0)})$.
Furthermore, we assume that the Hofer energy $E_{\textrm{H}}(u_{n};D)$
of $u_{n}$ is uniformly bounded. In the following we analyze the
convergence of $u_{n}$. 

\noindent The functions $a_{n}$ can be supposed to be not uniformly
bounded. If this is not the case, we may deduce using standard bubbling-off
analysis that the gradients of $u_{n}$ are uniformly bounded on all
of $D$, which in turn, implies that $u_{n}$ converge in $C^{\infty}(D)$
to a pseudoholomorphic disk with finite Hofer energy.

\noindent To describe the convergence and the limit object we use
the arguments from \cite{key-10} and \cite{key-14}. However, we
drop the details and explain only the strategy and mention the convergence
result. As we have assumed that the $\mathbb{R}-$coordinates of $u_{n}$
are unbounded, the maximum principle for subharmonic functions gives
$a_{n}\rightarrow-\infty$. By (\ref{eq:gradient_on_boundary}) we
have the $C^{\infty}-$convergence of $u_{n}$ on an arbitrary neighbourhood
of $\partial D$, and by a specific choice of this neighbourhood,
we assume that the $\mathbb{R}-$components of $u_{n}$, when restricted
to this neighbourhood, do not leave a fixed interval $[-K,K]$ for
some $K\in\mathbb{R}$ with $K>0$. Thus from level $-K-2$ we start
with the decomposition of $a_{n}^{-1}((-\infty,-K-2])$ into cylindrical,
essential and one ``bottom'' boundary components. This decomposition
which is identical to the decomposition done in \cite{key-10} and
\cite{key-14} is illustrated in Figure \ref{fig6}. From \cite{key-10}
and \cite{key-14} we know that there are at most $N_{0}\in\mathbb{N}$
cylindrical components.

\noindent \begin{figure}     
	\centering  
	\def\svgscale{1} 	
	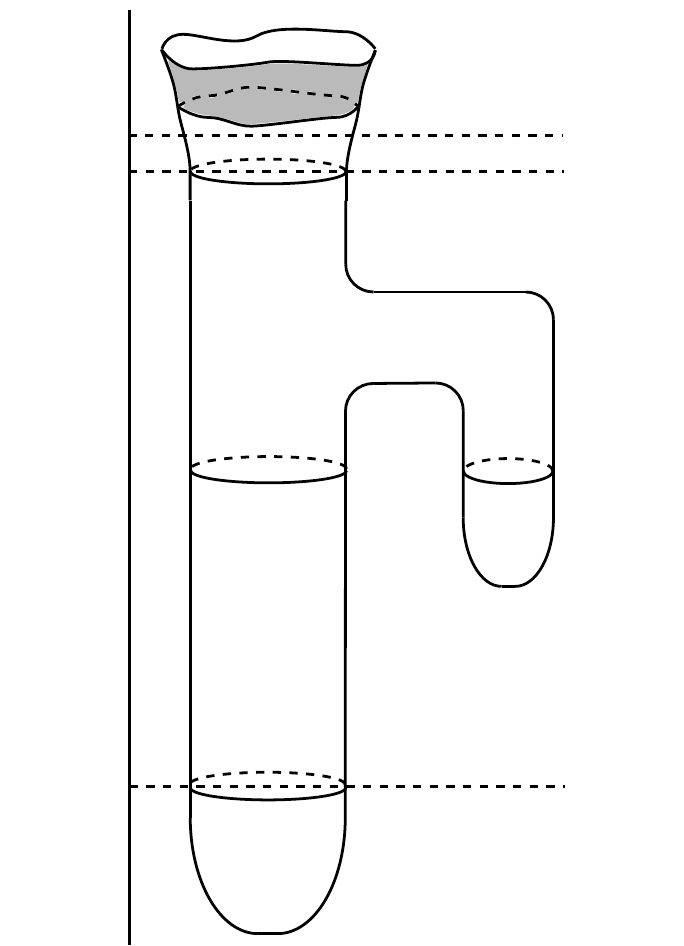  
	\caption{Decomposition of $a_{n}^{-1}((-\infty, -K-2])$}
	\label{fig6}
\end{figure}

\noindent In addition to the above decomposition, we add one more
boundary components, namely the ``upper'' boundary component. This
surface has two types of boundaries. The first one is the boundary
$\partial D$ which lies in a specific neighbourhood such that its
image under $u_{n}$ belongs to $[-K,K]\times M$. The second one
is the boundary which connects certain cylindrical components. For
the cylindrical, essential and bottom boundary components we use the
results established in \cite{key-10} and \cite{key-14} to describe
the convergence and the limit object. For the ``upper'' boundary
component we use Theorem 3.2 of \cite{key-10}, also known as ``Gromov
compactness with free boundary''. Here the choice of the neighbourhood
of $\partial D$, on which the $\mathbb{R}-$components of $u_{n}$
lie in $[-K,K]$, plays an essential role. The existence of a special
parametrization of a neighbourhood of\textbf{ $\partial D$ }will
enable us to apply ``Gromov compactness with free boundary'' in
the analysis of the convergence property of the upper boundary component.
Essentially, the application of ``Gromov compactness with free boundary'',
requires that the properties (A4) and (A5) under Definition 3.1 of
\cite{key-10} are satisfied. The following considerations ensure
these conditions: Choose $L_{0}'\geq1$ as in Remark 3.3 after Theorem
3.2 of \cite{key-10}. More precisely, $L_{0}'$ depends only on the
genus $g$ of the surface, the number of boundary components $m$,
the number of marked points $q$, the uniform bound $C$ on the area
of the considered pseudoholomorphic curves, the constant $\epsilon_{0}$
from Remark II.4.3 of \cite{key-5}, and the constant $C_{\text{ML}}$
from Lemma 3.17 of \cite{key-10} (the classical monotonicity lemma).
For this $L_{0}'$ we write $L_{0}'(g,m,q,C,\epsilon_{0},C_{ML})$.
Further on, choose $L_{0}$ as 
\begin{align*}
L_{0} & :=\max\left\{ L_{0}'(0,1,2,C,\epsilon_{0},C_{\text{ML}}),L_{0}'(0,2,1,C,\epsilon_{0},C_{\text{ML}}),\right.\\
 & \left.L_{0}'(0,3,0,C,\epsilon_{0},C_{\text{ML}}),...,L_{0}'(0,2N_{0},0,C,\epsilon_{0},C_{\text{ML}})\right\} .
\end{align*}
Note that when determining the constant $L_{0}'$ in the first two
cases, we introduce one and two artificial punctures, i.e. $q=2$
or $q=1$, in order to make our surface stable. Set $\tau_{0}=e^{-10\pi L_{0}}$
and choose $\tau<\tau_{0}$. In view of (\ref{eq:gradient_on_boundary}),
assume that there exists a constant $K>0$ such that $u_{n}(D\backslash\overline{D_{\tau}(0)})\subset[-K,K]\times M$
for all $n\in\mathbb{N}$. Hence the boundary is fixed in the symplectization.
The boundary region can be conformaly parametrized as follows. Consider
the map $\beta_{\partial D,0}:[0,5L_{0}]\times S^{1}\rightarrow D\backslash D_{\tau_{0}}(0)$,
$(s,t)\mapsto e^{-2\pi(s+it)}$. This map is obviously a conformal
parametrization of the boundary region. Let now $L=-\ln(\tau)/10\pi$.
Obviously, $L\geq L_{0}$ and the map $\beta_{\partial D}:[0,5L]\times S^{1}\rightarrow D\backslash D_{\tau}(0)$,
$(s,t)\mapsto e^{-2\pi(s+it)}$ is a conformal parametrization of
a neighbourhood of the boundary circle $\partial D$. Fix this boundary.
This conformal parametrization is obviously independent of $n$ and
will be used in conjunction with ``Gromov compactness with free boundary''.
Finally, glue the upper boundary component to the rest of the surface,
and obtain the resulting limit surface together with the convergence
description. 

\noindent To formulate the convergence result we introduce some notations.
Let $Z$ be an oriented surface diffeomorphis to the standard unit
disk $D$ and $\Delta=\Delta_{n}\amalg\Delta_{p}\subset Z$ a collection
of finitely many disjoint simple loops divided into two disjoint sets.
Denote by $Z_{\Delta_{n}}$ the surface obtained by collapsing the
curves in $\Delta_{n}$ to points. Write
\[
Z^{*}:=Z_{\Delta_{n}}\backslash\Delta_{p}=:Z^{(0)}\amalg\coprod_{\nu=1}^{N}Z^{(\nu)}\amalg Z^{(N+1)}
\]
as a disjoint union of components $Z^{(\nu)}$. Here $Z^{(0)}$ is
the bottom boundary component which is the disjoint union of finetly
many disks, while $Z^{(N+1)}$ is the upper boundary component whose
boundary is of two types. One type is the boundary of the disk $D$
and the other boundary components are certain loops from $\Delta_{p}$.
Let $j$ be a conformal structure on $Z\backslash\Delta$ such that
$(Z\backslash\Delta,j)$ is a punctured Riemann surface together with
an identification of distinct pairs of punctures given by the elements
of $\Delta$. This shows that $Z^{*}$ has the structure of a nodal
punctured Riemann surface with a remaining identification of punctures
given by the loops $\{\delta^{i}\}_{i\in I}=\Delta_{p}$, for some
index set $I$. A broken pseudoholomorphic curve (with $N+2$ levels)
is a map $F=(F^{(0)},F^{(1)},...,F^{(N+1)}):(Z^{*},j)\rightarrow X$,
where $X=\coprod_{\nu=0}^{N+1}(\mathbb{R}\times M)$ such that $F^{(\nu)}:(Z^{(\nu)},j)\rightarrow\mathbb{R}\times M$
is a punctured pseudoholomorphic curve with the additional property
that $F$ extends to a continous map $\overline{F}:Z\rightarrow\overline{X}$.
Here $\overline{X}$ is obtained as follows. The negative end of the
compactification of $\mathbb{R}\times M$ of the $\nu-$th copy is
glued to the positive end of the compactification of $\mathbb{R}\times M$
of the copy $\nu+1$. This procedure is done for $\nu=0,...,N$. For
a loop $\delta\in\Delta_{p}$, there exists $\nu\in\{0,...,N\}$ such
that $\delta$ is adjacent to $Z^{(\nu)}$ and $Z^{(\nu+1)}$. Fix
an embedded annulli $A^{\delta,\nu}\cong[-1,1]\times S^{1}\subset Z\backslash\Delta_{n}$
such that $\{0\}\times S^{1}=\delta$, $\{-1\}\times S^{1}\subset Z^{(\nu)}$
and $\{1\}\times S^{1}\subset Z^{(\nu+1)}$.

\noindent In this context, we state a convergence result which has
been established in \cite{key-10} and \cite{key-14}.
\end{singlespace}
\begin{prop}
\begin{singlespace}
\noindent \label{prop:Upto-passing-to}The sequence of pseudoholomorphic
disks $u_{n}=(a_{n},f_{n}):(D,i)\rightarrow\mathbb{R}\times M$ satisfying
(\ref{eq:gradient_on_boundary}) and having a uniformly bounded Hofer
energy has a subsequence that converges to a broken pseudoholomorphic
curve $u=(a,f):(Z,j)\rightarrow\mathbb{R}\times M$ with $N+2$ levels
in the following sense: There exists a sequence of diffomorphisms
$\varphi_{n}:D\rightarrow Z$ and a sequence of negative real numbers
$\min(a_{n})=r_{n}^{(0)}<r_{n}^{(1)}<...<r_{n}^{(N+1)}=-K-2$ with
$K\in\mathbb{R}$ and $r_{n}^{(\nu+1)}-r_{n}^{(\nu)}\rightarrow\infty$
as $n\rightarrow\infty$ such that the following hold:
\end{singlespace}
\begin{enumerate}
\begin{singlespace}
\item $Z$ with the circles $\Delta$ collapsed to points is a nodal Riemann
surface (in the sense of the above discussion, but with boundary).
$i_{n}:=(\varphi_{n})_{*}i\rightarrow j$ in $C_{\text{loc}}^{\infty}$
on $Z\backslash\Delta$. For every $i\in I$, the annulus $(A^{i},(\varphi_{n})_{*}i)$
is conformally equivalent to a standard annulus $[-R_{n},R_{n}]\times S^{1}$
by a diffeomorphism of the form $(s,t)\mapsto(\kappa(s),t)$ with
$R_{n}\rightarrow\infty$ as $n\rightarrow\infty$. 
\item The sequence $u_{n}\circ\varphi_{n}^{-1}|_{Z^{(\nu)}}:Z^{(\nu)}\rightarrow\mathbb{R}\times M$
converges in $C_{\text{loc}}^{\infty}$ on $Z^{(\nu)}\backslash\Delta_{n}$
to a punctured nodal pseudoholomorphic curve $u^{(\nu)}:(Z^{(\nu)},j)\rightarrow\mathbb{R}\times M$,
and in $C_{\text{loc}}^{0}$ on $Z^{(\nu)}$. 
\item The sequence $f_{n}\circ\varphi_{n}^{-1}:Z\rightarrow M$ converges
in $C^{0}$ to a map $f:Z\rightarrow M$, whose restriction to $\Delta_{p}$
parametrizes the Reeb orbits and to $\Delta_{n}$ parametrizes points. 
\item For any $S>0$ , there exist $\rho>0$ and $K\in\mathbb{N}$ such
that $a_{n}\circ\varphi_{n}^{-1}(s,t)\in[r_{n}^{(\nu)}+S,r_{n}^{(\nu+1)}-S]$
for all $n\geq K$ and all $(s,t)\in A^{\delta,\nu}$ with $|s|\leq\rho$.
\item The diffeomorphisms $\varphi_{n}\circ\beta_{\partial D}:[0,5L]\times S^{1}\rightarrow Z$
are independent of $n$.
\end{singlespace}
\end{enumerate}
\end{prop}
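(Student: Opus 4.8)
The plan is to adapt the SFT compactness machinery of \cite{key-10} and \cite{key-14} to the present situation, where the boundary $\partial D$ is pinned inside a slab $[-K,K]\times M$ of the symplectization. First I would dispose of the trivial case: if the $\mathbb{R}$-components $a_{n}$ are uniformly bounded on $D$, then standard bubbling-off analysis, together with the uniform Hofer energy bound and removal of singularities (a non-constant holomorphic plane with bounded $\mathbb{R}$-coordinate would cap off to a sphere of vanishing $d\alpha$-energy, hence a constant), forces uniformly bounded gradients on all of $D$, and therefore $C^{\infty}(D)$-convergence to a single pseudoholomorphic disk --- this is the degenerate case $N=-1$, $Z=D$, $\Delta=\emptyset$. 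So assume $a_{n}$ is not uniformly bounded. Since $a_{n}$ is subharmonic for the cylindrical $J$ (its Laplacian equals $f_{n}^{*}d\alpha\geq 0$), the maximum principle combined with the $C^{\infty}$-convergence on $D\setminus\overline{D_{\tau}(0)}$, where $a_{n}$ stays in $[-K,K]$, gives $\min_{D}a_{n}\to-\infty$.

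Next I would fix the boundary data once and for all. Using (\ref{eq:gradient_on_boundary}), choose $K>0$ with $u_{n}(D\setminus\overline{D_{\tau}(0)})\subset[-K,K]\times M$ for all $n$, set $L_{0}$ equal to the maximum of the constants $L_{0}'(g,m,q,C,\epsilon_{0},C_{\mathrm{ML}})$ enumerated before the statement (with $m\leq 2N_{0}$, one or two artificial punctures inserted to restore stability in the low-genus cases), put $\tau_{0}=e^{-10\pi L_{0}}$, take $\tau<\tau_{0}$, and use the $n$-independent conformal parametrization $\beta_{\partial D}\colon[0,5L]\times S^{1}\to D\setminus D_{\tau}(0)$ with $L=-\ln\tau/10\pi\geq L_{0}$. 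On this collar the $\mathbb{R}$-coordinate of $u_{n}$ is confined to $[-K,K]$, and the $C^{\infty}$-convergence there plays the role of the free-boundary condition; the fact that this parametrization is independent of $n$ is exactly what will deliver item 5 of the statement.

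Starting from the level $-K-2$, I would decompose $a_{n}^{-1}((-\infty,-K-2])$ into cylindrical, essential, and bottom boundary components precisely as in \cite{key-10} and \cite{key-14}, and use the classical monotonicity lemma together with the uniform area bound to bound the number of cylindrical components by some $N_{0}$ independent of $n$; this produces the finitely many levels and the real numbers $\min(a_{n})=r_{n}^{(0)}<r_{n}^{(1)}<\dots<r_{n}^{(N+1)}=-K-2$. Adjoin the ``upper'' boundary component $Z^{(N+1)}$, carrying the fixed boundary $\partial D$ together with the internal circles glued to cylindrical components below. On the cylindrical, essential, and bottom components I would invoke the convergence statements of \cite{key-10} and \cite{key-14}: after the shifts $r_{n}^{(\nu)}$ the restrictions converge in $C^{\infty}_{\mathrm{loc}}$ away from the collapsing loops $\Delta_{n}$ to punctured nodal pseudoholomorphic curves, the $M$-components converge in $C^{0}$, the loops of $\Delta_{p}$ map to Reeb orbits and those of $\Delta_{n}$ to points, and the $\mathbb{R}$-coordinate estimate on the breaking annuli $A^{\delta,\nu}$ (item 4) follows from gradient control on long cylinders of small $d\alpha$-energy. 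On $Z^{(N+1)}$ I would apply Theorem 3.2 of \cite{key-10} (``Gromov compactness with free boundary''), after checking that properties (A4) and (A5) of Definition 3.1 of \cite{key-10} hold --- this is what the choice of $L_{0}$, $\tau_{0}$ and $\beta_{\partial D}$ is designed to guarantee. Finally, gluing the limit curves along $\Delta=\Delta_{n}\amalg\Delta_{p}$ (collapsing $\Delta_{n}$ to nodes, matching asymptotic Reeb orbits across $\Delta_{p}$) and passing to a diagonal subsequence so that all convergences hold simultaneously yields $Z$, the decomposition $Z^{*}=Z^{(0)}\amalg\dots\amalg Z^{(N+1)}$, the diffeomorphisms $\varphi_{n}$, and items 1--5.

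The main obstacle I expect is the upper boundary component: one must verify that ``Gromov compactness with free boundary'' genuinely applies along the whole sequence --- that the a priori area bounds of Definition 3.1 of \cite{key-10} persist even though $a_{n}\to-\infty$ in the interior, that the internal boundary circles shared with the cylindrical components are treated compatibly with the gluing, and that the parametrization near $\partial D$ can be taken strictly independent of $n$ (which is needed for item 5). Getting the bookkeeping of the constant $L_{0}$ correct --- as a maximum over all configurations $(0,m,q)$ with $m\leq 2N_{0}$ that can occur, with artificial punctures added for stability --- is the technical heart of the argument; the remainder is a faithful transcription of the closed and cylindrical SFT compactness proofs of \cite{key-10} and \cite{key-14}.
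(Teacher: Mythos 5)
Your proposal tracks the paper's own argument step for step: the dichotomy on whether $a_{n}$ is uniformly bounded, the maximum-principle reduction to $\min a_{n}\to-\infty$, the choice of $K$, $L_{0}$ (as a maximum over the finitely many possible boundary-component configurations with artificial punctures for stability), $\tau_{0}=e^{-10\pi L_{0}}$, the $n$-independent collar parametrization $\beta_{\partial D}$, the decomposition of $a_{n}^{-1}((-\infty,-K-2])$ into cylindrical, essential, and bottom components following \cite{key-10} and \cite{key-14}, the addition of the upper boundary component, and the invocation of Gromov compactness with free boundary (Theorem 3.2 of \cite{key-10}) on that component. This is precisely the strategy the paper lays out in the discussion preceding the proposition, so the proposal is correct and takes essentially the same approach.
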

\begin{singlespace}

\section{\label{chap:Half-cylinders-with}Half cylinders with small energy}
\end{singlespace}

\begin{singlespace}
\noindent This appendix is devoted to the description of the convergence
of a sequence of pseudoholomorphic half cylinders $u_{n}=(a_{n},f_{n}):[0,\infty)\times S^{1}\rightarrow\mathbb{R}\times M$
with uniformly bounded $\alpha-$ and $d\alpha-$energies. More precisely,
we assume that there exists a constant $E_{0}>0$ such that $E(u_{n};[0,\infty)\times S^{1})\leq E_{0}$
and 
\begin{equation}
E_{d\alpha}(u_{n};[0,\infty)\times S^{1})\leq\frac{\hbar}{2},\label{eq:energybdd3}
\end{equation}
where $\hbar>0$ is defined as in Section \ref{subsec:Cylinders}.
Since the $d\alpha-$energy is smaller than $\hbar/2$ it follows,
from the usual bubbling-off analysis, that the gradients of $u_{n}$
are uniformly bounded with respect to the standard Euclidian metric
on the cylinder $[0,\infty)\times S^{1}$ and the induced cylindrical
metric on $\mathbb{R}\times M$. To analyze the convergence of such
a sequence we use the results of Appendix \ref{sec:Pseudoholomorphic-disks-with}
and \cite{key-23}. As before we split the analysis of the convergence
in two parts, namely the $C_{\text{loc}}^{\infty}-$ and the $C^{0}-$convergence.
Before stating the convergence results we need some auxiliary results
similar to those from \cite{key-23}. We begin with a remark on the
asymptotic of a pseudoholomorphic half cylinder.
\end{singlespace}
\begin{rem}
\begin{singlespace}
\noindent \label{rem:Let--be}Let $u=(a,f):[0,\infty)\times S^{1}\rightarrow\mathbb{R}\times M$
be a pseudoholomorphic half cylinder with $E(u;[0,\infty)\times S^{1})\leq E_{0}$
and $E_{d\alpha}(u;[0,\infty)\times S^{1})\leq\hbar/2$. To describe
the behaviour of $u$ as $s\rightarrow\infty$, we first assume that
$u$ has a bounded image in $\mathbb{R}\times M$. Consider the conformal
transformation $h:[0,\infty)\times S^{1}\rightarrow D\backslash\{0\}$,
$(s,t)\mapsto e^{-2\pi(s+it)}$. Then $u\circ h^{-1}=(a\circ h^{-1},f\circ h^{-1})$
is a pseudoholomorphic punctured disk satisfying the same assumption
as $u$ does. By the removal of singularity, $u\circ h^{-1}$ can
be defined on the whole disk $D$. In this case we use the results
from Appendix \ref{sec:Pseudoholomorphic-disks-with} to describe
the convergence. If $u$ has an unbounded image in $\mathbb{R}\times M$,
then due to Proposition 5.6 from \cite{key-1}, there exists $T\not=0$
and a periodic orbit $x$ of $X_{\alpha}$ such that $x$ is of period
$|T|$ and
\[
\lim_{s\rightarrow\infty}f(s,t)=x(Tt)\ \text{ and }\ \lim_{s\rightarrow\infty}\frac{a(s,t)}{s}=T\ \text{ in }C^{\infty}(S^{1}).
\]
\end{singlespace}
\end{rem}
\begin{singlespace}
\noindent To analyze the convergence of the sequence of pseudoholomorphic
half cylinders $u_{n}=(a_{n},f_{n}):[0,\infty)\times S^{1}\rightarrow\mathbb{R}\times M$
we destinguish two cases. 

\noindent In the first case each element of a subsequence of $u_{n}$,
still denoted by $u_{n}$, has a bounded image in the symplectization
$\mathbb{R}\times M$. By Remark \ref{rem:Let--be} we consider the
sequence of pseudoholomorphic disks $u_{n}\circ h^{-1}:D\rightarrow\mathbb{R}\times M$
having uniformly bounded energies and small $d\alpha-$energies. After
applying bubbling-off analysis and accounting on the uniform energy
bounds as well as on the small $d\alpha-$energies, we obtain a subsequence
having uniform gradient bounds with respect to the Euclidian metric
on the domains and the induced metric on $\mathbb{R}\times M$. After
a specific shift in the $\mathbb{R}-$coordinate, $u_{n}\circ h^{-1}$
converge in $C^{\infty}$ to a pseudoholomorphic disk $u:D\rightarrow\mathbb{R}\times M$. 

\noindent In the second case each element of a subsequence of $u_{n}$,
still denoted by $u_{n}$, has an unbounded image in $\mathbb{R}\times M$.
In the following we assume that after a specific shift in the $\mathbb{R}-$coordinate,
$a_{n}(0,0)=0$. Before describing the convergence of $u_{n}$, we
prove an asymptotic result for punctures which is similar to that
given in \cite{key-1}.
\end{singlespace}
\begin{prop}
\begin{singlespace}
\noindent \label{prop:After-going-over}After going over to a subsequence
the pseudoholomorphic half cylinders $u_{n}$ are asysmptotic to the
same Reeb orbit, i.e. there exists a Reeb orbit $x$ and $T\not=0$
with $|T|\leq C$ and a sequence $c_{n}\in S^{1}$ such that
\[
\lim_{s\rightarrow\infty}f_{n}(s,t)=x(T(t{\color{red}{\normalcolor +c_{n}}}))\ \text{ and }\ \lim_{s\rightarrow\infty}\frac{a_{n}(s,t)}{s}=T.
\]
Moreover, $u_{n}\rightarrow u$ in $C_{\text{loc}}^{\infty}$, where
$u$ is a pseudoholomorphic half cylinder $u:[0,\infty)\times S^{1}\rightarrow\mathbb{R}\times M$
which is asymptotic to the same Reeb orbit $x(T(t{\color{red}{\normalcolor +c^{*}}}))$
of period $T$ as above. Here, $c^{*}\in S^{1}$ and $c_{n}\rightarrow c^{*}$
as $n\rightarrow\infty$.
\end{singlespace}
\end{prop}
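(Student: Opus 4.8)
The plan is to treat the (only remaining) case in which every $u_n$ has unbounded image, and to proceed in three stages: extract a $C^\infty_{\mathrm{loc}}$-limit, identify the asymptotic orbit and period uniformly in $n$, and then pass the asymptotics to the limit. First I would observe that since $E_{d\alpha}(u_n;[0,\infty)\times S^1)\le\hbar/2$, the standard bubbling-off argument already used above (as in the proof of Theorem~\ref{thm:There-exists-finitely}) shows that the gradients $\|du_n\|$ are uniformly bounded on $[0,\infty)\times S^1$: a rescaled bubble would be a nonconstant finite-energy plane with $d\alpha$-energy $\le\hbar/2<\hbar$, contradicting Theorem~31 of \cite{key-2}. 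Together with the normalization $a_n(0,0)=0$ and the uniform energy bound $E(u_n)\le E_0$, elliptic regularity for pseudoholomorphic curves and the Arzel\`a--Ascoli theorem then yield a subsequence converging in $C^\infty_{\mathrm{loc}}([0,\infty)\times S^1)$ to a pseudoholomorphic half cylinder $u=(a,f)$ with $E(u)\le E_0$ and $E_{d\alpha}(u)\le\hbar/2$.

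Next I would pin down the asymptotic orbit uniformly. For each $n$, since $u_n$ has unbounded image, Proposition~5.6 of \cite{key-1} gives $T_n\neq0$ and a periodic Reeb orbit $x_n$ of period $|T_n|$ with $\lim_{s\to\infty}f_n(s,t)=x_n(T_nt)$ and $\lim_{s\to\infty}a_n(s,t)/s=T_n$ in $C^\infty(S^1)$. The $\alpha$-energy bound forces $|T_n|\le C$ for a constant $C$ depending only on $E_0$ (the asymptotic period is the limit of $\pm\int_{\{s\}\times S^1}f_n^*\alpha$, which is controlled by $E_\alpha(u_n)$), while nonvanishing of $X_\alpha$ on the compact manifold $M$ gives a uniform lower bound $|T_n|\ge T_{\min}>0$. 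Non-degeneracy of all periodic orbits then makes the action spectrum below $C$ finite and the set of periodic orbits of period $\le C$ consist of finitely many geometric orbits with finitely many admissible iterates each; hence, after passing to a further subsequence, the signs of the $T_n$ agree and the $x_n$ are all reparametrizations of one fixed orbit, so that $T_n\equiv T$ and $x_n(T_nt)=x(T(t+c_n))$ for a fixed orbit $x$ of period $|T|\le C$ and a sequence $c_n\in S^1$. This establishes the first displayed line of the statement.

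Finally I would transfer the asymptotics to $u$. The key ingredient is a \emph{uniform} exponential-decay estimate: there exist $C',\rho>0$, independent of $n$, such that on each slice $\{s\}\times S^1$ the map $u_n$ is $C'e^{-\rho s}$-close, in every $C^k$-norm, to the trivial cylinder over $x(T(t+c_n))$. Such an estimate follows from the Hofer--Wysocki--Zehnder asymptotic analysis: the asymptotic operator at $x$ is fixed, its spectrum is discrete and $0$ is not an eigenvalue by non-degeneracy, and the inputs $u_n$ have uniformly bounded gradients, uniformly small $d\alpha$-energy, and asymptotic orbit in a finite set, so the constants can be chosen uniformly in $n$; this is the half-cylinder analogue of the estimates used in \cite{key-11} and \cite{key-23}. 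Granting it, $c_n$ has a subsequence converging to some $c^*\in S^1$, and letting $n\to\infty$ in the uniform estimate shows that $f(s,t)$ is $C'e^{-\rho s}$-close to $x(T(t+c^*))$ and $a(s,t)-Ts$ stays bounded, whence $\lim_{s\to\infty}f(s,t)=x(T(t+c^*))$ and $\lim_{s\to\infty}a(s,t)/s=T$ in $C^\infty(S^1)$.

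I expect the main obstacle to be this last stage, i.e.\ the uniform exponential convergence to the asymptotic orbit. Without uniformity in $n$ one only knows that each $u_n$ converges to its own orbit and that $u_n\to u$ on compact subsets, which does not by itself prevent the asymptotic data of the limit from ``escaping to infinity'' along the cylinder; the uniform decay estimate is exactly what excludes this and forces $c^*$ to be the limit of the $c_n$. Stages one and two, by contrast, are routine given the removable-singularity and asymptotic results of \cite{key-1} and the non-degeneracy hypothesis on $(M,\alpha)$ already in force.
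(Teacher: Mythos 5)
Your proposal splits into three stages, and the first two match the paper's argument almost verbatim: bubbling-off with the small $d\alpha$-energy hypothesis gives uniform gradient bounds, so Arzel\`a--Ascoli produces the $C^\infty_{\text{loc}}$ limit $u$; Proposition 5.6 of \cite{key-1} gives an asymptotic orbit for each $u_n$; and non-degeneracy together with the uniform energy bound forces (after a subsequence) $T_n \equiv T$ and $x_n(T_n t) = x(T(t+c_n))$ for a single geometric orbit $x$ and a sequence $c_n \in S^1$ which may be assumed convergent. So far, so good.

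The third stage is where you diverge from the paper, and it is where the gap lies. You propose to invoke a \emph{uniform} exponential decay estimate — constants $C', \rho$ independent of $n$ such that $u_n(s,\cdot)$ is $C'e^{-\rho s}$-close to the trivial cylinder over $x(T(t+c_n))$ — and then pass $n\to\infty$ slice by slice. But you have not justified the uniformity of $C'$ and $\rho$: the standard HWZ asymptotic estimates give exponential decay for each finite-energy curve separately, with a constant depending on the ``entry time'' $s_0$ at which the curve first enters a neighbourhood of the orbit and on how close it is at that time. Arranging a uniform entry time across the sequence is exactly as hard as the proposition itself; indeed such a uniform $C^0$-estimate is precisely what Corollary \ref{cor:For-every-} establishes later, \emph{using} the present proposition together with Corollary 25 of \cite{key-23}. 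As stated, ``the constants can be chosen uniformly in $n$'' is an assertion, not an argument, and your own closing paragraph correctly identifies it as the weak point.

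The paper's proof avoids exponential decay entirely and is more elementary. After extracting the $C^\infty_{\text{loc}}$ limit $u$, the limit is a priori asymptotic to some orbit or point $\underline{x}$ of period $\underline{T}$ (possibly different from $x$), and the ``escape'' scenario you worry about is ruled out not by decay estimates but by the following argument: choose $\underline{N}_n \to \infty$ slowly enough that $f_n(\underline{N}_n,\cdot) \to \underline{x}(\underline{T}\cdot)$ (using $u_n\to u$ on compacts and the asymptotics of $u$), and $\overline{N}_n > \underline{N}_n$ large enough that $f_n(\overline{N}_n,\cdot) \to \overline{x}(\overline{T}(\cdot+c^*))$ (using the asymptotics of each $u_n$). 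The middle cylinders $v_n = u_n|_{[\underline{N}_n,\overline{N}_n]\times S^1}$ then have length $\to\infty$ but $d\alpha$-energy $\to 0$, and the classical ``long cylinders with vanishing $d\alpha$-energy'' analysis from \cite{key-1} forces $\underline{x}=\overline{x}$ and $\underline{T}=\overline{T}$. If you want to salvage your route, you would need to actually prove the uniform decay statement (essentially re-deriving the \cite{key-11}/\cite{key-23} small-area asymptotics for half-cylinders), which is more work than the argument above and largely redundant with it.
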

\begin{proof}
\begin{singlespace}
\noindent Let the sequence $u_{n}$ be asymptotic to some Reeb orbit.
More precisely, for all $n\in\mathbb{N}$ there exist $T_{n}\not=0$
and a periodic orbit $x_{n}$ of period $|T_{n}|$ such that
\[
\lim_{s\rightarrow\infty}f_{n}(s,t)=x_{n}(T_{n}t)\ \text{ and }\ \lim_{s\rightarrow\infty}\frac{a_{n}(s,t)}{s}=T_{n}
\]
in $C^{\infty}(S^{1})$. For simplicity, choose a subsequence of $T_{n}$,
also denoted by $T_{n}$, which is always positive (positive puncture).
Since we are in the non-degenerate case and $T_{n}\leq E_{0}$, assume,
after going to some subsequence, that $T_{n}=\overline{T}>0$ and
$x_{n}(\overline{T}t)=\overline{x}(\overline{T}(t{\color{red}{\normalcolor +c_{n}}}))$,
where $c_{n}\in S^{1}$ for all $n$. Thus after going over to some
subsequence we may assume that $c_{n}\rightarrow c^{*}\in S^{1}$.
From the uniform boundedness of the gradients of $u_{n}$, the elliptic
regularity, and Arzelá-Ascoli theorem, we have $u_{n}\rightarrow u:[0,\infty)\times S^{1}\rightarrow\mathbb{R}\times M$
in $C_{\text{loc}}^{\infty}$. Here $u$ is a pseuhoholomorphic half
cylinder with bounded energy and a small $d\alpha-$energy which is
asymptotic to some periodic orbit with period $\underline{T}$ or
a point; both being denoted by $\underline{x}$ . Choose the sequences
$\underline{N}_{n},\overline{N}_{n}\overset{n\rightarrow\infty}{\longrightarrow}\infty$
and $\underline{N}_{n}<\overline{N}_{n}$ such that after going over
to a subsequence we have
\[
\lim_{n\rightarrow\infty}f_{n}(\underline{N}_{n},t)=\underline{x}(\underline{T}t)\ \text{ and }\ \lim_{n\rightarrow\infty}f_{n}(\overline{N}_{n},t)={\normalcolor \overline{x}({\normalcolor \overline{T}(t{\color{red}{\normalcolor +c^{*}}})})}\ \text{ in }C^{\infty}(S^{1}),
\]
and consider the maps 
\[
v_{n}=u_{n}|_{[\underline{N}_{n},\overline{N}_{n}]\times S^{1}}.
\]
which have by construction $d\alpha-$energy tending to $0$. Performing
the same analysis as in \cite{key-1} we conclude that $\underline{x}=\overline{x}$
and $\underline{T}=\overline{T}$.
\end{singlespace}
\end{proof}
\begin{singlespace}
\noindent To describe the $C^{0}-$convergence of $u_{n}$ we use
the results established in \cite{key-23}. In view of Proposition
\ref{prop:After-going-over}, choose a sequence $R_{n}>0$ such that
$R_{n}\rightarrow\infty$ and $a_{n}(R_{n},t)-TR_{n}\rightarrow0$
as $n\rightarrow\infty$. Consider the shifted maps $\overline{u}_{n}(s,t):=u_{n}(s+R_{n},t)-TR_{n}$
for $(s,t)\in[-R_{n},R_{n}]\times S^{1}$. These are pseudoholomorphic
cylinders with uniformly bounded $\alpha-$ and $d\alpha-$energies
and a $d\alpha-$energy smaller than $\hbar/2$. Recall that these
pseudoholomorphic cylinders are a special case of the $\mathcal{H}-$holomorphic
cylinders described in \cite{key-23}. We distinguish two cases corresponding
to subsequences with vanishing and non-vanshing center actions. In
latter case, the cater action is greater than $\hbar>0$. By Proposition
\ref{prop:After-going-over}, the first case does not appear and we
are left with the case in which $A(\overline{u}_{n})\geq\hbar$. By
Corollary 25 of \cite{key-23}, for every $\epsilon>0$ there exists
$h>0$ such that for all $n\in\mathbb{N}$ and $R_{n}>h$, $\text{dist}_{\overline{g}_{0}}(\overline{f}_{n}(s,t),x(Tt+c_{n}))<\epsilon$
and $|\overline{a}_{n}(s,t)-Ts-a_{0}|<\epsilon$ for all $(s,t)\in[-R_{n}+h,R_{n}-h]\times S^{1}$.
On the other hand, we have the following result: For every $\epsilon>0$
there exists $h>0$ such that for all $n\in\mathbb{N}$ and $R_{n}>h$,
$\text{dist}_{\overline{g}_{0}}(f_{n}(s,t),x(Tt+c_{n}))<\epsilon$
and $|a_{n}(s,t)-Ts-a_{0}|<\epsilon$ for all $(s,t)\in[h,2R_{n}-h]\times S^{1}$.
As $R_{n}$ can be choosen arbitrary large the following equivalent
statement readily follows: 
\end{singlespace}
\begin{cor}
\begin{singlespace}
\noindent \label{cor:For-every-}For every $\epsilon>0$ there exist
$h>0$ and $N\in\mathbb{N}$ such that for all $n\geq N$ , $\text{dist}_{\overline{g}_{0}}(f_{n}(s,t),x(Tt+c_{n}))<\epsilon$
and $|a_{n}(s,t)-Ts-a_{0}|<\epsilon$ for all $(s,t)\in[h,\infty)\times S^{1}$. 
\end{singlespace}
\end{cor}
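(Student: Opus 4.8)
The plan is to deduce the corollary from the uniform, finite-window estimate that has just been established for the centered cylinders $\overline{u}_n$, by undoing the shift $s\mapsto s+R_n$ and then exhausting the half cylinder as $R_n\to\infty$.

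First I would record what Proposition~\ref{prop:After-going-over} buys us: it rules out the vanishing-center-action alternative, so the shifted cylinders $\overline{u}_n(s,t)=u_n(s+R_n,t)-TR_n$ on $[-R_n,R_n]\times S^1$ all have center action $A(\overline{u}_n)\geq\hbar$, and hence Corollary~25 of \cite{key-23} applies to them. This gives, for each $\epsilon>0$, a constant $h>0$ \emph{independent of $n$} with $\text{dist}_{\overline{g}_{0}}(\overline{f}_n(s,t),x(Tt+c_n))<\epsilon$ and $|\overline{a}_n(s,t)-Ts-a_0|<\epsilon$ on $[-R_n+h,R_n-h]\times S^1$ whenever $R_n>h$. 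Undoing the shift, i.e.\ writing $s'=s+R_n$ so that $\overline{f}_n(s,t)=f_n(s',t)$ and $\overline{a}_n(s,t)-Ts-a_0=a_n(s',t)-Ts'-a_0$, transfers exactly these $\epsilon$-estimates to $u_n=(a_n,f_n)$ on $[h,2R_n-h]\times S^1$, for all $n$ with $R_n>h$.

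Next, since $R_n\to\infty$ I would choose $N$ with $R_n>h$ for $n\geq N$, so that $2R_n-h\to\infty$ and the windows $[h,2R_n-h]\times S^1$ exhaust $[h,\infty)\times S^1$. To upgrade this to the stated estimate on the \emph{full} tail $[h,\infty)\times S^1$ for each fixed $n\geq N$, I would repeat the argument ``from the right'': for fixed $n$ and any sufficiently large $m>R_n$, the translate $u_n(\cdot+m,\cdot)-Tm$ restricted to $[-m,m]\times S^1$ is again a pseudoholomorphic cylinder whose center circle lies near the asymptotic Reeb orbit $x(T(\cdot+c_n))$ supplied by Proposition~\ref{prop:After-going-over}; its center action is therefore as close as we like to $|T|$, in particular $\geq\hbar$ for $m$ large, so Corollary~25 of \cite{key-23} applies again and, after unwinding, yields the same $\epsilon$-estimates for $u_n$ on $[h,2m-h]\times S^1$. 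Letting $m\to\infty$ with $n$ fixed covers all of $[h,\infty)\times S^1$ with the same $h$. (An alternative, which I would fall back on, is to close the remaining finite gap $[2R_n-h,\infty)$ directly, using the asymptotic convergence of $u_n$ to the trivial cylinder over $x(T(\cdot+c_n))$ together with the maximum principle, so that once $f_n$ is $\epsilon$-close to the orbit and $a_n(s,t)-Ts$ is $\epsilon$-close to $a_0$ on one circle $\{s_0\}\times S^1$ they remain so for all $s\geq s_0$.)

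The hard part will be exactly this last step: converting the uniform \emph{finite-window} control coming from \cite{key-23} into a statement on the whole non-compact tail, uniformly in $n$. Two points need care. One must check that the center action of the translated cylinders stays $\geq\hbar$ as the center circle is pushed towards the puncture---an action-spectrum/non-degeneracy argument that uses the a priori bound on $|T|$ and the spectral gap $\hbar$. And one must verify that the asymptotic constant $a_0$ and phase $c_n$ appearing in the half-cylinder bound are forced to coincide with those in the window bound; this follows because the windows overlap for $n$ large and the limit $u$ pins down $a_0$ and $c^*$, but it has to be said explicitly. Once these consistency issues are dispatched, the patching is routine.
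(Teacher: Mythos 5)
Your proof is correct and takes exactly the route the paper uses: apply Corollary~25 of \cite{key-23} to the recentered cylinders to get a finite-window estimate on $[-R_{n}+h,R_{n}-h]\times S^{1}$, undo the shift to transfer it to $[h,2R_{n}-h]\times S^{1}$, and then push the center level out to infinity for each fixed $n$ to exhaust $[h,\infty)\times S^{1}$. The paper compresses this last exhaustion step into the single remark that $R_{n}$ can be chosen arbitrarily large, while you correctly flag (and sketch how to resolve) the two consistency points it silently requires, namely that the center action of the translated cylinders stays $\geq\hbar$ as the center is pushed toward the puncture (which in fact follows from the monotonicity of $s\mapsto\int_{\{s\}\times S^{1}}f_{n}^{*}\alpha$ via Stokes and $f_{n}^{*}d\alpha\geq0$) and that $a_{0}$ and $c_{n}$ agree across windows.
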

\begin{singlespace}
\noindent Consider the diffeomorphism $\theta:[0,\infty)\times S^{1}\rightarrow\mathbb{R}\times M$
and the maps
\begin{equation}
g_{n}:=f_{n}\circ\theta^{-1}:[0,1)\times S^{1}\rightarrow M,\label{eq:C0-conv}
\end{equation}
which by Proposition \ref{prop:After-going-over} converge in $C_{\text{loc}}^{\infty}$
to a map $g:=f\circ\theta^{-1}:[0,1)\times S^{1}\rightarrow M$. By
Corollary \ref{cor:For-every-}, the maps $g_{n}$ and $g$ can be
continously extended to $[0,1]\times S^{1}$ by $g_{n}(1,t)=g(1,t)=x(Tt{\color{red}{\normalcolor +c_{n}}})$
for all $n\in\mathbb{N}$ and all $t\in S^{1}$. Hence due to Corollary
\ref{cor:For-every-}, $g_{n}$ converge in $C^{0}$ to $g$. As a
consequence, we formulate the following compactness property of the
sequence of pseudoholomorphic half cylinders $u_{n}:[0,\infty)\times S^{1}\rightarrow\mathbb{R}\times M$
with uniformly bounded energies and $d\alpha-$energies less than
$\hbar/2$:
\end{singlespace}
\begin{thm}
\begin{singlespace}
\noindent \label{thm:Let--be}Let $u_{n}$ be a sequence of pseudoholomorphic
curves having uniformly bounded energy by $E_{0}$ and satisfying
condition (\ref{eq:energybdd3}). Then there exists a subsequence
of $u_{n}$, still denoted by $u_{n}$, such that the following is
satisfied.
\end{singlespace}
\begin{enumerate}
\begin{singlespace}
\item $u_{n}$ is asysmptotic to the same Reeb orbit, i.e. there exists
a Reeb orbit $x$ and $T\not=0$ with $|T|\leq C$ and a sequence
$c_{n}\in S^{1}$ such that
\[
\lim_{s\rightarrow\infty}f_{n}(s,t)=x(Tt{\color{red}{\normalcolor +c_{n}}})\ \text{ and }\ \lim_{s\rightarrow\infty}\frac{a_{n}(s,t)}{s}=T.
\]
for all $n\in\mathbb{N}$. 
\item $u_{n}$ converge in $C_{\text{loc}}^{\infty}$ to a pseudoholomorphic
half cylinder $u:[0,\infty)\times S^{1}\rightarrow\mathbb{R}\times M$
having uniformly bounded energy by the constant $E_{0}$ and satisfying
condition (\ref{eq:energybdd3}).
\item The maps $g_{n}:[0,1]\times S^{1}\rightarrow M$ converge in $C^{0}$
to a map $g:[0,1]\times S^{1}\rightarrow M$ and satisfy $g(1,t)=x(T(t{\color{red}{\normalcolor +c^{*}}}))$,
where $x$ is a Reeb orbit of period $T\not=0$.
\end{singlespace}
\end{enumerate}
\end{thm}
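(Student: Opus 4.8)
The plan is to assemble the theorem from the asymptotic results already established in this appendix, so the bulk of the argument is bookkeeping rather than fresh estimates. First I would record the preliminary reduction: since $E_{d\alpha}(u_n;[0,\infty)\times S^1)\le\hbar/2$ and every non-constant finite energy plane carries $d\alpha$--energy at least $\hbar$, the usual small--energy bubbling--off argument rules out bubbles and yields a uniform bound $\|du_n\|_{\overline g}\le\tilde C$ on all of $[0,\infty)\times S^1$, as already noted before Remark~\ref{rem:Let--be}. Elliptic regularity for $J$--holomorphic maps together with the Arzela--Ascoli theorem then produce a subsequence, still denoted $u_n$, converging in $C_{\mathrm{loc}}^\infty$ to a pseudoholomorphic map $u=(a,f)\colon[0,\infty)\times S^1\to\mathbb R\times M$; lower semicontinuity of $E$ and $E_{d\alpha}$ under $C_{\mathrm{loc}}^\infty$--convergence gives $E(u)\le E_0$ and $E_{d\alpha}(u)\le\hbar/2$, which is part of assertion~(2) once the behaviour of $u$ near the puncture is pinned down.

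Next I would split into the two cases of Remark~\ref{rem:Let--be}. If, along a subsequence, the images $u_n([0,\infty)\times S^1)$ are bounded in $\mathbb R\times M$, then via the conformal identification $h\colon[0,\infty)\times S^1\to D\setminus\{0\}$ and removal of singularity each $u_n\circ h^{-1}$ extends to a pseudoholomorphic disk and one is in the setting of Appendix~\ref{sec:Pseudoholomorphic-disks-with}; in the application to neighbourhoods of punctures (Section~\ref{subsec:Punctures}) the relevant half cylinders are precisely those on which $a_n$ is unbounded, so this case is excluded there. In the remaining case every element of the subsequence has unbounded image, and Proposition~\ref{prop:After-going-over} applies directly: after passing to a further subsequence there is a single Reeb orbit $x$ of period $T\ne0$, with $|T|$ controlled by the energy, and shift parameters $c_n\in S^1$ with $c_n\to c^*$, such that $f_n(s,t)\to x(Tt+c_n)$ and $a_n(s,t)/s\to T$ as $s\to\infty$ for every $n$, while the $C_{\mathrm{loc}}^\infty$--limit $u$ is asymptotic to $x(Tt+c^*)$. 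This is assertion~(1) and it finishes assertion~(2).

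For assertion~(3) I would exploit the uniform--in--$n$ tail estimate. By definition (\ref{eq:C0-conv}) the maps $g_n=f_n\circ\theta^{-1}$ are defined on $[0,1)\times S^1$, and Proposition~\ref{prop:After-going-over} already gives $g_n\to g:=f\circ\theta^{-1}$ in $C_{\mathrm{loc}}^\infty$ there. Corollary~\ref{cor:For-every-} supplies, for each $\varepsilon>0$, constants $h>0$ and $N\in\mathbb N$ with $\mathrm{dist}_{\overline g}(f_n(s,t),x(Tt+c_n))<\varepsilon$ for all $n\ge N$ and $(s,t)\in[h,\infty)\times S^1$; transporting this estimate through the diffeomorphism $\theta$, which sends the tail $[h,\infty)\times S^1$ into a collar of $\{1\}\times S^1$, shows that $g_n$ (and $g$) extend continuously across $\{1\}\times S^1$ by $g_n(1,t)=x(Tt+c_n)$, $g(1,t)=x(Tt+c^*)$, and that the extended maps are uniformly $\varepsilon$--close in that collar. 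Combining this collar estimate with the interior $C_{\mathrm{loc}}^\infty$ convergence and with $c_n\to c^*$ upgrades the convergence to $C^0([0,1]\times S^1)$ and gives $g(1,t)=x(T(t+c^*))$, which is assertion~(3).

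The main obstacle is not in this theorem itself but in the results it invokes: essentially all the genuine analysis is packed into Proposition~\ref{prop:After-going-over} and Corollary~\ref{cor:For-every-}, namely the claim that the whole family is asymptotic to \emph{one} non-degenerate Reeb orbit \emph{uniformly} in $n$. That relies on the exponential asymptotic analysis near non-degenerate orbits and on Corollary~25 of \cite{key-23}, the crux being that the ``neck'' maps $v_n=u_n|_{[\underline N_n,\overline N_n]\times S^1}$, whose $d\alpha$--energy tends to $0$, force the two candidate limit orbits and periods to coincide. Within the proof of Theorem~\ref{thm:Let--be} the only delicate point is the passage from $C_{\mathrm{loc}}^\infty$--convergence on the half--open cylinder $[0,1)\times S^1$ to $C^0$--convergence on the closed cylinder $[0,1]\times S^1$, where the uniformity in $n$ in Corollary~\ref{cor:For-every-} is indispensable: without it the limit $g$ would be defined only on $[0,1)\times S^1$ and assertion~(3) would fail.
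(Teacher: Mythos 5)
Your proposal is correct and follows essentially the same route the paper takes: the paper does not give a separate proof block for Theorem~\ref{thm:Let--be} but assembles it from the preceding discussion in Appendix~\ref{chap:Half-cylinders-with}, namely the uniform gradient bound from small $d\alpha$--energy, Proposition~\ref{prop:After-going-over} for assertions~(1) and~(2), and Corollary~\ref{cor:For-every-} together with the definition~(\ref{eq:C0-conv}) for the $C^{0}$--extension in assertion~(3), exactly as you do.
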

\begin{singlespace}

\section{\label{chap:Special-coordinates}Special coordinates}
\end{singlespace}

\begin{singlespace}
\noindent Let $S$ be a compact surface with boundary, and let $j_{n}$
and $j$ be complex structures on $S$ for all $n\in\mathbb{N}$.
Additionally, let $h_{n}$ and $h$ be the hyperbolic structures on
$S$ with respect to $j_{n}$ and $j$, respectively. Assume that
$j_{n}\rightarrow j$ and $h_{n}\rightarrow h$ in $C^{\infty}(S)$.
In this appendix we construct a sequence of biholomorphic coordinates
around some point in $S$ with respect to the complex structure $j_{n}$
that converges in a certain sense to the biholomorphic coordinates
with respect to $j$. This result is used in Section \ref{chap:Compactness-results}
for proving the convergence on the thick part.
\end{singlespace}
\begin{lem}
\begin{singlespace}
\noindent \label{2_1_lem:There-exists-open-neighbourhood}For each
$z\in\mbox{int}(S)$ there exist the open neighbourhoods $U_{n}(z)=U_{n}$
and $U(z)=U$ of $z$ and the diffeomorphisms 
\begin{eqnarray*}
\psi_{n}:D_{1}(0) & \rightarrow & U_{n},\\
\psi:D_{1}(0) & \rightarrow & U
\end{eqnarray*}
such that 

\end{singlespace}\begin{enumerate}
\begin{singlespace}
\item $\psi_{n}$ are $i-j_{n}-$biholomorphisms and $\psi$ is a $i-j-$biholomorphism; 
\item \label{2_1_enu:bounded_grad_of_coordinate_maps_psi_n}$\psi_{n}\rightarrow\psi$
in $C_{\text{loc}}^{\infty}(D_{1}(0))$ as $n\rightarrow\infty$ with
respect to the Euclidian metric on $D_{1}(0)$ and $h$ on $S$; 
\item $\psi_{n}(0)=z$ for every $n$ and $\psi(0)=z$.
\end{singlespace}
\end{enumerate}
\end{lem}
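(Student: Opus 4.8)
The plan is to deduce the statement from the classical smooth dependence of solutions of the Beltrami equation on the Beltrami coefficient. First I would fix, once and for all, a smooth chart $\phi\colon V\to\mathbb{C}$ on a neighbourhood $V$ of $z$ in $\mathrm{int}(S)$ with $\phi(z)=0$ and $\overline{D_{r_{0}}(0)}\subset\phi(V)$ for some $r_{0}>0$. Pushing the complex structures forward by $\phi$, each $j_{n}$ and $j$ becomes an oriented complex structure on the planar domain $\phi(V)$, hence is encoded by a Beltrami coefficient $\mu_{n},\mu\colon\phi(V)\to\mathbb{C}$ with $\|\mu_{n}\|_{\infty},\|\mu\|_{\infty}<1$; since $j_{n}\to j$ in $C^{\infty}(S)$ we get $\mu_{n}\to\mu$ in $C^{\infty}(\phi(V))$, and in particular $\|\mu_{n}\|_{C^{0}(\overline{D_{r_{0}}(0)})}$ is bounded away from $1$ uniformly in $n$. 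Then I would extend each $\mu_{n}$ and $\mu$ to a compactly supported smooth Beltrami coefficient on all of $\mathbb{C}$, keeping the sup-norms uniformly smaller than $1$ and preserving the $C^{\infty}$-convergence on a fixed neighbourhood of $\overline{D_{r_{0}}(0)}$.

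Next I would invoke the measurable Riemann mapping theorem to produce, for each $n$, the unique quasiconformal homeomorphism $w_{n}\colon\mathbb{C}\to\mathbb{C}$ solving $\partial_{\bar{\zeta}}w_{n}=\mu_{n}\,\partial_{\zeta}w_{n}$ with the normalisation $w_{n}(0)=0$, $w_{n}(1)=1$, and likewise $w_{\infty}$ for $\mu$. Where the coefficient is smooth, so is the solution (elliptic regularity and local bootstrapping for the Beltrami operator, which is elliptic when $\|\mu\|_{\infty}<1$), so each $w_{n}$ restricts to a smooth $i$-$\mu_{n}$-holomorphic diffeomorphism on a neighbourhood of $\overline{D_{r_{0}}(0)}$. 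The essential analytic input is the Ahlfors-Bers theorem on the dependence of the normalised solution $w^{\mu}$ on $\mu$: combined with local elliptic estimates it upgrades $\mu_{n}\to\mu$ in $C^{\infty}$ to $w_{n}\to w_{\infty}$ in $C_{\text{loc}}^{\infty}(\mathbb{C})$, and hence also $w_{n}^{-1}\to w_{\infty}^{-1}$ in $C_{\text{loc}}^{\infty}$ near $0$.

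Then I would assemble the coordinates. Put $g_{n}:=(w_{n}\circ\phi)^{-1}$ and $g_{\infty}:=(w_{\infty}\circ\phi)^{-1}$; these are $i$-$j_{n}$- and $i$-$j$-biholomorphisms from neighbourhoods of $0\in\mathbb{C}$ onto neighbourhoods of $z$ in $S$, with $g_{n}(0)=g_{\infty}(0)=z$, and $g_{n}\to g_{\infty}$ in $C_{\text{loc}}^{\infty}$. Since $w_{n}\to w_{\infty}$ in $C^{1}$ on the fixed compact set $\overline{D_{r_{0}}(0)}$ and $w_{\infty}$ is a diffeomorphism, a degree argument shows that the images $w_{n}(\phi(V))$ all contain a common disc $D_{\rho}(0)$ once $n$ is large; shrinking $\rho$ further to $\rho'\le\rho$ absorbs the remaining finitely many indices, for which $g_{n}$ is in any case defined on some disc around $0$. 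Finally I would set $\psi_{n}(\zeta):=g_{n}(\rho'\zeta)$ and $\psi(\zeta):=g_{\infty}(\rho'\zeta)$ for $\zeta\in D_{1}(0)$; then $\psi_{n}$ is an $i$-$j_{n}$-biholomorphism onto $U_{n}:=\psi_{n}(D_{1}(0))$, $\psi$ is an $i$-$j$-biholomorphism onto $U:=\psi(D_{1}(0))$, $\psi_{n}(0)=\psi(0)=z$, and $\psi_{n}\to\psi$ in $C_{\text{loc}}^{\infty}(D_{1}(0))$ (the target metric $h$ being fixed, this is ordinary smooth convergence of maps). This gives all three assertions.

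The hard part is the analytic statement used in the second step: the $C^{\infty}$ dependence, not merely the uniform or $W^{1,p}$ dependence, of the normalised Beltrami solution on its coefficient; everything else is bookkeeping. One could also avoid the global measurable Riemann mapping theorem and instead solve the Beltrami equation directly on $D_{r_{0}}(0)$ by a Cauchy-transform Neumann series with prescribed value and derivative at $0$, tracking the dependence of each term on $\mu_{n}$; this yields the required $C_{\text{loc}}^{\infty}$ convergence more explicitly at the cost of some computation. I would finally note that the hypothesis $h_{n}\to h$ is not needed here: only $j_{n}\to j$ enters the argument, the hyperbolic metrics serving merely to pin down the fixed comparison metric $h$ on $S$ appearing in the statement.
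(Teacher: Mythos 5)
Your proof is correct, but it goes by a genuinely different route than the paper. The paper first fixes an $i$-$j$-holomorphic chart $c$ about $z$ and pulls $j_n$ back to complex structures $j^{(n)}\to i$ on a planar disk; it then solves the second-order Dirichlet problem $dd^{\mathbb C}_n f_n=0$ on $D_1(0)$ with fixed boundary data $f_n=p_x$, uses the uniform invertibility of the operators $dd^{\mathbb C}_n:W^{2+m,2}_\partial\to W^{m,2}$ (plus $dd^{\mathbb C}_n\to\Delta$ in operator norm and Sobolev embedding) to get $f_n\to f(x,y)=x$ in $C^\infty_{\text{loc}}$, and finally produces the $j^{(n)}$-$i$-holomorphic map as $F_n=f_n+ib_n$ via a normalized harmonic conjugate, concluding by the inverse function theorem. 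You instead encode $j_n$ as a Beltrami coefficient $\mu_n$, extend compactly to $\mathbb C$, and invoke the measurable Riemann mapping theorem with Ahlfors--Bers smooth parametric dependence to get normalized solutions $w_n\to w_\infty$ in $C^\infty_{\text{loc}}$, then rescale. Both are sound; the trade-off is that the paper's argument stays within basic second-order elliptic theory and the local existence of harmonic conjugates (everything it cites is a textbook Dirichlet-problem estimate), whereas yours is shorter and goes straight to the first-order equation defining holomorphicity, at the cost of importing the full Ahlfors--Bers machinery, whose $C^\infty$ dependence on the coefficient is precisely the heavy input you flag as ``the hard part.'' Your side remark that only $j_n\to j$, not $h_n\to h$, enters is also consistent with the paper's proof, which never uses the hyperbolic metrics.
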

\begin{proof}
\begin{singlespace}
\noindent Around $z\in\text{int}(S)$, choose the $i-j-$holomorphic
coordinates $c:D_{2}(0)\rightarrow U$ such that $U\subset\mbox{int}(S)$
and $c(0)=z$, and consider the complex structures $j^{(n)}:=c^{*}j_{n}$.
Since $j_{n}\rightarrow j$ as $n\rightarrow\infty$ in $C^{\infty}$,
$j^{(n)}\rightarrow i$ in $C_{\text{loc}}^{\infty}(D_{2}(0))$ as
$n\rightarrow\infty$. Let $d_{n}^{\mathbb{C}}$ be the operator defined
by $d_{n}^{\mathbb{C}}f=df\circ j^{(n)}$ and let $d^{\mathbb{C}}$
be the operator defined by $d^{\mathbb{C}}f=df\circ i$. Denote by
$p_{x}:\mathbb{R}^{2}\rightarrow\mathbb{R}$, $(x,y)\mapsto x$ the
projection onto the first coordinate. Consider the problem of finding
a smooth fumction $f:\overline{D_{1}(0)}\rightarrow\mathbb{R}$ such
that 
\begin{equation}
\begin{array}{cl}
dd_{n}^{\mathbb{C}}f & =0\text{ on }D_{1}(0),\\
f & =p_{x}\text{ on }\partial D_{1}(0)
\end{array}\label{2_1_proof:eq:1}
\end{equation}
for all $n$ and 
\begin{equation}
\begin{array}{cl}
dd^{\mathbb{C}}f & =0\text{ on }D_{1}(0),\\
f & =p_{x}\text{ on }\partial D_{1}(0).
\end{array}\label{2_1_proof:eq:2}
\end{equation}
As the second problem translates into 
\begin{equation}
\begin{array}{cl}
\Delta f & =0\text{ on }D_{1}(0),\\
f & =p_{x}\text{ on }\partial D_{1}(0),
\end{array}\label{2_1_proof:eq:3}
\end{equation}
where $\Delta$ is the standard Laplace operator in $\mathbb{\ensuremath{R}}$,
the unique solution is $f(x,y)=x$ for all $(x,y)\in\overline{D_{1}(0)}$.
To see the uniqueness observe that the difference of $f$ with any
other solution of (\ref{2_1_proof:eq:3}) solves $\Delta u=0$ with
$u|_{\partial D_{1}(0)}=0$. Thus from the maximum principle for harmonic
functions we deduce that $u\equiv0$, and so, that (\ref{2_1_proof:eq:3})
has the unique solution $f$. In coordinates representation, $j^{(n)}$
can be written as
\[
j^{(n)}=\left(\begin{array}{cc}
j_{11}^{(n)} & j_{12}^{(n)}\\
j_{21}^{(n)} & j_{22}^{(n)}
\end{array}\right)
\]
and take notice that $j^{(n)}\rightarrow i$ in $C^{\infty}$ on $D_{1}(0)$
as $n\rightarrow\infty$. The solutions of (\ref{2_1_proof:eq:1})
are equivalent to the solutions of
\begin{equation}
\begin{array}{cl}
dd_{n}^{\mathbb{C}}\tilde{f} & =t_{n}\text{ on }D_{1}(0),\\
\tilde{f} & =0\text{ on }\partial D_{1}(0),
\end{array}\label{eq:modified_laplace}
\end{equation}
where $t_{n}=-dd_{n}^{\mathbb{C}}p_{x}$. Hence $dd_{n}^{\mathbb{C}}$
is an elliptic and coercive operator, and thus by Proposition 5.10
from \cite{key-16}, the problem (\ref{eq:modified_laplace}) has
a uniquely weak solution $\tilde{f}_{n}\in W^{1,2}(D_{1}(0))$ for
all $n$. From regularity theorem, the solutions $\tilde{f}_{n}$
are smooth for all $n$. Thus $f_{n}:=\tilde{f}_{n}+p_{x}$ is the
smooth unique solution of (\ref{2_1_proof:eq:1}). Let us show that
$f_{n}\rightarrow f$ in $C_{\text{loc}}^{\infty}(D_{1}(0))$ as $n\rightarrow\infty$.
For $u_{n}:=f_{n}-f$ we have 
\[
\begin{array}{cl}
dd_{n}^{\mathbb{C}}u_{n} & =g_{n}\text{ on }D_{1}(0),\\
u_{n} & =0\text{ on }\partial D_{1}(0).
\end{array}
\]
Here, $g_{n}\in C^{\infty}(D_{1}(0))$ is defined by $g_{n}:=dd_{n}^{\mathbb{C}}f$,
and because of $j^{(n)}\rightarrow i$ in $C^{\infty}(D_{1}(0))$
as $n\rightarrow\infty$, $g_{n}$ converges to $0$ in $C_{\text{loc}}^{\infty}(D_{1}(0))$
as $n\rightarrow\infty$. For every $m\in\mathbb{N}_{0}$ we consider
the bounded operator $dd_{n}^{\mathbb{C}}:W_{\partial}^{2+m,2}(D_{1}(0),\mathbb{R})\rightarrow W^{m,2}(D_{1}(0),\mathbb{R})$,
where $W_{\partial}^{2+m,2}(D_{1}(0),\mathbb{R})$ consists of maps
from $W^{2+m,2}(D_{1}(0),\mathbb{R})$ that vanish at the boundary.
By Proposition 5.10 together with Propositons 5.18 and 5.19 of \cite{key-16}
we deduce that the operator $dd_{n}^{\mathbb{C}}$ is bounded invertible;
hence $u_{n}=(dd_{n}^{\mathbb{C}})^{-1}g_{n}$. Since $dd_{n}^{\mathbb{C}}\rightarrow\Delta$
in operator norm, $(dd_{n}^{\mathbb{C}})^{-1}$ is a uniformly bounded
family, and so, $\left\Vert u_{n}\right\Vert _{W^{m+2,2}}\rightarrow0$
as $n\rightarrow\infty$. Further on, as $m\in\mathbb{N}_{0}$ was
arbitrary, the Sobolev embedding theorem yields $u_{n}\rightarrow0$
in $C_{\text{loc}}^{\infty}(D_{1}(0))$ as $n\rightarrow\infty$.
Thus we have constructed a unique sequence of solutions $\{f_{n}:\overline{D_{1}(0)}\rightarrow\mathbb{R}\}_{n\in\mathbb{N}}$
of (\ref{2_1_proof:eq:1}), and a unique solution $f:\overline{D_{1}(0)}\rightarrow\mathbb{R},(x,y)\mapsto x$
of (\ref{2_1_proof:eq:2}) satisfying $f_{n}\rightarrow f$ in $C_{\text{loc}}^{\infty}(D_{1}(0))$
as $n\rightarrow\infty$.

\noindent According to Lemma 6.8.1 of \cite{key-8}, there exists
a $j^{(n)}-i-$holomorphic function $F_{n}:D_{1}(0)\rightarrow\mathbb{C}$
and a $i-i-$holomorhic function $F:D_{1}(0)\rightarrow\mathbb{C}$
such that $f_{n}=\Re(F_{n})$ and $f=\Re(F)$. Let us investigate
the unique extensions of the functions $F_{n}$ and $F$. For doing
this we set $F_{n}=f_{n}+ib$ and $F=f+ib$, where $b_{n},b:D_{1}(0)\rightarrow\mathbb{R}$
are harmonic functions. As $F_{n}$ and $F$ are $j^{(n)}-i-$holomorphic
and $i-i-$holomorphic, respectively, they solve the equations 
\[
dF_{n}+i\circ dF_{n}\circ j^{(n)}=0
\]
and 
\[
dF+i\circ dF\circ i=0,
\]
respectively, which in turn, are equivalent to 
\[
db_{n}=-df_{n}\circ j^{(n)}
\]
and 
\[
db=-df\circ i,
\]
respectively. By the harmonicity of $f_{n}$ and $f$, and the application
of Poincare lemma on $D_{1}(0)$, we find the solutions $b_{n}$ and
$b$ which are unique up to addition with some constant. They can
be make unique by requiring that $b_{n}(0)=0$ and $b(0)=0$. In particular,
we find $F(x,y)=x+iy$. Then we get $db_{n}\rightarrow db$ in $C_{\text{loc}}^{\infty}(D_{1}(0))$
as $n\rightarrow\infty$, and from $b_{n}(0)=0$ and $b(0)=0$, we
actually get $b_{n}\rightarrow b$ in $C_{\text{loc}}^{\infty}(D_{1}(0))$
as $n\rightarrow\infty$. Hence $F_{n}\rightarrow F=\text{id}$ in
$C_{\text{loc}}^{\infty}(D_{1}(0))$ as $n\rightarrow\infty$.

\noindent For $n$ large, $F_{n}$ is bijective onto its image (maybe
after shrinking the domain). This follows from the proof of the inverse
function theorem. With $\tilde{F}_{n}=F_{n}-f_{n}(0)$, the maps $\psi_{n}$
and $\psi$ are defined by $\psi_{n}=c\circ\tilde{F}_{n}:D_{1}(0)\rightarrow U_{n}$
and $\psi=c\circ F:D_{1}(0)\rightarrow U$ for sufficiently large
$n$, respectively.
\end{singlespace}
\end{proof}

\end{document}